\newtheorem{theorem}{Theorem}[section]
\newtheorem{thm}[theorem]{Theorem}
\newtheorem{fact}[theorem]{Fact}
\newtheorem{example}[theorem]{Example}
\newtheorem{proposition}[theorem]{Proposition}
\newtheorem{prop}[theorem]{Proposition}
\newtheorem{claim}[theorem]{Claim}
\newtheorem{lemma}[theorem]{Lemma}		
\newtheorem{corollary}[theorem]{Corollary}
\newtheorem{cor}[theorem]{Corollary}
\newtheorem{question}[theorem]{Question}
\theoremstyle{definition}
\newtheorem{definition}[theorem]{Definition}
\newtheorem{remark}[theorem]{Remark}
\newcommand{\KO}{\mathcal{O}}
\newcommand{\DII}{\Delta^0_2}
\newcommand{\NN}{{\mathbb{N}}}
\newcommand{\RR}{{\mathbb{R}}}
\newcommand{\QQ}{{\mathbb{Q}}}
\newcommand{\ZZ}{{\mathbb{Z}}}
\newcommand{\sub}{\subseteq}
\newcommand{\sN}[1]{_{#1\in \NN}}
\newcommand{\uhr}[1]{\! \upharpoonright_{#1}}
\newcommand{\SI}[1]{\Sigma^0_{#1}}
\newcommand{\PI}[1]{\Pi^0_{#1}}
\newcommand{\PPI}{\PI{1}}
\newcommand{\bi}{\begin{itemize}}
\newcommand{\ei}{\end{itemize}}
\newcommand{\bc}{\begin{center}}
\newcommand{\ec}{\end{center}}
\newcommand{\Halt}{{\ES'}}
\newcommand{\ES}{\emptyset}
\newcommand{\estring}{\la \ra}
\newcommand{\tp}[1]{2^{#1}}
\newcommand{\ex}{\exists}
\newcommand{\fa}{\forall}
\newcommand{\la}{\langle}
\newcommand{\ra}{\rangle}
\newcommand{\seqcantor}{2^{ \NN}}
\newcommand{\cantor}{\seqcantor}
\newcommand{\leT}{\le_{\mathrm{T}}}
\newcommand{\MLR}{\mbox{\rm \textsf{MLR}}}
\newcommand{\WtwoR}{\mbox{\rm \textsf{W2R}}}
\newcommand{\twoR}{\mbox{\rm \textsf{2R}}}
\newcommand{\n}{\noindent}
\newcommand{\vsps}{\vspace{3pt}}
\newcommand{\vsp}{\vspace{6pt}}
\newcommand{\leb}{\mathbf{\lambda}}
\newcommand{\sss}{\sigma}
\newcommand{\w}{\omega}
\newcommand{\s}{\sigma}
\newcommand{\lland}{\, \land \, }
\newcommand \seq[1]{{\left\langle{#1}\right\rangle}}
\newcommand\+[1]{\mathcal{#1}}
\newcommand{\wt}{\widetilde}
\newcommand{\ol}{\overline}
\newcommand{\ul}{\underline}
\newcommand{\lra}{\leftrightarrow}
\newcommand{\LR}{\Leftrightarrow}
\newcommand{\RA}{\Rightarrow}
\newcommand{\LA}{\Leftarrow}
\newcommand{\sssl}{\ensuremath{|\sigma|}}
\def\uh{\upharpoonright}
\DeclareMathOperator \init{init}
\newcommand{\RCA}{\ensuremath{\mathbf{RCA_0}}}
\newcommand{\frb}{\mathfrak{b}}
\newcommand{\frd}{\mathfrak{d}}
\numberwithin{equation}{section}
\renewcommand{\hat}{\widehat}
\begin{document}

\title{Logic Blog 2016}

 \author{Editor: Andr\'e Nies}

\maketitle

%\begin{abstract}  %The 2015  logic blog has focussed on the following:
% \end{abstract}

 {
The Logic Blog is for
\bi \item rapidly announcing    results related to logic
\item putting up results and their proofs for further research
\item archiving results for later use
\item getting feedback before submission to   a journal.   \ei

Each year's  blog is    posted on arXiv shortly after the year has ended.
\vsp
\begin{tabbing} 

  \href{http://arxiv.org/abs/1602.04432}{Logic Blog 2015} \ \ \ \   \= (Link: \texttt{http://arxiv.org/abs/1602.04432})  \\
  
  \href{http://arxiv.org/abs/1504.08163}{Logic Blog 2014} \ \ \ \   \= (Link: \texttt{http://arxiv.org/abs/1504.08163})  \\

   \href{http://arxiv.org/abs/1403.5719}{Logic Blog 2013} \ \ \ \   \= (Link: \texttt{http://arxiv.org/abs/1403.5719})  \\

    \href{http://arxiv.org/abs/1302.3686}{Logic Blog 2012}  \> (Link: \texttt{http://arxiv.org/abs/1302.3686})   \\

 \href{http://arxiv.org/abs/1403.5721}{Logic Blog 2011}   \> (Link: \texttt{http://arxiv.org/abs/1403.5721})   \\

 \href{http://dx.doi.org/2292/9821}{Logic Blog 2010}   \> (Link: \texttt{http://dx.doi.org/2292/9821})  
     \end{tabbing}

\vsp

\n {\bf How does the Logic Blog work?}

\vsp

\n {\bf Writing and editing.}  The source files are in a shared dropbox.
 Ask Andr\'e (\email{andre@cs.auckland.ac.nz})  in order    to gain access.

\vsp

\n {\bf Citing.}  Postings can be cited.  An example of a citation is:

\vsp

\n  H.\ Towsner, \emph{Computability of Ergodic Convergence}. In  Andr\'e Nies (editor),  Logic Blog, 2012, Part 1, Section 1, available at
\url{http://arxiv.org/abs/1302.3686}.}

\vsp

\n {\bf Announcements on the wordpress front end.}  The Logic Blog has a \href{http://logicblogfrontend.hoelzl.fr/
}{front-end}  managed by Rupert H\"olzl.   

\n (Link: \texttt{http://logicblogfrontend.hoelzl.fr/})

\vsps
When you post source code on the logic blog in the dropbox, you can post a comment on the front-end alerting the community, and possibly summarising the result in brief.  The front-end is also good for posting questions. It allows MathJax.
 
The logic blog,  once it is on  arXiv,  produces citations on Google Scholar.
%\n  A. Taveneaux, \emph{Randomness Zoo}, Logic Blog, Section 6, available at
%
%\texttt{http://dl.dropbox.com/u/370127/Blog/Blog2011.pdf}.
\newpage
\tableofcontents

\part{Randomness, analysis and ergodic theory}
 \section{Westrick: randomness and rotations of the unit circle}
The following result was obtained at the computability retreat at  Research Centre Coromandel in February. It  started through   discussions between Adam Day, Andre Nies, Dan Turetsky and Brown Westrick.

\vsp

Let $P\sub[0,1]$ be a $\Pi^0_1$ class.  
\begin{thm} \label{thm:multiple recurrence}
Let $X \in MLR$. Let  $k\in \NN$. 
\begin{enumerate}
\item There is a rational number $q \neq 0$ such that for all $i\leq k$, $X + qi \in P$.
\item Let $\alpha$ be a computable irrational number.
There is an integer $n \neq 0$ such that   for all $i\leq k$, $(X + \alpha n i \mod 1) \in P$.
\end{enumerate}
\end{thm}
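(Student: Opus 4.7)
The plan is to show, for each part, that the set of Martin-L\"of random $X$ for which the required $n$ (resp.\ $q$) fails to exist is Martin-L\"of null, hence empty. I focus on (2), since (1) is analogous and easier; throughout I assume $X \in P$, which is forced by the $i=0$ instance of the conclusion.

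For (2), for each $n \in \ZZ \setminus \{0\}$ let
\[
U_n \;:=\; P \cap (P - n\alpha) \cap (P - 2n\alpha) \cap \cdots \cap (P - kn\alpha) \pmod 1,
\]
a $\Pi^0_1$ class uniformly in $n$ since $\alpha$ is computable. The conclusion asks $X \in \bigcup_{n \neq 0} U_n$. Set $B := P \setminus \bigcup_{n \neq 0} U_n$. I would first verify $\lambda(B)=0$ by a Furstenberg-style argument: if $\lambda(B) > 0$, applying the multiple recurrence theorem to the ergodic system $([0,1), \lambda, T_\alpha)$ and to $B$ yields some $n \neq 0$ with $\lambda(B \cap T_\alpha^{-n}B \cap \cdots \cap T_\alpha^{-kn}B) > 0$; any point of this intersection lies simultaneously in $B$ and in $U_n$, a contradiction.

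To turn qualitative nullness into a Martin-L\"of test, approximate $B$ from above by the descending sets $V_N := P \setminus \bigcup_{0 < |n| \leq N} U_n$ and cover each $V_N$ by the effectively open set $V_N^+ := O_N \setminus \bigcup_{0 < |n| \leq N} U_n$, where $O_N$ is a stage-$N$ outside approximation of $P$. Then $\lambda(V_N^+) \to 0$: the outer approximations give $\lambda(O_N) \to \lambda(P)$, while $\lambda(\bigcup_{|n| \leq N} U_n) \to \lambda(P)$ since $\lambda(B)=0$. For Kronecker systems the latter rate is computable---the continued-fraction expansion of the computable $\alpha$ gives quantitative equidistribution of $\{n\alpha\}$, and the Fourier expansion of $1_P$ controls the multiple correlation $\int \prod_{i=0}^{k} 1_P(x + in\alpha)\,dx$---yielding a computable $N(m)$ with $\lambda(V_{N(m)}^+) < 2^{-m}$. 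The $(V_{N(m)}^+)_m$ then form a Martin-L\"of test capturing $B$, so no $X \in \MLR$ lies in $B$. For (1), define analogously $U_q := \bigcap_{i=0}^k (P - iq)$ for non-zero rational $q$; by Lebesgue differentiation, almost every $x \in P$ is a density-one point, and at such $x$ a small enough rational shift places all $k+1$ iterates in $P$ by a union bound on the density deficit, giving $\lambda(P \setminus \bigcup_{q \neq 0} U_q) = 0$. Effectivity follows by enumerating rationals of bounded denominator and using effective Lebesgue-density bounds at the Martin-L\"of random point $X$.

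The principal obstacle is the computable rate in (2) for a general computable irrational $\alpha$: Furstenberg's theorem is purely qualitative, and Liouville-type $\alpha$ make the discrepancy of $\{n\alpha\}$ decay arbitrarily slowly, so producing $N(m)$ uniformly may fail; a cleaner alternative is a Solovay-test formulation, which only requires summable measures of the approximating opens rather than a pointwise computable rate. A secondary subtlety in (1) is that not every Martin-L\"of random is a density-one point of every containing $\Pi^0_1$ class (this characterizes the strictly smaller class of density-randoms), so the pointwise density argument is not available for all $X \in \MLR$ and must be replaced by effective outer approximation of the bad set.
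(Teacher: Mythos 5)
Your plan to show that the bad set $B = P \setminus \bigcup_{n\neq 0} U_n$ is Martin-L\"of null is the right target, but the proof has two genuine gaps, both of which you flag yourself without resolving. For (2), Furstenberg's multiple recurrence gives $\lambda(B)=0$ only qualitatively; to cover $B$ by a test you need $\lambda(P\setminus\bigcup_{0<|n|\le N} U_n)$ to decrease at an effectively controlled rate, and for Liouville-type computable $\alpha$ the discrepancy of $\{n\alpha\}$ can decay slower than any computable function, so no computable $N(m)$ exists. Merely ``switching to a Solovay test'' does not by itself fix this: a Solovay test still requires its components to be effectively enumerable with an effectively bounded total measure, and you offer no mechanism that makes that sum converge. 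For (1), the Lebesgue-density argument only applies at density-one points, and as you observe this characterizes density-randomness, a class strictly contained in $\MLR$; the fallback ``effective outer approximation of the bad set'' is not spelled out and is exactly the hard part.

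The paper avoids both obstacles by never invoking ergodic theory or density theorems. It constructs a Solovay test directly: whenever a cylinder $[\sigma]$ enters the test it fixes a shift $r_\sigma$ (or $\alpha n_\sigma \bmod 1$) in the narrow window $(2^{-|\sigma|}/(2k+3),\ 2^{-|\sigma|}/(2k+2))$, so that every $z\in[\sigma]$ can $k$-recur inside $[\sigma]$ in at least one direction. When a sub-cylinder $[\rho]\subseteq[\sigma]$ is seen to leave $P$, the interval $[\sigma]$ splits into the $r_\sigma$-invariant set $C$ of points whose $r_\sigma$-orbit meets $[\rho]$ and its complement; only points of $C\setminus[\rho]$ can ever be enumerated on account of $\rho$, and the size bound forces $\lambda[\rho]/\lambda(C)\le 1/(2k+3)$, so at most a $(2k+2)/(2k+3)$ fraction of $C$ is ever added in response to $\sigma$. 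Iterating on the untouched complement gives geometric decay and hence a Solovay test of finite measure that captures every $X\in P$ failing to $k$-recur for all of the chosen shifts. This is elementary, uniform in $\alpha$ (one only needs density of $\{n\alpha \bmod 1\}$, not any quantitative equidistribution), and works for every $X\in\MLR$, not just the density-random ones, so it sidesteps both difficulties you identified.
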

\begin{proof}

Both (1) and (2) are proved by the same method.  Dynamically construct a Solovay test as follows.  Put the empty string $\estring$ in the test.  Then for any $\sigma$ that has been placed in the test, let $r_\sigma$ be a rational number (for (1)) or let $n_\sigma >0$  be an integer (for (2)) such that \bc $\frac{2^{-|\sigma|}}{2k+3} < r_\sigma, (\alpha n_\sigma \mod 1) < \frac{2^{-|\sigma|}}{2k+2}$.  \ec These bounds are chosen so that for every $z \in [\sigma]$,   $z + kr_\sigma \in [\sigma]$ or $z - kr_\sigma \in [\sigma]$.  Let $U_s$ denote the complement of $P$ as seen at stage $s$.  We will enumerate $\tau$ into the test at stage $s$ if we see the following occur:
\begin{enumerate}
\item $\sigma \prec \tau$, 
\item $\tau$ is incomparable with any $\tau'$ which has already entered the test on the basis of this same $\sigma$
\item $[\tau] \cap U_s = \emptyset$
\item For some integer $i \in [-k, k]$, $[\tau] + i r_\sigma \subseteq [\sigma]\cap U_s$.
\end{enumerate}
The point is to enumerate $[\tau]$ if its potential $\pm r_\sigma$ $k$-recurrence inside $[\sigma]$ is invalidated.  For (2), replace $r_\sigma$ everywhere with $\alpha n_\sigma \mod 1$.

The test catches any $z\in P$ which fails to $k$-recur for all $r \in \mathbb{Q}\setminus \{0\}$  (resp. for all $\alpha n$ with $n \in \mathbb{Z}\setminus \{0\}$).  We need to show it is a Solovay test.

We show that for every $\sigma$ in the test, the total measure of all $\tau$ added to the test as a result of $\sigma$ is at most $\frac{2k+2}{2k+3} \mu [\sigma]$.  So if the depth of $\tau$ is the number of initial segments that $\tau$ has in the test, then $\mu(\cup\{[\tau] : \tau \text{ of depth } d\}) \leq \left(\frac{2k+2}{2k+3}\right)^d$, so this will suffice to show that the total measure of the test is finite.

When $\sigma$ is first put in the test, $[\sigma]$ is disjoint from $U_s$.  When first $\rho$ enters $U_s$ with $\sigma \prec \rho$, we can consider $[\sigma]$ as being divided into two parts, both invariant under addition of $r_\sigma$: $C=\{z \in [\sigma] : z + ir_\sigma \in [\rho]$ for some $i \in \mathbb{Z}\}$, and its complement.  All $z$ whose initial segments could potentially be enumerated as a result of $\rho$ are contained in $C$.  Also $[\rho]\subseteq C$, but nothing comparable with $\rho$ will be enumerated.  By the choice of $r_\sigma$, $\frac{\mu[\rho]}{\mu C} \leq \frac{1}{2k+3}$.  Therefore, the measure added to the test as a result of the addition of $\rho$, and as a result of the addition of any future $\rho'$ with $[\rho'] \subseteq C$, is bounded by $\frac{2k+2}{2k+3}\mu C$.  

The remaining set $[\sigma] \setminus C$ is currently untouched but may in the future be seen to intersect the complement of $P$.  When that happens, we apply the same reasoning inside of $[\sigma]\setminus C$, partitioning it into two invariant pieces and arguing that one piece remains untouched, while the other can never contribute more than $\frac{2k+2}{2k+3}$ of its measure to the test.  Continuing in this way, we get the desired bound on the measure the test uses in response to~$\sigma$.  
\end{proof}

\section{Nies: alternative proof of Thm.\ \ref{thm:multiple recurrence} (1)}

The  case $k=1$ of the theorem can be derived from a result of Figueira et al.\ \cite{Figueira.Miller.ea:09}; also see \cite[3.3.7]{Nies:book}. That theorem of \cite{Figueira.Miller.ea:09} says that if $X$ is not autoreducible, then there is a bit position $n$ such that the bit $n$ is indifferent for $X$ with respect to $P$, which means that we can change $X$ at $n$ and remain in $P$. This change corresponds to adding or subtracting the rational $\tp{-n}$. 

Now let us obtain an arithmetical progression  namely $X + qi \in P$ for $i < k/2$ (for technical reasons). 

At first we work with $Z\in k^\omega$. Extending the case of $k=2$, we say that $Z$ is auto-reducible if there is a reduction procedure $\Phi$ such that $\Phi^Z(n) \neq Z(n)$ for each $n$, and the computation $\Phi^Z(n)$ only queries the oracle at values other than $n$. As before, one checks that a  ML-random sequence $Z$ is not auto-reducible. We say that position $n$ is indifferent for $Z$ with respect to  a $\PI 1$ class $P\sub k^\omega$ if we can change $Z$ at $n$  to any value $< k$ and remain in $P$.

\begin{prop}  Let $Q\sub  k^\omega$ be a $\PPI$ class. If  $Z \in Q$ is not autoreducible, then there is a   position $n$ such that the bit $n$ is indifferent for $Z$ with respect to $Q$. \end{prop}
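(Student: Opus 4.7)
The plan is to prove the contrapositive and adapt the argument of Figueira, Miller and Nies from $2^\omega$ to the $k$-valued setting. Suppose no position is indifferent for $Z$ with respect to $Q$; I will construct a Turing functional $\Phi$ witnessing that $Z$ is auto-reducible.

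Fix a computable approximation $(Q_s)_{s \in \NN}$ to $Q$ from above, given by a \ce\ set of forbidden strings over the alphabet $k$. For a position $n$, a value $v < k$, and a length $\ell > n$, write $Z^{n,v}\uh \ell$ for the string obtained from $Z\uh \ell$ by overwriting the symbol at position $n$ with $v$. The key observation is that $Z^{n,v}\uh \ell$ can be produced by an oracle machine that queries $Z$ only at positions in $[0,\ell) \setminus \{n\}$. Define $\Phi^Z(n)$ to search over pairs $(\ell, v)$ with $\ell > n$ and $v < k$, in some fixed computable ordering, for the first such pair with $Z^{n,v}\uh \ell$ declared forbidden by $Q_\ell$, and then output that $v$.

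Three things must then be verified. First, $\Phi^Z(n)$ does not query position $n$, which is immediate from the observation above. Second, if $\Phi^Z(n)$ halts, its output is different from $Z(n)$, because $Z\in Q$ forces $Z^{n,Z(n)}\uh\ell = Z\uh\ell \in Q_\ell$ for every $\ell$, so the pair $(\ell,Z(n))$ is never selected. Third, $\Phi^Z(n)$ does halt for every $n$: the assumption that $n$ is not indifferent yields some $v \neq Z(n)$ with $Z^{n,v} \notin Q$, and since $Q$ is $\PI 1$ this exclusion is witnessed at some finite length $\ell$, at which stage the search succeeds.

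The only delicate point is the first one: the search must always test the overwritten string $Z^{n,v}\uh\ell$, never the unmodified $Z\uh\ell$, so that position $n$ is never touched by the oracle. Once this bookkeeping is explicit, the verification is routine and the argument goes through uniformly in $k$, so I do not foresee any substantial obstacle beyond faithfully transcribing the binary proof in the $k$-ary alphabet.
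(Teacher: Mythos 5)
Your argument is correct and is essentially the same as the paper's: both prove the contrapositive by building $\Phi^Z(n)$ to search for a value $v<k$ and a finite stage/length witnessing $Z^{n,v}\notin Q$, output that $v$, and observe that the search never queries position $n$ and can never settle on $v=Z(n)$ since $Z\in Q$. The paper only sketches this; your write-up supplies the routine details, but the idea is identical.
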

\begin{proof} A straightforward extension of the argument for $k=2$. If there is no such position, given input $n$ the reduction $\Phi$ searches for a stage $s$ and $i < k$ such that $Z$, with position $n$ changed to value $i$, is not in $Q$, and once found, output this $i$. 
\end{proof}
To obtain the arithmetical progression assume that $k= \tp r$ for $r \in \NN$.  Given $\PPI$ class $P \sub \cantor$ and ML-random $X\in P$  , let $Q$, $Z$ be the class/sequence rewritten using the alphabet $0, \ldots, k-1$. That is, the  block   bits of $X$ in positions $nr \ldots (n+1)r -1$ corresponds the  symbol of $Z$ in position $n$.  If $n$ is indifferent for $Z$ in $Q$, then we have an arithmetical progression   $X + qi\in P$,  where $i < k/2$, $q = \pm \tp{-rn}$.

%has to be part 2 because it's cited
   \part{Randomness via algorithmic tests}
\section{Smart sets for arbitrary cost functions}
\newcommand{\conc}{\hat{\,\,}}
\newcommand{\cost}{\mathbf{c}}
\newcommand{\dost}{\mathbf{d}}
\newcommand{\cc}{\mathbf{c}}
\newcommand{\limcost}{\underline{\cost}}
\newcommand{\converge}{\!\!\downarrow}

\newcommand \sinit {s_{\mathrm{init}}}

\label{s:smart for cost functions}

The following is work of Greenberg, Miller, Nies and Turetsky at RCC in Feb.\ 2015, and in Wellington slightly later. For background on cost functions see \cite{Nies:CalculusOfCostFunctions}. In the following all cost functions satisfy the limit condition $\lim_x \cost(x) = 0$. 

\begin{definition}[\cite{Bienvenu.Greenberg.ea:16}] \label{def:cost-bounded_test} 	Let~$\cost$ be a cost function. A descending  sequence $\seq{V_n}$ of uniformly c.e.\ open sets is a \emph{$\cost$-bounded test} if $\leb(V_n) =O( \limcost(n))$ for all~$n$. \end{definition}
We think of each $V_n$ as an approximation for $Y\in \bigcap_k V_k$. Being in  $\bigcap_n V_n$ can be viewed as a new sense of obeying $\cost$ that makes sense for  ML-random sets.    
\begin{lemma} \label{lem:shift} Suppose $aY$ fails a $\cost$-bounded test $\bigcap_n V_n$ where $a \in \{0,1\}$. Then $Y$ fails a $\cost$-bounded test. \end{lemma}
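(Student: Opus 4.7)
The plan is to shift the test by the single bit $a$. Define
\[
W_n = \{Z\in\cantor : aZ\in V_n\}.
\]
Equivalently, if $V_n = \bigcup\{[\sigma]:\sigma\in S_n\}$ for a c.e.\ prefix set $S_n$, then
\[
W_n = \bigcup\{[\tau] : a\tau\in S_n\}\ \cup\ \bigcup\{\cantor : \varepsilon\in S_n\},
\]
so each $W_n$ is c.e.\ open uniformly in $n$, and the inclusion $V_{n+1}\subseteq V_n$ immediately gives $W_{n+1}\subseteq W_n$. The hypothesis $aY\in\bigcap_n V_n$ is the same as $Y\in\bigcap_n W_n$, so $Y$ fails $\seq{W_n}$.

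It remains to verify the measure bound. The map $Z\mapsto aZ$ is a measure-doubling bijection from $\cantor$ onto $[a]$, so $\leb(W_n)=2\,\leb(V_n\cap[a])\le 2\,\leb(V_n)$. Since $\leb(V_n)=O(\limcost(n))$ by hypothesis, we get $\leb(W_n)=O(\limcost(n))$ as well (the multiplicative constant in the $O$-notation simply doubles), which is exactly the $\cost$-bounded test condition.

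There is no real obstacle here; the only thing to check is that the definition of $\cost$-bounded test is robust under absorption of a constant factor, which it is by the choice of $O(\limcost(n))$ rather than a fixed coefficient. Thus \Cref{lem:shift} reduces to the observation that shifting by a finite prefix costs at most a constant factor in measure, and this argument clearly extends by induction to any fixed finite prefix $a\in\fs$, a fact we expect to use freely in the subsequent development.
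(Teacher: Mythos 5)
Your proof is correct and uses essentially the same idea as the paper: you define $W_n$ as the preimage of $V_n$ under the map $Z\mapsto aZ$, whereas the paper first normalizes so that $V_n\subseteq[a]$ and then applies the shift operator $T$; with that normalization $T(V_n)$ is exactly your $W_n$, and both give the same factor-of-two measure bound.
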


\begin{proof} We may suppose $a=0$ and $X \in V_n$ implies $X(0) =0$. Since $\leb T (V_n )\le 2 \leb V_n $ where $T$ is the usual shift operator on Cantor space, $ \seq{ T (V_n)}$ is also a $\cost$-bounded  test. Clearly $Y$ fails it. \end{proof}

The  basic motivating result    is     a  generalisation  in terms of cost functions of a fact of  Hirschfeldt and Miller.  %
\begin{prop} \label{prop:basic fact} If   $A \models \cost$ and $Y$ is a ML-random captured by a $\cost$-bounded test, then $A \leT Y$.  \end{prop}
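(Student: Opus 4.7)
The plan is to construct a Turing functional $\Psi$ such that $\Psi^Y = A$, using the computable approximation of $A$ witnessing $A \models \cost$ together with the c.e.\ enumeration of the test $(V_k)$ capturing $Y$.

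Fix a computable approximation $(A_s)_{s \in \NN}$ of $A$ with $\lim_s A_s = A$ and $\sum_s \cost(s, n_s) < \infty$, where $n_s$ is the least position at which $A_s$ differs from $A_{s-1}$. Fix the $\cost$-bounded test $(V_k)$ with $Y \in \bigcap_k V_k$ and $\leb(V_k) \le C_0 \, \limcost(k)$ for some constant $C_0$. I would first invoke Lemma~\ref{lem:shift} to normalize: by peeling off bits using the shift operator, we may assume that each $V_k$ refines a fixed cylinder known in advance, so the inclusion $Y \in V_k$ detectable from a sufficiently long prefix of $Y$ can be exploited efficiently.

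The core idea is that every change of $A_s$ at a position $n$ after a ``late enough'' stage can be forced to consume measure from a $V_k$-like set, contradicting cost-boundedness if it happens too often. Concretely, I would build an auxiliary uniformly c.e.\ sequence of open sets $(W_k)$ in parallel with the enumeration of $(V_k)$ and the approximation $(A_s)$: whenever $A_s$ changes at position $n_s$ at stage $s$, pick the least $k = k(s)$ with $\cost(s, n_s) \le \limcost(k)$, and enumerate into $W_{k}$ a clopen subset of the currently enumerated $V_{k}[s]$ of measure comparable to $\cost(s, n_s)$, chosen so that it records the pair $(s, n_s)$ (e.g.\ as a prefix-free code embedded in the chosen cylinders). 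The choice of $k(s)$ and the fact that $\sum_s \cost(s, n_s) < \infty$ together give $\leb(W_k) = O(\limcost(k))$, so $(W_k)$ is again a $\cost$-bounded test.

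The reduction $\Psi^Y(n)$ then operates as follows: search for a stage $s$ and a level $k$ such that a sufficiently long prefix of $Y$ lies in $V_k[s] \setminus W_k[s]$ and $A_t(n)$ has been constant for $t$ in a window ending at $s$; output that common value. Correctness is verified by showing that any $Y$ for which $\Psi^Y \neq A$ is eventually captured by $(W_k)$ with an encoding that, combined with the prefix of $Y$ used to enter $V_k$, violates the Solovay/ML-randomness bound on $Y$. The main obstacle is the bookkeeping in the encoding step: the clopen subset added to $W_{k(s)}$ must be measurable against $\leb(V_{k(s)}[s])$ in a way that both preserves cost-boundedness and allows an oracle $Y$ inside $V_{k(s)} \cap W_{k(s)}$ to decode $n_s$, so that the combined test witnesses $A \leT Y$ on the co-null set of ML-randoms.
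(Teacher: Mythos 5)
The paper states Proposition~\ref{prop:basic fact} without proof, crediting it as a generalization of a result of Hirschfeldt and Miller, so there is no internal argument to compare against; I will assess your proposal on its own. Your outline has the right broad shape (define a functional $\Psi$ with $\Psi^Y = A$, track the oracles on which $\Psi$ has made commitments that are later injured, and bound the resulting error set using the cost that $A$ pays), but there is a gap at the single most important step. You arrange for the auxiliary sets $(W_k)$ to again form a \emph{$\cost$-bounded test} and then assert that $Y$ lying in $\bigcap_k W_k$ ``violates the Solovay/ML-randomness bound on $Y$.'' It does not: being captured by a $\cost$-bounded test is perfectly compatible with ML-randomness — indeed $Y$ is, by hypothesis, captured by the $\cost$-bounded test $(V_k)$ itself — and $\leb(V_k) = O(\limcost(k))$ need not tend to $0$ fast enough for $(W_k)$ to be a Solovay or ML test. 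What is actually needed is that the error set $\{Z : \Psi^Z \neq A\}$ intersected with $\bigcap_k V_k$ is covered by a genuine ML/Solovay test, and the bound on that test's measure must come from the \emph{total cost} $\sum_s \cost(n_s,s) < \infty$ of the approximation $(A_s)$. The hypothesis that $(V_k)$ is $\cost$-bounded is used differently: it is a side condition inside the definition of $\Psi$ that controls \emph{when} $\Psi^\sigma(n)$ is permitted to converge (roughly, only once the currently enumerated part of the relevant $V_k$ has measure dominated by a constant times $\cost(n,s)$), so that each later change of $A$ at $n$ can injure only $O(\cost(n_s,s))$ of measure; $A \models \cost$ then makes the total injured measure finite and one can split it across levels to get a ML-test.

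Several concrete steps also do not hold up. The claim $\leb(W_k) = O(\limcost(k))$ does not follow from $\sum_s \cost(n_s,s) < \infty$: many stages $s$ may be assigned the same $k(s)$, and summing their contributions to $W_{k}$ can exceed $\limcost(k)$ by an arbitrary factor. The ``prefix-free code embedded in the chosen cylinders'' is also problematic: the added clopen set must be a subset of $V_{k(s)}[s]$, so you cannot freely encode $(s,n_s)$, and there is no reason the particular cylinders you select contain $Y$ — so even when $A$ later changes at $n$, the oracle on which $\Psi$ committed the now-wrong value need not land in $W_{k(s)}$, and the reduction's correctness argument collapses. Finally, the appeal to Lemma~\ref{lem:shift} is out of place: that lemma is used in the proof of Theorem~\ref{thm:smart} to replace the family $(\Phi_e)$ by a single universal functional $\Upsilon$ at the cost of shifting finitely many bits; here you are constructing $\Psi$ directly, so there is nothing to normalize, and the lemma does not say that each $V_k$ may be assumed to refine a fixed cylinder.
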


\subsection{The most powerful kind of set   obeying a  given cost function}
The following is the  central definition for this entry: given $\cost$, we consider sets $A$ such that the converse   implication  holds as well.
\begin{definition} Let $\cost$ be a cost function and $A$ be a $\DII$ set. We say that $A$ is \emph{smart  for $\cost$} if $A \models \cost$ and for each ML-random set $Y$, 
\begin{center} $Y$  is captured by a $\cost$-bounded test $ \LR  A \leT Y$. \end{center} \end{definition}

Informally, if $A$ is smart for $\cost$ then $A$ is  as complex as possible among the sets obeying $\cost$, in the sense that  the only random sets $Y$ above $A$ are the ones that have to be there because $A$ obeys the cost function that puts it below $Y$ anyway.

%%%%%%%%%%%
\begin{theorem} \label{thm:smart} Let $\cost$ be  a cost function with $\cost \to \cost_\Omega$. Some  c.e.\ set $A$  is smart  for~$\cost$.  \end{theorem}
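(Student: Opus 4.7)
The plan is to build $A$ as a c.e.\ set by stages, constructing simultaneously, for each Turing functional $\Phi_e$, a candidate $\cost$-bounded test $\seq{V_n^e}_{n\in\NN}$. The forward direction of the biconditional in the definition of smart is already given by \cref{prop:basic fact}; the real content is the reverse: every ML-random $Y$ with $A\leT Y$ should be captured by some $\cost$-bounded test. Since $A\leT Y$ must be witnessed by some particular $\Phi_e$, it suffices to guarantee, for each $e$, that $\seq{V_n^e}_n$ captures every ML-random $Y$ satisfying $\Phi_e^Y = A$; the chosen test for $Y$ is then just the one attached to that $e$.

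The strategy for requirement $R_e$ at stage $s$ inspects each string $\sigma$ of length at most $s$ for which $\Phi_{e,s}^\sigma$ converges and agrees with $A_s$ on a long initial segment $\{0,\ldots,m-1\}$, and would like to enumerate $[\sigma]$ into $V_n^e$ for each $n\le m$. Since the budget for $V_n^e$ is $O(\limcost(n))$, this can be done only while that budget is respected. If a cluster of candidate computations threatens to overflow some $V_n^e$, we instead enumerate a suitable $x<n$ into $A$, chosen so that $x$ is smaller than the $\Phi_e^\sigma$-use for every $\sigma$ in the cluster; this invalidates all those computations at once, because $\Phi_e^\sigma(x)$ was computed from the old value of $A(x)$. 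Such an enumeration costs $\cost(x)$ against the total cost of $A$.

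The core obstacle is the joint bookkeeping: we must simultaneously certify $A\models\cost$ and keep each $V_n^e$ within its prescribed budget. The hypothesis $\cost\to\cost_\Omega$ enters exactly here. An impending overflow of $V_n^e$ means that a pool of cylinders of total measure comparable to $\limcost(n)$ is about to be processed, and the comparison $\cost\to\cost_\Omega$ supplies enough multiplicative slack to charge $\cost(x)$ against this freed-up measure so that the sum of all enumeration costs over the construction telescopes to a finite quantity. A priority arrangement, in which strategy $R_e$ is allotted only a $2^{-e}$ share of the global cost budget and higher-priority enumerations can restrain lower-priority ones, keeps conflicts between strategies geometrically summable; enumerations forced by $R_{e'}$ with $e'<e$ may shrink the pool of useful $V_n^e$-candidates for $R_e$, but this only makes the corresponding test \emph{easier} to keep bounded, not harder, while the ML-randomness of any $Y$ with $\Phi_e^Y=A$ still forces $Y$ into $\bigcap_n V_n^e$ because the construction has captured every sufficiently long matching initial segment of $Y$.
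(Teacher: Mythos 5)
Your high-level idea is the right one and matches the paper's: when a test component threatens to overflow its measure budget, enumerate a small number $x$ into $A$ so that the offending oracle computations become invalid, and charge the enumeration cost against the measure that has just been freed. But the paper avoids your one-test-per-$e$ architecture entirely. It fixes a universal functional $\Upsilon$ with $\Upsilon(0^e1^\frown X)=\Phi_e(X)$, builds a \emph{single} test $\seq{\+U_k}$ that captures every ML-random $Y$ with $A=\Upsilon^Y$, and then invokes Lemma~\ref{lem:shift} to pass to each $\Phi_e$. This removes the priority machinery, the $2^{-e}$ budget shares, and the whole question of how strategies interact. Each component $\+U_k$ is assigned a dedicated interval $I_k=[2^k,2^{k+1})$ of $2^k$ potential enumeration targets, and the invariant $\leb\+U_{k,s}\le\cost(k,s)+\leb(\+E_{s+1}-\+E_k)$ (with $\+E$ a global error set) gives a clean counting argument: each enumeration for $\+U_k$ pushes $>2^{-k}$ of new measure into $\+E$, so the $2^k$ slots in $I_k$ never run out. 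Your proposal has no analogue of this counting, so it is not clear you always have a usable $x$.

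There is also a substantive error in your last paragraph. You claim that an enumeration forced by a higher-priority $R_{e'}$ ``only makes the test $V_n^e$ \emph{easier} to keep bounded.'' This is backwards. Since the $V_n^e$ are c.e.\ open, cylinders already enumerated cannot be withdrawn; when $A$ changes at some $x'$, the cylinders $[\sigma]$ with $\Phi_e^\sigma(x')\converge$ equal to the old value of $A(x')$ become dead weight sitting permanently inside $V_n^e$, consuming budget without capturing any $Y$ with $\Phi_e^Y=A$. This accumulated garbage is precisely what the paper's error set $\+E$ tracks, and the hypothesis $\cost\to\cost_\Omega$ is used exactly here: one first normalizes so that $\cost(x,s)\ge 2^{-x}$, and then $\leb(\+E-\+E_k)\le^\times\cost_\Omega(k)\le^\times\limcost(k)$ shows the garbage term in $(\diamond)$ stays within the $O(\limcost(k))$ budget. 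Your description of the hypothesis as providing ``multiplicative slack so the costs telescope'' does not identify this mechanism; without it, you have no control over the wasted measure, and neither the boundedness of the test components nor the claim ``the construction has captured every sufficiently long matching initial segment of $Y$'' can be established. The proposal as written therefore leaves the central verification unsupported.
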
 

\begin{proof} Recall  $\Upsilon$ is a ``universal"   Turing functional   in the sense    that  $\Upsilon({0^e1}\conc{X}) = \Phi_e(X)$ for each $X,e$. We build $A$ and a $\cost$-test $\seq{\+ U_k} $ capturing any ML-random $Y$ such that $A = \Upsilon^Y$. This suffices for the theorem by Lemma~\ref{lem:shift}.

 Since $2^{-x}\le^\times \Omega - \Omega_x$,   we may assume that $\cost(x,s) \ge \tp{-x}$ for $x \le s$. 

As in \cite{Bienvenu.Greenberg.ea:16}, during the construction of $A$ we build a global ``error set":
\[ \+ E_s = \{ Y \colon \ex n\, [ \Upsilon^Y_s(n)\converge=0 \lland A_s(n) = 1 ]\}\]
All all stages $s$ we will have 
\[  \tag{$\diamond$}   \leb \+ U_{k,s} \le \cost(k,s) + \leb ( \+ E_{s+1} - \+ E_k). \]
So since $\leb (\+ E - \+ E_k) \le^\times \cost_ \Omega(k)  $ and   $\cost_\Omega(k)  \le^\times\limcost(k) $, the test $\seq{\+ U_k} $ is indeed a $\cost$-test. 

We reserve the interval $I_k = [\tp k, \tp{k+1} )$ for ensuring  ($\diamond$). The construction of  $\+ U_k$ is as follows.
At stage $s > k$, let  $x = \min (I_k - A_{s-1})$. Let 
\[ \+ U_{k,s} = \bigcup_{t<s} \{ Y \colon A_t \uhr {x+1} \preceq \Upsilon_t^Y \} - \+ E_{k} \]  
If   ($\diamond$) threatens to fail at $s$, namely  $\leb \+ U_{k,s} >  \cost(k,s) + \leb ( \+ E_s - \+ E_k)$,   put $x$ into $A_{s+1}$. This causes $\+ U_{k,s}$ to go into $\+ E_{s+1}$. 

First we verify that $x$ always exists, that is, we enumerate at most $\tp k$ times for $\+ U_k$.   If we do this at stage $s$, $\leb\+ U_{k,s} > 2^{-k} + \leb(\+ E_s - \+E_k)$.  Since $\+U_{k,s} \cap \+E_k = \emptyset$ by definition, and $\+U_{k,s} \subseteq \+E_{s+1}$, it follows that $\leb(\+E_{s+1} - \+E_s) > 2^{-k}$.  Since $\leb \+E \le 1$, this can happen at most $2^k$ times.

 In particular, if $A = \Upsilon^Z$ then $Z \in \bigcap_k \+ U_k$. 
 
 It remains to verify that $A \models \cost$. If we enumerate $x$ for $\+ U_k$ at stage $s$ then 
 \[ \leb (\+ U_{k,s} - \+ E_s) = \leb (\+ U_{k,s} - (\+ E_s - \+ E_k) ) \ge \leb (\+ U_{k,s} ) - \leb(\+ E_s - \+ E_k) >  \cost(k,s ) \ge  \cost (x,s). \]  Since $ \+ U_{k,s} - \+ E_s \sub \+ E_{s+1} - \+ E_s$, we see that $\cost(x,s) < \leb(\+E_{s+1} - \+E_s)$.  This implies that the total cost of the enumeration of $A$  is at most $1$.
\end{proof} 

%%%%%%%
%\hl{(OK maybe we should first call it smart, but it's the same as ML complete anyway after the theorem is true. ML complete is the better term because one can guess what it means and it relates to the previous sections)}

\subsection{ML-reducibility}
\begin{definition}[\cite{Bienvenu.Greenberg.ea:16}]\label{def:MLred} For $K$-trivial sets $A$ and $B$, we write  $B \le_{\textup{ML}} A$ if $A \leT Y$ implies $B \leT Y$ for any ML-random set $Y$.
\end{definition}
 
Clearly, $\leT $ implies $\le_{\textup{ML}}$. The ML-degrees form an upper semilattice where the least upper bound of $K$-trivial sets  $C $ and $D$ is given by the $K$-trivial set $C \oplus D$.

\begin{definition} Let $\cost$ be a cost function and $A$ be a $\DII$ set. We say that $A$ is \emph{ML-complete   for $\cost$} if $A \models \cost$, and $\fa B \, [B \models \cost \RA  B \le_{ML}   A] $.  \end{definition}
\begin{cor} $A$ is smart for $\cost$ $\LR$ $A$ is ML-complete for $\cost$. \end{cor}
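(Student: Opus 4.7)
The plan is to prove both implications directly from the definitions, using the ``basic fact'' (Proposition~\ref{prop:basic fact}) for the easy half of each direction, and invoking Theorem~\ref{thm:smart} (existence of a smart set for $\cost$) to supply the nontrivial half of the $\Leftarrow$ direction. Note that both ``smart'' and ``ML-complete'' presuppose $A\models \cost$, so that clause carries across for free; the content is in the equivalence of the quantifier over ML-random $Y$ with the quantifier over $K$-trivials $B$.

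For the forward direction, assume $A$ is smart for $\cost$. Let $B\models\cost$; I want $B\leq_{\textup{ML}} A$. Take any ML-random $Y$ with $A\leT Y$. By smartness of $A$ (using the $\Leftarrow$ direction of the biconditional in the definition), $Y$ is captured by some $\cost$-bounded test. Since $B\models\cost$, Proposition~\ref{prop:basic fact} then yields $B\leT Y$. This gives $B\leq_{\textup{ML}} A$.

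For the backward direction, assume $A$ is ML-complete for $\cost$. Given a ML-random $Y$, I must establish that $Y$ is captured by a $\cost$-bounded test iff $A\leT Y$. The $(\Rightarrow)$ half is immediate from Proposition~\ref{prop:basic fact} applied to $A$, since $A\models\cost$. For the $(\Leftarrow)$ half, invoke Theorem~\ref{thm:smart} to fix a c.e.\ set $A^*$ that is smart for $\cost$. Then $A^*\models\cost$, so ML-completeness of $A$ gives $A^*\leq_{\textup{ML}} A$. Hence $A\leT Y$ implies $A^*\leT Y$, and smartness of $A^*$ (the $\Rightarrow$ direction of its defining biconditional) produces the desired $\cost$-bounded test capturing $Y$.

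There is essentially no obstacle here: the corollary is a formal consequence of the two ingredients already in place. The only subtle step is recognising that one needs Theorem~\ref{thm:smart} at all, in order to transport the reverse implication of smartness from the witness $A^*$ down to $A$ via ML-reducibility; without such an existence result the implication ML-complete $\Rightarrow$ smart would be vacuously stronger than what the definitions alone provide.
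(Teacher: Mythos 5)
Your proof is correct and follows essentially the same two-step argument as the paper: the forward direction chains smartness with Proposition~\ref{prop:basic fact}, and the backward direction pulls the capture back through Theorem~\ref{thm:smart}'s smart set via ML-reducibility. (You also explicitly dispatch the easy half of the biconditional in the backward direction, which the paper leaves implicit since it is again just Proposition~\ref{prop:basic fact}.)
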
 
\begin{proof} 
$\RA$:   Suppose $A\leT Y$ for ML-random $Y$. Then  some $\cost$-bounded test captures $Y$. If $B \models \cost$, then $B \leT Y$ by the basic fact~\ref{prop:basic fact}. Thus $B \le_{ML} A$ as required. 

\noindent $\LA$: By Theorem~\ref{thm:smart} let $\wt A$ be smart for $\cost$. Suppose $A \leT Y$ for ML-random $Y$;  we want to show that  $Y$ is captured by a $\cost$-bounded test. Since $A$ is ML-complete for $\cost$ we have  $\wt A \le_{ML} A$, so $\wt A \leT Y$, so $Y$ is captured by a $\cost$-bounded test as required.
\end{proof}
In particular, the ML-degree of a  smart set $A$ for $\cost$ is uniquely determined by $\cost$.  On the other hand, for each low c.e.\ set $A$ there is a c.e.\ set $B \not \le_T A$ such that $B \models \cost$ \cite[5.3.22]{Nies:book}.  If $A$ is smart for $\cost$, then $A \oplus B$ is also smart for $\cost$. As each $K$-trivial is low, the Turing degree of a set $A$ that is smart for $\cost$ is not uniquely determined by $\cost$. 

\begin{question} Given $\cost$  can we build a smart for $\cost$ set  $A$ that is cappable? Can we even have two smart for $\cost $ sets that form a minimal pair? \end{question}

%%%%%%%%%%%%%%%%%%%%%%%%%%%%%%%
%%%%%%%%%%%%%%%%%%%%%%%%%%%%%%%
\subsection{The strongest  cost function obeyed by a   given set}
 Given c.e.\ $K$-trivial $A$ we will define a c.f.\ $\cost_A$ with $A \models \cost_A$ such that every random computing $A$ is captured by a $\cost_A$ test. (In other words, $A$ is smart for $c_A$.) However, $\cost_A$ may not be a very natural cost function. We build a $K$-trivial $A$ such that the class of sets obeying $\cost_A$ is not closed downward under $\leT$, and in fact not even the shift $T(A) $ obeys $\cost_A$. 
 
As before $\Upsilon$ denotes a ``universal" Turing functional.  Given a c.e.\ $K$-trivial set $A$, fix a c.e.\ approximation $\seq{A_s} \models \cost_\Omega$.   We let 
 \[ \cost_A(x,s) = \leb \bigcup_{x\le t< s} \{ Y \colon A_t \uhr {x+1} \preceq \Upsilon_t^Y \}.\]
 \begin{prop} (i) $A \models \cost_A$. (ii) Suppose $\cost$ is a cost function such that $A \models \cost$. Then $\cost_A \to \cost$.   In particular, $\cost_A \to \cost_\Omega$. \end{prop}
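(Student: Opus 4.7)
For (i), I aim to bound $\sum_{x \in A} \cost_A(x, s_x)$, where $s_x$ is the stage at which $x$ enters $A$, by adapting the error-set analysis from the proof of Theorem~\ref{thm:smart}. Set $V_t^x = \{Y : A_t \uhr{x+1} \preceq \Upsilon_t^Y\}$ and $M_x(s) = \bigcup_{x \le t < s} V_t^x$, so $\cost_A(x, s) = \leb M_x(s)$, and let $\+E_s = \{Y : \exists n\, [\Upsilon_s^Y(n)\converge = 0 \land A_s(n) = 1]\}$. Two facts drive the argument. First, any $Y \in M_x(s_x)$ has $\Upsilon_t^Y(x) = A_t(x) = 0$ for some $t < s_x$, so by monotonicity of the functional $\Upsilon_{s_x}^Y(x) = 0$ while $A_{s_x}(x) = 1$, giving $M_x(s_x) \subseteq \+E_{s_x}$. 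Second, assuming one enumeration per stage, for $s_x < s_{x'}$ we have $M_x(s_x) \subseteq \+E_{s_x} \subseteq \+E_{s_{x'} - 1}$, so the ``fresh'' sets $M_x(s_x) \setminus \+E_{s_x - 1}$ are pairwise disjoint, yielding $\sum_x \leb(M_x(s_x) \setminus \+E_{s_x - 1}) \le \leb \+E \le 1$.

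The main obstacle for (i) is the overlap $\sum_x \leb(M_x(s_x) \cap \+E_{s_x - 1})$, where oracles already in the error set may contribute to several $M_x(s_x)$. My plan is to charge each such $Y$ back to its first appearance in $\+E$, using that the identity $\Upsilon^Y \uhr{x+1} = A_t \uhr{x+1}$ forces agreement on shorter initial segments as well, producing a telescoping bound. A cleaner alternative is to redefine $\cost_A(x, s)$ as the measure of $M_x(s) \setminus \+E_x$, mirroring the definition of $\+U_{k,s}$ in the proof of Theorem~\ref{thm:smart}; the disjointness argument then yields the full sum directly, at the cost of slightly modifying the cost function.

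For (ii), interpreting $\cost_A \to \cost$ as the pointwise inequality $\cost(x, s) \le^\times \cost_A(x, s)$ (consistent with $\cost \to \cost_\Omega$ being used as a hypothesis in Theorem~\ref{thm:smart}), the plan exploits the universality of $\Upsilon$ via $\Upsilon(0^e 1 \conc X) = \Phi_e(X)$. For the \emph{in particular} case, pick $\Phi_e$ simulating the universal prefix-free machine on its oracle, so each halting program $\tau$ producing a prefix of $A_t$ of length at least $x+1$ contributes measure $\tp{-|\tau|}$ to $V_t^x$; summing over programs that halt in the interval $(x, s]$ yields $\cost_A(x, s) \ge^\times \Omega_s - \Omega_x = \cost_\Omega(x, s)$. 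For the general case, design $\Phi_e$ to place measure proportional to $\cost(x, t)$ on oracles $X$ with $\Phi_e^X \succeq A_t \uhr{x+1}$, using the enumeration witnessing $A \models \cost$ to schedule the reservations. The principal difficulty is maintaining consistency as a single Turing functional across all $(x, t)$; I expect the $K$-triviality of $A$ and the cost-function calculus of \cite{Nies:CalculusOfCostFunctions} to ensure the total measure budget is bounded, reducing the general case to the halting-probability argument above.
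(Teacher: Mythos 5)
Your plan for part (i) sets up the right objects (the error sets $\+E_s$ and the decomposition of $\cost_A(x,s_x)$ into a fresh part and an overlap with $\+E_{s_x-1}$), but the route you take is different from the paper's and leaves the overlap term genuinely unresolved. Charging a repeated oracle $Y$ to its first appearance in $\+E$ does not telescope: once $Y$ errs on some position $n_0$ at stage $t_0$, it can still satisfy $A_t \uhr{x+1} \preceq \Upsilon^Y_t$ (and thus contribute to $M_x(s_x)$) for every $x < n_0$ at suitable stages $t$, so a single $Y$ is counted unboundedly often with no cancellation. The paper avoids the decomposition entirely: under the standard convention that whatever enters $A_s$ is $\le s$, any $Y \in \+E_x$ already errs on some $n \le x$, so it can never satisfy $A_t \uhr{x+1} \preceq \Upsilon^Y_t$ for $t \ge x$; thus $M_x(s) \cap \+E_x = \ES$ directly and $\cost_A(x,s) \le \leb(\+E_s \setminus \+E_x) = \cost_\epsilon(x,s)$, with $\cost_\epsilon$ the cost function of the left-c.e.\ real $\epsilon_s = \leb\+E_s$. (Your proposed redefinition of $\cost_A$ by subtracting $\+E_x$ is therefore a no-op.) The second ingredient, which your plan never uses, is that the approximation $\seq{A_s}$ was fixed to satisfy $\seq{A_s} \models \cost_\Omega$; since $\cost_\Omega \to \cost_\epsilon$ this gives $\cost_\epsilon\seq{A_s} < \infty$ and hence $\cost_A\seq{A_s} < \infty$. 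The pure measure bound $\leb\+E \le 1$ on the fresh parts is not enough on its own; without invoking the $\cost_\Omega$ hypothesis you have no way to close the argument.

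For part (ii) your plan coincides with the paper's: construct $\Psi$ so that the measure of $\{Y: A_t\uhr{x+1} \prec \Psi^Y_t\}$ is tied to $\cost(x,s)$, and then use the universality $\Upsilon(0^e1\conc X) = \Phi_e(X)$ with $\Phi_e = \Psi$ to pull a factor $2^{-(e+1)}$ into $\cost_A$. The paper resolves the consistency worry you flag by first scaling so that $\cost(0) < 1/2$ and passing to a computable speed-up $f$ with $\cost\seq{A_{f(s)}} < 1/2$, which caps the error-set measure of $\Psi$ below $1$ and lets the construction proceed. Your separate prefix-free-machine argument for the ``in particular'' clause is a reasonable concrete instance but a detour: $\cost_A \to \cost_\Omega$ follows at once from the general case (ii) applied to $\cost = \cost_\Omega$, since the $K$-trivial c.e.\ set $A$ obeys $\cost_\Omega$.
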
 
 \begin{proof}  (i) We show that the fixed approximation $\seq{A_s} \models \cost_A$.  Define the left-c.e.\ ``error real'' by:
 \[
 \epsilon_s = \leb\{Y : \exists n [\Upsilon^Y_s(n)\converge=0 \wedge A_s(n) =1]\}
 \]
Note that if $x \in A_s - A_{s-1}$, then $\cost_A(x,s) \le \epsilon_s - \epsilon_x = \cost_\epsilon(x,s)$.  So $\cost_A\seq{A_s} \le \cost_\epsilon\seq{A_s}$.  Since $\cost_\Omega \to \cost_\epsilon$, and $\seq{A_s} \models \cost_\Omega$, it follows that $\seq{A_s} \models \cost_A$.
  
  (ii) By multiplying by a constant, we may assume that $\cost(0) < 1/2$.  Fix a computable speed-up $f$ such that $\cost\seq{A_{f(s)}} < 1/2$.  Define a Turing functional $\Psi$ such that at every stage $f(s)$, $\leb\{Y : A_{f(s)}\uhr{x+1} \prec \Psi^Y_{f(s)} \} = \cost(x,s)$.  By a simple argument, the measure of the error-set $\+E$ for this functional will be $\cost\seq{A_{f(s)}} < 1/2$, so this construction may proceed.
  
Fix $e$ with $\Phi_e = \Psi$.  Then $\limcost_A(x) \ge 2^{-(e+1)} \limcost(x)$.
\end{proof}

 Recall that  $T(A)$ is the shift of $A$. 
 \begin{thm} 
\label{thm:shift} For each cost function $\dost$, there is a cost function $\cost \ge \dost$ and a c.e.\  set  $A $ such that $A \models \cost$ and $T(A) \not \models \cost$. \end{thm}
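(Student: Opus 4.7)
The plan is a priority construction that simultaneously builds the c.e.\ set $A$ and the cost function $\cost \ge \dost$. We start with $\cost = \dost$ and modify $\cost$ only by \emph{spikes} (defined below) that preserve the $x$-monotonicity (non-increasing), the $s$-monotonicity (non-decreasing), and the inequality $\cost \ge \dost$. For each index $e$ of a c.e.\ enumeration $\{B_{e,s}\}_s$, we impose the requirement
\[
R_e:\ \text{either } \textstyle\bigcup_s B_{e,s} \neq T(A),\ \text{or } \textstyle\sum_s \cost(\min(B_{e,s} \setminus B_{e,s-1}), s) = \infty.
\]
Meeting every $R_e$ gives $T(A)\not\models\cost$.

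The module for $R_e$ uses a sequence of fresh large witnesses $n_{e,1}<n_{e,2}<\cdots$. At each witness $n=n_{e,j}$ we observe $\{B_{e,s}\}$. If at some stage $s$ we see $n\in B_{e,s}$ while $n+1\notin A_s$, we permanently forbid $n+1$ from $A$; then $n\notin T(A)$ but $n \in \bigcup_s B_{e,s}$, so $R_e$ is satisfied. Otherwise, at a scheduled stage $s_{e,j}$ we enumerate $n+1$ into $A$ and perform the \emph{spike}
\[
\cost(x,s)\leftarrow\max(\cost(x,s),\ \delta_{e,j})\qquad\text{for all }x\le n,\ s\ge s_{e,j},
\]
for an appropriately chosen $\delta_{e,j}>0$. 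Crucially, $\cost(n+1,s_{e,j})$ is untouched by this spike, so by choosing $n$ large we can guarantee $\cost(n+1,s_{e,j})\le 2^{-\langle e,j\rangle}$. After the attack, any correct enumeration of $T(A)$ must later include $n$, and at the first stage $t\ge s_{e,j}$ with $n\in B_{e,t}$, the minimum new element is at most $n$, so by $x$-monotonicity the charge at $t$ is at least $\cost(n,t)\ge\delta_{e,j}$.

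The main subtlety is \emph{batching}: if $\{B_{e,s}\}$ enumerates several witnesses $n_{e,j_1}<\cdots<n_{e,j_\ell}$ at a single stage $t$, only a single charge of $\ge\delta_{e,j_1}$ is collected. Since the limit condition $\lim_x\cost(x)=0$ forces $\delta_{e,j}\to 0$, a sufficiently aggressive (doubling or even tower-style) batching schedule could in principle produce a convergent total. We defeat this by exploiting that $\{B_{e,s}\}$ is a \emph{fixed} computable enumeration: at each construction stage we can effectively watch for the next batching step of $\{B_{e,s}\}$ and schedule the commitment stages $s_{e,j}$ and amounts $\delta_{e,j}$ adaptively, so that a fresh witness is committed before every next enumeration stage of $\{B_{e,s}\}$. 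This guarantees that every batching stage of $\{B_{e,s}\}$ must include at least one freshly-spiked witness whose $\delta$ contributes a new positive charge, and the sum of those charges over all of $\{B_{e,s}\}$'s infinitely many enumeration stages diverges. The hardest part of writing up the proof will be formalising this adaptive scheduling and the attendant finite-injury bookkeeping (for instance via sub-requirements $R_{e,k}$ each aiming at total charge $\ge k$).

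For verification, $A\models\cost$ via its natural enumeration because the total cost is $\sum_{e,j}\cost(n_{e,j}+1,s_{e,j})\le\sum_{e,j}2^{-\langle e,j\rangle}<\infty$. The inequality $\cost\ge\dost$ is preserved by the $\max$-update. The limit condition $\lim_x\cost(x)=0$ is preserved because the spike at $(n_{e,j},\cdot)$ only raises $\limcost(x)$ for $x\le n_{e,j}$; by arranging that for each $\varepsilon>0$ only finitely many $(e,j)$ have $\delta_{e,j}\ge\varepsilon$ (which is compatible with the adaptive scheme above since the $\delta$'s may be allowed to decrease slowly), one sees that $\limcost(x)<\varepsilon$ for all sufficiently large $x$. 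The non-trivial ingredient is thus the adaptive batching-defeat argument; the outer priority framework is routine.
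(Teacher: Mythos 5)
Your overall blueprint matches the paper's: enumerate the cost function alongside $A$, exploit the shift gap so that putting $n+1$ into $A$ places $n$ into $T(A)$ while a spike confined to levels $\le n$ leaves $\cost(n+1,s)$ untouched, and force the opponent to pay the spiked price at level $n$. But your batching-defeat rule has a genuine gap. The criterion ``commit a fresh witness before every next enumeration stage of $B_e$'' does \emph{not} imply that ``every batching stage of $B_e$ must include at least one freshly-spiked witness.'' An enumeration stage of $B_e$ need not touch any of your witnesses (it can be adding other elements of $T(A)$), and after you have therefore committed several more $n_j$'s, $B_e$ can dump them all in a single later step, charging only the maximal $\delta$ among them. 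For instance, if $B_e$ includes some non-witness element at each of its first $2^k-1$ steps and then the whole block $n_1,\dots,n_{2^k-1}$ at step $2^k$, then $n_{2^k},\dots,n_{2^{k+1}-1}$ at step $2^{k+1}$, and so on, the witness charges are $\sum_k\delta_{e,2^k}$, which can converge once the limit condition forces $\delta_{e,j}\to 0$. The correct rule is strictly stronger: commit $n_{j+1}$ only \emph{after} $B_e$ has already enumerated $n_j$. Then $n_j$ and $n_{j+1}$ cannot share a batch, each contributes a separate charge $\ge\delta_{e,j}$. This is exactly what the paper's $e$-expansionary stages encode — a stage $s$ is $e$-expansionary when $\Phi_e$ has caught up with $T(A)$ past the previous expansionary stage $x$, and the witness enumerated at $s$ is that same $x$, at which $\cost(x-1,\cdot)$ had already been raised to $\alpha$ back at stage $x$.

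Two smaller comments on the bookkeeping you defer. The paper targets $\cost\seq{\Phi_e}\ge 1$ rather than $=\infty$, using the speed-up fact that $T(A)\models\cost$ yields an enumeration of cost $<1$; this lets each $R_e$ use a \emph{single} spike amount $\alpha = \tp{-\sinit(e)}$ per initialization round and act at most $1/\alpha$ times, so the finite-injury and limit-condition arguments stay uniform, whereas your per-$j$ varying $\delta_{e,j}$ with $\sum_j\delta_{e,j}=\infty$ makes both considerably messier. Also, ``fresh large'' alone does not protect the $A\models\cost$ side: a higher-priority spike occurring after $R_{e'}$ has already chosen its witness $n'$ raises $\cost(n'+1,\cdot)$ above $\dost$, so $R_{e'}$ must be initialized (forced onto a new witness) whenever a higher-priority requirement spikes — the paper's ``initialize lower priority requirements'' step at each expansionary stage does exactly this, and it is not as routine as your last sentence suggests.
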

 Since $\cost_A \to \cost$, this shows that $T(A) \not \models \cost_A$. 
 
 \begin{proof} We   fix a listing  $\seq{\Phi_e}$  of all (possibly partial) computable enumerations   $\seq{B_t}$, where  $B_t \simeq D_{\Phi_e(t)}$ and $D_{\Phi_e(t)} \sub  D_{\Phi_e(t+1)}$ if defined.   
 %We write $\seq {\Phi_e} $ for this enumeration. 
 
 We may assume $\dost(s-1, s) \ge \tp{-s}$. We define $\cost(x,s)$ so that $\cost(x,s)  \ge \dost(x,s)$ for each $x,s$. At a stage $x$ of the construction we may also  declare that $\cost(x-1,x) \ge \alpha$, which by monotonicity entails that $\cost(y, s) \ge \alpha $ for each $y<x$ and $s \ge x$.
 
We meet the requirements
\[R_e \colon \, T(A) = \bigcup_t D_{\Phi_e(t)}    \RA \cost \seq {\Phi_e} \ge 1. \]
The strategy for $R_e$ tries to  ensure $\limcost (x-1)$ is large and $\limcost(x)$ is small for sufficiently many $x$. In that case $R_e$ can put $x$ into $A$ for the small cost, while the opponent's enumeration $\seq{\Phi_e}$ of $T(A)$ has to deal with the large cost. One    problems in implementing this idea is the timing as we will  of $\Phi_e (u) $ for a stage $u$ only at a stage $s$ much larger than $u$.  Also we always have  $\cost(x,s) \ge \dost(x,s)$, so once we discover that the enumeration of $x$ would help because  $\seq{\Phi_e}$ caught up sufficiently much, it may be that the enumeration of $x$ has become too expensive for $R_e$. Similar to the usual construction of  a set obeying $\dost$, in this case we simply pick a new $x$. Since $\dost$ has the limit condition, eventually we will always be able to keep $x$. 
 
 At a stage $s >0$ let $s_{\init}(e)$ be the greatest stage $t< s$ such that $t=0$ or $R_e$ has been initialised at $t$. If by stage $s$ the strategy for $R_e$ has been initialised for $b$ times it can spend a $\cost$-cost of $\tp{-b-e}$ in enumerating $A$. It is also allowed to raise $\cost(x,s)$ to $\tp{-\sinit(e)}$.
 
 The  $e$-expansionary stages are the ones at which $\seq {\Phi_e} $ catches up with $T(A)$. We declare $0$ as $e$-expansionary. A stage $s>0$ is $e$-\emph{expansionary} if for the largest $e$-expansionary stage $t<s$, we have that $T(A_s)\uhr{t+2} = \Phi_{e,s}(u) \uhr {t+2}$ where $u$ is largest such that $\Phi_{e,s}(u)$ is defined and $u>t$. 
 
 \noindent \emph{Strategy for $R_e$.}  Write $\alpha = \tp{-\sinit(e)}$. A  rational parameter $\gamma_e \in [0,1]$ measures progress of $R_e$. We set $\gamma_e$   to $0$ when $R_e$ is initialised. 
 
 At  $e$-expansionary stage $s$, if $\gamma_e \le 1$ do the following. Initialize lower priority requirements.   Declare that $\cost (s-1,s+1) \ge \alpha$. (So for $\Phi_e$, changing  at $s-1$ will be expensive after stage $s$.)
 
  Let $x < s $ be the last $e$-expansionary stage. If $\dost(x,s) < \tp{-b-e} \alpha$, put $x$ into $A_{s}$ (note that no-one has raised the $\cost$ cost for $x$ above  $\dost(x,s)$ yet), add $\alpha $ to $\gamma_e$.  Say that $R_e$ acts.

 Clearly   $R_e$   only acts $ \tp{\sinit(e)}= 1/\alpha $ times while it is not initialised.

 \begin{claim}  $A \models \cost$.  \end{claim}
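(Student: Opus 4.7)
The plan is to bound $\sum_s \cost(y_s, s)$, where $y_s$ is the element (if any) enumerated into $A$ at stage $s$, by partitioning the sum according to the responsible requirement $R_e$ and to the \emph{incarnation} in which the action occurred (a maximal interval of stages during which $R_e$ is not initialized). I would show each incarnation contributes boundedly and that the doubly-indexed total converges geometrically.

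Fix one incarnation of $R_e$ with current initialization count $b$, so $\alpha = \tp{-\sinit(e)}$ is constant on this incarnation. Since each action adds $\alpha$ to $\gamma_e$ and $R_e$ acts only while $\gamma_e \le 1$, there are at most $\lceil 1/\alpha\rceil$ actions per incarnation. For each action enumerating $x$ at stage $s$, the enumeration condition gives $\dost(x,s) < \tp{-b-e}\alpha$; combined with the parenthetical assertion that $\cost(x,s) = \dost(x,s)$ at this moment, the action contributes at most $\tp{-b-e}\alpha$ to the $\cost$-sum. Hence the whole incarnation contributes at most $\tp{-b-e}$. Summing $\sum_{b\ge 0}\tp{-b-e} = \tp{-e+1}$ across incarnations of $R_e$, then $\sum_{e\ge 0} \tp{-e+1} = 4$ across requirements, yields a finite bound, so $A \models \cost$.

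The main obstacle is justifying the parenthetical remark: at the moment $R_e$ enumerates $x$ at stage $s$, no earlier raise has pushed $\cost(x,s)$ strictly above $\dost(x,s)$. Any raise declared at some intermediate stage $t \in (x,s)$ by another requirement $R_{e'}$ sets $\cost(t-1,t+1) \ge \tp{-\sinit(e')}$, and by monotonicity propagates to $\cost(x,s)$. My plan is to use the priority structure: no higher-priority $R_{e'}$ (with $e' < e$) can have acted within the current incarnation of $R_e$ (otherwise $R_e$ would have been initialized), while every lower-priority $R_{e'}$ was initialized at stage $x$ or later (because $R_e$ initializes lower priorities at every $e$-expansionary stage, in particular at $x$), which bounds each such raise value by $\tp{-x}$. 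The remaining task is to check that this cumulative raise contribution does not exceed $\dost(x,s)$ at the enumeration, or, failing that, to absorb it as a constant-factor loss in the geometric sum above by refining the accounting so that the $\cost$-cost---not merely the $\dost$-cost---is what the strategy actually controls.
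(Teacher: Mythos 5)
Your decomposition --- cost per action, actions per incarnation bounded by $1/\alpha$, then geometric sums over the incarnation count $b$ and the requirement index $e$ --- is exactly the paper's, which compresses the whole verification into the single line ``When $R_e$ acts at $s$ we have $\cost(x,s)\le \tp{-b-e}\alpha$ by initialisation'' and leaves the summation implicit. You have also correctly isolated the step that actually needs an argument, namely bounding the raises declared by other strategies, and your priority analysis is right: a higher-priority $R_{e'}$ acting in $(x,s)$ would have reinitialised $R_e$; a lower-priority $R_{e'}$ declaring at a stage $t\in(x,s)$ had $\sinit(e')\ge x$ at that moment, so its raise is $\le \tp{-x}$; and a declaration made at stage $t$ only touches positions $<t$, so $R_e$'s own declaration at stage $x$ misses position $x$ while its declaration at $s$ only takes effect from stage $s+1$.

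The ingredient you are missing to finish is the normalisation made at the top of the construction, $\dost(s-1,s)\ge \tp{-s}$. Since $x<s$, monotonicity gives $\dost(x,s)\ge \dost(x,x+1)\ge \tp{-(x+1)}$. Combined with the enumeration condition $\dost(x,s)<\tp{-b-e}\alpha=\tp{-b-e-\sinit(e)}$, this forces $\tp{-(x+1)}<\tp{-b-e-\sinit(e)}$, hence $x\ge b+e+\sinit(e)$. Consequently every raise applicable to the pair $(x,s)$ is $\le\tp{-x}\le\tp{-b-e}\alpha$, so $\cost(x,s)=\max(\dost(x,s),\text{raises})\le\tp{-b-e}\alpha$ exactly; the ``constant-factor loss'' escape hatch you proposed is not even needed. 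Each incarnation then contributes at most $(1/\alpha)\cdot\tp{-b-e}\alpha=\tp{-b-e}$, and summing over $b$ and then over $e$ gives a finite total, so $A\models\cost$.
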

 When $R_e$ acts at $s$  we have $\cost(x,s)\le \tp{-b-e} \alpha $ by initialisation. 
 
  \begin{claim}  $TA \not \models \cost$.  \end{claim}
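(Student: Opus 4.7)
The plan is to derive a contradiction from the assumption that $T(A) \models \cost$. Any obedient c.e.\ approximation of $T(A)$ is listed as $\seq{\Phi_e}$ for some $e$, so it suffices to verify $R_e$: $\cost\seq{\Phi_e} \ge 1$. By induction on priority, each $R_i$ with $i<e$ acts only finitely often (at most $\tp{\sinit(i)}$ times once its own parameters stabilise), so eventually $R_e$ is never again initialised. Fix such a tail; then $\sinit(e)=s_0$, the initialisation count $b$, and $\alpha = \tp{-s_0}$ are all constant from that point on.

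Next I would show the strategy for $R_e$ acts at least $\lceil 1/\alpha\rceil$ times on this tail. Since $\Phi_e$ correctly enumerates $T(A)$, for any prior $e$-expansionary stage $t$ there is a large $s$ with $T(A_s)\uhr{t+2} = \Phi_{e,s}(u)\uhr{t+2}$ for suitable $u > t$, yielding infinitely many $e$-expansionary stages. The limit condition on $\dost$ moreover gives $\limcost(x)\to 0$, so at all sufficiently late $e$-expansionary stages $x$ and all $s$ sufficiently larger, $\dost(x,s) < \tp{-b-e}\alpha$. Hence almost every $e$-expansionary stage satisfies the action condition, and the strategy acts whenever $\gamma_e\le 1$; since each action increments $\gamma_e$ by $\alpha$, there are at least $\lceil 1/\alpha\rceil$ such actions.

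I would then bound below the contribution of each action to $\cost\seq{\Phi_e}$. When $R_e$ acts at stage $s$, it enumerates the previous $e$-expansionary stage $x$ into $A_s$; by the definition of the shift, $x-1$ thereby enters $T(A)$. But at the earlier stage $x$ itself, the strategy had already declared $\cost(x-1,x+1) \ge \alpha$, so by monotonicity $\cost(x-1,s')\ge \alpha$ for every $s'\ge x+1$. Since $\Phi_e$ correctly enumerates $T(A)$, position $x-1$ must appear in some $D_{\Phi_e(t)}$, and so contributes $\cost(x-1,s')\ge \alpha$ at the first stage $s' \ge s$ where this occurs. The distinct actions produce contributions at distinct positions, so the total is at least $\lceil 1/\alpha\rceil\cdot\alpha \ge 1$, verifying $R_e$ and yielding the contradiction.

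The main obstacle is exactly the timing in the final step: one must be sure that by the time $x-1$ enters $T(A)$, the cost of position $x-1$ has already been raised to $\alpha$. The pre-emptive declaration $\cost(s-1,s+1)\ge \alpha$, issued at every $e$-expansionary stage $s$, is precisely what makes this work: when $s$ is later chosen as a witness to be enumerated into $A$, the shifted position $s-1$ is charged at the already-inflated rate, so any c.e.\ approximation of $T(A)$ is forced to pay. The remaining worry -- that these advance cost raises do not themselves violate $A\models\cost$ -- is exactly the content of the preceding Claim.
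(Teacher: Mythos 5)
Your proposal follows essentially the same high-level plan as the paper: suppose $T(A) \models \cost$ via some computable enumeration $\seq{\Phi_e}$, argue that $R_e$ acts often enough for $\gamma_e$ to reach $1$, and argue that each action forces $\Phi_e$ to pay at least $\alpha$. However, there is a genuine gap in the cost-contribution step.

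You assert that when $R_e$ acts via $x$, the position $x-1$ contributes $\cost(x-1,s')\ge\alpha$ ``at the first stage $s'\ge s$ where this occurs'' in $\Phi_e$'s enumeration. But you have not justified why the first index at which $x-1$ appears in $D_{\Phi_e(\cdot)}$ must be $\ge x+1$. The sequence $\seq{D_{\Phi_e(t)}}$ is a fixed computable enumeration with $\bigcup_t D_{\Phi_e(t)} = T(A)$; nothing in the definition prevents it from including $x-1$ at some tiny index $t\le x$, long before the cost of position $x-1$ is raised, in which case $\cost(x-1,t)$ could be very small and the action contributes essentially nothing. What closes this gap is the $e$-expansionary condition at the stage $s$ where $R_e$ acts: we have $T(A_s)\uhr{x+2} = D_{\Phi_e(u)}\uhr{x+2}$ for the largest relevant $u>x$, and since $x\notin A_s$ before the action, $x-1\notin T(A_s)$ and hence $x-1\notin D_{\Phi_e(u)}$. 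Monotonicity of the $D_{\Phi_e(t)}$ then forces $x-1$ to first appear at some index $>u>x$, where $\cost(x-1,\cdot)\ge\cost(x-1,x+1)\ge\alpha$. In other words, the $e$-expansionary gating is exactly what prevents $\Phi_e$ from ``front-running'' the construction and enumerating $x-1$ cheaply; without invoking it here the lower bound does not follow.

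Your final paragraph identifies one timing subtlety correctly (the cost on $x-1$ is raised at stage $x$, before $x-1$ ever enters $T(A)$), and you do use the $e$-expansionary condition earlier to get infinitely many action opportunities. But the condition is needed a second time, at the cost-accounting step, and that is the load-bearing use you have omitted.
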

Otherwise $TA \models \cost$ via some  enumeration $\seq {\Phi_e} $ such that $\cost \seq {\Phi_e} <1 $.  Since $\dost$ satisfies the limit condition, $\gamma_e$ reaches the value $1$. This is at least what $\seq {\Phi_e} $ pays for  the enumerations of $x-1$ into $TA$. For when $R_e$ acts at $s$ via $x$ then  $x-1 \not \in \Phi_e(u)$ for some $u>x$ since $s$ is $e$-expansionary. By the next $e$-expansionary stage we have $x-1 \in \Phi_e(u')$ for some $u' > u$. So $\Phi_e$ paid the cost $\cost(x-1,x+1) \ge \alpha$ that was set by $R_e$  at stage $x$.
 \end{proof}

\part{Computability theory}
 \section{Merkle, Nies and Stephan: A  dual of the Gamma question}

Merkle, Nies and Stephan worked at NUS in February. They considered a dual of the $\Gamma$ operator on Turing degrees.

For $Z\sub \NN$ the lower density is defined to be
$$\underline \rho (Z)  = \liminf_n \frac{|Z \cap [0, n)|}{n}.$$
Recall that $$\gamma(A)=\sup_{X \, \text{computable}} \underline \rho(A\leftrightarrow X)$$   
$$\Gamma(A) = \inf \{ \gamma(Y) \colon  \, Y \le_T A \}$$
which only depends on the Turing degree of $A$.  The $\Gamma$ operator was introduced by  Andrews, Cai, Diamondstone, Jockusch and Lempp \cite{Andrews.etal:2013}.
\begin{definition}  \label{def:delta}  $$\delta(A)=\inf_{X \, \text{computable}} \underline \rho(A\leftrightarrow X)$$
$$\Delta(A) = \sup \{ \delta(Y) \colon  \, Y \le_T A \}.$$
\end{definition}
Intuitively, \bi \item $\Gamma(A)$ measures  how well computable sets can approximate    the sets that $A$ computes, counting   the asymptotically worst case (the infimum over all $Y \le_T A$). In contrast, \item $\Delta(A)$ measures how well the sets   that $A$ computes can approximate   the computable sets, counting  the asymptotically best case (the supremum over all $Y \le_T A$). 
\ei
Clearly the maximum value of $\Delta(A)$ is $1/2$. The operator $\Delta(A)$ is related to the analog of a cardinal characteristic introduced by Brendle and Nies  in the 2015 Logic Blog~\cite{LogicBlog:15}.  They define  \mbox{$\+ B(\sim_p)$}  to be  the class of oracles $A$ that compute a set $Y$  such that   for each computable  set $X$, we have $\ul \rho (X \lra Y) > p$.  For each $p$ with $0 \le p < 1/2$,  
\bc  $\Delta(A) > p \RA   A \in \+ B(\sim_p) \RA  \Delta(A) \ge p$. \ec
	
	We state three minor  results.
	
\begin{prop} \label{prop:2gen} Let $A$ be  2-generic. Then $\Delta(A)=0$. \end{prop}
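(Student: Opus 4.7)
The goal is to show $\delta(Y) = 0$ for every $Y \leT A$. Fix such $Y = \Phi^A$ with $\Phi^A$ total, and fix a rational $\epsilon > 0$; I aim to produce a computable $X$ with $\ul\rho(Y \lra X) < \epsilon$. For each $\sigma \in \fs$, let
\[
E_\sigma := \{ k \in \NN : \exists \tau_0, \tau_1 \succeq \sigma \text{ with } \Phi^{\tau_0}(k) = 0 \text{ and } \Phi^{\tau_1}(k) = 1 \},
\]
a uniformly $\SI1$ set. For $\sigma \prec A$ and $k \notin E_\sigma$, a search for any $\tau \succeq \sigma$ with $\Phi^\tau(k)\!\downarrow$ terminates (since $\Phi^A$ is total) and returns the common value $Y(k)$; so $Y$ agrees with a $\sigma$-computable function off $E_\sigma$. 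By the $\SI2$-genericity of $A$, $A$ either avoids or meets the $\SI2$ set $S := \{\sigma : \exists N\, \forall n \geq N,\ |E_\sigma \cap [0,n)| \geq \epsilon n\}$.

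Case 1: $A$ avoids $S$. Then some $\sigma \prec A$ has $|E_\sigma \cap [0,n)| < \epsilon n$ for infinitely many $n$. Using $\sigma$ as parameter, define the computable $X$ by setting $X(k) := 1 - \Phi^\tau(k)$, where $(\tau,s)$ is the first pair in a fixed enumeration with $\sigma \preceq \tau$, $|\tau| \leq s$, and $\Phi^\tau_s(k)\!\downarrow$. For every $k \notin E_\sigma$, this value equals $1 - Y(k)$, so $X(k) \neq Y(k)$. Hence $|\{k < n : X(k) = Y(k)\}| \leq |E_\sigma \cap [0,n)|$ for every $n$, and along the chosen subsequence, $\ul\rho(Y \lra X) < \epsilon$.

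Case 2: $A$ meets $S$. The controllable positions above suitable prefixes of $A$ now retain positive density. The plan is to force $Y$ into extreme ones- or zeros-density by a further application of genericity. For $v \in \{0,1\}$ consider the $\SI1$ sets
\[
U_{N,v} := \{ \tau : \exists n > N,\ \Phi^\tau \uhr{[0,n)}\!\downarrow \text{ with value-}v \text{ density} \geq 1 - \epsilon \}.
\]
Show that one of $U_{N,0}$, $U_{N,1}$ is dense above some $\sigma \prec A$ for every $N$; then 1-genericity of $A$ (implied by 2-genericity) yields either $\ol\rho(Y) \geq 1 - \epsilon$ (take $X = \emptyset$) or $\ul\rho(Y) \leq \epsilon$ (take $X = \NN$), either way $\ul\rho(Y \lra X) \leq \epsilon$.

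The principal obstacle is the density claim in Case 2: although $E_\sigma$ has positive density, pinning many of its positions simultaneously under a single extension $\tau \succeq \sigma$ is constrained by the fact that uses of $\Phi$ at different positions may overlap. Coordinating these interdependencies to realize a long initial segment of $\Phi^\tau$ with value-$v$ density $\geq 1-\epsilon$ is the combinatorial heart of the argument, and it is here that the $\SI2$-meeting strength of 2-genericity, genuinely beyond 1-genericity, becomes essential.
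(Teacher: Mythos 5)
Your Case 1 is sound: off the splittable positions $E_\sigma$ the value of $Y$ is decided by $\sigma$, so the computable $X$ you build agrees with $Y$ only inside $E_\sigma$, and any prefix $\sigma\prec A$ with $|E_\sigma\cap[0,n)|<\epsilon n$ for infinitely many $n$ gives $\ul\rho(Y\lra X)\le\epsilon$. (A side remark: your $S$ has the form $\ex N\,\fa n\,\ex s\,(\dots)$ and is therefore $\SI 3$, not $\SI 2$, so 2-genericity does not give meet-or-avoid for it; but the case split itself is just excluded middle, so this is repairable.) The genuine gap is Case 2, and the density claim you flag as ``the combinatorial heart'' is not merely unproven but false as stated. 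In Case 2 the decided positions $[0,n)\setminus E_\sigma$ may have density close to $1-\epsilon$, and their $\Phi$-values are frozen by $\sigma$; if these frozen values alternate between $0$ and $1$, then no extension $\tau\succeq\sigma$ can make $\Phi^\tau\uhr{[0,n)}$ have value-$v$ density $\ge 1-\epsilon$ once $\epsilon<1/3$, so both $U_{N,0}$ and $U_{N,1}$ are empty above every prefix of $A$ and the genericity step has nothing to meet. The correct target in Case 2 is not to force $Y$ itself to extreme density, but to force $Y$ to disagree with a suitably chosen computable $X$ on a positive fraction of the splittable positions; the obstruction there is that $X$ must be fixed in advance while the set $E_\tau$ of still-splittable positions shrinks unpredictably as $\tau$ grows along $A$, so the relevant $\SI 1$ sets need not be dense. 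Nothing in the proposal overcomes this, and it is exactly the hard part of the statement.

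For comparison, the paper does not argue directly at all: Proposition~\ref{prop:2gen} is derived as a corollary of Theorem~\ref{thm:mainDelta}. Only the easier inclusion $\+ B(p)\sub\+ B(\neq^*,\tp{(\tp n)})$ is needed (chop $Y$ into exponentially growing blocks and read each block as a value of an $A$-computable function), together with two known facts: the class $\+ B(\neq^*)$ of oracles computing a function eventually different from every computable function coincides with the high-or-d.n.c.\ oracles, and a 2-generic is neither high nor d.n.c. Hence $A\notin\+ B(\neq^*,\tp{(\tp n)})\supseteq\+ B(p)$ for every $p\in(0,1/2)$, so $\Delta(A)=0$. A self-contained forcing proof along your lines would essentially have to reprove, inside Case 2, that a non-high non-d.n.c.\ oracle computes no eventually different function; that is where the real work lies.
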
 
A proof is given at the end of Section~\ref{s:Nies_Delta}.

\begin{prop} Let $A$ compute a  Schnorr random $Y$. Then $\Delta(A) = 1/2$. \end{prop}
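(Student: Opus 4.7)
Since $\Delta(A) \leq 1/2$ always, and $\Delta(A) \geq \delta(Y)$ whenever $Y \leq_T A$, it suffices to show that any Schnorr random $Y$ satisfies $\delta(Y) = 1/2$; equivalently, that $\underline{\rho}(Y \leftrightarrow X) \geq 1/2$ for every computable set $X$. So the plan is to fix an arbitrary computable $X$ and compute the asymptotic density of the agreement set $Y \leftrightarrow X = \{n : Y(n) = X(n)\}$.

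The first step is to observe that $Y \leftrightarrow X$ is the bitwise complement of $Y \oplus X$, where $\oplus$ denotes bitwise XOR. Thus for every $n$,
\[
\frac{|\{i<n : Y(i) = X(i)\}|}{n} = 1 - \frac{|(Y \oplus X) \cap [0,n)|}{n},
\]
and it is enough to show that the density of $1$s in $Y \oplus X$ converges to $1/2$.

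The second step is to verify that $Y \oplus X$ is itself Schnorr random. The map $Z \mapsto Z \oplus X$ on $\cantor$ is a computable, measure-preserving auto-homeomorphism (its inverse is itself), and Schnorr randomness is preserved under such transformations: given a Schnorr test $\seq{V_n}$ capturing $Y \oplus X$, the sequence $\seq{V_n \oplus X}$ (obtained by XOR-ing $X$ into every element of each $V_n$) is uniformly c.e., has the same measures $\leb(V_n \oplus X) = \leb(V_n)$, and captures $Y$.

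The third step is to apply Schnorr's effective law of large numbers: every Schnorr random sequence $Z$ satisfies $\lim_n |Z \cap [0,n)|/n = 1/2$. Applying this to $Z = Y \oplus X$ and combining with the identity above yields
\[
\lim_n \frac{|\{i<n : Y(i)=X(i)\}|}{n} = 1/2,
\]
so in particular $\underline{\rho}(Y \leftrightarrow X) = 1/2$. As $X$ was arbitrary, $\delta(Y) = 1/2$, and hence $\Delta(A) = 1/2$. No real obstacle arises; the proof is essentially an application of two standard facts about Schnorr randomness (closure under XOR by a computable sequence, and the effective law of large numbers), so if any care is needed it is only to record explicitly that the closure property holds in the Schnorr-test setting and not merely for Martin-L\"of tests.
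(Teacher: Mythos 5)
Your proof is correct and follows exactly the same approach as the paper, which simply asserts that $\underline\rho(Y\leftrightarrow R)=1/2$ for each computable $R$; your three-step argument (rewriting agreement as the complement of an XOR, noting that XOR by a computable sequence preserves Schnorr randomness and Schnorr testhood, then invoking the effective law of large numbers) is precisely the unpacking of that one-line claim.
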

This is clear because  $\underline \rho(Y\leftrightarrow R)=1/2$ for each computable $R$.

\begin{prop} Let $p\in (0,1/2)$ be computable. Let $A$ be Schnorr random for the Bernoulli measure w.r.t.\ $p$. Then $\delta(B) =p$ for each $B \equiv_1 A$. \end{prop}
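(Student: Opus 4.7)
The plan is to first verify $\delta(A) = p$ directly, and then note that $\delta$ is invariant under 1-equivalence in this setting because Bernoulli($p$)-Schnorr randomness is preserved under computable permutations of $\NN$.

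For $\delta(A) = p$: the upper bound is immediate by taking $X$ to be the all-$1$s computable sequence, since then $A \leftrightarrow X = A$, whose lower density equals $p$ by the effective SLLN (valid for Schnorr Bernoulli($p$)-random sequences; this is a standard Schnorr-level event). For the lower bound, fix an arbitrary computable $X$ and split $\NN$ into the computable sets $X_i = \{k : X(k) = i\}$ for $i \in \{0,1\}$. By a relativized form of the same SLLN applied to computable infinite index sets, for each $i$ with $X_i$ infinite, the relative frequency of $1$s of $A$ along $X_i$ tends to $p$ (the finite case is trivial). Writing $\alpha_n = |X \cap [0,n)|/n$, a short computation gives
\[ \rho_n(A \leftrightarrow X) = \alpha_n p + (1-\alpha_n)(1-p) + o(1) = (1-p) - (1-2p)\alpha_n + o(1). \]
Because $1 - 2p > 0$, taking $\liminf_n$ yields $\underline\rho(A \leftrightarrow X) = (1-p) - (1-2p)\overline\rho(X) \ge (1-p) - (1-2p) = p$. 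Hence $\delta(A) = p$.

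Now suppose $B \equiv_1 A$. By Myhill's isomorphism theorem fix a computable bijection $h : \NN \to \NN$ with $B(n) = A(h(n))$. The induced map $h^\ast : 2^\NN \to 2^\NN$, $h^\ast(Y)(n) = Y(h(n))$, is a computable homeomorphism that preserves Bernoulli($p$), since product measure is invariant under permutation of coordinates. A Schnorr test $\{U_n\}$ then pulls back under $h^\ast$ to another Schnorr test: the preimages $(h^\ast)^{-1}(U_n)$ are uniformly c.e.\ open, and $\mu((h^\ast)^{-1}(U_n)) = \mu(U_n)$ is computable and bounded by $2^{-n}$. Therefore $B = h^\ast(A)$ is Schnorr random for Bernoulli($p$), and applying the first part to $B$ yields $\delta(B) = p$.

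The main thing to be careful about is the effective SLLN used in the first step, both for the unrestricted density of $A$ and along computable infinite subsets of $\NN$. This is a standard fact about Schnorr randomness with respect to a computable Bernoulli measure, provable from Chernoff/Hoeffding bounds which have computable rate, so that the failure set is a Schnorr $\mu$-null set. Everything else is routine bookkeeping.
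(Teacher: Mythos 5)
The paper states this proposition without any proof (it is one of "three minor results" the authors simply record), so there is no reference argument to compare against. Your proposal is, however, a correct and complete argument, and each step is sound: the upper bound $\delta(A)\le p$ from the all-$1$s comparison sequence together with the effective SLLN; the lower bound via the decomposition into $X_0,X_1$ and the relativized SLLN along computable infinite index sets, yielding the identity $\underline\rho(A\leftrightarrow X)=(1-p)-(1-2p)\,\overline\rho(X)\ge p$ because $1-2p>0$; and the reduction to the case $B=A$ via Myhill's theorem, using that Bernoulli($p$) product measure is invariant under computable coordinate permutations and that pullbacks of Schnorr tests along such permutations are again Schnorr tests. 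Two small points worth spelling out if this is to be written up: (i) the relativized SLLN along a computable infinite $S$ should be justified by noting that the reindexing map $A\mapsto(A(s_j))_j$ (where $s_1<s_2<\cdots$ enumerates $S$) is a computable, measure-preserving projection onto $2^\NN$ with Bernoulli($p$), so the usual SLLN Schnorr test pulls back; and (ii) the $o(1)$ error term is $o(|X_i\cap[0,n)|)=o(n)$ uniformly, which is what makes the passage to $\liminf$ legitimate. Neither is a gap, just bookkeeping you flagged yourself.
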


The ``$\Delta$-question" is (or rather, was, and has been quite short-lived):

\begin{question}[solved] Can $\Delta(A)$ be properly between $0$ and $1/2$? \end{question}
Using a dual form of Monin's technique's below, this has been answered in the negative by Nies; see Section~\ref{s:Nies_Delta}.

\section{Monin - A resolution of the Gamma question} 	

We show that there is no sequence $X$ with a Gamma value strictly between $0$ and $1/2$.

\begin{definition}
For a given $n \in \omega$ and two strings $\s_1, \s_2 \in 2^n$, the notation $d(\s_1, \s_2)$ denotes the normalised  \textbf{hamming distance} between $\s_1$ and $\s_2$, that is, the     number of bits   on which $\s_1$ and $\s_2$ differ divided by  $n$.
\end{definition}

\begin{definition}
Let $F:\omega \rightarrow \omega$ be a function. Given  a function $f$ such that $f(n)<2^{F(n)}$, for each $n$, by   $[f(n)]$ we  denote the string of length $F(n)$ encoded by $f(n)$.
\end{definition}

The following weakens the  notion of infinitely often equal (i.o.e.) for a computable bound, which was  first studied in \cite{Monin.Nies:15}.
\begin{definition}
Let $F:\omega \rightarrow \omega$ be a function and $\alpha \in [0,1]$. A function $f$ is $2^{F(n)}$-infinitely often $\alpha$-equal if for every computable function $g:\omega \rightarrow \omega$ which is bounded by $2^{F(n)}$, we have:
$$\liminf_{n} d([f(n)], [g(n)])  \leq 1 - \alpha$$
%$$\limsup_{n} \frac{\#\{i : [f(n)](i) = [g(n)](i)\}}{n} \geq \alpha$$
Informally, we want $f$ to equal infinitely often on a fraction of at least $\alpha$ bits, to every computable function bounded by $2^{F(n)}$. Typically we will have $\alpha > 1/2$.
%A binary sequence is $2^{F(n)}$-infinitely often $\alpha$-equal if it computes a function which is $2^{F(n)}$-infinitely often $\alpha$-equal.
\end{definition}

\begin{proposition} \label{yoyo}
Let $1/2 < \alpha < 1$. Suppose that for no $k$, $X$ computes a function which is $2^{\lfloor 2^{n/k} \rfloor}$-i.o.$\alpha$-e. Then $\Gamma(X) \geq 1 - \alpha$.
\end{proposition}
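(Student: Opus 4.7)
The plan is to show that every $Y \leq_T X$ satisfies $\gamma(Y) \geq 1-\alpha$, which gives $\Gamma(X) \geq 1-\alpha$. Fix $Y \leq_T X$ and $\varepsilon > 0$; it suffices to produce a single computable $R$ with $\underline\rho(Y \leftrightarrow R) \geq 1-\alpha-\varepsilon$, then let $\varepsilon \to 0$. The computable $R$ will be produced by block-encoding $Y$ to obtain an $f \leq_T X$, applying the hypothesis to $f$ to get a computable ``disagreement'' witness $g$, and then bitwise-complementing $g$.

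First I would fix $k$ large in advance, depending only on $\varepsilon$ and $\alpha$, so that $(1-\alpha)\cdot 2^{-1/k} \geq 1-\alpha-\varepsilon$. Partition $\NN$ into consecutive blocks $I_n = [a_n, a_{n+1})$ of length $F(n) = \lfloor 2^{n/k}\rfloor$, and set $[f(n)] = Y \uh I_n$, so that $f \leq_T Y$ with $f(n) < 2^{F(n)}$. By the hypothesis applied at this $k$, the function $f$ is not $2^{F(n)}$-i.o.-$\alpha$-equal, so there is a computable $g$ bounded by $2^{F(n)}$ with $\liminf_n d([f(n)], [g(n)]) > 1-\alpha$. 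Thus some $N$ and $\delta > 0$ satisfy $d([f(n)], [g(n)]) \geq 1-\alpha+\delta$ for all $n \geq N$.

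Next I would let $R$ be the computable sequence whose restriction to each $I_n$ is the bitwise complement of $[g(n)]$. The per-block \emph{agreement} of $Y$ with $R$ on $I_n$ equals $F(n)\cdot d([f(n)], [g(n)])$, so it is at least $(1-\alpha+\delta)F(n)$ for $n \geq N$. For $m \in [a_M, a_{M+1})$ with $M > N$, the agreement count on $[0,m)$ is at least $(1-\alpha+\delta)(a_M - a_N)$, so the density is at least $(1-\alpha+\delta)(a_M - a_N)/a_{M+1}$. Since $F(n)$ grows geometrically at ratio $2^{1/k}$, a routine estimate gives $a_M / a_{M+1} \to 2^{-1/k}$, whence $\underline\rho(Y \leftrightarrow R) \geq (1-\alpha)\cdot 2^{-1/k} \geq 1-\alpha-\varepsilon$ by the choice of $k$.

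The main obstacle, and the very reason for the specific shape $2^{\lfloor 2^{n/k}\rfloor}$ of the bound in the hypothesis, is the loss factor $2^{-1/k}$ in the last step: an initial segment may end in the middle of a single block whose length is comparable to all the earlier blocks combined, so per-block agreement does not translate directly into initial-segment density. Choosing $k$ from $\varepsilon$ \emph{before} invoking the hypothesis decouples this loss from the $\delta$ produced by the witness $g$, which is what makes the two-parameter estimate close up.
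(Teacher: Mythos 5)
Your proposal is correct and follows essentially the same route as the paper: both block-encode $Y$ with block lengths $\lfloor 2^{n/k}\rfloor$, invoke the hypothesis to obtain a computable witness $g$ (or $h$), bitwise-complement it, and convert the per-block agreement of at least $1-\alpha$ into a lower bound on the lower density by exploiting that consecutive block lengths grow by only a factor of about $2^{1/k}$. The only difference is cosmetic: you parametrize the final estimate directly as $(1-\alpha)\cdot 2^{-1/k}$ and choose $k$ from $\varepsilon$, whereas the paper introduces an auxiliary $c$, picks $k$ with $2^{1/k}-1<1/(2c)$, and obtains the equivalent bound $(1-\alpha)/(1+1/c)$ before letting $c\to\infty$.
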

\begin{proof}
Consider any sequence $Y$ computed by $X$. Fix some $c \in \omega$ and let $k$ be the smallest integer such that $2^{1/k} - 1 < 1/(2c)$. We then split $Y$ in blocks of bits of length $\lfloor 2^{n/k} \rfloor$. We now argue that for $n$ large enough, the number of bits in the $n+1$-th block is smaller than $1/c$ times the sum of the number of bits in the previous blocks. By the sum of geometric series we have:

$$\sum_{i=0}^{i \leq n} 2^{i/k} = \frac{2^{(n+1)/k} - 1}{2^{1/k} - 1}$$
which implies that 
$$
\begin{array}{rcl}
2^{(n+1)/k} - 1&=&(2^{1/k} - 1)\sum_{i=0}^{i \leq n} 2^{i/k}\\
						&\leq&\frac{1}{2c}\sum_{i=0}^{i \leq n} 2^{i/k}\\
\end{array}
$$
For $n$ large enough we have $\frac{1}{2} \sum_{i=0}^{i \leq n} 2^{i/k} > (n+c)$. Thus for $n$ large enough we have:

 %which implies that:
$$
\begin{array}{rcl}
2^{(n+1)/k} - 1&\leq&\frac{1}{2c}\sum_{i=0}^{i \leq n} 2^{i/k}\\
						 &\leq&\frac{1}{2c}\sum_{i=0}^{i \leq n} 2^{i/k} + \frac{1}{2c}\sum_{i=0}^{i \leq n} 2^{i/k} - \frac{1}{c}(n+c)\\
						 &\leq&\frac{1}{c}(\sum_{i=0}^{i \leq n} 2^{i/k} - n - c)\\
						 &\leq&\frac{1}{c}(\sum_{i=0}^{i \leq n} \lfloor 2^{i/k} \rfloor - c)\\
						 &\leq&\frac{1}{c}(\sum_{i=0}^{i \leq n} \lfloor 2^{i/k} \rfloor) - 1
\end{array}
$$
We then have for $n$ large enough that:
$$\lfloor 2^{(n+1)/k} \rfloor \leq \frac{1}{c}(\sum_{i=0}^{i \leq n} \lfloor 2^{i/k} \rfloor)$$

%We want to show that the length of the $n+1$-th block of bit is smaller than $1/c$ of the sum of the length of the previous blocks. Formally we want:
%
%$$2^{(n+1)/k} \leq 1/c \sum_{i=0}^{i \leq n} \lfloor 2^{i/k} \rfloor$$
%
%In order to achieve this we need to remove at most $1$ for each integer value, and $c$ more to remove $1$ on the left of the inequality. Thus we want:
%
%$$2^{(n+1)/k} - 1 < 1/c (\sum_{i=0}^{i \leq n} 2^{i/k} - n - c)$$
%
%In other word we want 
%
%$$\sum_{i=0}^{i \leq n} 2^{i/k} (2^{1/k} - 1) < 1/c (\sum_{i=0}^{i \leq n} 2^{i/k} - n - c)$$
%
%As $2^{1/k} - 1 < 1/(2c)$, this is true for $n$ large enough.

Thus the length of the $n+1$ block of bit is smaller than $1/c$ of the sum of the length of the previous blocks.

 Define the function $f < 2^{\lfloor 2^{n/k} \rfloor}$ by setting $f(n)$ to the value coded by the bits of the $n$-th block. Suppose that $f$ is not $2^{\lfloor 2^{n/k} \rfloor}$ i.o.$\alpha$-e. In particular there must be some computable function $h \leq 2^{\lfloor 2^{n/k} \rfloor}$ such that for almost every $n$, $[h(n)]$ agrees with $[f(n)]$ on a fraction of strictly less than $\alpha$ bits. Let  $h'$ be defined as the complement of $h$ bitwise. Then  for almost every $n$, $[h'(n)]$ agrees with $[f(n)]$ on a fraction of bits strictly bigger than $1-\alpha$. 

Now consider the bit number $m$ of the sequence defined by $f$, starting at the begining of a block. Let $m+n$ be the last position of that block. By hypothesis, among the $m$ first bits (for $m$ large enough), there are at least $m(1-\alpha) - \mathcal{O}(1)$ bits which are guessed correctly by $h'$. In particular, for any $1 \leq i \leq n$, there are also at least $m(1-\alpha) - \mathcal{O}(1)$ bits which are guessed correctly among the $m+i$ first bits. Also for any $1 \leq i \leq n$, the number of total bits is at most $m + n$, which is at most $m + m/c$. Thus for each $i$ the fraction of bits which are guessed correctly before $m+i$ is at least:

$$\frac{m(1-\alpha) - \mathcal{O}(1)}{m+m/c}$$
As $m$ goes to infinity, this value converges to:
$$\frac{1-\alpha}{1+1/c}$$

Thus $\gamma(Y) \geq \frac{1-\alpha}{1+1/c}$. We can carry out  this argument  for $c$ larger and larger, making lower bounds on  $\gamma(Y)$ closer and closer to $1-\alpha$. Thus if for any $k$, we can split  any $Y $ into blocks of bits as above such that the resulting function is not $2^{\lfloor 2^{n/k} \rfloor}$-i.o.$\alpha$-e., then $\gamma(Y) \geq 1 - \alpha$. By hypothesis we can do this  for  every $Y$ computable by $X$. Hence  $\Gamma(X) \geq 1 - \alpha$.
\end{proof}

By contrapositive, if $\Gamma(X) < (1-\alpha)$, for $1/2 < \alpha < 1$, then for some $k \in \omega$, $X$ computes a function which is $2^{\lfloor 2^{n/k} \rfloor}$-i.o.$\alpha$-e. \\

%\begin{corollary}
%If $\Gamma(X) < (1-\alpha)$ then for any $c \in \omega$, there exists a rational $x$ such that $X$ computes a function which is $2^{\lfloor 2^{c n + x} \rfloor}$-i.o.$\alpha$-e. 
%\end{corollary}

We now need to borrow some technics from the field of error correcting codes. The idea is the folloing: We want to transmit some message of length $m$. But some random bit flip can occur during the transmission. We want to make sure that if the percentage of error is small enough, we can still recover the original message. The idea is to use an injection $\Phi$ from $2^m$ into $2^n$ for some $n >> m$, in such a way that the elements in the range of $\Phi$, are pairwise far away from each other, in the sense of  the hamming distance. If $d$ is the smallest distance between two elements in the range of $\Phi$, then it is clear that we can recover up to $d/2$ error.

Also we would like $n$ to be not much bigger than $m$ (ideally a multiplicative constant). It is easy to find such a multiplicative constant allowing, to have a list of $2^m$ elements of $2^n$, which are all at a distance of at least $1/2 - \epsilon$ from each other, for $\epsilon$ as small as we want. Thus it is possible to correct this way, up to a fraction $1/4$ of error. It is not anymore possible if the fraction is bigger than $1/4$. However, if the fraction is smaller than $1/2$, it is possible to identify a small list of messages, among which must figure our original message. It is even possible to make the size of the list constant. Formally we use the following theorem, for which we provide a proof for completness:

\begin{theorem}[The list decoding capacity theorem] \label{list_decod}
Let $0 < \theta < 1/2$. There exists $L \in \omega$ and $0 < R < 1$ as follows.

For any $n$, there exists a set $C$ of $2^{\lfloor Rn \rfloor}$ many strings of length $n$ such that for any string $\s$ of length $n$, there are at most $L$ strings $\tau$ in $C$ such that $d(\s, \tau) < \theta$.
\end{theorem}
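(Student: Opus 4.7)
The plan is to apply the probabilistic method, exploiting the standard entropy bound on Hamming balls. Recall that with $H(\theta) := -\theta\log_2\theta - (1-\theta)\log_2(1-\theta)$ the binary entropy, every Hamming ball $B(\sigma,\theta) := \{\tau \in 2^n : d(\sigma,\tau)<\theta\}$ has cardinality at most $2^{H(\theta) n}$, and $H(\theta) < 1$ whenever $\theta < 1/2$. These are the only external facts I would use; both are by now standard.

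Given $\theta \in (0,1/2)$, I would first choose a rate $R$ with $0 < R < 1 - H(\theta)$ and then an integer $L$ with $(L+1)(1 - H(\theta) - R) > 1$; both choices are possible because $H(\theta) < 1$. Now fix $n$ and set $N = 2^{\lfloor Rn \rfloor}$. Sample $c_1,\dots,c_N \in 2^n$ uniformly at random (with or without replacement; since $R < 1$, the distinction affects only lower-order terms). For a fixed $\sigma \in 2^n$ and a fixed $(L+1)$-subset $S \subseteq \{1,\dots,N\}$, the events ``$c_i \in B(\sigma,\theta)$'' for $i \in S$ are (essentially) independent with individual probability at most $2^{H(\theta)n}/2^n = 2^{-(1-H(\theta))n}$. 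Union-bounding over the $2^n$ choices of $\sigma$ and the at most $N^{L+1} \leq 2^{Rn(L+1)}$ choices of $S$, the probability that some ball of radius $\theta$ contains more than $L$ of the codewords is at most
$$2^n \cdot 2^{Rn(L+1)} \cdot 2^{-(1-H(\theta))n(L+1)} = 2^{n\bigl(1 - (L+1)(1 - H(\theta) - R)\bigr)},$$
which by the choice of $L$ is strictly less than $1$ for all $n \geq 1$. Therefore a code $C$ with the desired list-decoding property exists.

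The only cosmetic issue is getting $|C|$ to equal $2^{\lfloor Rn \rfloor}$ rather than merely being bounded by it. I would handle this by running the argument with a slightly larger rate $R' \in (R, 1-H(\theta))$ and then extracting any subset of size exactly $2^{\lfloor Rn \rfloor}$; the list-decoding property is clearly inherited by subsets. Small $n$ need no attention because then $\lfloor Rn \rfloor = 0$ and $|C| = 1$ satisfies the conclusion for any $L \geq 1$.

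The whole argument is really a one-line union bound once the volume estimate $|B(\sigma,\theta)| \leq 2^{H(\theta)n}$ and the inequality $H(\theta) < 1$ are in place, so there is no serious obstacle. The only step that requires a moment of thought is the simultaneous choice of $R$ and $L$: one must pick $R$ \emph{first} (strictly below $1 - H(\theta)$) so that the slack $1 - H(\theta) - R$ is a positive constant, and then take $L$ large enough to amplify that slack past the $+1$ coming from the $2^n$ outer union bound.
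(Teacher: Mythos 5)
Your proposal is correct, and it is the same probabilistic existence argument as the paper's: sample a random code of size $2^{\lfloor Rn\rfloor}$, union-bound over all centers $\sigma$ and all $(L+1)$-subsets of codewords, and verify that the failure probability is strictly below $1$. The one substantive difference is which exponential volume bound on the Hamming ball feeds the union bound. The paper uses the Hoeffding/Chernoff tail estimate $\lvert B(\sigma,\theta)\rvert / 2^n \le e^{-2(1/2-\theta)^2 n} = 2^{-\beta n}$ with $\beta = 2(1/2-\theta)^2\log e$, then chooses $L$ with $\beta - 1/L > 0$ and $R < \beta - 1/L$; you use the binary entropy bound $\lvert B(\sigma,\theta)\rvert \le 2^{H(\theta)n}$ and choose $R < 1 - H(\theta)$, $L$ with $(L+1)(1-H(\theta)-R)>1$. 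Both are standard and both close the argument; yours is the sharper estimate (it is tight up to polynomial factors and yields the true list-decoding capacity $R < 1 - H(\theta)$, whereas $\beta$ is strictly smaller than $1-H(\theta)$ for $\theta \in (0,1/2)$), but for the purposes of this theorem — where any $R > 0$ suffices — the difference is immaterial. Your extra care about exactly hitting cardinality $2^{\lfloor Rn\rfloor}$ and about with-versus-without replacement is a point the paper quietly elides; your fix (run at a slightly larger rate $R'$, then pass to a subset, noting that list-decodability is inherited) is fine.
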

\begin{proof}
We prove that for parameters $L$ and $R$ well chosen, if we pick at random the strings in $C$, the theorem is true with positive probability. 

Let $\beta = 2(1/2 - \theta)^2\log(e)$ and pick $L$ such that $\beta - \frac{1}{L} > 0$. Then pick $R$ such that $R  < \beta - \frac{1}{L}$.\\

Using Chernoff bounds, for any string $\tau$ of length $n$, the measure of the set $\{[\s] : |\s| = n \text{ and } d(\tau, \s) < \theta\}$ is bounded by $e^{-2(1/2-\theta)^2n} = 2^{-\beta n}$. Thus, given a string $\s$ the probability that picking a string $\tau$ at random gives $d(\s, \tau) < \theta$, is bounded by $2^{- \beta n}$.

Now let $q = \lfloor Rn \rfloor$ and let $C$ be a collection of $2^{q}$ strings picked at random. For any subset of $L+1$ of these strings, the probability that a given string $\s$ has a hamming distance smaller than $\theta$ with each of them is bounded by $2^{-\beta n (L+1)}$. Thus the probability that a given $\s$ has a hamming distance smaller than $\theta$ with any possible subsets of size $L+1$ of $C$ is bounded by $\binom{2^q}{L+1} 2^{-\beta n (L+1)}$. And the probability that this happens for any string $\s$ is bounded by $2^n\binom{2^q}{L+1} 2^{-\beta n (L+1)}$. The following computation shows that this quantity is smaller than $1$:

$$
\begin{array}{rcl}
2^n\binom{2^q}{L+1} 2^{-\beta n (L+1)}&\leq&2^n 2^{R n (L+1)} 2^{-\beta n (L+1)}\\
&\leq&2^{-n(L+1)(-R - 1/(L+1) + \beta)}\\
&\leq&2^{-n(L+1)(-\beta + 1/L -  1/(L+1) + \beta)}\\
&\leq&2^{-n(L+1)((L+1 - L)/L(L+1))}\\
&\leq&2^{-n/L}
\end{array}
$$

It follows that that for any $n$, if $C$ is a collection of $2^{\lfloor Rn \rfloor}$ strings of length $n$ that we pick at random, the probability that no string $\s$ of length $n$ has a hamming distance smaller than $\theta$ with more than $L$ strings of $C$, is positive. In particular, for any $n$, there exists such a collection of strings.
\end{proof}

We can now prove that only $0$, $1/2$ and $1$ can be realized by $\Gamma$ values of sequences. First by \cite{Monin.Nies:15},     if $X$ compute a function bounded by $2^{(2^n)}$ which equals infinitely often every computable function bounded by $2^{(2^n)}$, then $\Gamma(X) = 0$: First,  it is also easy to show that if $f$ is $2^{(2^n)}$-i.o.e., then for any $c$, $f$ computes a function which is $2^{(2^{c \times n})}$-i.o.e.
Second, it  is easy to show that if $f$ is $2^{(2^{c \times n})}$-i.o.e. then $\gamma(f) < 1/(c+1)$.  
\begin{theorem}
Let $1/2 < \alpha < 1$. If $\Gamma(X) < 1-\alpha$ then $X$ is $2^{(2^n)}$-i.o.e. and hence $\Gamma(X) = 0$.
\end{theorem}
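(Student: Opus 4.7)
The plan is to combine Proposition~\ref{yoyo} with Theorem~\ref{list_decod}. From $\Gamma(X) < 1-\alpha$ and the contrapositive of Proposition~\ref{yoyo}, one obtains an integer $k$ and a function $f$ computable from $X$, bounded by $2^{\lfloor 2^{n/k}\rfloor}$, that is $2^{\lfloor 2^{n/k}\rfloor}$-i.o.$\alpha$-e. Setting $\theta = 1-\alpha \in (0, 1/2)$, Theorem~\ref{list_decod} yields constants $L \in \omega$ and $R \in (0,1)$ and a uniformly computable family of codes $C_N$ of length $N$ encoding $\lfloor RN \rfloor$-bit messages, list-decodable up to relative distance $\theta$ with list size at most $L$.

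The central construction: given any total computable target $g$ bounded by $2^{(2^n)}$, encode successive initial segments of $g$ into codewords. Let $N_m = \lfloor 2^{m/k}\rfloor$ and let $G_{n_m}$ be the bit-concatenation of $g(0), \ldots, g(n_m)$, where $n_m$ is maximal with $|G_{n_m}| \leq \lfloor RN_m \rfloor$ (so $n_m \approx m/k + \log R$); set $g^*(m) = C_{N_m}(G_{n_m})$. Then $g^*$ is total computable and bounded by $2^{\lfloor 2^{m/k}\rfloor}$, so by hypothesis $f$ is $\alpha$-i.o.e.\ to $g^*$: infinitely many $m$ satisfy $d([f(m)], [g^*(m)]) \leq \theta$, and list-decoding $f(m)$ at each such $m$ yields at most $L$ candidate messages, one of which is $G_{n_m}$ itself, so the correct list member recovers $g(0),\ldots,g(n_m)$ exactly.

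To manufacture a single $f' \leq_T X$ that is $2^{(2^n)}$-i.o.e., I would enumerate partial computable $(g_e)_e$ (with $g_e(n)<2^{(2^n)}$) and their encodings $g_e^*$, and design $f'$ via a dovetailed construction assigning each position $n$ to a pair $(e, j) \in \omega \times \{1, \ldots, L\}$: on $n$ assigned to $(e,j)$, compute $f'(n)$ as the $n$-th length-$2^n$ block of the $j$-th list-decoding candidate at the smallest $m$ with $n_m \geq n$ so far enumerated into $M_e := \{m : d([f(m)],[g_e^*(m)]) \leq \theta\}$ (with default values and timeouts for partial $g_e$). For each total $g_e$, pigeonhole over $M_e$ yields $j^* \leq L$ such that the correct candidate has index $j^*$ for infinitely many $m \in M_e$; at each such $m$ the $j^*$-th decoded candidate agrees with $g_e$ on the entire initial segment $[0, n_m]$, so once the position-assignment to pair $(e, j^*)$ is arranged to intersect these initial segments infinitely often, $f'(n) = g_e(n)$ holds for infinitely many $n$.

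Hence $X$ computes a $2^{(2^n)}$-i.o.e.\ function, and by the remark preceding the theorem $\Gamma(X) = 0$. The main obstacle is the combinatorial bookkeeping: uniformly in $e$, ensuring that positions assigned to each pair $(e, j)$ are distributed so as to intersect the $j^*$-good initial segments infinitely often for every total $g_e$ (where $M_e$ is only guaranteed to be infinite, not to have positive density), while simultaneously keeping $f'$ total computable from $X$ despite partiality of some $g_e$'s via appropriate dovetailing and fallback defaults.
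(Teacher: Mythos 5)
Your plan agrees with the paper up through invoking Proposition~\ref{yoyo} and the list-decoding theorem, but you then diverge: you encode the entire initial segment $g(0),\ldots,g(n_m)$ into a single codeword $g^*(m)$, whereas the paper encodes only the single value $g(n)$ as a codeword via $g'(n) = \sigma^{\lfloor 2^{n/k}\rfloor}_{g(n)}$ and builds an $X$-computable $L$-trace $T_n = \{\,i : d([f(n)],\sigma^{\lfloor 2^{n/k}\rfloor}_i) < 1-\alpha\,\}$, then amplifies the trace and extracts $L$ candidate functions by a diagonal trick. Your route is a genuinely different idea, but the step you yourself flag as ``bookkeeping'' is a real gap, not a technicality, for two compounding reasons.

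First, as written, ``the smallest $m$ with $n_m \ge n$ so far enumerated into $M_e$'' does not deliver the correct candidate: the position of $G_{n_m}$ in the decoded list varies with $m$, so even after pigeonholing onto $j^*$ you have no control over whether the $j$-th candidate at the $m$ you happen to pick is the right one. The union of intervals $(n_{m'},n_m]$ over $m\in M_e$ with correct index $j^*$ is infinite, but there is no reason a pre-assigned computable set of positions for the pair $(e,j^*)$ must intersect it infinitely often — an adversarial $g_e$ (together with $f$) can make those intervals all miss your positions. Second, the natural repair — at a position assigned to $(e,j)$, search instead for an $m$ where the $j$-th candidate actually equals $G_{n_m}^{g_e}$ — requires evaluating $g_e(0),\ldots,g_e(n_m)$, whose running time is not $X$-computably bounded. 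Since $f'$ must be total and $X$-computable, every such search needs an $X$-computable cutoff $B(n)$, and for any such $B$ there is a total $g_e$ whose running time dominates $B$, so the search defaults on almost every position for that $e$. Encoding initial segments bakes $g_e$'s running time into the construction, and there is no way to dodge it.

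The paper's construction avoids both problems structurally: the trace $T_n$ is computed from $f(n)$ alone, with no reference to any target $g$, so it is $X$-computable with no dovetailing or timeout. After amplifying ($T_{2n}$ versus $T_{2n+1}$, iterated) to a trace for functions bounded by $2^{L2^n}$, the diagonal functions $h_i(n) = e_i^i$ (the $i$-th length-$2^n$ block of the $i$-th element of $T_n$) are again $X$-computable outright, and the pigeonhole/diagonalization argument — if no $h_i$ were i.o.e., the tuple $p(n)=\langle p_1(n),\ldots,p_L(n)\rangle$ would be a computable function of the right bound almost never traced — closes the proof without ever racing a time bound against an arbitrary total machine. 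If you want to salvage the initial-segment idea, you would need to replace the dovetailing with a $g$-independent, trace-style extraction, but the good $m$'s in your scheme are inherently $g$-dependent, which is precisely what blocks that.
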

\begin{proof}
Suppose $\Gamma(X) < 1-\alpha$. In particular from \cref{yoyo}, $X$ computes a function $f$ which is $2^{\lfloor 2^{n/k} \rfloor}$-i.o.$\alpha$-e. for some $k \in \omega$.

Using \cref{list_decod}, we pick $L \in \omega$ and $0 < R < 1$ such that for any $n$, there exists a collection $C_n$ of $2^{\lfloor Rn \rfloor}$ strings of length $n$, such that no string $\s$ of length $n$ has a Hamming distance less than $\theta= 1 - \alpha$ with more than $L$ strings of $C$. Note that such a collection of strings $C_n$ is computable uniformly in $n$. Uniformly computable in  $n$ we fix a listing  $\s_0^n, \s_1^n, \dots \s^n_{2^{\lfloor Rn \rfloor}-1}$ of the elements of   $C_n$.

We define the following $X$-computable $L$-trace $\{T_n\}_{n \in \omega}$: For any $n$, $T_n$ is the collection of integer $i$ such that the hamming distance between  $[f(n)]$ and $\s_i^{\lfloor 2^{n/k} \rfloor}$ is less than $1 - \alpha$. Note that each $T_n$ is $X$-computable uniformly in $n$ and that $|T_n| \leq L$ (possibly $T_n$ is also empty). Note also that the values of $T_n$ are bounded by $2^{\lfloor R2^{n/k} \rfloor}$.

We claim that every computable function $g < 2^{\lfloor R2^{n/k} \rfloor}$ is traced by $T_n$. Indeed, given such a computable function $g$, consider the computable function $g'$ defined by $g'(n) = \s_i^{\lfloor 2^{n/k} \rfloor}$ if $g(n) = i$. We have that $g'$ is computable, and furthermore $g'(n) \leq 2^{\lfloor 2^{n/k} \rfloor}$. As $f$ is $2^{\lfloor 2^{n/k} \rfloor}$-i.o.$\alpha$-e., there   exist infinitely many   $m$ such that $d([f(m)], [g'(m)]) < 1 - \alpha$. Then, by definition of $\{T_n\}_{n \in \omega}$,  we   have $g(m) \in T_m$ for each of these $m$.

Thus every computable function bounded by $2^{\lfloor R2^{n/k} \rfloor}$ is captured infinitely often by $\{T_n\}_{n \in \omega}$.\\

Now, either the trace $T_{2n}$ must capture infinitely often every computable function bounded by $2^{\lfloor R2^{2n/k} \rfloor}$, or the trace $T_{2n+1}$ must capture infinitely often every computable function bounded by $2^{\lfloor R2^{(2n+1)/k} \rfloor}$, as otherwise, by combining the two computable witnesses that neither is the case, we would have a computable function bounded by $2^{\lfloor R2^{n/k} \rfloor}$ and not traced by $\{T_n\}_{n \in \omega}$. In either case, $\{T_n\}_{n \in \omega}$ can compute a trace which traces every function bounded by $2^{\lfloor R2^{2n/k} \rfloor}$ (Note that a trace capturing infinitely often every computable function bounded by $F$, also captures infinitely often every computable function bounded by $G < F$).

By iterating this argument, $X$ can compute an $L$-trace $\{T_n\}_{n \in \omega}$  which captures infinitely often every function bounded by $2^{L 2^{n}}$. We can also without loss of generality assume that each element of each $T_n$ is bounded by $2^{L 2^{n}}$, and thus coded on exactly $L 2^{n}$ bits.  We now use the fact that $|T_n| \leq L$ for every $n$, to compute using $T_n$ a function $h \leq 2^{2^{n}}$ which is equal infinitely often to every computable function bounded by $2^{2^{n}}$.

First for every $n$, we add if necessary some elements in $T_n$ such that $|T_n| = L$. Then we view each element $e_i$ of $T_n$ as an $L$-tuple $\langle e^1_i, \dots, e^L_i \rangle$. Formally $e^j_i$ is the $j$-th block of $2^n$ consecutive bits. Consider $L$ distinct $X$-computable functions $h_1, \dots, h_L$ given by $h_i(n) = e^i_i$ where $e_i = \langle e^1_i, \dots, e^i_i, \dots, e^L_i \rangle$ is the $i$-th element of $T_n$. We claim that at least one $h_i$ is $2^{(2^n)}$-i.o.e. Suppose otherwise, and consider the $L$ computable functions $p_1, \dots, p_L$ witnessing that. Then the computable function $p(n) = \langle p_1(n), \dots, p_L(n) \rangle$ is never captured by $T_n$, as the $i$-th component of $p(n)$ (seen as a $L$-tuple) is different from the $i$-th component of the $i$-th element of $T_n$ (seen as a $L$-tuple). This contradicts our hypothesis. So   at least one $h_i$ is  $2^{(2^n)}$-i.o.e.

Note that $h_i$  is computable from $X$. As in  \cite[Thm.\ III.4]{Monin.Nies:15}  from $h_i$ we can compute functions which are $2^{(a^n)}$-i.o.e. for any $a \in \omega$; the binary sequence encoding each of these function   have a $\gamma$ value  $\le 1/a$.
\end{proof}

Note that all the reductions are $tt$. Thus this also solves the $\Gamma$ question in the $tt$ degrees.

%\begin{lemma}[Andre] \label{lem:Andre} Given $c \ge 1$    let $k\ge$. Then for each $n \ge$
%$$\lfloor 2^{(n+1)/k} \rfloor \leq \frac{1}{c}(\sum_{i=0}^{ n} \lfloor 2^{i/k} \rfloor).$$ \end{lemma}
%
%\begin{proof} We have  $\lfloor 2^{i/k} \rfloor \le 2^{\lfloor i/k \rfloor}$. Next, $\sum_{i=0}^{ n} 2^{\lfloor i/k \rfloor} \le  k \sum_{r=0}^{ \lceil n/k \rceil} 2^r $ \end{proof}

\section{Brendle and Nies: Analog for cardinal characteristics of Monin's solution to the $\Gamma$ question }

For background and definitions of the characteristics see last year's blog \cite[Section 7]{LogicBlog:15}.  In analogy to Monin's post above, we will show that  $\frd(p)= \frd(\neq^*, \tp{(\tp n)})$  and $\frb(p)= \frb(\neq^*, \tp{(\tp n)})$  for each  $p\in (0,1/2)$.  

For convenience here are the main definitions:

Let  $R \subseteq  X \times Y$ be a relation between spaces $X,Y$ (such as Baire space) satisfying $\forall x \; \exists y \;
(x R y)$ and $\forall y \; \exists x \; \neg (x R y)$. Let $S = \{  \langle y,x \rangle \in Y \times X \colon \neg xR y\}$.  

\begin{definition} \label{df: bd} We write

\[ \frd(R) = \min\{|G|:G\subseteq Y \land \, \forall x \in X \,
\exists y \in  G   \, xR y\}.\]

\[ \frb(R) = \frd (S) =  \min\{|F|:F\subseteq X \land \, \forall y\in Y
\exists x \in F   \, \neg  xR y\}.\]
\end{definition}

%\subsection{The parameterised  families of relations} \label{ss:hbrelations}

We will study $\frd(R)$ and $\frb(R)$ for  two types of relations $R$.

\

\n {\it 1.}   Let $h \colon \omega \to \omega $  (usually unbounded).   Define  for $  x \in {}^\omega\omega$ and 

\n $  y \in \Pi_n \{0, \ldots, h(n)-1\}$,   

\[ x \neq^*_h  y \LR \fa^\infty n \,  [ x(n) \neq y(n)]. \]

\

\n {\it 2.} Let $ 0 \le p \le 1/2$. Define,   for $x,y \in {}^\omega 2$

\[ x \sim_p y \LR   \underline \rho (x \lra y)  >p, \]
where $x \lra y$ is  the set of $n$ such that $x(n) = y(n)$, and $\underline \rho$ denotes the lower density: $\ul \rho (z) = \liminf_n |z \cap n|/n$.

%\subsection{The cardinal characteristics}
\n It will be helpful to  express Definition~\ref{df: bd} for these relations   in words.

\

\n $\frd(\neq^*_h)$ is the least size of a set $G$ of $h$-bounded functions so that for each function $x$ there is a function $y$ in   $G$  such that    $\fa^\infty n [ x(n) \neq y(n)]$.  (Of course it suffices to require this for $h$-bounded $x$.)

\

\n  $\frb(\neq^*_h)$ is the least size of a set   $F$ of  functions such that for each $h$-bounded function $y$, there is a function $x$ in $F$  such that $\ex^\infty n \, x(n) = y(n)$.   (Of course we can require that each function in $F$ is $h$-bounded.)

 \

 \n $\frd(\sim_p)$ is the least size of a set $G$ of bit sequences  so that for each sequence $x$ there is a sequence  $y$ in  $G$   so that $\ul \rho (x \lra y) >p$. 
 
\

\n   $\frb(\sim_p)$ is the least size of a set  $F$ of  bit sequences such that for each bit sequence $y$, there is a sequence $x$ in $F$  such that $\ul \rho (x \lra y) \le p$.

\begin{definition} \label{df:KhLh} Let $h$ be a function of the form $2^{\hat h}$ with $\hat h \colon \, \omega \to \omega$, and let $X_h$ be the space of all $h$-bounded functions. 
 For  such a    function   we    view $x(n)$ either as a number, or as a binary string   of length $\hat h(n)$ via the binary expansion with leading zeros allowed. We  define $L_h\colon X_h \to   {}^\omega 2$ by   $L_h(x) = \prod_n x(n) $, i.e.  the concatenation of these strings.  We let $K_h \colon  {}^\omega 2 \to X_h$ be the inverse of $L_h$. \end{definition}

We begin with some preliminary facts of independent interest.
 On occasion we denote a function $\lambda n. f(n)$ simply by $f(n)$. The next lemma amplifies functions without changing the cardinal characteristics. 

\begin{lemma} \label{lem:double}  \bi \item[(i)] Let $h$ be nondecreasing and $g(n) = h(2n)$. We have $\frd(\neq^*,h) = \frd(\neq^*,g)$ and $\frb(\neq^*,h) = \frb(\neq^*,g)$.

\item [(ii)] For each $a,b >1$ we have $\frd(\neq^*,{\tp{(a^n)})}= \frd(\neq^*,{\tp{(b^n)}})$ and  

\n $\frb(\neq^*,{\tp{(a^n)})}= \frb(\neq^*,{\tp{(b^n)}})$. \ei \end{lemma}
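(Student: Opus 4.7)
The plan is to prove (i) by explicit reductions in both directions, and then to derive (ii) from (i) via a short monotonicity argument. Throughout, write $X_h = \prod_n \{0,\ldots,h(n)-1\}$ for the space of $h$-bounded functions.

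For (i), the easy inequality $\frd(\neq^*,g) \le \frd(\neq^*,h)$ goes as follows. Given a $\neq^*_h$-dominating family $G \sub X_h$ of size $\kappa$, put $y^g(n) = y(2n)$ for $y \in G$; this lies in $X_g$ since $y(2n) < h(2n) = g(n)$. For $x \in X_g$, the extension $\tilde x(2n) = x(n)$, $\tilde x(2n+1) = 0$ is in $X_h$, and a $y \in G$ with $y \neq^*_h \tilde x$ yields $y^g \neq^*_g x$. The harder inequality $\frd(\neq^*,h) \le \frd(\neq^*,g)$ uses an \emph{interleaving} construction: from $G \sub X_g$ of size $\kappa$, define $H = \{z_{y_1,y_2} : y_1,y_2 \in G\}$ with $z_{y_1,y_2}(2n) = y_1(n)$ and $z_{y_1,y_2}(2n+1) = y_2(n)$. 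This lies in $X_h$ because $h$ is nondecreasing, so $y_2(n) < g(n) = h(2n) \le h(2n+1)$; and $|H| = \kappa$. For $x \in X_h$, set $x_E(n) = x(2n)$ and $x_O(n) = x(2n+1) \bmod g(n)$ (both in $X_g$), and pick $y_1,y_2 \in G$ dominating them. At even positions $y_1(n) \ne x(2n)$ a.e.\ by choice of $y_1$; at odd positions with $x(2n+1) < g(n)$, one has $x_O(n) = x(2n+1)$ so $y_2(n) \ne x(2n+1)$ a.e.; and at odd positions with $x(2n+1) \ge g(n)$, trivially $z_{y_1,y_2}(2n+1) = y_2(n) < g(n) \le x(2n+1)$.

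The $\frb$ analogs follow by dual constructions. For $\frb(\neq^*,h) \le \frb(\neq^*,g)$, from $F \sub X_g$ witnessing $\frb(\neq^*,g)$ take $F' = \{x^{(E)} : x \in F\}$ with $x^{(E)}(2n) = x(n)$ and $x^{(E)}(2n+1) = 0$; for $y \in X_h$ the even part $y_E(n) = y(2n)$ is matched i.o.\ by some $x \in F$, so $x^{(E)}$ matches $y$ i.o. For $\frb(\neq^*,g) \le \frb(\neq^*,h)$, from $F \sub X_h$ take $F' = \{x^E,x^O : x \in F\}$ with $x^E(n) = x(2n)$ and $x^O(n) = x(2n+1) \bmod g(n)$; for $y \in X_g$ use the diagonal extension $\tilde y(2n) = \tilde y(2n+1) = y(n) \in X_h$ (valid since $h$ is nondecreasing). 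Applying the hypothesis to $\tilde y$, the infinitely many matches of some $x \in F$ must lie either at even positions (giving $x^E(n) = y(n)$ i.o.) or at odd positions, where $x(2n+1) = y(n) < g(n)$ cancels the modulus and gives $x^O(n) = y(n)$ i.o.

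For (ii), I first establish monotonicity: $h_1 \le h_2$ implies $\frd(\neq^*,h_1) \ge \frd(\neq^*,h_2)$ and $\frb(\neq^*,h_1) \le \frb(\neq^*,h_2)$. The $\frb$ inequality follows at once from $X_{h_1} \sub X_{h_2}$; for $\frd$, a $\neq^*_{h_1}$-dominating $G \sub X_{h_1}$ in fact $\neq^*$-dominates every $x \in {}^\omega\omega$, because the truncation $x'(n) = x(n)$ if $x(n) < h_1(n)$ and $x'(n) = 0$ otherwise lies in $X_{h_1}$, a $y \in G$ with $y \neq^* x'$ matches $x$ only where $x(n) < h_1(n)$ and coincides with $x'$, and at all other positions $y(n) < h_1(n) \le x(n)$ automatically. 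Now with $h_a(n) = \tp{(a^n)}$, the identity $h_a(2n) = \tp{(a^{2n})} = h_{a^2}(n)$ combined with (i) gives $\frd(\neq^*,h_a) = \frd(\neq^*,h_{a^{2^k}})$ and $\frb(\neq^*,h_a) = \frb(\neq^*,h_{a^{2^k}})$ for all $k \in \NN$. Given $a \le b$ in $(1,\infty)$, monotonicity yields $\frd(\neq^*,h_a) \ge \frd(\neq^*,h_b)$; choosing $k$ with $a^{2^k} \ge b$ gives $\frd(\neq^*,h_{a^{2^k}}) \le \frd(\neq^*,h_b)$; combining with the equality from (i) yields $\frd(\neq^*,h_a) = \frd(\neq^*,h_b)$. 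The argument for $\frb$ is symmetric.

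The main obstacle is the interleaving step in the harder direction of (i): the value $x(2n+1)$ may exceed $g(n) = h(2n)$, so the odd-position analysis cannot be directly reduced to $X_g$. The ``$\bmod g(n)$'' trick together with the case split on $x(2n+1) \gtrless g(n)$, crucially using that $h$ is nondecreasing, is what carries the proof through.
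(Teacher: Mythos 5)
Your proof is correct and follows essentially the same approach as the paper: the interleaving construction $z_{y_1,y_2}$ (the paper writes $p_0 \oplus p_1$) for the harder $\frd$ inequality, even/odd projections for the harder $\frb$ inequality with a parity pigeonhole on where the infinitely many matches land, and deriving (ii) from (i) by iterating the doubling and invoking monotonicity of $\frd$, $\frb$ in the bound.

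The one cosmetic divergence is your use of $x_O(n) = x(2n+1) \bmod g(n)$. The paper's reading of $\frd(\neq^*,g)$ has the witness family $\neq^*$-dominate \emph{every} function in $\omega^\omega$, not only the $g$-bounded ones (the parenthetical in Definition~\ref{df: bd} notes that restricting to $h$-bounded $x$ changes nothing, precisely because one can truncate --- the argument you give for monotonicity of $\frd$). Under that reading you can simply take $x_O(n) = x(2n+1)$ and pick $y_2 \in G$ with $y_2 \neq^* x_O$; the case split on whether $x(2n+1) < g(n)$ then disappears. Your $\bmod$ version works too, it is just a detour around an issue that the definition already dissolves. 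Everything else, including the explicit truncation argument for $\frd$-monotonicity and the observation that $h$ nondecreasing is what makes the odd slot of the interleaving fit under $h(2n+1)$, matches the paper.
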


\begin{proof} (i)  Trivially, $h \le g$ implies that   $\frd(\neq^*,h) \ge \frd(\neq^*,g)$ and $\frb(\neq^*,h) \le \frb(\neq^*,g)$. So it suffices to show two inequalities.

\vsps

\n \fbox{$\frd(\neq^*,h) \le\frd(\neq^*,g)$:}  Let $G$ be a witness set for $\frd(\neq^*,g)$. Note that $G$ is also a witness set for $\frd(\neq^*,h(2n+1))$. Let $\hat G= \{p_0 \oplus p_1\colon \, p_0, p_1 \in G \}$, where $(p_0 \oplus p_1)(2m+i) = p_i(m)$ for $i= 0,1$. Each function in $\hat G$ is bounded by $h$.  Since $G$ is infinite, $|\hat G| = |G|$. Clearly $\hat G$ is a witness set for $\frd(\neq^*,h)$.

\vsps

\n \fbox{$\frb(\neq^*,h) \ge \frb(\neq^*,g)$:}  Let $F$ be a witness set for $\frb(\neq^*,h)$.  Let $\hat F$ consist of the functions of the form $ n \to p(2n)$,  or  of the form  $n \to p(2n+1)$,  where $p \in F$. Then $|\hat F| = |F|$, and each function in $\hat F$ is $g$ bounded. 

Clearly,  $\hat F$ is a witness set for $\frb(\neq^*,g)$: if $q$ is $g$-bounded, then  $\hat q$ is $h$ bounded where $\hat q(2n+i) = q(n)$ for $i=0,1$. Let $p\in F$ be such that $\ex^\infty k \, p(k) = \hat q(k)$. Let $i \le 1 $ be such that infinitely many such $k$ have parity $i$. Then the function $n \to p(2n+i) $ which is in $\hat F$ is as required.

(ii) is immediate from (i) by iteration using that   $a^{2^i} >b$  and $b^{2^i} >a$ for sufficiently large $i$. 
\end{proof}

\begin{lemma} Let $a \in \omega - \{0\}$. We have $\frd( \neq^*,{\tp{(a^n)}}) \le \frd(1/a)$ and 

\n $\frb( \neq^*,{\tp{(a^n)}}) \ge \frb(1/a)$.\end{lemma}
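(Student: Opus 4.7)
The plan is to prove both inequalities through the same mechanism: the concatenation map $L_h$ and its inverse $K_h$ from Definition~\ref{df:KhLh} with $h = \tp{(a^n)}$, so that the $n$-th block of $L_h(x')$ has length $a^n$, combined with bitwise complementation $\ol{\cdot}$ on ${}^\omega 2$. The single auxiliary fact driving the argument is a density calculation for blocks. The $n$-th block ends at position $N_n = (a^{n+1}-1)/(a-1)$, and occupies fraction $a^n(a-1)/(a^{n+1}-1) \to 1-1/a$ of $[0,N_n)$. Consequently, if a set $A \sub \omega$ is disjoint from the $n$-th block for infinitely many $n$, then along that subsequence of end-of-block positions $|A \cap [0, N_n)|/N_n \le 1 - a^n(a-1)/(a^{n+1}-1) \to 1/a$, so $\ul \rho(A) \le 1/a$.

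For the upper bound $\frd(\neq^*, \tp{(a^n)}) \le \frd(1/a)$, I would start from a witnessing set $G \sub {}^\omega 2$ for $\frd(\sim_{1/a})$ and put $G' = \{ K_h(\ol y) : y \in G \}$, which has $|G'| \le |G|$. Given any $h$-bounded $x'$, pick $y \in G$ with $\ul \rho(L_h(x') \lra y) > 1/a$ and set $y' = K_h(\ol y)$. The point is: whenever $x'(n) = y'(n)$, the $n$-th block of $L_h(x')$ equals the $n$-th block of $L_h(K_h(\ol y)) = \ol y$, so $L_h(x')$ and $y$ disagree throughout that block; equivalently, $L_h(x') \lra y$ is disjoint from the $n$-th block. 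If this occurred for infinitely many $n$, the auxiliary fact would give $\ul \rho(L_h(x') \lra y) \le 1/a$, a contradiction. Hence $x' \neq^*_h y'$, and $G'$ witnesses $\frd(\neq^*_h)$.

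The lower bound $\frb(\neq^*, \tp{(a^n)}) \ge \frb(1/a)$ is the exact dual. Starting from a witness $F \sub X_h$ for $\frb(\neq^*_h)$, put $F' = \{ \ol{L_h(x)} : x \in F \}$. For an arbitrary $y \in {}^\omega 2$, set $y' = K_h(y)$ and pick $x \in F$ with $x(n) = y'(n)$ for infinitely many $n$. At each such $n$, the $n$-th block of $L_h(x)$ equals the $n$-th block of $y$, so $\ol{L_h(x)}$ and $y$ disagree throughout that block, i.e.\ $\ol{L_h(x)} \lra y$ is disjoint from it. By the auxiliary fact, $\ul \rho(\ol{L_h(x)} \lra y) \le 1/a$, which is precisely $\neg(\ol{L_h(x)} \sim_{1/a} y)$. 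So $F'$ is a witness for $\frb(\sim_{1/a})$ of cardinality at most $|F|$.

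The only real obstacle is the density computation and making sure $\liminf$ is taken along the correct subsequence of end-of-block indices $N_n$; everything else is bookkeeping. The crucial conceptual step is that bitwise complementation converts ``the $n$-th block agrees as a symbol'' into ``the $n$-th block disagrees bit by bit'', and this is what bridges the block-level relation $\neq^*$ and the bit-level relation $\sim_{1/a}$.
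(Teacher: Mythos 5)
Your proof is correct and takes essentially the same approach as the paper: the blocking maps $L_h$, $K_h$ with $h=\tp{(a^n)}$, bitwise complementation to convert block-level agreement under $\neq^*_h$ into bit-level disagreement for $\sim_{1/a}$, and the same density estimate $(a^m-1)/(a^{m+1}-1)\to 1/a$ evaluated along the subsequence of block-boundary positions. The only cosmetic difference is in the $\frd$ inequality, where you build the complement into the witness set ($G'=\{K_h(\ol y):y\in G\}$) and argue directly, while the paper uses $\hat G=\{K_h(y):y\in G\}$ and argues by contrapositive, complementing the putative bad sequence instead; the $\frb$ part is identical.
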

\begin{proof} Let $I_m$ for $m \ge 2$  be the $m-1$-th consecutive interval of length $a^m$ in $\omega-\{0\}$, i.e.

\[ I_m = \big [\frac{a^m-1}{a-1} , \frac{a^{m+1} -1 } {a-1}\big). \]
First let $G$ be a witness set for $\frd(1/a)$. Let $h(n) = \tp{(a^n)}$.  We show that  $\hat G = \{ K_h(y) \colon \, y \in G \}$ is a witness set for $\frd( \neq^*,{\tp{(a^n)}})$. Otherwise there is a sequence $z\in {}^\omega 2$ such that for each $x \in  {}^\omega \omega$ there are infinitely many $m$ with  $x(m) = K_h(z)(m)$. Let $y' $ be the complement $\omega- z$ of  $z$, that is   $0$s and $1$s are interchanged. Then for infinitely many $m$, $L_h(x) (i) \neq y'(i)$ for each $i \in I_m$.  If we let $n = 1+ \max I_m$, the proportion  of $i < n$ such that $ L_h(x) (i) = y'(i)$ is therefore at most $(a^m-1) / (a^{m+1}-1)$, which converges to $1/a$ as $m \to \infty$. This contradicts the choice of $G$. 

Now let $F$ be a witness set for $\frb(\neq^*, h)$. Let $\hat F = \{ \omega - L_h(x) \colon \, x \in F \}$. For each  $y  \in \cantor$ there is $x\in F$ such that $\ex^\infty n  \,  K_h(y)(n) = x(n)$. This implies $\ul \rho (y \lra   x') \le 1/a  $ where $  x' =  \omega - L_h(x) \in \hat F$. Hence $\hat F$ is a witness set for $\frb(1/a)$.
\end{proof}

%\begin{thm} For any $p, q$ with  $0< p < q < 0.5$ we have $\frd(p)= \frd(q)$. \end{thm}
\begin{thm} Fix any  $p\in (0,1/2)$. We have $\frd(p)= \frd(\neq^*, \tp{(\tp n)})$ and 

\n $\frb(p)= \frb(\neq^*, \tp{(\tp n)})$. \end{thm}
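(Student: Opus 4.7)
The proof splits into an easy and a hard direction for each identity. The easy directions, $\frd(\neq^*, \tp{(\tp n)}) \le \frd(p)$ and $\frb(\neq^*, \tp{(\tp n)}) \ge \frb(p)$, follow from the preceding two lemmas. Pick an integer $a \ge 1/p$, so that $1/a \le p$. Since $\frd(\sim_p)$ is non-decreasing in $p$ and $\frb(\sim_p)$ is non-increasing, we have $\frd(1/a) \le \frd(p)$ and $\frb(1/a) \ge \frb(p)$. Combining with the preceding lemma (which gives $\frd(\neq^*, \tp{(a^n)}) \le \frd(1/a)$ and $\frb(\neq^*, \tp{(a^n)}) \ge \frb(1/a)$) and using Lemma~\ref{lem:double}(ii) to swap base $a$ for base $2$ yields the claimed inequalities.

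The hard directions, $\frd(p) \le \frd(\neq^*, \tp{(\tp n)})$ and $\frb(p) \ge \frb(\neq^*, \tp{(\tp n)})$, are the cardinal-characteristic analog of Monin's theorem from the preceding section; the plan is to carry out Monin's construction family-wise. Write $\kappa = \frd(\neq^*, \tp{(\tp n)})$, fix $\alpha = 1 - p \in (1/2, 1)$ and $\theta = p$, and apply the list-decoding Theorem~\ref{list_decod} to obtain parameters $L$, $R$ and, for each $n$, a computable code $C_n \subseteq 2^n$ of size $2^{\lfloor R n \rfloor}$. Fix a witness family $G$ of size $\kappa$ for $\frd(\neq^*, \tp{(\tp n)})$; by Lemma~\ref{lem:double}(ii), $G$ may be viewed as a witness for $\frd(\neq^*, \tp{(a^n)}) = \kappa$ for any prescribed $a$. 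Mimicking Monin's proof on the family $G$: each $y \in G$ is turned, by list-decoding each block $[y(n)]$ against $C_{a^n}$, into an $L$-valued trace capturing many bounded functions; the iteration step of Monin's proof produces family-wise a family of $L$-traces of size $\kappa$ that captures all bounded functions up to the desired growth rate; and the $L$-block interleaving trick of the final paragraph of Monin's theorem then yields a family $\hat G$ of bit sequences of size $\kappa$ that $\sim_p$-dominates every element of Cantor space. The inequality $\frb(p) \ge \frb(\neq^*, \tp{(\tp n)})$ follows by the dual family-wise construction, starting from a $\sim_p$-unbounded family of size $\frb(p)$ and producing a $\neq^*$-unbounded family of the same size.

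The main obstacle is the faithful family-wise translation of Monin's single-oracle argument: at each stage (list-decoding, amplification by iteration, $L$-block interleaving), the new family must have size bounded by $\kappa$, and the resulting asymptotic density property $> p$ (respectively $\le p$ for the $\frb$ case) must be verified to hold uniformly for every target sequence in Cantor space. The parameter $p$ enters only through $\theta = p$ in the list-decoding Theorem~\ref{list_decod}, which is the structural reason why $\frd(p)$ is constant on the open interval $(0, 1/2)$ and equals $\frd(\neq^*, \tp{(\tp n)})$.
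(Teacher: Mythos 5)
Your proposal follows the same route the paper takes: reduce the easy inequalities to the two preceding lemmas plus monotonicity, and get the hard inequalities by transliterating Monin's single-oracle argument into a family-wise cardinal-characteristic argument, using the list-decoding lemma, an amplification step, and the $L$-block interleaving trick. So the conception is correct.

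Two remarks. First, the parameter you feed to list decoding should not be $\theta=p$ exactly but a slightly larger $q=p+2/c$. The paper's Claim~\ref{cl:1} shows that when a sequence is split into blocks of length $\lfloor 2^{n/k}\rfloor$, block-wise disagreement on a proportion $\geq q$ of bits only yields lower density $\geq q-2/c$ for the agreement, because partial blocks at the running boundary eat up a fraction roughly $1/c$. So one must start with $q=p+2/c$ and then let the block boundary effects bring the density bound down to exactly $p$; with $\theta=p$ you would end up with lower density only $\geq p-2/c$, giving a strictly weaker conclusion. Second, and more substantially, your sketch gestures at "carrying out Monin's construction family-wise" but does not actually carry it out: the paper makes this precise by introducing the auxiliary relations $\la\neq^*,\hat h,q\ra$ (block disagreement with density $q$) and $\la\not\ni^*,u,L\ra$ (slalom avoidance), and proving four intermediate claims (Claims~\ref{cl:1}, \ref{cl:2}, \ref{lem:double2}, \ref{cl:final}) that transport witness families step by step between the relations. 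These claims are where the actual work lies. You acknowledge this as "the main obstacle" but do not resolve it; in particular the $\frd$ and $\frb$ transformations are not literally "each $y\in G$ becomes a trace"---for $\frd$ the witness family stays a family of functions re-interpreted via codewords (Claim~\ref{cl:2}) and re-packed into $L$-tuples (Claim~\ref{cl:final}), while the trace/slalom view you describe fits the $\frb$ side.
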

\begin{proof} By the two foregoing lemmas we have $  \frd(p)  \ge \frd(\neq^*, \tp{(\tp n)}) $ and $\frb(p)\le  \frb(\neq^*, \tp{(\tp n)})$. It remains to show the converse inequalities:

\n   $\frd(p) \le  \frd(\neq^*, \tp{(\tp n)})$ and $\frb(p)\ge  \frb(\neq^*, \tp{(\tp n)})$.

\begin{definition} For strings $x,y$ of length $r$, the normalised Hamming distance is defined as the proportion of bits on which $x,y$ disagree, that is, 
\[ d(x,y) = \frac 1 r |\{ i \colon x(n) (i)    \neq y(n)(i) \}| \] \end{definition}
%%%%%%%%%%%%%
\begin{definition} Let $h$ be a function of the form $2^{\hat h}$ with $\hat h \colon \, \omega \to \omega$, and let $X=Y= X_h$ be the space of $h$-bounded functions.  Let $q \in (0, 1/2)$. 
 We   define a relation on $X \times Y$ by 
    \[x \neq^*_{\hat h,q} y \LR \fa^\infty n  \, [ d(x(n), y(n)) \ge q]   \]   
 namely for a.e. $n$ the   strings $x(n)$ and $y(n)$  disagree on a proportion of at least  $q$ of the bits. We will usually write $\la \neq^*, \hat h,q \ra $ for this  relation.  \end{definition}
 \begin{claim} \label{cl:1} For each $c\in \omega$ there is $k \in \omega$ such that \bc $\frd(q- 2/c) \le \frd \la \neq^*, \lfloor \tp{n/k} \rfloor, q\ra$, and    \ec 
 \bc $\frb(q- 2/c) \ge \frb \la \neq^*, \lfloor \tp{n/k} \rfloor, q\ra$. \ec \end{claim}
 \n To see this, let $k $ be large enough so that $\tp{1/k}-1 < \frac 1 {2c}$. Let  $\hat h(n) = \lfloor \tp{n/k} \rfloor$ and  $h = \tp{\hat h}$. Write $H(n) = \sum_{r\le  n} \hat h(r)$. Given an infinite  bit sequence, we refer to the bits with position in an interval $[H(n), H(n+1)) $ as \emph{Block}~$n$ (the first block is Block~0).  By Monin's 2016 logic blog entry, for sufficiently large $n$,  
 \[  \hat h(n+1) \le \frac 1 c H(n). \]  
 For the   inequality involving $\frd$, let $G$ be a witness set for $\frd \la \neq^*, \hat h , q\ra$.  Thus, for each function $x < h$ there is a function $y \in G$ such that for almost all $n$,  $L_h(x), L_h(y)$ disagree on a proportion of $q$ bits of Block $n$. Let $z$ be the complement of $L_h(y)$.   Given $m$, let $n$ be such that $H(n) \le m < H(n+1)$. Since $m- H(n) \le \frac 1 c H(n)$, for large enough $m$, $L_{  h}(x) $ and $z$ agree up to $m$ on a proportion of at least $q - 1.5/c$ bits. So  the set of complements of the $L_h(y)$, $y \in G$, forms a witness set for $\frd(q- 2/c)$ as required.

 %%%%%%%%%%%%%%
For the   inequality involving $\frb$, let $F$ be a witness set for  $\frb(q-2/c)$. Thus, for each $y \in \cantor$ there is $x \in F$ such that $\ul \rho(y \lra x) \le q-2/c$. Let $\hat F = \{K_h(1 -x) \colon \, x \in F \}$. We show that $\hat F$ is a witness set for $\frb \la \neq^*, \lfloor \tp{n/k} \rfloor, q\ra$. 

Give a function $y < h$,  let $y' = L_h(y)$.  There is $x \in F$ such that  $\ul {\rho} (y' \lra x) \le q - 2/c$, and hence $\ol  \rho (y' \lra  x' ) \ge 1- q+ 2/c$ where $x' = 1 -x$ is the complement and $\ol  \rho$ denotes the upper density.  Then there are infinitely many $m$ such that the strings $y'\uhr m$ and $x'\uhr m$ agree on a proportion of $> q+1/c$ bits.
Suppose that  $H(n) \le m < H(n+1)$ then the contribution  of disagreement of   Block  $n$ is at most $1/c$. So 
  there are infinitely many $k$ so that in Block $k$, $y'$ and $x'$ agree on  a proportion of more than $1-q$ bits, and hence disagree on a proportion of fewer than $q$ bits.  
 This shows the claim. 
 
 As in Monin's entry we use the list decoding capacity theorem from the theory of error-correcting codes.  Given $q $ as above  and $L \in \omega$, for each $r$ there is a ``fairly large" set $C$ of strings of length $r$ (the  allowed code words) such that for each string, at most $L$ strings in $C$ have normalised Hamming distance less than $q$ from $\sss$. (Hence there is only a small set of strings that could be the error-corrected  version of $\sss$.) Given string $\sss$ of length $r$, let $B_q(\sss)$ denote  an  open  ball around $\sss$ in the normalised Hamming distance, namely, $B_q(\sss) = \{ \tau \in  {}^r 2 \colon \, \sigma, \tau \text{  disagree on fewer than $qr$ bits}\}$
 \begin{lemma}[List decoding] \label{lem:ListDec} Let $q\in (0,1/2)$.  There are $\epsilon >0$ and $L \in \omega$ such that for each $r$, there is a set $C$ of $\tp{\lfloor \epsilon r \rfloor }$ strings of length $r$ as follows: \bc $\forall \sss \in {}^r 2 \, [ | B_q(\sss) \cap C | \le L]$.  \ec
 \end{lemma}
%%%%%%
For $L \in \omega$, an $L$-slalom is a function $s\colon \, \omega \to \omega^{[\le L]}$, i.e.\ a function that  maps  natural numbers to sets  of natural numbers with a  size of at most $L$. 
\begin{definition}  Fix  a function $u \colon \omega \to \omega$ and $L \in \omega$. Let $X$ be the space of $L$-slaloms $s$ such that $\max s(n) < u(n)$ for each $n$, and let $Y$  be the set of functions such that  $y(n)  <  u (n)$ for each $n$. Define a relation on $X \times Y$ by
 \[s \not \ni^*_{u,L} y \LR \fa^\infty n [s(n) \not \ni y(n)]. \] 
  We will   write $\la \not \ni^*, u ,L \ra $ for this  relation. 
\end{definition} 

\begin{claim} \label{cl:2} Given $q < 1/2$, let $L, \epsilon $ be as in Lemma~\ref{lem:ListDec}. Fix  a nondecreasing  function $\hat h$, and   let  $u(n)= \tp{\lfloor \epsilon \hat h(n) \rfloor }$. We have 
\[ \frd \la \neq^*,  \hat h , q\ra \le \frd \la \not \ni^*, u,L \ra \text{ and } \frb \la \neq^*,  \hat h , q\ra \ge \frb \la \not \ni^*, u,L \ra. \]
  \end{claim}
For  the   inequality involving $\frd$, let $G$ be a set of functions bounded by $u$ such that $|G| < \frd \la \neq^*,  \hat h , q\ra$. We show that $G$ is not a witness set  for  the right hand side $\frd \la \not \ni^*, u,L \ra$. 

For each $r$ of the form $\hat h(n)$ choose a set $C= C_r$ as in Lemma~\ref{lem:ListDec}. Since $|C_r| = \tp{\lfloor \epsilon r \rfloor}$  we may choose a sequence $\seq {\sss^r_i}_{i<  \tp{\lfloor \epsilon r \rfloor}}$ listing $C_r$ without repetitions. For a  function $y <  u$ let $\wt y$ be the function given by $\wt y(n)= \sss^{\hat h(n)}_{y(n)}$. Thus $\wt y(n)$ is a binary string of length $\hat h(n)$. Let $\wt G = \{ \wt y  \colon \, y \in G \}$. Then  $|\wt G| = |G| < \frd \la \neq^*,  \hat h , q\ra $. So there is a function $x$ with $x(n) \in {}^{\hat h(n)}2$ for each $n$ such that for each $y \in \hat G$ we have  $\ex^\infty n \, [d(x(n), y(n)) < q]$. Let $s$ be the slalom given by 
\[ s(n) = \{ i \colon \, d (x(n), \sss^{\hat h(n)}_i )< q  \}.\]
Note that by the choice of the $C_r$ according to Lemma~\ref{lem:ListDec},  $s$ is an $L$-slalom.   By the definitions, for each $y \in G$ we have $\ex^\infty n \, [s(n) \ni y(n)]$. So $G$ is not a witness set for $\frd \la \not \ni^*, u,L \ra$.

For  the   inequality involving $\frb$,  suppose $F$ is a witness set  for $\frb \la \neq^*,  \hat h , q\ra$. That is, for each $h= \tp{\hat h}$-bounded function $y$, there is $x\in F$ such that 
\bc $\ex^\infty n \, [ d(x(n), y(n) <q]$ \ec (as usual we view $x(n), y(n)$ as  binary strings of length $\hat h(n)$).
For $x\in F$ let $s_x$ be the  $L$-slalom such that
 \bc  $s_x(n) = \{i < u(n) \colon \, d(\sss^{\hat h(n)}_i, x(n) )< q\}$. \ec
 Let $\hat F = \{s_x \colon \, x \in F\}$. Given an $u$-bounded function $y$, let $y'(n) =  \sss^{\hat h(n)}_{y(n)}$. There is $x \in F$ such that      $d(x(n), y'(n) < q$ for     infinitely many $n$. This means that $y(n) \in s_x(n)$. Hence $\hat F$ is a witness set for 
$\frb \la \not \ni^*, u,L \ra$. This shows the claim.

% be the string of length $\hat h(n)$ which is the complement  of $

We next need an amplification tool   in the context of slaloms. The proof is almost verbatim the one in Lemma~\ref{lem:double}(i), so we omit it.

\begin{claim} \label{lem:double2}     Let $ L \in \omega$, let the function $u$ be nondecreasing and let $w(n) = u(2n)$. We have $\frd(\la \not \ni^*,u, L \ra = \frd\la \not \ni^*,w,L\ra$ and $\frb(\la \not \ni^*,u, L \ra = \frb\la \not \ni^*,w,L\ra$.
 \end{claim}
Iterating the claim,  starting with the function $\hat h(n) = \lfloor \tp{n/k}\rfloor$ with $k$ as in Claim~\ref{cl:1},  we   obtain that $\frd \la \not \ni^*,\tp{\hat h}, L \ra=  \frd \la \not \ni^*,\tp{(L 2^n)}, L \ra$, and similarly for~$\frb$. 
It remains to verify the following.
\begin{claim} \label{cl:final} $\frd \la \not \ni^*,\tp{(L 2^n)}, L \ra \le \frd(\neq^*, \tp{(\tp n)})$ and 

\n  $\frb \la \not \ni^*,\tp{(L 2^n)}, L \ra \ge \frb(\neq^*, \tp{(\tp n)})$. \end{claim}
 Given  $n$, we write  a number $k< \tp{(L \tp n)}$ in binary with leading zeros if necessary, and so can view $k$ as a binary string of length $L \tp n$. We view such a string as    consisting  of $L$ consecutive blocks of length  $\tp n$. 

For the inequality involving $\frd$,
let $G$ be a witness set for   $\frd(\neq^*, \tp{(\tp n)})$.  For   functions $y_1, \ldots , y_L$  such that  $y_i(n) < \tp{( \tp n)}$ for each $n$, let $(y_1, \ldots, y_L)$ denote the function $y$ with $y(n) <  \tp{(L \tp n)} $ for each $n$  such that  the $i$-th block of $y(n)$ equals $y_i(n)$ for  each   $i$ with  $1 \le i \le L$.  Let 
\bc $\hat G = \{ (y_1, \ldots, y_n) \colon \, y_1, \ldots, y_L \in G\}$. \ec
Since  $G $ is infinite we have $|\hat G| = |G|$. We check that  $\hat G$ is a witness set for  the left hand side $\frd \la \not \ni^*,\tp{(L 2^n)}\ra$. Given an $L$-slalom $s$ bounded by $\tp{(L \tp n)} $ we may assume that $s(n) $ has exactly $L$ members, and they  are binary  strings of length $L \tp n$. For $i \le L$ let $x_i(n) $ be the $i$-th block of the $i$-th string in $s(n) $,  so that $|x_i(n)|  =  \tp n$. Viewing  the $x_i$ as  functions  bounded by $\tp{(\tp n)}$,  we can choose $y_1, \ldots, y_L \in G$ such that $\fa^\infty n \, x_i(n) \neq y_i(n)$. Let $y = (y_1, \ldots, y_n) \in \hat G$. Then $\fa^\infty n \, [s(n) \ni y(n)]$,  as required. 

For the inequality involving $\frb$ let $F$ be a witness set for $\frb \la \not \ni^*,\tp{(L 2^n)}$. That is,  $F$ is a set of $L$-slaloms $s$ such that for each function $y$ with $y(n) < 2^{(L 2^n)}$, there is $s\in F$ such that $s(n) \ni y(n)$ for infinitely many $n$.  

Let $\hat F$ be the set of functions $s_i$, for $s\in F$ and  $i<L$, such that $s_i(n) $ is the $i$-th block of the $i$-th element of $s(n)$ (as before we may assume that each string in $s(n)$ has length $L2^n$). Now let $y$ be a given function bounded by $2^{(2^n)}$. Let $y'$ be the function bounded by $2^{(L2^n)}$ such that for each $n$, each block of $y'(n)$ equals $y(n)$. There is $s\in F$ such that $s(n) \ni y'(n)$ for infinitely many $n$. There is $i<L$ such that , $y'(n)$ is the $i$-th string in $s(n)$ for infinitely many of these $n$, and hence  $y(n) = s_i(n)$. Thus  $\hat F$ is a witness set for $\frb(\neq^*, \tp{(\tp n)})$.
This proves the claim. 

We can now summarise the argument  that  $\frd(p) \le  \frd(\neq^*, \tp{(\tp n)})$. Pick $c$ large enough such that $q= p+2/c < 1/2$.  

By Claim~\ref{cl:1} there is $k$ such that \bc $\frd(p) \le \frd \la \neq^*, \lfloor \tp{n/k} \rfloor, q\ra$.  \ec

 By Claim~\ref{cl:2} there are $L$, $\epsilon$ such that where $\hat h(n) =  \lfloor \tp{n/k} \rfloor$, we have \bc  $\frd \la \neq^*,  \hat h , q\ra \le \frd \la \not \ni^*, u,L \ra$,   \ec 
where $u(n)= \tp{\lfloor \epsilon \hat h(n) \rfloor } $.

Applying Claim~\ref{lem:double2}  sufficiently many times we  have \bc $\frd \la \not \ni^*, u,L \ra \le \frd \la \not \ni^*, \tp{(L 2^n)} ,L \ra$.  \ec 
\n Finally, $\frd \la \not \ni^*,\tp{(L 2^n)}, L \ra \le \frd(\neq^*, \tp{(\tp n)})$ by Claim~\ref{cl:final}. The argument for $\frb(p)\ge  \frb(\neq^*, \tp{(\tp n)})$ is the exact dual. 
\end{proof}

\section{Nies:  answering the $\Delta$ question }
\label{s:Nies_Delta}

We use the notation in  \cite{Brendle.Brooke.ea:14}. Let  $R \subseteq  X \times Y$ be a relation between spaces $X,Y$, and  let $S = \{  \langle y,x \rangle \in Y \times X \colon \neg xR y\}$.
Suppose we have specified what it means for     objects $x$ in $X$,    $y$ in  $Y$  to be computable in a Turing oracle $A$. We denote this by for example $x\leT A$. In particular, for $A= \emptyset$ we have a notion of computable objects.

Let the variable $x$ range over $X$, and let $y$ range over $Y$. We define the highness properties

\[ \+ B(R) =   \{ A:  \,\exists y \leT A \, \fa x \ \text{computable} \ [xRy]     \} \]

\[ \+ D(R) =  \+ B(S) =   \{ A:  \,\exists x \leT A \, \fa y \ \text{computable} \ [\neg xRy]   \}  \]

\n {\it 1.}   Let $h \colon \omega \to \omega - \{0,1\} $.   Define  for $  x \in {}^\omega\omega$ and 

\n $  y \in \prod_n \{0, \ldots, h(n)-1\} \sub  {}^\omega\omega$,   

\[ x \neq^*_h  y \LR \fa^\infty n \,  [ x(n) \neq y(n)]. \]

\n {\it 2.} Let $ 0 \le p \le 1/2$. Define,   for $x,y \in {}^\omega 2$

\[ x \sim_p y \LR   \underline \rho (x \lra y)  >p, \]
where $x \lra y$ is  the set of $n$ such that $x(n) = y(n)$, and $\underline \rho$ denotes the lower density: $\ul \rho (z) = \liminf_n |z \cap n|/n$.

It may be helpful to separately state two special cases.
\begin{definition}  For a computable function $h$, we let $\+ B(\neq^*_h)$ denote  the class of oracles $A$ that compute a function $y <  h$  such that   for each computable function $x$, we have     $\fa^\infty n \, x(n) \neq y(n)$.  

For $p< 1/2$, we let $\+ B(\sim_p)$, or $\+ B(p)$ for short,  denote  the class of oracles $A$ that compute a set $y$  such that   for each computable  set $x$, we have $\ul \rho (x \lra y) > p$.  \end{definition}

Recall Definition~\ref{def:delta} and  that  for each $p$ with $0 \le p < 1/2$,  
\bc  $\Delta(A) > p \RA   A \in \+ B(\sim_p) \RA  \Delta(A) \ge p$. \ec

We show that   $\+ B(\sim_p)= \+ B(\neq^*, \tp{(\tp n)})$  for each  $p\in (0,1/2)$. In particular, $\Delta(A) >0 \RA \Delta(A) = 1/2$ so there are only two possible $\Delta $ values.

 We begin with some preliminary facts of independent interest.
 On occasion we denote a function $\lambda n. f(n)$ simply by $f(n)$. 
 \begin{lemma} \label{lem:doubleR}  \bi \item[(i)] Let $h$ be nondecreasing and $g(n) = h(2n)$. We have   $\+ B(\neq^*,h) = \+ B(\neq^*,g)$.

\item [(ii)] For each $a,b >1$ we have   $\+ B(\neq^*,{\tp{(a^n)})}= \+ B(\neq^*,{\tp{(b^n)}})$. \ei \end{lemma}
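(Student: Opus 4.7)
The plan is to transfer the argument of Lemma~\ref{lem:double}(i) from cardinal characteristics to the highness classes $\+B(\neq^*_h)$. Note first that the class $\+B(\neq^*,h)$ is monotone in the bound $h$: if $h\le h'$ pointwise and $A$ computes $y<h$ eventually different from every computable function, then the same $y$ also bears witness to $A\in\+B(\neq^*,h')$. Since $h$ is nondecreasing we have $h\le g$, so the inclusion $\+B(\neq^*,h)\subseteq\+B(\neq^*,g)$ is immediate from this monotonicity.

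For the reverse inclusion in (i), I would suppose $A\in\+B(\neq^*,g)$ via an $A$-computable $y<g$ with $\fa^\infty n\,[x(n)\neq y(n)]$ for every computable $x$. Define the $A$-computable function $y'(n)=y(\lfloor n/2\rfloor)$. To see $y'<h$, note $y'(2m+i)=y(m)<g(m)=h(2m)\le h(2m+i)$ for $i\in\{0,1\}$, using that $h$ is nondecreasing. To verify $y'$ is eventually different from every computable $x$, split $x$ into the two computable functions $x_0(m)=x(2m)$ and $x_1(m)=x(2m+1)$. By hypothesis there is $N$ so that for all $m\ge N$ we have $y(m)\neq x_i(m)$ for both $i=0,1$; this gives $y'(n)\neq x(n)$ for all $n\ge 2N$. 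Hence $A\in\+B(\neq^*,h)$.

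For (ii), iterating (i) $k$ times yields $\+B(\neq^*,h)=\+B(\neq^*,\lambda n.h(2^k n))$ for any nondecreasing $h$. Applying this with $h(n)=\tp{(a^n)}$ gives
\[
\+B(\neq^*,\tp{(a^n)})=\+B(\neq^*,\tp{(a^{2^k n})})=\+B(\neq^*,\tp{((a^{2^k})^n)}).
\]
Given $a,b>1$, choose $k$ large enough that $a^{2^k}\ge b$. Then $\tp{((a^{2^k})^n)}\ge\tp{(b^n)}$, so by monotonicity $\+B(\neq^*,\tp{(b^n)})\subseteq\+B(\neq^*,\tp{((a^{2^k})^n)})=\+B(\neq^*,\tp{(a^n)})$. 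Swapping the roles of $a$ and $b$ yields the reverse inclusion.

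The only delicate point is the bound check $y'<h$ in the proof of (i), which relies essentially on $h$ being nondecreasing; the diagonalisation part is a routine even/odd splitting that mirrors the $\hat G$ construction in Lemma~\ref{lem:double}(i). Everything else is monotonicity and the fact that iteration of $n\mapsto 2n$ amplifies a single power $a^n$ into $a^{2^k n}$, which for large $k$ dominates any $b^n$.
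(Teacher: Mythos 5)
Your proof is correct and matches the paper's argument in essentially every detail: the function $y'(n)=y(\lfloor n/2\rfloor)$ is exactly the paper's $\hat y$ with $\hat y(2n+i)=y(n)$, the even/odd splitting of $x$ into $x_0,x_1$ is the same verification of eventual difference, and part (ii) is the same iteration-plus-monotonicity argument using $a^{2^k}\ge b$. The only difference is that you spell out the bound check $y'<h$ more explicitly, which the paper leaves as an aside.
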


\begin{proof} (i)  Trivially, $h \le g$ implies that       $\+ B(\neq^*,h) \sub \+ B(\neq^*,g)$. So it suffices to show the converse inclusion $\+ B(\neq^*,h) \supseteq \+ B(\neq^*,g)$.

Let $y\leT A$, $y < g$ be a function witnessing that $A \in B(\neq^*,g)$. Let $\hat y(2n+i) = y(n)$ for $i \le 1$, so that $\hat y < h$. Given any computable function $x$, for almost every $n$ we have $x(2n) \neq y(n) $ and $x(2n+1) \neq y(n)$.  Therefore $\fa^\infty n \, [ x(n)  \neq \hat y(n)]$. Hence $\hat y$ is a witness for $A \in B(\neq^*,h)$.

(ii) is immediate from (i) by iteration using that   $a^{2^i} >b$  and $b^{2^i} >a$ for sufficiently large $i$. 
\end{proof}
 Recall Def.\ \ref{df:KhLh} of the $L_h$ and $K_h$ operators.
\begin{lemma} Let $a \in \omega - \{0\}$. We have  $\+ B( \neq^*,{\tp{(a^n)}}) \supseteq  \+ B(1/a)$.  \end{lemma}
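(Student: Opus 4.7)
The plan is to mimic, in the computable/oracle setting, the argument used for the analogous statement about $\frd$ in the previous section. Given $A \in \+ B(1/a)$, fix an $A$-computable bit sequence $y$ witnessing membership: for every computable $z \in {}^\omega 2$, $\ul \rho(z \lra y) > 1/a$. Set $h(n) = \tp{(a^n)}$, so that $\hat h(n) = a^n$ in the notation of Definition~\ref{df:KhLh}, and define $f = K_h(y)$; this is $A$-computable and $h$-bounded. I claim that $f$ witnesses $A \in \+ B(\neq^*, h)$.

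Suppose, aiming at a contradiction, that there is a computable $g < h$ with $\exists^\infty n\, f(n) = g(n)$. Let $T = \{n : f(n) = g(n)\}$, an infinite set, and let $z \in {}^\omega 2$ be the complement of $L_h(g)$; note that $z$ is computable. The key observation is the block-by-block translation: $f(n) = g(n)$ says exactly that the $n$-th block of $y$ (of length $a^n$) matches the $n$-th block of $L_h(g)$ bit for bit, i.e., $y$ and $z$ \emph{disagree} on every bit of that block.

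Now set $N_n = \sum_{m \le n} a^m = (a^{n+1}-1)/(a-1)$, the length of $y \uh N_n$ after block $n$. For $n \in T$, block $n$ contributes nothing to the agreement set $y \lra z$, so
\[ \frac{|(z \lra y) \cap [0, N_n)|}{N_n} \le \frac{N_n - a^n}{N_n} = \frac{(a^n-1)/(a-1)}{(a^{n+1}-1)/(a-1)} = \frac{a^n - 1}{a^{n+1} - 1}.\]
As $n \to \infty$ along $T$, this ratio tends to $1/a$ from below. Hence $\ul \rho(z \lra y) \le 1/a$, contradicting the choice of $y$. Therefore no such $g$ exists and $f$ has the required property.

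The argument is straightforward once one unwinds the encoding $K_h$; there is no real obstacle. The only mild subtlety is to pick the right ``dual'' sequence — taking $z$ to be the complement of $L_h(g)$ (rather than $L_h(g)$ itself) converts the assumed infinitely-often agreement of $f$ with $g$ into infinitely-often complete disagreement of $y$ with $z$, which is what forces the density of agreement down to $1/a$. Note that the case $a = 1$ is degenerate, since then both $\+ B(\neq^*, h)$ and $\+ B(1)$ are empty, so the inclusion holds trivially.
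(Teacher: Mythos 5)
Your proof is correct and follows essentially the same route as the paper's: both set the witness to $K_h(y)$, compare $y$ against the complement $1 - L_h(g)$ of the challenge, and use the block-by-block bound $(a^n-1)/(a^{n+1}-1) \to 1/a$ on the density of agreement. The paper states this directly ("for large enough $n$ there is a position in $I_n$ where they agree") while you unwind the same estimate as a contradiction argument; the content is identical.
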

\begin{proof} Let $I_m$ for $m \ge 2$  be the $m-1$-th consecutive interval of length $a^m$ in $\omega-\{0\}$, i.e.

\[ I_m = \big [\frac{a^m-1}{a-1} , \frac{a^{m+1} -1 } {a-1}\big). \]
 
Let $h(m) = \tp{a^m}$.  Let $y \leT A$ witness that $A \in \+ B(1/a)$, and let $\hat y = K_h(y)$. Given a computable function $x < h$, let $x' = 1- L_h(x)$. Since $\ul \rho( x' \lra y) > 1/a$, for large enough $n$, there is $k \in I_n$ such that $x'(i) = y(i)$. Hence we cannot have $x(n) = \hat y(n)$.  Thus $\hat y$   witnesses that $A \in \+ B( \neq^*,{\tp{(a^n)}})$.  
 \end{proof}

A similar argument shows that $\+ B( \neq^*,{\tp{\hat h(m)}}) \supseteq  \+ B(0)$ for any computable function $\hat h$ such that $\fa a \fa^\infty m \,  \hat h(m) \ge a^m$.   Now the $m$-th interval has length $\hat h (m)$.

%\begin{thm} For any $p, q$ with  $0< p < q < 0.5$ we have $\frd(p)= \frd(q)$. \end{thm}
\begin{thm} \label{thm:mainDelta} Fix any  $p\in (0,1/2)$. We have $\+ B(p)= \+ B(\neq^*, \tp{(\tp n)})$. \end{thm}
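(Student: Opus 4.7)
The plan is to mirror the proof of the preceding theorem (which established $\frd(p) = \frd(\neq^*, \tp{(\tp n)})$), translating each step from ``witness sets of small cardinality'' to ``oracles computing a single witness''.  The easy inclusion $\+ B(p) \subseteq \+ B(\neq^*, \tp{(\tp n)})$ follows immediately: pick $a$ with $1/a < p$; then trivially $\+ B(p) \subseteq \+ B(1/a)$, the preceding unlabeled lemma gives $\+ B(1/a) \subseteq \+ B(\neq^*, \tp{(a^n)})$, and Lemma~\ref{lem:doubleR}(ii) identifies this class with $\+ B(\neq^*, \tp{(\tp n)})$.

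For the hard inclusion $\+ B(\neq^*, \tp{(\tp n)}) \subseteq \+ B(p)$, fix $q \in (p, 1/2)$, choose $c$ with $q - 2/c > p$, and pick $k, L, \epsilon$ as in Claims~\ref{cl:1} and~\ref{cl:2}, with $\hat h(n) = \lfloor \tp{n/k}\rfloor$ and $u(n) = \tp{\lfloor \epsilon \hat h(n) \rfloor}$.  I would establish the chain
\[
\+ B(\neq^*, \tp{(\tp n)}) \subseteq \+ B(\not\ni^*, \tp{(L \tp n)}, L) \subseteq \+ B(\not\ni^*, u, L) \subseteq \+ B(\neq^*, \hat h, q) \subseteq \+ B(p),
\]
each link being the single-witness analog of the corresponding step among Claims~\ref{cl:final}, \ref{lem:double2}, \ref{cl:2}, \ref{cl:1}.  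For the first link (dual of Claim~\ref{cl:final}), from $y \leT A$ with $y(n) < \tp{(\tp n)}$ escaping every computable $\tp{(\tp n)}$-bounded function, set $\hat y(n) = (y(n), y(n), \ldots, y(n))$, the $L$-fold concatenation; for any computable $L$-slalom $s$ bounded by $\tp{(L \tp n)}$, the function $x_i(n) = $ ``$i$-th block of the $i$-th element of $s(n)$'' is computable and $\tp{(\tp n)}$-bounded, so $x_i(n) \neq y(n)$ a.e., whence the $i$-th element of $s(n)$ differs from $\hat y(n)$ in its $i$-th block and $\hat y(n) \notin s(n)$ a.e.  The second link is iterated interleaving, essentially verbatim from Lemma~\ref{lem:doubleR}(i) and Claim~\ref{lem:double2}.

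The main obstacle is the third link, where I would reuse the list decoding capacity theorem (Lemma~\ref{lem:ListDec}).  Given an $A$-computable $y$ bounded by $u$ that escapes every computable $L$-slalom, define $\tilde y(n) = \sigma^{\hat h(n)}_{y(n)}$ using the codewords from that theorem; this $\tilde y$ is $A$-computable and $\tp{\hat h}$-bounded.  If some computable $x < \tp{\hat h}$ had $d(x(n), \tilde y(n)) < q$ for infinitely many $n$, then the computable slalom $s(n) = \{ i : d(x(n), \sigma^{\hat h(n)}_i) < q \}$ would be $L$-bounded by list decoding and satisfy $s(n) \ni y(n)$ infinitely often, contradicting the choice of $y$; hence $\tilde y$ witnesses $A \in \+ B(\neq^*, \hat h, q)$.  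For the fourth link (dual of Claim~\ref{cl:1}), set $z = \omega - L_{\tp{\hat h}}(\tilde y)$: for any computable $w \in \cantor$, $x = K_{\tp{\hat h}}(w)$ is computable and $\tp{\hat h}$-bounded with $d(x(n), \tilde y(n)) \ge q$ a.e., so the geometric-series block-size estimate of Claim~\ref{cl:1} gives $\ul \rho(w \lra z) \ge q - 2/c > p$.  All constructions in the chain are uniformly computable, so Turing reducibility is preserved throughout.
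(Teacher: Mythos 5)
Your proposal mirrors the paper's proof essentially verbatim: the easy inclusion via the two preceding lemmas, and the hard inclusion via the same chain $\+ B(\neq^*, \tp{(\tp n)}) \subseteq \+ B\la\not\ni^*, \tp{(L \tp n)}, L\ra \subseteq \+ B\la\not\ni^*, u, L\ra \subseteq \+ B\la\neq^*, \hat h, q\ra \subseteq \+ B(p)$, using the same witness-transforming constructions (block duplication, list-decoding slaloms, complement-of-concatenation) as the paper's Claims~\ref{cl:finalR}, \ref{lem:doubleR2}, \ref{cl:BR}, \ref{cl:AR}. The argument is correct and is the same approach as the paper.
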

\begin{proof} The two foregoing lemmas imply  $\+ B(p)\sub  \+ B(\neq^*, \tp{(\tp n)})$. It remains to show the converse inclusion  $\+ B(p)\supseteq   \+ B(\neq^*, \tp{(\tp n)})$.

\begin{definition} For strings $x,y$ of length $r$, the normalised Hamming distance is defined as the proportion of bits on which $x,y$ disagree, that is, 
\[ d(x,y) = \frac 1 r |\{ i \colon x(n) (i)    \neq y(n)(i) \}| \] \end{definition}
%%%%%%%%%%%%%
\begin{definition} Let $h$ be a function of the form $2^{\hat h}$ with $\hat h \colon \, \omega \to \omega$, and let $X=Y= X_h$ be the space of $h$-bounded functions.  Let $q \in (0, 1/2)$. 
 We   define a relation on $X \times Y$ by 
    \[x \neq^*_{\hat h,q} y \LR \fa^\infty n  \, [ d(x(n), y(n)) \ge q]   \]   
 namely for a.e. $n$ the   strings $x(n)$ and $y(n)$  disagree on a proportion of at least  $q$ of the bits. We will usually write $\la \neq^*, \hat h,q \ra $ for this  relation.  \end{definition}
 The first step makes the crucial transition from the density setting to the combinatorial setting.
 \begin{claim} \label{cl:AR} Let $q \in (0, 1/2)$. For each $c\in \omega$ such that $2/c<q$,  there is $k \in \omega$ such that   \bc $\+ B(q- 2/c) \supseteq \+ B \la \neq^*, \lfloor \tp{n/k} \rfloor, q\ra$. \ec  \end{claim}
 \n To see this, let $k $ be large enough so that $\tp{1/k}-1 < \frac 1 {2c}$. Let  $\hat h(n) = \lfloor \tp{n/k} \rfloor$ and  $h = \tp{\hat h}$. Write $H(n) = \sum_{r\le  n} \hat h(r)$. Given an infinite  bit sequence, we refer to the bits with position in an interval $[H(n), H(n+1)) $ as \emph{Block}~$n$ (the first block is Block~0).  By Monin's 2016 logic blog entry, for sufficiently large $n$,  
 \[  \hat h(n+1) \le \frac 1 c H(n). \]  
 %
  
 %%%%%%%%%%%%%%
Let $ y \leT A$  be a witness   for  $A \in \+ B \la \neq^*, \lfloor \tp{n/k} \rfloor, q\ra$. Thus, $y< h$ and  $\fa^\infty n \, [d(L_h(x)(n), L_h(y)(n) \ge q]$. Let $y' = 1  -  L_h(y)$. Then  \bc $\fa^\infty n \, [d(x'(n), y'(n) )<  q]$ \ec for each computable   $x' \in \cantor$.  Suppose that  $H(n) \le m < H(n+1)$.  Then the contribution of   Block  $n$ to the proportion of  disagreement between $x', y'$  is at most $1/c$. 
So $\ul \rho (y' \lra x') > q-2/c$. Hence $y'\leT A$  is a witness for  $A\in \+ B(q-2/c)$. 
  This shows the claim. 
 
% As in Monin's entry we use the list decoding capacity theorem from the theory of error-correcting codes.  Given $q $ as above  and $L \in \omega$, for each $r$ there is a ``fairly large" set $C$ of strings of length $r$ (the  allowed code words) such that for each string, at most $L$ strings in $C$ have normalised Hamming distance less than $q$ from $\sss$. (Hence there is only a small set of strings that could be the error-corrected  version of $\sss$.) Given string $\sss$ of length $r$, let $B_q(\sss)$ denote  an  open  ball around $\sss$ in the normalised Hamming distance, namely, $B_q(\sss) = \{ \tau \in  {}^r 2 \colon \, \sigma, \tau \text{  disagree on fewer than $qr$ bits}\}$
% \begin{lemma}[List decoding] \label{lem:ListDec} Let $q\in (0,1/2)$.  There are $\epsilon >0$ and $L \in \omega$ such that for each $r$, there is a set $C$ of $\tp{\lfloor \epsilon r \rfloor }$ strings of length $r$ as follows: \bc $\forall \sss \in {}^r 2 \, [ | B_q(\sss) \cap C | \le L]$.  \ec
% \end{lemma}
%%%%%%
For $L \in \omega$, an $L$-slalom is a function $s\colon \, \omega \to \omega^{[\le L]}$, i.e.\ a function that  maps  natural numbers to sets  of natural numbers with a  size of at most $L$. 
\begin{definition}  Fix  a function $u \colon \omega \to \omega$ and $L \in \omega$. Let $X$ be the space of $L$-slaloms (or  traces)   $s$ such that $\max s(n) < u(n)$ for each $n$. Thus $s$ maps natural numbers to sets of natural numbers of size at most $L$, represented by strong indices. Let $Y$  be the set of functions such that  $y(n)  <  u (n)$ for each $n$. Define a relation on $X \times Y$ by
 \[s \not \ni^*_{u,L} y \LR \fa^\infty n [s(n) \not \ni y(n)]. \] 
  We will   write $\la \not \ni^*, u ,L \ra $ for this  relation. 
\end{definition} 

\begin{claim} \label{cl:BR} Given $q < 1/2$, let $L, \epsilon $ be as in Lemma~\ref{lem:ListDec}. Fix  a nondecreasing  computable function $\hat h$, and   let  $u(n)= \tp{\lfloor \epsilon \hat h(n) \rfloor }$. We have \bc $ \+ B \la \neq^*,  \hat h , q\ra \supseteq \+ B \la \not \ni^*, u,L \ra$. \ec 
  \end{claim}
%For  the   inequality involving $\frd$, let $G$ be a set of functions bounded by $u$ such that $|G| < \frd \la \neq^*,  \hat h , q\ra$. We show that $G$ is not a witness set  for  the right hand side $\frd \la \not \ni^*, u,L \ra$. 
%
For each $r$ of the form $\hat h(n)$ compute a set $C= C_r$ as in Lemma~\ref{lem:ListDec}. Since $|C_r| = \tp{\lfloor \epsilon r \rfloor}$  there is  a uniformly computable sequence $\seq {\sss^r_i}_{i<  \tp{\lfloor \epsilon r \rfloor}}$ listing $C_r$ in increasing lexicographical order. 
 
 %Let $\wt G = \{ \wt y  \colon \, y \in G \}$. Then  $|\wt G| = |G| < \frd \la \neq^*,  \hat h , q\ra $. So there is a function $x$ with $x(n) \in {}^{\hat h(n)}2$ for each $n$ such that for each $y \in \hat G$ we have  $\ex^\infty n \, [d(x(n), y(n)) < q]$. Let $s$ be the slalom given by 
%\[ s(n) = \{ i \colon \, d (x(n), \sss^{\hat h(n)}_i )< q  \}.\]
%Note that by the choice of the $C_r$ according to Lemma~\ref{lem:ListDec},  $s$ is an $L$-slalom.   By the definitions, for each $y \in G$ we have $\ex^\infty n \, [s(n) \ni y(n)]$. So $G$ is not a witness set for $\frd \la \not \ni^*, u,L \ra$. 
%

%For  the   inequality involving $\+ B$,  s

Suppose that $y \leT A$ is a witness   for 
$A \in \+ B \la \not \ni^*, u,L \ra$.  Let   $\wt y < h $ be the function given by $\wt y(n)= \sss^{\hat h(n)}_{y(n)}$. 
We show that $\wt y$  is a witness for  

\n  $A\in \+ B \la \neq^*,  \hat h , q\ra$. 

For a computable function $x < h$,  let $s_x$ be the computable  $L$-trace   such that
 \bc  $s_x(n) = \{i < u(n) \colon \, d(\sss^{\hat h(n)}_i, x(n) )< q\}$. \ec 

 Since $y$ is a witness for $A \in \+ B \la \not \ni^*, u,L \ra$, for almost every $n$ we have $y(n) \not \in s_x(n)$. Hence $d(\wt y(n),x(n) \ge q$, as required.

We next need an amplification tool   in the context of traces. The proof is almost verbatim the one in Lemma~\ref{lem:doubleR}(i), so we omit it.

\begin{claim} \label{lem:doubleR2}     Let $ L \in \omega$, let the  computable function $u$ be nondecreasing and let $w(n) = u(2n)$. We have  $\+ B(\la \not \ni^*,u, L \ra = \+ B\la \not \ni^*,w,L\ra$.
 \end{claim}
Iterating the claim,  starting with the function $\hat h(n) = \lfloor \tp{n/k}\rfloor$ with $k$ as in Claim~\ref{cl:AR},  we   obtain that $\+ B\la \not \ni^*,\tp{\hat h}, L \ra=  \+ B \la \not \ni^*,\tp{(L 2^n)}, L \ra$. 
It remains to verify the following, which would work for any computable function $\hat h(n)$ in place of the  $2^n$ in the exponents.
\begin{claim} \label{cl:finalR}    $\+ B \la \not \ni^*,\tp{(L 2^n)}, L \ra \supseteq \+ B(\neq^*, \tp{(\tp n)})$. \end{claim}
 Given  $n$, we write  a number $k< \tp{(L \tp n)}$ in binary with leading zeros if necessary, and so can view $k$ as a binary string of length $L \tp n$. We view such a string as    consisting  of $L$ consecutive blocks of length  $\tp n$.

Let $y$ be a witness  function for $\+ B(\neq^*, \tp{(\tp n)})$.  That is, $y< 2^{(2^n)}$ and  $\fa^\infty n \, x(n) \neq y(n)$ for each computable function $x$.    Let $y'$ be the function bounded by $2^{(L2^n)}$ such that for each $n$, each block of $y'(n)$ equals $y(n)$.  
Given a computable $L$-trace $s$ with $\max s(n) <  \tp{(L 2^n)}$, for $i< L$ let $x_i$ be the computable function  such that $x_i(n) $ is the $i$-th block of the $i$-th element of $s(n)$ (as before we may assume that each string in $s(n)$ has length $L2^n$). For sufficiently large $n$,  we have $\fa i < L \, y(n) \neq x_i(n)$.  Hence $\fa^\infty n \, s(n) \not \ni y'(n)$ and $y'$ is a witness for $A\in \+ B \la \not \ni^*,\tp{(L 2^n)}$.

We can now summarise the argument  that  $\+ B(p) \supseteq \+ B(\neq^*, \tp{(\tp n)})$. Pick $c$ large enough such that $q= p+2/c < 1/2$.  

By Claim~\ref{cl:AR} there is $k$ such that \bc $\+ B(p) \supseteq\+ B \la \neq^*, \lfloor \tp{n/k} \rfloor, q\ra$.  \ec

 By Claim~\ref{cl:BR} there are $L$, $\epsilon$ such that where $\hat h(n) =  \lfloor \tp{n/k} \rfloor$ and $u(n)= \tp{\lfloor \epsilon \hat h(n) \rfloor }$, we have \bc  $\+ B \la \neq^*,  \hat h , q\ra \supseteq\+ B \la \not \ni^*, u,L \ra$.   \ec

Applying Claim~\ref{lem:doubleR2}  sufficiently many times we  have \bc $\+ B \la \not \ni^*, u,L \ra \supseteq\+ B \la \not \ni^*, \tp{(L 2^n)} ,L \ra$.  \ec 
\n Finally, $\+ B \la \not \ni^*,\tp{(L 2^n)}, L \ra \supseteq\+ B(\neq^*, \tp{(\tp n)})$ by Claim~\ref{cl:finalR}. \end{proof}
We note that by the proofs,  both inclusions in Theorem~\ref{thm:mainDelta} are uniform in a strong sense: from a  witness $y \leT A$ for one property one can compute a witness $y'$ for the other property.

As a  corollary to Theorem~\ref{thm:mainDelta}  we obtain a proof of Proposition~\ref{prop:2gen}. If $A$ is 2-generic then $A$ is neither high nor d.n.c., so $A$ is not in $\+ B(\neq^*) = \text{high or d.n.c.}$ the class where no bound is imposed on the witness  function $A$ computes. So $A$ is not in $B(\neq^*, \tp{(\tp n)})$, hence $\Delta(A)=0$.

 %%%%%%%%%%%%%%%%%%
  \part{Reverse mathematics}
 \section{Belanger,  Nies and Shafer: the  strength of randomness existence axioms} 
 
 David Belanger, Andr\'e Nies and Paul Shafer others discussed the strength of randomness existence axioms at NUS and University of Ghent in February/March.
 
In \cite[Section.\ 9]{LogicBlog:13},   for a randomness notion $\+ C$, we defined  $\+ C_0$ to be  the system $\mathtt{RCA} + \fa X \ex Y \, Y \in \+ C^X$. For instance, $\MLR_0$ is equivalent to weak weak K\"onigs Lemma over    $\mathtt{RCA}$.  
 
 The system $\mathtt{CR}_0$ (computable randomness)  appears to be   equivalent to the seemingly weaker $\mathtt{SR}_0$ (Schnorr randomness) (\cite[Prop.\ 9.2]{LogicBlog:13}; the strength of induction axioms  that are needed  to show this  remains to be checked carefully). 
 
 \subsection*{2-randomness versus weak 2-randomness}  On the other hand $\twoR_0$ (the system for 2-randomness) is strictly stronger than 
%:
 $\WtwoR_0$  (the system for weak 2 randomness). To see this, take a weakly 2-random $Z$ that does not compute a   2-random. For instance,   any 2-random has  hyperimmune degree. Any computably dominated ML-random $Z$ is weakly 2-random and hence does not compute a 2-random. For each $n$ let $Z_n$ be the $n$-th column of $Z$, that is,   $Z_n = \{ k \colon \, \la k,n \ra \in Z\}$. Let $\+ M = ( \NN, \+ S)$ where  $\+ S$ consists of all the sets Turing below the join of finitely many columns of $Z$.  Note that $Z_n$ is weakly 2-random in any finite sum of columns not containing $Z_n$. So $\+ M$ is a model of  $\WtwoR_0$.  
 
 We can also separate the two randomness existence axioms via an interesting mathematical  consequence:  Csima and Mileti~\cite{Csima.Mileti:09} have shown that $\twoR_0$ implies the Rainsey Rambo's theorem. 
 
 \begin{prop} $\WtwoR_0$ does not imply the Rainbow Ramsey's theorem. \end{prop}
 
 \begin{proof} Joseph Miller has shown that the Rainbow Ramsey's theorem is equivalent over $\mathtt{RCA}_0  $ plus  some induction to the existence of a d.n.c.\ function relative to $\Halt$.  (Detail needed here on the construction for the forward direction: recursive in  a homogeneous set we obtain the function.) By \cite[Exercise 4.3.18]{Nies:book} there is a weakly 2-random set $Z$ that does not compute a 2-fixed point free function, and hence it computes no d.n.c.\ function relative to $\Halt$. Construct the model $\+ M \models \WtwoR_0$ from $Z$ as above. Then Rainbow Ramsey's theorem fails in $\+ M$. 
 \end{proof}

 \subsection*{Weak Demuth randomness  versus $\mathtt{WKL}$}
\begin{definition} 
	A $\DII$ function $f$ is $\w$-c.e.\ if there is a computable function $h$ such that $h(n)$ bounds the number of changes in a computable approximation to $f(n)$. % (We also say that $f$ is $h$-c.e.\ 
	
	 A \emph{Demuth test} is an effective sequence $\seq{\+ U_n}$ of effectively open ($\Sigma^0_1$) subsets of Cantor space such that:
	\begin{enumerate}
		\item For all $n$, the measure $\lambda (\+ U_n)$ of $\+ U_n$ is bounded by $2^{-n}$; and
		\item there is an $\w$-c.e.\ function mapping $n$ to a $\Sigma^0_1$ index for $\+ U_n$.
	\end{enumerate}
	  A set (an element of Cantor space) $X$ is captured by a Demuth test $\seq{\+ U_n}$ if $X\in \+ U_n$ for infinitely many $n$. A set is \emph{Demuth random} if it is not captured by any Demuth test. 
	  
	   A set $Z$ \emph{weakly passes} a test $\seq{\+ U_n}$ if $Z \not \in \bigcap_n \+ U_n$. A set $Z$ is \emph{weakly Demuth random} if it weakly  passes every Demuth test.  

\end{definition}  
Figueira et al.\  \cite{Figueira.Hirschfeldt.ea:15} introduced balanced randomness, a notion in between weak Demuth randomness and ML-randomness where the $m$-th test component $\+ U_m$ of a test  can be ``replaced"   at most  $O(2^m)$ times. This notion is still stronger than Oberwolfach randomness.  More generally, for an order function $h$ we say that $Z$ is \emph{$h$- weak Demuth random}  if it weakly passes each  Demuth test where the component $\+ U_m$ can be replaced at most $h(m)$ times.
 
  It was observed in \cite{Bienvenu.Greenberg.ea:16} that the methods of \cite{Figueira.Hirschfeldt.ea:15} show the following.
 
 \begin{prop} Let $Z= Z_0 \oplus Z_1$ be ML-random. Then $Z_0$ or $Z_1$ is balanced random, and in fact $O(r(n)2^{n})$-weak Demuth random for some order function $r$. \end{prop} 
 
 \begin{proof}	 We use the terminology of \cite{Figueira.Hirschfeldt.ea:15}. If $Z_0$ is $\omega$-c.e.\ tracing then any weak Demuth test  (which is given by an $\omega$-c.e.\ function) can be converted into a ML-test relative to $Z_0$. So by van Lambalgen theorem, $Z_1$ is weak Demuth random. 
 
 If $Z_0$ is not  $\omega$-c.e.\ tracing then by \cite[Thm.\ 23]{Figueira.Hirschfeldt.ea:15} $Z_0$ is  $O(r(n)2^{n})$-weak Demuth random for some order function $r$, and in particular, balanced random. 
 \end{proof}
 
 So weak weak K\"onigs lemma plus sufficient induction implies the axiom for balanced randomness. 
 \begin{definition} For a computable function  $h$,  we say that a set $Z$ is $h$-c.e.\ if there is a computable approximation such that $Z\uhr n$ changes at most $h(n)$ times. For instance, each left-c.e.\ set is $2^n$-c.e.  \end{definition} 
Such a set is clearly not $h$-weak Demuth random.   So the following $\omega$-model $\+ M$ satisfies  $\mathtt{WKL}$ but not  the axiom for weak $h$-Demuth randomness, for any function $h$ that dominates each function $\lambda n . k^n$ , such as $h(n) = 2^{n \cdot p(n)}$ for some  order function~$p$. 
% \begin{prop} For an appropriate primitive recursive function $h$, there  is an $\omega$-model $\+ M$  of $\mathtt{WKL}$ such that each set of $\+ M$  is superlow and $h$-c.e. \end{prop} 
% 
% \begin{proof}	Let $\+ P^X$ be the $\PPI$ class relative to $X$ of sets that are PA-complete in $X$. Over  $\mathtt{RCA}$, the axiom  $\mathtt{WKL}$ is equivalent to $\fa X \, \+ P^X \neq \ES$. 
% 
% Let $K^Y$ denote the   universal c.e.\  operator $\{\la e, x \ra \colon x \in W^Y_e\}$. Let $W^X$ be the c.e.\ operator defined recursively as follows. \bi \item For $n=3k$ let $W^X(n) = X(k)$.\item  For $n=3k+1$ let $W^X(n)= 1$ iff $k \in K^X$. \item For $n=3k+2$ let $W^X(n) =1$ iff $Q_\tau =\ES$, where $\tau$ is the binary string such that $k+1$ has the binary expansion $1\tau$, and \bc $Q_\tau = \{ Z \in \+ P \colon \, \fa e. \tau(e) =0 [ e \not \in W^Z]\}$. \ec \ei
% Now for each $Y$ we can run the proof of the superlow basis theorem with the operator $W$ to obtain a set $S(Y)$ such that $W^{S(Y)}$ is left-c.e.\ relative to $Y$. Let $\+ M$ be the $\omega$ model with second-order part the Turing downward closure of $\{\ES, S(\ES), S(S(\ES)), \ldots \}$. 
% 
% \end{proof}
 \begin{prop} There  is an $\omega$-model $\+ M$  of $\mathtt{WKL}$ such that each set of $\+ M$  is superlow and $k^n$-c.e.\  for some $k \in \NN$.  \end{prop} 
 
 \begin{proof}	Let $\+ S(X)$ be the $\PPI$ class relative to $X$ of sets that are PA-complete in $X$. Over  $\mathtt{RCA}$, the axiom  $\mathtt{WKL}$ is equivalent to $\fa X \, \+ S(X) \neq \ES$.

 Let $\+ Q$ be the $\PPI$ class consisting of the sets $Y$ such that $Y_{i+1} \in \+ S({\bigoplus_{k \le i} Y_k})$ for each $i$; here $Y_i = \{ n \colon \, \la i,n\ra \in Y\}$ is the $i$-th column of $Y$. 
 If $Y \in \+ Q$ then the Turing ideal  generated by the columns of $Y$ determines an $\omega$-model $\+ M$  of $\mathtt{WKL}$.

 Let $X \to  W^X$ be the c.e.\ operator  such that 
 
 \bc $2^e (2n+1) \in W^X \LR  \Phi_e^X(n) = 1\}$. \ec
 
 By the superlow basis theorem as stated  in  \cite[1.8.38]{Nies:book}, but with the operator $W$ instead of the domain of the usual Turing jump $J$, there is $Y \in  \+ Q$ such that $W^Y$ is left-c.e.
 
 Suppose that $R = \Phi_e^Y$. Since $W^Y$ is left-c.e.,  $R$ is $2^{2^e (2n+1)}$-c.e., and hence $k^n$-c.e. where $k= 2^{2^{e+2}}$. Thus each set of $\+ M$ is $k^n$-c.e.\  for some $k \in \NN$. 
 
 Clearly  $Y' \le_m W^Y$. So $Y$ is superlow. 
 \end{proof}

 \section{Belanger,  Nies and Shafer: the  strength of randomness existence axioms} 
 
 David Belanger, Andr\'e Nies and Paul Shafer others discussed the strength of randomness existence axioms at NUS and University of Ghent in February/March.
 
In \cite[Section.\ 9]{LogicBlog:13},   for a randomness notion $\+ C$, we defined  $\+ C_0$ to be  the system $\mathtt{RCA} + \fa X \ex Y \, Y \in \+ C^X$. For instance, $\MLR_0$ is equivalent to weak weak K\"onigs Lemma over    $\mathtt{RCA}$.  
 
 The system $\mathtt{CR}_0$ (computable randomness)  appears to be   equivalent to the seemingly weaker $\mathtt{SR}_0$ (Schnorr randomness) (\cite[Prop.\ 9.2]{LogicBlog:13}; the strength of induction axioms  that are needed  to show this  remains to be checked carefully). 
 
 \subsection*{2-randomness versus weak 2-randomness}  On the other hand $\twoR_0$ (the system for 2-randomness) is strictly stronger than 
%:
 $\WtwoR_0$  (the system for weak 2 randomness). To see this, take a weakly 2-random $Z$ that does not compute a   2-random. For instance,   any 2-random has  hyperimmune degree. Any computably dominated ML-random $Z$ is weakly 2-random and hence does not compute a 2-random. For each $n$ let $Z_n$ be the $n$-th column of $Z$, that is,   $Z_n = \{ k \colon \, \la k,n \ra \in Z\}$. Let $\+ M = ( \NN, \+ S)$ where  $\+ S$ consists of all the sets Turing below the join of finitely many columns of $Z$.  Note that $Z_n$ is weakly 2-random in any finite sum of columns not containing $Z_n$. So $\+ M$ is a model of  $\WtwoR_0$.  
 
 We can also separate the two randomness existence axioms via an interesting mathematical  consequence:  Csima and Mileti~\cite{Csima.Mileti:09} have shown that $\twoR_0$ implies the Rainsey Rambo's theorem. 
 
 \begin{prop} $\WtwoR_0$ does not imply the Rainbow Ramsey's theorem. \end{prop}
 
 \begin{proof} Joseph Miller has shown that the Rainbow Ramsey's theorem is equivalent over $\mathtt{RCA}_0  $ plus  some induction to the existence of a d.n.c.\ function relative to $\Halt$.  (Detail needed here on the construction for the forward direction: recursive in  a homogeneous set we obtain the function.) By \cite[Exercise 4.3.18]{Nies:book} there is a weakly 2-random set $Z$ that does not compute a 2-fixed point free function, and hence it computes no d.n.c.\ function relative to $\Halt$. Construct the model $\+ M \models \WtwoR_0$ from $Z$ as above. Then Rainbow Ramsey's theorem fails in $\+ M$. 
 \end{proof}

 \subsection*{Weak Demuth randomness  versus $\mathtt{WKL}$}
\begin{definition} 
	A $\DII$ function $f$ is $\w$-c.e.\ if there is a computable function $h$ such that $h(n)$ bounds the number of changes in a computable approximation to $f(n)$. % (We also say that $f$ is $h$-c.e.\ 
	
	 A \emph{Demuth test} is an effective sequence $\seq{\+ U_n}$ of effectively open ($\Sigma^0_1$) subsets of Cantor space such that:
	\begin{enumerate}
		\item For all $n$, the measure $\lambda (\+ U_n)$ of $\+ U_n$ is bounded by $2^{-n}$; and
		\item there is an $\w$-c.e.\ function mapping $n$ to a $\Sigma^0_1$ index for $\+ U_n$.
	\end{enumerate}
	  A set (an element of Cantor space) $X$ is captured by a Demuth test $\seq{\+ U_n}$ if $X\in \+ U_n$ for infinitely many $n$. A set is \emph{Demuth random} if it is not captured by any Demuth test. 
	  
	   A set $Z$ \emph{weakly passes} a test $\seq{\+ U_n}$ if $Z \not \in \bigcap_n \+ U_n$. A set $Z$ is \emph{weakly Demuth random} if it weakly  passes every Demuth test.  

\end{definition}  
Figueira et al.\  \cite{Figueira.Hirschfeldt.ea:15} introduced balanced randomness, a notion in between weak Demuth randomness and ML-randomness where the $m$-th test component $\+ U_m$ of a test  can be ``replaced"   at most  $O(2^m)$ times. This notion is still stronger than Oberwolfach randomness.  More generally, for an order function $h$ we say that $Z$ is \emph{$h$- weak Demuth random}  if it weakly passes each  Demuth test where the component $\+ U_m$ can be replaced at most $h(m)$ times.
 
  It was observed in \cite{Bienvenu.Greenberg.ea:16} that the methods of \cite{Figueira.Hirschfeldt.ea:15} show the following.
 
 \begin{prop} Let $Z= Z_0 \oplus Z_1$ be ML-random. Then $Z_0$ or $Z_1$ is balanced random, and in fact $O(r(n)2^{n})$-weak Demuth random for some order function $r$. \end{prop} 
 
 \begin{proof}	 We use the terminology of \cite{Figueira.Hirschfeldt.ea:15}. If $Z_0$ is $\omega$-c.e.\ tracing then any weak Demuth test  (which is given by an $\omega$-c.e.\ function) can be converted into a ML-test relative to $Z_0$. So by van Lambalgen theorem, $Z_1$ is weak Demuth random. 
 
 If $Z_0$ is not  $\omega$-c.e.\ tracing then by \cite[Thm.\ 23]{Figueira.Hirschfeldt.ea:15} $Z_0$ is  $O(r(n)2^{n})$-weak Demuth random for some order function $r$, and in particular, balanced random. 
 \end{proof}
 
 So weak weak K\"onigs lemma plus sufficient induction implies the axiom for balanced randomness. 
 \begin{definition} For a computable function  $h$,  we say that a set $Z$ is $h$-c.e.\ if there is a computable approximation such that $Z\uhr n$ changes at most $h(n)$ times. For instance, each left-c.e.\ set is $2^n$-c.e.  \end{definition} 
Such a set is clearly not $h$-weak Demuth random.   So the following $\omega$-model $\+ M$ satisfies  $\mathtt{WKL}$ but not  the axiom for weak $h$-Demuth randomness, for any function $h$ that dominates each function $\lambda n . k^n$ , such as $h(n) = 2^{n \cdot p(n)}$ for some  order function~$p$. 
% \begin{prop} For an appropriate primitive recursive function $h$, there  is an $\omega$-model $\+ M$  of $\mathtt{WKL}$ such that each set of $\+ M$  is superlow and $h$-c.e. \end{prop} 
% 
% \begin{proof}	Let $\+ P^X$ be the $\PPI$ class relative to $X$ of sets that are PA-complete in $X$. Over  $\mathtt{RCA}$, the axiom  $\mathtt{WKL}$ is equivalent to $\fa X \, \+ P^X \neq \ES$. 
% 
% Let $K^Y$ denote the   universal c.e.\  operator $\{\la e, x \ra \colon x \in W^Y_e\}$. Let $W^X$ be the c.e.\ operator defined recursively as follows. \bi \item For $n=3k$ let $W^X(n) = X(k)$.\item  For $n=3k+1$ let $W^X(n)= 1$ iff $k \in K^X$. \item For $n=3k+2$ let $W^X(n) =1$ iff $Q_\tau =\ES$, where $\tau$ is the binary string such that $k+1$ has the binary expansion $1\tau$, and \bc $Q_\tau = \{ Z \in \+ P \colon \, \fa e. \tau(e) =0 [ e \not \in W^Z]\}$. \ec \ei
% Now for each $Y$ we can run the proof of the superlow basis theorem with the operator $W$ to obtain a set $S(Y)$ such that $W^{S(Y)}$ is left-c.e.\ relative to $Y$. Let $\+ M$ be the $\omega$ model with second-order part the Turing downward closure of $\{\ES, S(\ES), S(S(\ES)), \ldots \}$. 
% 
% \end{proof}
 \begin{prop} There  is an $\omega$-model $\+ M$  of $\mathtt{WKL}$ such that each set of $\+ M$  is superlow and $k^n$-c.e.\  for some $k \in \NN$.  \end{prop} 
 
 \begin{proof}	Let $\+ S(X)$ be the $\PPI$ class relative to $X$ of sets that are PA-complete in $X$. Over  $\mathtt{RCA}$, the axiom  $\mathtt{WKL}$ is equivalent to $\fa X \, \+ S(X) \neq \ES$.

 Let $\+ Q$ be the $\PPI$ class consisting of the sets $Y$ such that $Y_{i+1} \in \+ S({\bigoplus_{k \le i} Y_k})$ for each $i$; here $Y_i = \{ n \colon \, \la i,n\ra \in Y\}$ is the $i$-th column of $Y$. 
 If $Y \in \+ Q$ then the Turing ideal  generated by the columns of $Y$ determines an $\omega$-model $\+ M$  of $\mathtt{WKL}$.

 Let $X \to  W^X$ be the c.e.\ operator  such that 
 
 \bc $2^e (2n+1) \in W^X \LR  \Phi_e^X(n) = 1\}$. \ec
 
 By the superlow basis theorem as stated  in  \cite[1.8.38]{Nies:book}, but with the operator $W$ instead of the domain of the usual Turing jump $J$, there is $Y \in  \+ Q$ such that $W^Y$ is left-c.e.
 
 Suppose that $R = \Phi_e^Y$. Since $W^Y$ is left-c.e.,  $R$ is $2^{2^e (2n+1)}$-c.e., and hence $k^n$-c.e. where $k= 2^{2^{e+2}}$. Thus each set of $\+ M$ is $k^n$-c.e.\  for some $k \in \NN$. 
 
 Clearly  $Y' \le_m W^Y$. So $Y$ is superlow. 
 \end{proof}

\section{Carlucci: Bounded Hindman's Theorem and Increasing Polarized Ramsey's Theorem}
The following results were proved after reading the paper {\em Effectiveness of Hindman's Theorem
for bounded sums} by Dzhafarov, Jockusch, Solomon and Westrick, \cite{DJSW:16}. % (references are located at the end of this entry). 

The following natural restriction of Hindman's Theorem to sums with a bounded number of terms
was discussed by Blass in \cite{Bla:05} and first studied from a Reverse Mathematics perspective 
by Dzhafarov et alii in \cite{DJSW:16}.

\begin{definition}[Hindman's Theorem with bounded sums]
$\mathsf{HT}^{\leq n}_k$ states that for every coloring $f:\mathbb{N}\rightarrow k$ there exists an infinite set $H$ 
such that $FS^{\leq n}(H)$ is monochromatic for $f$, where $FS^{\leq n}(H)$ denotes the 
set of all finite sums of at most $n$ distinct members of $H$. 
We denote $\forall k \ \mathsf{HT}^{\leq n}_k$ by $\mathsf{HT}^{\leq n}$.
\end{definition}

The main results in \cite{DJSW:16} are (1) that $\mathsf{HT}^{\leq 2}_2$ is unprovable 
in $\mathsf{RCA}_0$ and (2) that $\mathsf{HT}^{\leq 3}_3$ proves $\mathsf{ACA}_0$ over $\mathsf{RCA}_0$. 

\bigskip
The following version of Ramsey's Theorem is introduced in \cite{Dza-Hir:11}.

\begin{definition}[Increasing Polarized Ramsey Theorem]
$\mathsf{IPT}^n_k$ is the following principle: for every $f:[\mathbb{N}]^n\rightarrow k$ there exists a sequence
$\langle H_1,\dots,H_n\rangle$ of infinite sets and  $c < k$ such that
for all increasing tuple $(x_1,\dots,x_n)\in H_1\times\dots\times H_n$ we have $f(x_1,\dots,x_n)=c$.
The sequence $\langle H_1,\dots,H_n\rangle$ is called increasing p-homogeneous for $f$.
We denote $\forall k \ \mathsf{IPT}^n_k$ by $\mathsf{IPT}^n$.
\end{definition}

We first prove that $\mathsf{HT}^{\leq 2}_5$ implies $\mathsf{IPT}^2_2$ over $\mathsf{RCA}_0$
by a direct combinatorial argument.

\begin{proposition}\label{prop:ht25_to_ipt22} 
$\mathsf{RCA}_0 \vdash \mathsf{HT}^{\leq 2}_5 \rightarrow \mathsf{IPT}^2_2$
\end{proposition}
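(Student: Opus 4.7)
The plan is to encode $f$ into a $5$-coloring $g$ of $\NN$ so that any set $H$ monochromatic for $g$ on $FS^{\leq 2}(H)$ carries enough binary-arithmetic structure to yield an increasing $p$-homogeneous pair. For $n > 0$, let $\mu(n)$ and $\lambda(n)$ denote the positions of the lowest and highest $1$-bits in $n$. Define $g(n) = 4$ if $n$ is $0$ or a power of $2$, and $g(n) = 2(\mu(n) \bmod 2) + f(\mu(n), \lambda(n))$ otherwise, giving a coloring into $\{0,1,2,3,4\}$. Applying $\mathsf{HT}^{\leq 2}_5$ produces an infinite $H$ and a color $c$ with $g \equiv c$ on $FS^{\leq 2}(H)$. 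Since $2^a + 2^b$ is never a power of $2$ for $a \neq b$, the color $c = 4$ is excluded, so $c = 2\beta + a$ with $\beta, a \in \{0,1\}$; consequently $\mu \equiv \beta \pmod 2$ and $f(\mu, \lambda) = a$ uniformly on $H$ and on all sums $h + h'$ of distinct elements of $H$.

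The crucial step, which I expect to be the main obstacle, is to show that $\mu$ is unbounded on $H$. Supposing otherwise, a first pigeonhole gives an infinite $H^{*} \sub H$ on which $\mu = \mu_0$ is constant; writing each $h \in H^{*}$ as $2^{\mu_0}(1 + 2 r_h)$ and applying a second pigeonhole on $r_h \bmod 2$ yields an infinite $H^{**} \sub H^{*}$ on which $r_h$ has fixed parity. The identity $h + h' = 2^{\mu_0 + 1}(1 + r_h + r_{h'})$ then makes $1 + r_h + r_{h'}$ odd, so $\mu(h + h') = \mu_0 + 1$ for any distinct $h, h' \in H^{**}$. This contradicts the parity constraint $\mu(h + h') \equiv \mu_0 \pmod 2$ inherited from $\mathsf{HT}^{\leq 2}_5$; the purpose of placing $\mu \bmod 2$ into $g$ is precisely to force this clash. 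Both pigeonholes are $\Sigma^0_1$ selections inside already-infinite sets, so the argument stays inside $\mathsf{RCA}_0$.

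With $\mu$ unbounded on $H$, I will recursively thin $H$ to a sparse subsequence $h'_1 < h'_2 < \cdots$ with $\mu(h'_{i+1}) > \lambda(h'_i)$. Unboundedness supplies an appropriate $h'_{i+1} \in H$ at each step, and the inequality $h'_{i+1} \geq 2^{\mu(h'_{i+1})} > 2^{\lambda(h'_i)} \geq h'_i$ makes the sequence strictly increasing. Sparseness makes the binary supports of the $h'_i$ pairwise disjoint, so $\mu(h'_i + h'_j) = \mu(h'_i)$ and $\lambda(h'_i + h'_j) = \lambda(h'_j)$ for $i < j$, and the $\mathsf{HT}^{\leq 2}_5$ conclusion then gives $f(\mu(h'_i), \lambda(h'_j)) = a$ for all $i \leq j$.

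To finish, set $H_1 = \{\mu(h'_i) : i \in \NN\}$ and $H_2 = \{\lambda(h'_i) : i \in \NN\}$, both infinite and strictly increasing by sparseness. The only IPT case needing verification is $\alpha = \mu(h'_i)$, $\beta = \lambda(h'_j)$ with $\alpha < \beta$: when $i > j$, sparseness forces $\alpha > \lambda(h'_{i-1}) \geq \lambda(h'_j) = \beta$, contradicting $\alpha < \beta$ and making the case vacuous; the case $i \leq j$ was handled above. Hence $\langle H_1, H_2 \rangle$ is increasing $p$-homogeneous for $f$ with color $a$.
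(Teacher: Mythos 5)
Your proof takes a genuinely different route from the paper's. The paper works in the base-$3$ expansion with digits in $\{1,2\}$ and uses the value $i(n)\in\{1,2\}$ of the least significant digit as a toggle: if two elements share the same lowest-digit position, adding them flips $i$, which gives a one-line proof that the lowest-digit position is injective (hence unbounded) on $H$. You work in ordinary base $2$, where the least significant $1$-bit carries no extra information, and instead encode the \emph{parity of its position} $\mu(n)$; the addition carry then supplies the toggle. Both encodings give $5$ colors and yield the apartness needed to sparsify, and the final extraction of $H_1,H_2$ and the case split on $i\leq j$ vs.\ $i>j$ is the same in both proofs.

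The one genuine gap is in how you derive unboundedness of $\mu$ on $H$. You apply the infinite pigeonhole principle $\mathsf{RT}^1_k$ first with $M+1$ classes (where $M$ is the supposed bound on $\mu$) and then with $2$ classes. But $M$ is a witness produced inside the argument and may be nonstandard; $\mathsf{RT}^1_k$ uniformly in $k$ is, by Hirst's theorem, equivalent to $B\Sigma^0_2$ over $\mathsf{RCA}_0$ and hence not available. This matters because the surrounding discussion in the paper is explicitly tracking $B\Sigma^0_2$. The fix is easy, since you never need an infinite monochromatic subset — two distinct $h,h'\in H$ with the same pair $(\mu(h),\,r_h\bmod 2)$ already give the contradiction. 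Either apply finite pigeonhole to the first $2(M+1)+1$ elements of $H$, or, closer to the paper's spirit, prove directly that $h\mapsto(\mu(h),\,r_h\bmod 2)$ is injective on $H$: a collision would, by your carry computation, force $\mu(h+h')=\mu(h)+1$, violating the parity constraint that $FS^{\leq 2}(H)$ is monochromatic. Injectivity, together with the second coordinate taking only two values, gives unboundedness of $\mu$ on $H$ within $\mathsf{RCA}_0$. With that adjustment the argument is correct.
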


\begin{proof}
We use the following notation: for $n\in \mathbb{N}$, 
if $n = i_0\cdot 3^{k_0} + \dots + i_t\cdot 3^{k_t}$ with $k_0 < \dots < k_t$
and $i_0,\dots,i_t \in \{1,2\}$,
we denote $k_0$ by $\lambda(n)$, 
$k_t$ by $\mu(n)$, and $i_0$ by $i(n)$. 
The following elementary properties hold (see \cite{DJSW:16}, Theorem 3.1):

(P1) If $\lambda(n) < \lambda(m)$ then $\lambda(n+m)=\lambda(n)$ and $i(n+m)=i(n)$.

(P2) If $\lambda(n)=\lambda(m)$ and $i(n)=i(m)=1$ then $\lambda(n+m)=\lambda(n)$ and $i(n+m)=2$.

(P3) If $\lambda(n)=\lambda(m)$ and $i(n)=i(m)=2$ then $\lambda(n+m)=\lambda(n)$ and $i(n+m)=1$.

(P4) If $\mu(n) < \lambda(m)$ then $\lambda(n+m)=\lambda(n)$ and $\mu(n+m)=\mu(m)$.

\bigskip
Let $f:[\mathbb{N}]^2\rightarrow 2$ be given. Define $g:\mathbb{N}\rightarrow 5$ as follows.

$$
g(n):=
\begin{cases}
0 & \mbox{ if  } n = i\cdot 3^t, i \in \{1,2\},\\
1+f(\lambda(n),\mu(n)) & \mbox{ if } n \neq i\cdot 3^t \wedge i(n)=1,\\
3+f(\lambda(n),\mu(n)) & \mbox{ if } n \neq i\cdot 3^t \wedge i(n)=2.\\
\end{cases}
$$
Note that $g$ is well-defined since $\lambda(n)<\mu(n)$ if $n$ is not of the form $i\cdot 3^t$, $i\in\{1,2\}$.

Let $H$ witnessing $\mathsf{HT}^{\leq 2}_5$ for $g$ be an infinite set such that $FS^{\leq 2}(H)$ is monochromatic under $g$. 

First, the color of $FS^{\leq 2}(H)$ cannot be $0$: the set $H$ cannot contain two terms $3^p,3^q$ with $p < q$
since their sum is not of the form $i\cdot 3^r$, and it cannot contain two terms $2\cdot 3^p, 2\cdot 3^q$ 
with $p < q$ since their sum is not of the form $i\cdot 3^r$.

Second, for all $h,h'\in FS^{\leq 2}(H)$, $i(h)=i(h')$:\footnote{This is Property (P5) of \cite{DJSW:16}
strengthened to $FS^{\leq 2}(H)$ instead of $H$. Note that this is necessary for its application 
just below establishing (P6), and the same is true in the setting of \cite{DJSW:16}, which needs a minor correction.}
$i(n)=1$ implies $g(n)\in\{1,2\}$ and $i(n)=2$ implies $g(n)\in\{3,4\}$. 
Then also the following property (P6) holds under the assumption that $FS^{\leq 2}(H)$ 
(rather than $FS^{\leq 3}(H)$, as in \cite{DJSW:16}) 
is monochromatic for $g$: for all $k\geq 0$ there is at most
one $n\in H$ such that $\lambda(n)=k$. Suppose otherwise as witnessed by $h\neq h'$ in $H$ with $\lambda(h)=\lambda(h')$. 
Since we also have $i(h)=i(h')$, it follows that $i(h+h')\neq i(h)=i(h')$, contra (P5).

Hence we can sparsify $H$ (computably) so as to ensure the following 

\begin{center}
(Apartness Condition): 
if $h < h'$ are in $H$ then $\mu(h)<\lambda(h')$. 
\end{center}

Assume without loss of generality that $H$ satisfies the apartness condition. 
Assume without loss of generality that the value $i(h)$ for $h\in H$ is $1$.
Let $c\in \{1,2\}$ be such that $FS^{\leq 2}(H)$ has color $c$ under $g$. 

Let 
$$H_1 := \{\lambda(n)\,:\, n \in H\}$$ 
and 
$$H_2 := \{\mu(n)\,:\, n \in H\}.$$ 

We claim that $\langle H_1,H_2\rangle$ is increasing p-homogeneous for $f$. 

First observe that, letting $H=\{h_1,h_2,\dots\}_<$, we have
$H_1 = \{ \lambda(h_1),\lambda(h_2),\dots\}_<$ and 
$H_2 = \{ \mu(h_1),\mu(h_2),\dots\}_<$. 
This is so because $\lambda(h_1)<\mu(h_1)<\lambda(h_2)<\mu(h_2)<\dots$ by the apartness condition 
and the fact that the color is not $0$.

Then we claim that $f(x_1,x_2) = c-1$ for every increasing pair $(x_1,x_2)\in H_1\times H_2$.
Note that $(x_1,x_2) = (\lambda(h_i),\mu(h_j))$ for some $i \leq j$. 
If $i=j$ we have 
$$ c = g(h_i) = 1+f(\lambda(h_i),\mu(h_i)),$$
and if $i<j$ we have
$$c= g(h_i+h_j) = 1 + f(\lambda(h_i+h_j),\mu(h_i+h_j)) = 1 + f(\lambda(h_i),\mu(h_j)),$$
since $FS^{\leq 2}(H)$ is monochromatic for $g$ with color $c$.
Thus, in any case
$$c = 1+f(\lambda(h_i),\mu(h_j)) = 1 + f(x_1,x_2).$$
This shows that $\langle H_1,H_2\rangle$ is increasing p-homogeneous of color $c-1$ for~$f$.
\end{proof}

\subsection*{Discussion}
Proposition~\ref{prop:ht25_to_ipt22} should be compared with Corollary 2.4 of \cite{DJSW:16}: 
$\mathsf{RCA}_0 + B\Sigma^0_2 +\mathsf{IPT}^2_2  \vdash \mathsf{SRT}^2_2$. 
Proposition~\ref{prop:ht25_to_ipt22} has the following corollaries.

\begin{corollary}
Over $\RCA$, $\mathsf{HT}^{\leq 2}_5$ implies $\mathsf{SRT}^2_2$. 
\end{corollary}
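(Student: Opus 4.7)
My plan is to apply Proposition~\ref{prop:ht25_to_ipt22} to get $\mathsf{IPT}^2_2$ and then carefully exploit stability to bypass the use of $B\Sigma^0_2$ present in Corollary 2.4 of \cite{DJSW:16}. The key point is that the increasing p-homogeneous sets delivered by $\mathsf{IPT}^2_2$ give us, for free, that the $\Delta^0_2$ limit coloring takes a constant value on a specific infinite set, so we do not need to first apply $\mathsf{RT}^1_2$ to a $\Delta^0_2$ coloring (which is where the $B\Sigma^0_2$ hypothesis enters the standard argument).

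First, let $f:[\NN]^2\to 2$ be a stable $2$-colouring. Applying Proposition~\ref{prop:ht25_to_ipt22} using $\mathsf{HT}^{\leq 2}_5$ yields an infinite $H_1$, an infinite $H_2$ and $c \in \{0,1\}$ such that $f(x_1,x_2) = c$ for every increasing pair $(x_1,x_2)\in H_1 \times H_2$. For every $x \in H_1$ the stability of $f$ guarantees that $\lim_{y\to\infty} f(x,y)$ exists, and this limit must equal $c$ because $f(x,y) = c$ on the cofinal set $\{y\in H_2 : y>x\}$. Thus $H_1$ is an infinite set on which the limit colouring is constantly $c$, and moreover this constancy is witnessed uniformly by the infinite set $H_2$.

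Second, I would construct the required $f$-homogeneous set $H\subseteq H_1$ of colour $c$ by $\Sigma^0_1$-primitive recursion available in $\mathsf{RCA}_0$. Enumerate $H_1$ in increasing order and set $x_0$ to be its least element. Given $x_0 < \dots < x_{n-1}$ in $H_1$, let $x_n$ be the least element of $H_1$ strictly above $x_{n-1}$ satisfying $f(x_i,x_n)=c$ for all $i<n$. Existence of $x_n$ follows at once: each of the $n$ sets $\{y : f(x_i,y)\neq c\}$ is finite by the previous paragraph, so their union is finite, and any sufficiently large element of $H_1$ works. Setting $H=\{x_n : n\in\NN\}$, the construction guarantees that $H$ is infinite and $f$-homogeneous of colour $c$.

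The only real obstacle is verifying that this recursion is legal in $\mathsf{RCA}_0$. At stage $n$ one performs a bounded conjunction over the $n$ previously chosen elements, and the defining statement for $x_n$ is $\Sigma^0_1$; its truth is reduced to the fact that a finite union of finite sets is finite, which is routinely available in $\mathsf{I}\Sigma^0_1$. So the whole construction is carried out with only the induction already present in $\mathsf{RCA}_0$, and no appeal to $B\Sigma^0_2$ is needed. This is precisely the gain from starting with $\mathsf{IPT}^2_2$ together with a stable hypothesis rather than from an arbitrary $\Delta^0_2$ limit colouring, and it yields $\mathsf{HT}^{\leq 2}_5 \vdash_{\mathsf{RCA}_0} \mathsf{SRT}^2_2$ as claimed.
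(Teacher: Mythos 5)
Your first step — applying Proposition~\ref{prop:ht25_to_ipt22} to get $\mathsf{IPT}^2_2$, and then observing that $H_1$ is limit-homogeneous for the stable colouring $f$ (which is exactly an instance of $\mathsf{D}^2_2$) — is correct. The gap is in the second step, in the sentence ``each of the $n$ sets $\{y : f(x_i,y)\neq c\}$ is finite by the previous paragraph, so their union is finite, and any sufficiently large element of $H_1$ works.'' The finiteness of each set $\{y : f(x_i,y)\neq c\}$ is precisely the $\Sigma^0_2$ statement $\exists t_i\,\forall y\geq t_i\,(f(x_i,y)=c)$. To conclude that the union is bounded you need a single bound $T$ dominating all the $t_i$ for $i<n$; that inference from $\forall i<n\,\exists t_i\,\theta(i,t_i)$ with $\theta$ a $\Pi^0_1$ formula to $\exists T\,\forall i<n\,\exists t_i<T\,\theta(i,t_i)$ is exactly an instance of $B\Pi^0_1$, which is equivalent to $B\Sigma^0_2$ over $\mathsf{RCA}_0$ and is \emph{not} a consequence of $I\Sigma^0_1$. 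The appeal to $H_2$ does not repair this: knowing $f(x_i,b)=c$ for all $b\in H_2$ with $b>x_i$ establishes that $\lim_y f(x_i,y)=c$, but it gives no bound on where the tail of $f(x_i,\cdot)$ actually stabilises, so $H_2$ does not furnish a uniform witness for the thresholds.

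In effect, your construction is the standard reduction of $\mathsf{SRT}^2_2$ to $\mathsf{D}^2_2$, and removing $B\Sigma^0_2$ from that reduction is precisely the hard theorem of Chong, Lempp and Yang (Theorem~1.4 of \cite{Cho-Lem-Yan:10}); their proof proceeds by first showing $\mathsf{D}^2_2$ itself implies enough collection. The paper's proof avoids the issue by citing that result after reducing, via Dzhafarov--Hirst (\cite{Dza-Hir:11}, Proposition~3.5), from $\mathsf{IPT}^2_2$ to $\mathsf{D}^2_2$. Your claim that $I\Sigma^0_1$ alone settles the totality of the recursion is therefore a genuine error, and without it the proposal does not establish the corollary.
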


\begin{proof}
Dzhafarov and Hirst show that  $\mathsf{IPT}^2_2$ 
implies $\mathsf{D}^2_2$ over $\mathsf{RCA}_0$ (Proposition 3.5 in ~\cite{Dza-Hir:11}). 
Chong, Lempp and Yang have later proved that
$\mathsf{D}^2_2$ implies $\mathsf{SRT}^2_2$ over $\mathsf{RCA}_0$ (Theorem 1.4 in \cite{Cho-Lem-Yan:10}).
\end{proof}

Since $\mathsf{SRT}^2_2$ implies $B\Sigma^0_2$, we also have the following corollary.

\begin{corollary}
$\mathsf{HT}^{\leq 2}_5$ is not provable in $\mathsf{WKL}_0$. 
\end{corollary}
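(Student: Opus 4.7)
The plan is to reduce the statement to the classical meta-theoretic fact that $\mathsf{WKL}_0$ does not prove $B\Sigma^0_2$. First I would chain together the two preceding implications: the previous corollary gives $\mathsf{RCA}_0 \vdash \mathsf{HT}^{\leq 2}_5 \to \mathsf{SRT}^2_2$, and the stated (Cholak--Jockusch--Slaman) fact that $\mathsf{SRT}^2_2$ implies $B\Sigma^0_2$ over $\mathsf{RCA}_0$ then yields
\[
\mathsf{RCA}_0 \vdash \mathsf{HT}^{\leq 2}_5 \to B\Sigma^0_2.
\]
Since $\mathsf{WKL}_0$ extends $\mathsf{RCA}_0$, if $\mathsf{WKL}_0$ proved $\mathsf{HT}^{\leq 2}_5$ it would also prove $B\Sigma^0_2$.

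The remaining (and only substantive) step is to invoke that $\mathsf{WKL}_0 \not\vdash B\Sigma^0_2$. This is a standard consequence of Harrington's theorem that $\mathsf{WKL}_0$ is $\Pi^1_1$-conservative over $\mathsf{RCA}_0$ (see Simpson, \emph{Subsystems of Second-Order Arithmetic}, IX.2); since $B\Sigma^0_2$ can be expressed as a $\Pi^1_1$ scheme (in fact, instance-wise arithmetic and hence trivially $\Pi^1_1$), any $\mathsf{WKL}_0$-proof of $B\Sigma^0_2$ would give an $\mathsf{RCA}_0$-proof of it. But $\mathsf{RCA}_0$, whose first-order part is exactly $I\Sigma^0_1$, does not prove $B\Sigma^0_2$: this is the classical Kirby--Paris result, witnessed concretely by any model of $I\Sigma^0_1 + \neg B\Sigma^0_2$ expanded by its $\Delta^0_1$-definable sets. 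Putting the two halves together gives the contradiction and hence the corollary.

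The argument is essentially a one-line deduction once the ingredients are in place, so there is no real combinatorial obstacle; the only care needed is to confirm that $B\Sigma^0_2$ really does fall under the scope of $\Pi^1_1$-conservativity (so that the Harrington result applies) rather than requiring the more refined first-order conservation of $\mathsf{WKL}_0$ over $I\Sigma^0_1$ due to Harrington and Simpson, which would work equally well. I would cite Simpson IX.2.1 (or equivalently the first-order conservation result) together with the previous corollary, and that completes the proof.
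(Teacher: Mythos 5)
Your proposal is correct and follows the same route as the paper: chain $\mathsf{HT}^{\leq 2}_5 \to \mathsf{SRT}^2_2 \to B\Sigma^0_2$ over $\mathsf{RCA}_0$ and observe that $\mathsf{WKL}_0$ does not prove $B\Sigma^0_2$. The paper leaves the last fact implicit as standard; you correctly supply the justification via Harrington's $\Pi^1_1$-conservativity (equivalently the first-order conservativity of $\mathsf{WKL}_0$ over $I\Sigma_1$) together with Kirby--Paris.
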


Also, as Ludovic Patey kindly pointed out to us, $\mathsf{IPT}^2_2$ is known to be {\em strictly} stronger than 
$\mathsf{SRT}^2_2$: Theorem 2.2 in \cite{Cho-Sla-Yan:14} showed that there is a non-standard model 
of $\mathsf{SRT}^2_2+B\Sigma^0_2$ having only low sets in the sense of the model. Lemma 2.5 in \cite{Dza-Hir:11} 
can be formalized in $\mathsf{RCA}_0$ and shows that no model of $\mathsf{IPT}^2_2$ can contain only $\Delta^0_2$ sets.
Thus, Proposition~\ref{prop:ht25_to_ipt22} implies that $\mathsf{HT}^{\leq 2}_5$ is 
strictly stronger than $\mathsf{SRT}^2_2$.

\bigskip
The proof of Proposition~\ref{prop:ht25_to_ipt22} is easily adapted to show 
\bc $\mathsf{RCA}_0 \vdash \forall n (\mathsf{HT}^{\leq 2}_{2n+1}\rightarrow \mathsf{IPT}^2_n)$. \ec
Then $\mathsf{RCA}_0 \vdash \mathsf{HT}^{\leq 2} \rightarrow \mathsf{IPT}^2$. 
On the other hand we know that $\mathsf{IPT}^2 \rightarrow \mathsf{SRT}^2$, where
$\mathsf{SRT}^2$ denotes the Stable Ramsey's Theorem for pairs and all colors
(Dzhafarov and Hirst \cite{Dza-Hir:11}, Theorem 3.3). 
Finally, it is known that $\mathsf{RCA}_0 + \mathsf{SRT}^2 \vdash \mathsf{B}\Sigma^0_3$
(Cholak, Jockusch and Slaman \cite{Cho-Joc-Sla:01}, Section 11.2). 
Therefore we also have the following corollary. 

\begin{corollary}
Over $\mathsf{RCA}_0$, $\mathsf{HT}^{\leq 2}$ implies $\mathsf{B}\Sigma^0_3$.
\end{corollary}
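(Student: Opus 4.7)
The plan is to chain three implications over $\mathsf{RCA}_0$:
\[ \mathsf{HT}^{\leq 2} \;\longrightarrow\; \mathsf{IPT}^2 \;\longrightarrow\; \mathsf{SRT}^2 \;\longrightarrow\; \mathsf{B}\Sigma^0_3. \]
The last two arrows are already on record: Dzhafarov and Hirst's Theorem~3.3 of~\cite{Dza-Hir:11} gives $\mathsf{IPT}^2 \to \mathsf{SRT}^2$, and Section~11.2 of Cholak, Jockusch and Slaman~\cite{Cho-Joc-Sla:01} gives $\mathsf{RCA}_0 + \mathsf{SRT}^2 \vdash \mathsf{B}\Sigma^0_3$. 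So the real work is in the first arrow, which I would derive by establishing the uniform statement $\mathsf{RCA}_0 \vdash \forall n\,(\mathsf{HT}^{\leq 2}_{2n+1} \to \mathsf{IPT}^2_n)$ and then appealing to $\mathsf{HT}^{\leq 2}$.

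I would prove this uniform implication by directly generalising Proposition~\ref{prop:ht25_to_ipt22}. Given $f:[\mathbb{N}]^2\to n$, define $g:\mathbb{N}\to 2n+1$ by the same recipe as in the $n=2$ case: assign colour $0$ to numbers of the form $i\cdot 3^t$ with $i\in\{1,2\}$; assign colour $1+f(\lambda(m),\mu(m))\in\{1,\dots,n\}$ to the remaining $m$ with $i(m)=1$; and assign colour $n+1+f(\lambda(m),\mu(m))\in\{n+1,\dots,2n\}$ to the remaining $m$ with $i(m)=2$. Apply $\mathsf{HT}^{\leq 2}_{2n+1}$ to obtain an infinite $H$ with $FS^{\leq 2}(H)$ monochromatic under $g$. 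The same arguments as before rule out colour $0$ (via sums of two elements of the form $3^p,3^q$ or $2\cdot 3^p,2\cdot 3^q$), and the block structure $\{1,\dots,n\}$ versus $\{n+1,\dots,2n\}$ of the remaining colours forces all $h\in FS^{\leq 2}(H)$ to agree on $i(\cdot)$, yielding the analogue of (P5). Property (P6) then follows from (P2)/(P3) exactly as in the original proof. After sparsifying $H$ to enforce the apartness condition, the sets $H_1=\{\lambda(h):h\in H\}$ and $H_2=\{\mu(h):h\in H\}$ form an increasing p-homogeneous pair for $f$ of colour $c-1$ (or $c-n-1$, according to the block containing $c$), using (P1)/(P4) to identify $\lambda(h_i+h_j)=\lambda(h_i)$ and $\mu(h_i+h_j)=\mu(h_j)$ for $i<j$.

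The main thing to verify carefully is that the block structure really preserves property (P5): one reads off $i(m)\in\{1,2\}$ from whether $g(m)$ lies in $\{1,\dots,n\}$ or in $\{n+1,\dots,2n\}$, independently of the $f$-value, so if $h\neq h'$ in $H$ both have $\lambda(h)=\lambda(h')$ then $i(h+h')\neq i(h)=i(h')$, contradicting monochromaticity. Beyond this, the rest is routine bookkeeping that is uniformly computable in $n$, so no stronger induction should be required than what already underwrites Proposition~\ref{prop:ht25_to_ipt22}. Once the uniform implication is in hand, combining it with $\mathsf{HT}^{\leq 2}$ yields $\mathsf{IPT}^2$, and then the two cited results close out the chain to $\mathsf{B}\Sigma^0_3$.
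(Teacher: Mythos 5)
Your proposal is correct and follows exactly the paper's own route: the paper itself states that Proposition~\ref{prop:ht25_to_ipt22} generalises to $\mathsf{RCA}_0 \vdash \forall n\,(\mathsf{HT}^{\leq 2}_{2n+1} \to \mathsf{IPT}^2_n)$, then chains through $\mathsf{IPT}^2 \to \mathsf{SRT}^2$ (Dzhafarov--Hirst) and $\mathsf{SRT}^2 \to \mathsf{B}\Sigma^0_3$ (Cholak--Jockusch--Slaman). You supply the details of the generalised colouring that the paper leaves to the reader, and they check out.
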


The author has been up to now unable to lift the combinatorial argument in the proof 
of Proposition~\ref{prop:ht25_to_ipt22} to show that 
$\mathsf{HT}^{\leq 3}_k$ implies $\mathsf{IPT}^3_2$, for some $k\geq 2$.
Note that by results of \cite{Dza-Hir:11} and \cite{DJSW:16} the following holds:
$\mathsf{RCA}_0 \vdash \mathsf{HT}^{\leq 3}_4 \rightarrow \mathrm{IPT}^3_2$.

  \part{Computational complexity theory}

\section{Thompson: Symmetric functions can be computed by Boolean  circuits of linear size and logarithmic depth}

 Declan Thompson and Matt Bray, Honour's students from Auckland University,  visited the 
  Research Centre Coromandel in February. Declan worked  on Boolean circuits, connecting to the CompSci 750 class on computational complexity which    Andr\'e  had co-taught in Semester 2,  2015.
 
 \vsp
%	
%	Let $\mathbb{B} = \{0,1\}$.
%	\begin{definition}
%		A function $f:\B^n\to\B^m$ is \emph{symmetric} if for every permutation $\pi$ of n arguments, $f(\vec{x}) = f(\pi\vec{x})$.
%	\end{definition}
%	
%	Examples of symmetric functions include:
%	\begin{itemize}
%		\item The functions $E^n_k:\B^n\to \B$ defined by $E^n_k(x)=1$ iff $x_0 + \dots + x_{n-1} = k$.
%		\item The threshold functions $T^n_k:\B^n\to\B$ defined by $T^n_k(x)=1$ iff $x_0 + \dots + x_{n-1} \geq k$.
%		\item The parity functions $P^n:\B^n\to\B$ defined by $P^n(x) = x_0 + \dots + x_{n-1} \mod 2$.
%	\end{itemize}	
%	A symmetric Boolean function depends only on the number of $1$s in the input. Hence, given a binary representation of this number we can construct\dots.
%	
%	Thus once we have a binary representation of the number of $1$s in the input within a circuit, we can compute the function using linear size and logarithmic depth. This document is not concerned with this step of the procedure. Instead, we focus on the task of obtaining a binary representation of the number of $1$s in the input in linear size.

	A  function $f\colon \, \{0,1\}^n\to \{0,1\}$ is symmetric if its value does not depend on the order of its arguments. For Boolean symmetric functions, this means that the output depends only on the number of $1$s in the input.   It is not hard to see that given a binary representation of the number of $1$s in the input, a Boolean  circuit of   linear size and logarithmic depth  can  compute the value of a symmetric function.  So it suffices to find  an efficient method for obtaining this binary representation from the original input.
	
	Circuits exist which add binary numbers in linear size and logarithmic depth (see, for example, the Krapchenko adder of \cite{Wegener:87}). A n\"aive approach to counting the number of $1$s is to treat each input as an individual number and utilise an adding tree. Unfortunately this requires a circuit of more than  logarithmic depth. Instead, we will utilise so called ``3-for-2'' adders and finally only one Krapchenko adder.
	
	A \emph{full adder} is a circuit for calculating the sum of three bits. Figure \ref{fig:full adder} gives a construction for a full adder. There are three input bits and two output bits, representing the two bit output. A \emph{carry save adder} (CSA) utilises a chain of full adders to take three $n$-bit inputs $a,b,c$ and return two $n+1$-bit outputs $u,v$ such that $a+b+c = u+v$. The CSA works by sending sum bits ($y_1$ in Figure \ref{fig:full adder}) to $u$ and carry bits ($y_2$) to $v$. Specifically, if the full adder returns $y_2y_1$ from $a_i, b_i, c_i$, then $u_i = y_1$ and $v_{i+1} = y_2$. We set $u_n = v_0 = 0$. As an example, consider the following sum.
	
	\begin{align*}
	100110&\\
	111101&\\
	\underline{+\phantom{99}101101}&\\
	0110110&\quad = u\\
	1011010& \quad = v
	\end{align*}
	
	\begin{figure}
		\centering
		\includegraphics[width=0.35\textwidth]{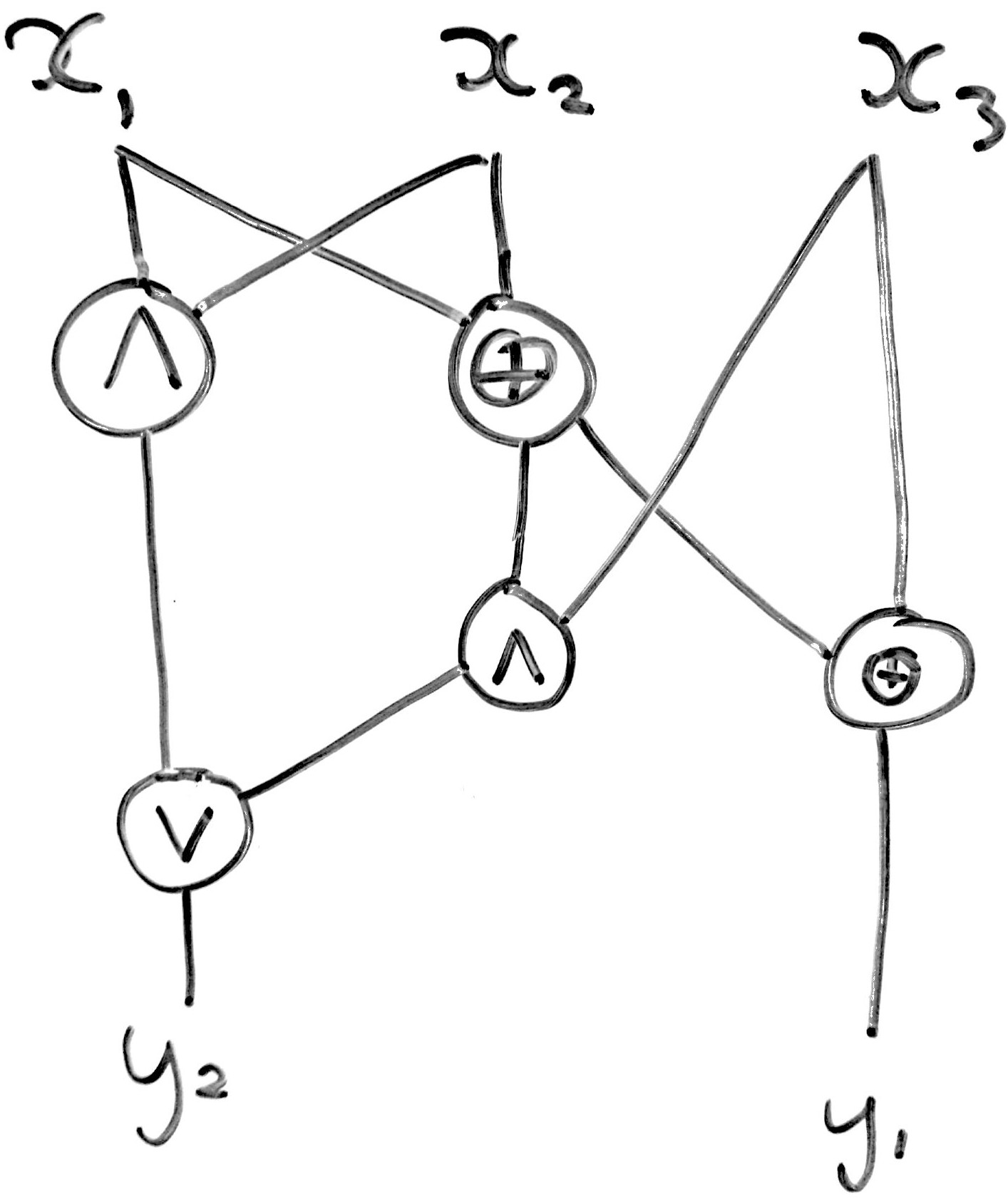}
		\caption{A full adder. $x_1 + x_2 + x_3 = y_2y_1$. $\oplus$ denotes addition modulo $2$.}\label{fig:full adder}
	\end{figure}
	
	A full adder has a constant number of gates and a CSA uses a full adder for each bit. Hence a CSA uses $O(n)$ size circuits with depth constant, where $n$ is the number of bits of each input number.
	
	We construct an adder tree for $x_0, \dots x_{n-1}$ by running CSAs in parallel and then combining their outputs. At the first ``level'', there are $\frac{1}{3}n$ CSAs each taking in three $1$ bit inputs and returning two $2$ bit outputs (4 wires in total). At the second level there are $\frac{1}{3}n\cdot2$ total inputs and each CSA takes three inputs, so there are $\frac{1}{3}\frac{2}{3}n$ CSAs. In general, at the $jth$ level there are $\frac{1}{3}\frac{2}{3}^{j-1}n$ CSAs taking three $j$ bit inputs and returning two $j+1$ bit outputs. Each CSA decreases the number of numbers to add by $1$ until we are left with two binary numbers. Hence we require $n-2$ CSAs. 
	
	Logarithmic depth of the adder tree is easy to establish. Each CSA has constant depth and since we combine them in a tree fashion the depth of CSAs is logarithmic. Each CSA has size proportional to the number of bits in the inputs. From this, we can conclude that the size of the adder tree is approximately
	\[
	O(\frac{1}{3}n\sum_{j=1}^{\lfloor\log n\rfloor}\frac{2}{3}^{j-1}j).
	\]
	However $\sum_{j=1}^{\infty}\frac{2}{3}^{j-1}j = 9$ and so in fact the adder tree is of size $O(n)$.
	
	Thus we have obtained two binary numbers which add to the total number of $1$s in the input, using linear size and logarithmic depth circuits. It is now a simple process to use a linear size, logarithmic depth adder (such as Krapchenko's) to obtain a single binary number, which can then be used to determine the output of an arbitrary symmetric function.

\section{Describing ordinals less than $  \varepsilon_0$ by finite trees}

Andreas Weiermann, Paul Shafer and Nies discussed in Ghent in March. This discussion  connected some well-known facts. 

For this note a \emph{tree} $B$  is an acyclic connected directed finite graph with a distinguished  root $r$.  $B$ can be naturally viewed as a finite subset of $\omega^{<\omega}$ (sequences of natural numbers) closed under initial segments; $r$ is the empty string. If $B \neq r$ then $B$ is given by a tuple $(B_1, \ldots, B_n)$ of trees, where the $B_i$ correspond to the successors of $r$ from left to right.

 The ordinal $o(B)$ is defined recursively as follows: let $o(r) = 0$. If $B= (B_1, \ldots, B_n)$ then \bc  $o(B) = \sum_{i=1}^n \omega^{o(B_{\pi(i)})}$ \ec where $\pi$ is a permutation of $\{1, \ldots, n\}$ such that $o(B_{\pi(1)}) \ge \ldots \ge o(B_{\pi(n)})$.

 The \emph{norm} $N(\alpha)$ of an ordinal $\alpha < \epsilon_0$ is defined by $N(0) = 0$ and $N(\gamma) =  1 + N(\alpha) + N(\beta)$ if $\gamma$ has	
 Cantor normal form  $\omega^\alpha +\beta$.    Clearly each $\gamma < \varepsilon_0$ has the form $o(B)$ for some tree $B$, and  $N(\gamma)$ is the number of edges of such a tree.
 
 The ordinal  of a tree is a complete  invariant for tree isomorphism:
 
 \begin{prop} Let $B, C$ be trees. We have \bc $B \cong C \LR o(B)= o(C)$. \ec
 \end{prop}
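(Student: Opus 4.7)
The direction $(\Rightarrow)$ is straightforward by induction on the maximum of the number of edges of $B$ and $C$. If $B \cong C$ via a root-preserving isomorphism $\varphi$, then $B$ and $C$ have the same number of children at the root, say $B=(B_1,\ldots,B_n)$ and $C=(C_1,\ldots,C_n)$, and $\varphi$ restricts to isomorphisms $B_i \cong C_{\varphi(i)}$. By induction $o(B_i) = o(C_{\varphi(i)})$, and since $o(B)$ is defined as a sum $\sum \omega^{o(B_{\pi(i)})}$ in weakly decreasing order of the $o(B_{\pi(i)})$, the composition of $\varphi$ with the two sorting permutations yields the same sum, so $o(B)=o(C)$.

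For $(\Leftarrow)$, my plan is induction on the common ordinal value $\gamma = o(B) = o(C)$, using that $o(B_i) < o(B)$ for each child $B_i$ (since $o(B) \ge \omega^{o(B_{\pi(1)})} > o(B_{\pi(1)}) \ge o(B_i)$). The base case $\gamma = 0$ forces both trees to be the single-node tree $r$, hence isomorphic. For the inductive step, write $B = (B_1,\ldots,B_m)$ and $C=(C_1,\ldots,C_{m'})$, and let $\pi,\rho$ be the permutations that arrange the children in weakly decreasing order of ordinal value. Then
\[
\gamma \;=\; \sum_{i=1}^{m} \omega^{o(B_{\pi(i)})} \;=\; \sum_{j=1}^{m'} \omega^{o(C_{\rho(j)})},
\]
both expressions being in Cantor normal form. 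By uniqueness of Cantor normal form, $m=m'$ and $o(B_{\pi(i)}) = o(C_{\rho(i)})$ for each $i \le m$. By the inductive hypothesis applied to each $i$, we obtain tree isomorphisms $\varphi_i \colon B_{\pi(i)} \to C_{\rho(i)}$, and these assemble into an isomorphism $B \to C$ by sending the root to the root and using $\varphi_i$ on the subtree rooted at the $\pi(i)$-th child of $B$.

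The only real content is the appeal to uniqueness of Cantor normal form, which I will invoke as a standard fact. A small subtlety is that when several children of $B$ share the same ordinal value, the permutation $\pi$ is not unique; however, this does not matter, because any two valid choices differ by a permutation within a ``block'' of equal ordinals, and the induction hypothesis gives the needed isomorphisms within such a block regardless of how the matching is made. This is where I expect to be most careful in writing out the details: ensuring that one can coherently pair up children of equal ordinal value in $B$ and $C$, which reduces to the fact that the multisets $\{o(B_i)\}$ and $\{o(C_j)\}$ coincide (an immediate consequence of the uniqueness of Cantor normal form together with the sorted decomposition above).
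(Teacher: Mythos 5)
Your proposal is correct and takes essentially the same approach as the paper: both directions proceed by induction (on tree size, respectively on the ordinal or equivalently tree size) and the converse direction hinges on uniqueness of Cantor normal form. The paper's proof of $(\Leftarrow)$ leaves the induction and the pairing-up of children with equal ordinal values implicit, whereas you spell these out more carefully, but the argument is the same.
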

 \begin{proof}First suppose that $\rho \colon B \cong C$. If $B=C = r$ then $o(B) =o(C) =0$. Otherwise $B = (B_1, \ldots, B_n)$, $C = (C_1, \ldots, C_n)$ and $\rho \colon B_i \cong C_{\sss(i)}$ for $\sss \in S_n$. Inductively we have $o(B_i) \cong o(C_{\sss(i)})$. Choose $\pi \in S_n$ such that $o(B_{\pi(1)}) \ge \ldots \ge o(B_{\pi(n)})$. Then $o(C_{\sss(\pi(1))}) \ge \ldots \ge o(C_{\sss(\pi(n))})$. Therefore $o(B)= o(C)$. 
 
 Now suppose $\alpha = o(B)= o(C)$. If $\alpha=0$ we have $B \cong C$ trivially. Otherwise write $\alpha$ in Cantor normal form $\sum_{i=1}^n \omega^{\beta_i}$ where $\beta_1 \ge \ldots \ge \beta_n$. Since CNF is unique we have $B = (B_1, \ldots, B_n)$, $C = (C_1, \ldots, C_n)$ with $\beta_i= o(B_{\pi(i)}) = o(C_{\theta(i)})$ for $\pi, \theta \in S_n$. This shows that $B \cong C$. 
 \end{proof}

The number of ordinals $\alpha$  with $N(\alpha)= n$ is asymptotically $(2.95)^n n^{-1.5}$ by a result of Weiermann. The number of trees with $n$ edges is $\binom {2n} n /(n+1)$ (Catalan number), which is asymptotically $4^n n^{-1.5}/\sqrt \pi$ (much larger). 

A tree $B$ is canonical if $B$ is  a root, or  $B = (B_1, \ldots, B_n)$ for canonical trees $B_i$ such that $o(B_1) \ge \ldots \ge o(B_n)$. Clearly each ordinal less than $\epsilon_0$ is $o(B)$ for a unique canonical tree. Isomorphism and canonization is in LOGSPACE by a result of Aho,   Hopcroft and Ullman. Also see Buss 1995 where algorithms in the smaller class ALOGSPACE are given.

\begin{question} How can one generate a random ordinal with given norm  $n$ using poly($n$) steps and the appropriate number of $n \log_2 (2.95)$ of  random bits? \end{question}
 
 This amounts to generating a random canonical tree with $n$ edges.

\section{Nies and Scholz: Grothendieck's constants}
(Written by Nies after some discussion with Volkher Scholz, who works at U Gent in the research group of Prof. Verstraete, and visited Auckland in April.)
Let $\mathbb F$ be one of the fields $\RR, \mathbb C$.  Choose scalars from $\mathbb F$. A.\ Grothendieck proved  in   1953    that there are positive constants $K_G^\mathbb F$ as follows.  

 Let $C=\seq{\gamma_{i,k}}$ be an $n \times n$ matrix such that \begin{equation} \label{eqn:matrix condition}  \sum_{i,k} \gamma_{i,k} a_i b_k \le \sup_i |a_i| \sup_k |b_k| \end{equation} for each scalars $a_i, b_k$. Then  for each Hilbert space $H$ over $\mathbb F$,   and vectors $x_i, y_k \in H$  ($1\le i,k \le n$)
 \[ \sum_{i,k} \gamma_{i,k} \la x_i, y_k \ra \le K_G^\mathbb F \sup_i  ||x_i|| \sup_k ||x_k||. \] 
 See the reprinted paper~\cite{Grothendieck:96}. In (\ref{eqn:matrix condition}) we may assume that $|a_i| = |b_k| =1$ because for fixed $C$ the inequality describes a convex set, so it suffices to look at its extreme points. Given the matrix $C$ as an input, it is known,  that the condition in NP-hard. Say  for $\RR$ the problem Max-Cut can be reduced to it (which asks given  a graph whether one can partition the set of vertices into two sets so that the number of crossing edges is above a threshold).  Easy matrices $C$  satisfying the condition are: $n^{-1} I$, or the one where each $\gamma_{i,k}$ equals $n^{-2}$. 
 
 We can get away with finite-dimensional Hilbert spaces. However the dimension of $H$ must be unbounded. E.g the value of $K^\RR_G$ for   dimension $2$ is $\sqrt 2$.

 Interestingly, G's work is closely related to, but predated the Bell inequality (1964). The transition from a problem in real analysis to the setting of Hilbert space corresponds to the transition from classical to quantum physics.  The real Grothendieck constant can be viewed as an upper bound on the possible violation of Bell's inequality in a quantum system.
 
  The precise values of the constants are not known. We know that (see \cite{Pisier:12})
 \[ 1 < K_G^{\mathbb C} < K_G^\RR < 1.782.\]
 
 Raghavendra and Steurer \cite[Thm 1.3]{Raghavendra.Steurer:09} prove that $K^\RR_G$ is  a computable real, and in fact computable up to precision $\eta>0$ in time proportional to  $\exp(\exp(O(\eta^{-3}))$. No surprise we still don't know what its value is.

%  \part{Algebra}
\part{Group  theory and its connections to logic}

\section{Doucha and Nies: Primitive group actions in the setting of Polish Spaces}

Michal Doucha and Nies worked at the RCC in 2014/2015.
\begin{definition}
Let $G$ be a Polish group and $X$ a Polish $G$-space with all orbits dense. 
  $(G,X)$ is called \emph{imprimitive} if there exists a closed proper subset $D\subseteq X$ such that for every $g\in G$ either $g\cdot D=D$ or $g\cdot D\cap D=\emptyset$ and moreover, $D$ intersects some orbit in at least two elements. I.e., $\ex x \neq y \in D \ex g \in G [ g \cdot x =y]$. The set $D$ is   called a \emph{closed domain of imprimitivity}. Otherwise, $(G,X)$ is called \emph{primitive}.
\end{definition}
Given an action $G \curvearrowright X$, we say that an equivalence relation $E$ on $X$ is \emph{$G$-invariant}  if for every $g\in G$ and every $x,y\in X$ we have $x E y\lra g\cdot x E g\cdot y$. Equivalently, $g \cdot [x]_E= [g\cdot x]_E$ for each $g\in G$ and $x \in X$.
\begin{prop}
Given a  Polish group $G$ and a Polish $G$-space $X$ with all orbits dense, TFAE: 

\bi \item[(i)]  $(G,X)$ is   imprimitive  \item[(ii)] there exists a $G$-invariant smooth  equivalence relation $E$ on $X$ other than $\mathrm{id}_X$ or $X^2$,  with all equivalence classes closed,  such that       some   equivalence class intersects some  orbit in at least two elements.
\item[(iii)] as in (ii) but  without the restriction to smoothness.
\ei \end{prop}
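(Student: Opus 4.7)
The plan is to establish (ii)$\Rightarrow$(iii)$\Rightarrow$(i)$\Rightarrow$(ii). The first implication is immediate since (iii) just drops the smoothness requirement from (ii).

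For (iii)$\Rightarrow$(i), I would pick an $E$-class $D := [y]_E$ that intersects some orbit in at least two elements (which exists by hypothesis). Since $E$-classes are closed, $D$ is closed; since $E \neq X^2$, $D$ is a proper subset of $X$. By $G$-invariance of $E$, for each $g \in G$ the set $g \cdot D = [g \cdot y]_E$ is another $E$-class, hence either equals $D$ or is disjoint from $D$. Thus $D$ witnesses imprimitivity.

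For (i)$\Rightarrow$(ii), I would build $E$ directly from the block structure induced by $D$. Let $H := \{g \in G : g \cdot D = D\}$, which is a closed subgroup of $G$ (the setwise stabilizer of the closed set $D$). The family $\mathcal{B} := \{g \cdot D : g \in G\}$ partitions $Y := \bigcup \mathcal{B}$: if $x \in g_1 \cdot D \cap g_2 \cdot D$, then $g_1^{-1} g_2 \cdot D$ meets $D$, forcing $g_1^{-1} g_2 \cdot D = D$ by the block condition. Define $E$ by: $x E y$ iff $x = y$ or $x, y$ share a block in $\mathcal{B}$. Its classes are the blocks $g \cdot D$ on $Y$ (closed as translates of $D$) and singletons on $X \setminus Y$. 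All the required properties — $G$-invariance, $E \neq \mathrm{id}_X$ (because $D$ meets an orbit in $\geq 2$ elements), $E \neq X^2$ (because $D$ is proper), closed classes, and the orbit-intersection condition — follow immediately.

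The hard part will be verifying that $E$ is smooth. My plan here is to identify the set of blocks with $G/H$: the orbit map $\Phi : G \to F(X)$, $g \mapsto g \cdot D$, into the Effros Borel space, is Borel (for each open $U \subseteq X$, the preimage $\{g : g \cdot D \cap U \neq \emptyset\}$ is the projection of an open set in $G \times D$ under evaluation), and factors through $G/H$ as a Borel injection, so by Lusin-Suslin the image $\mathcal{B}$ is a Borel subset of $F(X)$ and $G/H \to \mathcal{B}$ is a Borel isomorphism. A similar argument, applied to the evaluation map $G \times D \to X$ (which descends to an injection on the quotient by the free $H$-action $h \cdot (g, d) := (g h^{-1}, h \cdot d)$), shows $Y$ is Borel in $X$ and the block-of map $Y \to G/H$ is Borel. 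Extending by the identity on the Borel set $X \setminus Y$ gives a Borel reduction of $E$ to equality on the standard Borel space $(G/H) \sqcup (X \setminus Y)$, establishing smoothness.
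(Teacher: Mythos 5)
Your proof is correct and follows the same broad strategy as the paper's — $D=[y]_E$ is the block for (iii)$\Rightarrow$(i), and conversely the translates $\{gD\}$ give $E$ and the factoring through $G/H$ gives smoothness — but you diverge on two technical points, both to your credit. First, the paper defines $E$ only via "$x,y$ lie in a common translate of $D$" and then asserts that $X/D=\{gD:g\in G\}$ is the full set of $E$-classes; this tacitly assumes $G\cdot D=X$, which is not obviously implied by dense orbits, and if $G\cdot D\subsetneq X$ the paper's $E$ is not even reflexive and its map $\phi:X\to G/H$ is only partially defined. Your version, which adjoins singleton classes on $X\setminus Y$ and reduces $E$ to equality on $(G/H)\sqcup(X\setminus Y)$, sidesteps this. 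Second, the paper proves Borelness of $\phi$ topologically: it shows $\phi$ factors as $\psi\circ\pi$ where $\pi:X\to X/D$ carries the quotient topology and $\psi:X/D\to G/H$ is a homeomorphism, so $\phi$ is actually continuous; you instead work purely in the Borel category with the Effros Borel space, Lusin--Suslin, and a Borel reduction. Both are valid, and the paper's gives marginally more (continuity). One gloss in your write-up: freeness of the $H$-action on $G\times D$ does not by itself make the quotient standard Borel; you should instead invoke a Borel transversal $T\subseteq G$ for the left cosets of $H$ (which exists since $H$ is closed) and observe that $(t,d)\mapsto t\cdot d$ is a continuous injection of the standard Borel space $T\times D$ onto $Y$, whence Lusin--Suslin gives that $Y$ is Borel and the block map $Y\to G/H$, $td\mapsto tH$, is Borel. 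That repair is routine and your conclusion stands.
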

\begin{proof}
(iii)$\to$(i) Let $D=[x]_E$ be an equivalence class of $E$ that intersects some  orbit   in at least two points. We claim that $D$ is   a closed domain of imprimitivity. It is clearly closed and properly contained in $X$.   For each $g\in G$, if $gx E x$ we have $gD = [gx]_E= D$. Otherwise $gD \cap D = \ES$.

\n (i)$\to$(ii). Let  $D$ be  a closed domain of imprimitivity. Let  \bc $H=\{h\in G: h\cdot D=D\}$. \ec Clearly, $H$ is a closed subgroup of $G$. Since every orbit in $X$ is dense and $D$ is a proper subset, $H$ is a proper subgroup of $G$. 

Let us define a relation $E$ on $X$. For $x,y\in X$ we define \bc $x E y \lra \exists g\in G  [ g^{-1}\cdot x \in D \lland g^{-1}\cdot y \in D$. \ec  We shall prove that $E$ is a smooth  $G$-invariant equivalence relation with closed classes such that some class intersects some orbit in at least two elements. 

Since for every $g\in G$ the map $x\to g\cdot x$ is a homeomorphism of $X$ we have that for each $g\in G$ the set $g\cdot D$ is also closed. Denote by $X/D$ the set $\{g\cdot D:g\in G\}$. We claim for every $F_1\neq F_2\in X/D$ we have $F_1\cap F_2=\emptyset$. Indeed, write $F_i$ as $g_i\cdot D$ for $g_i\in G$, where $i\in \{1,2\}$. If $g_1\cdot D\cap g_2\cdot D\neq \emptyset$, then $D\cap (g_1^{-1}\cdot g_2)\cdot D\neq \emptyset$. Thus by assumption $(g_1^{-1}\cdot g_2)\cdot D=D$, so $g_1\cdot D=g_2\cdot D$, a contradiction. It follows that $X/D$ is the set of $E$-classes. In particular, $E$ is an equivalence relation with each equivalence class being closed and the $E$-class $D$ intersects by assumption some orbit in at least two elements.

Consider the quotient $G/H$ consisting of left cosets of $H$ with the quotient topology. It is a folklore fact that this is a Polish space metrizable by the metric $\delta$ defined for any $g_1\cdot H, g_2\cdot H$ as the Hausdorff distance  \bc $\delta(g_1\cdot H,g_2\cdot H)=\inf\{d_G(h_1,h_2): h_1\in g_1\cdot H,h_2\in g_2\cdot H\}$, \ec  where $d_G$ is some compatible right-invariant metric on $G$. See for example \cite{Gao:09} for details. Let us define the map $\phi:X\rightarrow G/H$ as follows: \bc  $\phi(x)=g\cdot H$ if $g^{-1}\cdot x\in D$.  \ec This is well-defined since $H$ fixes $D$. We now claim that $\phi$ is a Borel reduction of $E$ into $\mathrm{id}(G/H)$. This will show that $E$ is smooth.

To show that it is a reduction, pick some $x,y\in X$. Suppose that $x E y$. Then by definition there is $g\in G$ such that $g^{-1}\cdot x$ and $g^{-1}\cdot y$ lie in $D$, thus $\phi(x)=\phi(y)=g\cdot H$. On the other hand, if  $\phi(x)=\phi(y)=g\cdot H$, then $g^{-1}\cdot x$ and $g^{-1}\cdot y$ lie in $D$. 

It remains to check that $\phi$ is Borel. We show that $\phi$ factorizes through $X/D$ as $\psi\circ \pi$, where $\pi:X\rightarrow X/D$ is the canonical projection and $\psi:X/D\rightarrow G/H$ sends $g\cdot D$ to $g\cdot H$. We note that a set $U\subseteq X/D$ is open if and only if $\bigcup U$ is open in $X$. Then we show that $\psi$ is continuous (even open) and $\psi$ is a bijection whose inverse $\psi^{-1}$ is continuous, that suffices.

That $\pi$ is continuous (and open) follows directly from the definition of the topology on $X/D$. Also, it is clear that $\psi$ is a bijection. We check that $\psi^{-1}$ is continuous. Let $U\subseteq X/D$ be an open neighbourhood of some $g\cdot D$. Pick arbitrarily some $x\in D$. Since $\bigcup U$ is an open neighbourhood of $x$ and the group action is continuous there exists an open neighbourhood $V$ of $g$ such that for every $h\in V$ we have $h\cdot x\in \bigcup U$. We have that $V_H=V\cdot H$ is an open neighbourhood of $g\cdot H$ in $G/H$ and we claim that $\psi^{-1}(V_H)\subseteq U$. This is immediate. Any element of $V_H$ is of the form $h\cdot H$ for some $h\in V$ and thus sent by $\psi^{-1}$ to $h\cdot D$. We have that $h\cdot x\in \bigcup U$ and since $\bigcup U$ is $E$-invariant we have that $h\cdot D\subseteq \bigcup U$, thus $h\cdot D\in U$.
\end{proof}
\begin{prop} \label{prop:stabilizer}
Let $G$ be a Polish group and $X$ a Polish $G$-space with all  orbits dense. Suppose that for any $x\in X$ the stabilizer $G_x$ is a maximal closed subgroup of $G$. Then $(G,X)$ is primitive.
\end{prop}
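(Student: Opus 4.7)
The plan is to prove the contrapositive: assume $(G,X)$ is imprimitive and construct some $x \in X$ whose stabilizer $G_x$ fails to be a maximal closed subgroup of $G$. By the preceding proposition, imprimitivity gives a closed domain of imprimitivity $D \subsetneq X$ such that, for every $g \in G$, either $g \cdot D = D$ or $g \cdot D \cap D = \emptyset$, and moreover $D$ intersects some orbit in at least two points.

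First I would introduce the setwise stabilizer $H = \{h \in G : h \cdot D = D\}$, which is a closed subgroup of $G$. Since orbits of $G$ are dense in $X$ while $D$ is a proper closed subset, $H$ is a proper subgroup of $G$: otherwise $G \cdot D = D$ would force $D$ to contain an entire orbit, hence to be all of $X$. Next I would pick a point $x \in D$ lying in an orbit that meets $D$ in at least two points, so there exists $y \in D$ with $y \neq x$ and $g_0 \in G$ satisfying $g_0 \cdot x = y$.

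The two key inclusions to verify are $G_x \subseteq H$ and $g_0 \in H \setminus G_x$. For the first, if $h \in G_x$ then $h \cdot x = x \in D \cap h \cdot D$, so by the imprimitivity dichotomy $h \cdot D = D$, giving $h \in H$. For the second, $g_0 \cdot x = y \in D \cap g_0 \cdot D$ so again $g_0 \cdot D = D$ and $g_0 \in H$, while $g_0 \cdot x = y \neq x$ shows $g_0 \notin G_x$. Since $G_x$ is closed (stabilizers of points under continuous actions are closed) and $H$ is closed, we obtain a chain $G_x \subsetneq H \subsetneq G$ of closed subgroups, contradicting the maximality of $G_x$ among closed subgroups.

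I do not expect any serious obstacle: the argument is essentially the classical Galois-style correspondence between closed intermediate subgroups and invariant partitions, and everything needed (closedness of $H$, properness of $H$ from density of orbits, the dichotomy defining $D$) is available directly. The one point where one must be careful is ensuring that the distinguished $x$ is chosen \emph{inside} $D$ and inside the orbit that witnesses $|D \cap G \cdot x| \geq 2$; without this, one could not exhibit the element $g_0$ certifying the strict inclusion $G_x \subsetneq H$.
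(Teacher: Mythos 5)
Your proof is correct and follows essentially the same route as the paper: introduce the setwise stabilizer $H$ of the domain of imprimitivity $D$, show $G_x \leq H$ via the dichotomy, exhibit an element of $H \setminus G_x$ from the two points of $D$ in one orbit, and show $H < G$ from density of orbits. The only cosmetic difference is that you argue $H \neq G$ globally (else $D$ would be $G$-invariant, closed, and dense, hence all of $X$), whereas the paper picks a specific $g$ moving $x$ out of $D$; these are interchangeable.
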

\begin{proof}
Suppose that $(G,X)$ is imprimitive, so  there exists a closed domain of imprimitivity $D\subseteq X$. Let $H=\{h\in G:h\cdot D=D\}$. $H$ is clearly a closed subset of $G$. Moreover, since $D$ is a closed domain of imprimitivity we have that $H$ is a group. Let $x\in X$ be such that we have $|D\cap G\cdot x|\geq 2$. We may suppose that $x\in D$. We have that $G_x\leq H$. We shall show that $G_x<H<G$ and that will be a contradiction with the maximality of $G_x$. By assumption there exists $h\in G\setminus G_x$ such that $h\cdot x\in D$. Since $D$ is a domain of imprimitivity, so for every $g\in G$ we have either $g\cdot D=D$ or $g\cdot D\cap D=\emptyset$, we must have that $h\cdot D=D$ and thus $h\in H$. We have shown that $G_x<H$. On the other hand, $D$ is a proper closed subset of $X$ and since the orbit of $x$ is dense, there exists $g\in G$ such that $g\cdot x\notin D$ and thus $g\cdot D\cap D=\emptyset$ and $g\notin H$. We have shown that $H<G$ and the proof is complete.
\end{proof}

 \n {\bf Questions.}

\n 1. How about the converse implication in  Prop. ~\ref{prop:stabilizer}?

\n 2. Robinson \cite[7.2.5]{Robinson:82}  states that if $G$ is primitive on $X$,  then every nontrivial normal subgroup is transitive. (The latter property is called quasiprimitive by Cheryl Praeger.)   Check this in the Polish setting,  where the normal subgroup is closed, and we have topological transitivity in that every orbit is dense.  

\section{Nies and Tent: a sentence of size $O(\log n)$ expressing that a group has $n$ elements}
This post  is related to Nies'  and Tent's article  ``Describing finite groups by short first-order sentences"  \cite{Nies.Tent:17}. We use the definition $\log n = \min \{ r \colon \, 2^r \ge n\}$. 
  In that article we gave a description of any finite group $G$ via a first order sentence of length $O (\log^3|G|)$. Here we want to express that the group has size $n$ by a first-order sentence of length $O(\log n)$.   This will also yield a new way to describe the finite simple 
  groups in length $O(\log n)$, still relying on CFSG but  not relying on the short presentations in \cite{Guralnick:08}
  
This work happened in March 2016  at UCLA, after some preliminary work of Nies with the honour's student Matthew Bray. At the BIRS permutation groups meeting Nov 13-18, Csaba Schneider and David Craven provided the crucial references needed to distinguish  by short first order sentences the few examples of non isomorphic simple groups of the same size.

\begin{thm} \label{thm: express size} For each $n$ there is a sentence $\phi_n$ in the first order language of groups such that $|\phi_n | = O (\log n)$ and for each group $G$, 
\bc $G \models \phi_n \LR |G| = n$. \ec \end{thm}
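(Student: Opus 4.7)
The plan is to express $|G|=n$ via the existence of a ``binary coordinate system'' on $G$, with the crucial length saving coming from a recursively defined lex-comparison formula.

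First I would handle the case $n=2^k$. Let $\phi_{2^k}$ assert the existence of $x_1,\ldots,x_k\in G$ such that the map $\sigma\colon\{0,1\}^k\to G$, $\sigma(v_1,\ldots,v_k)=x_1^{v_1}\cdots x_k^{v_k}$, is a bijection. Encoding each boolean $v_i$ by a quantified element $z_i$ constrained to lie in $\{e,x_i\}$ (via $z_i=e\lor z_i=x_i$), both the injectivity and the surjectivity clauses become first-order sentences of length $O(k)$. The ``only if'' direction is immediate. For ``if'', every group of order $2^k$ is a $2$-group, so it admits a subnormal series $G=G_0>G_1>\cdots>G_k=\{e\}$ with each index equal to $2$; picking $x_i\in G_{i-1}\setminus G_i$ yields the desired coordinates.

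For arbitrary $n$, set $k=\lceil\log_2 n\rceil$, write the binary expansion $n=\sum_{i=1}^{k} n_i\, 2^{i-1}$, and define $V=\{v\in\{0,1\}^k: N(v)<n\}$ where $N(v)=\sum_i v_i 2^{i-1}$, so that $|V|=n$. The key ingredient is a formula $\textup{lex}_k(z_1,\ldots,z_k)$, parameterised by $x_1,\ldots,x_k$, expressing that the vector $v$ encoded by $\bar z$ lies in $V$. It is defined recursively by $\textup{lex}_0\equiv\bot$ and
\[
\textup{lex}_k \;\equiv\; \begin{cases} (z_k=e)\,\land\,\textup{lex}_{k-1} & \text{if } n_k=0,\\ (z_k=e)\,\lor\,(z_k=x_k\,\land\,\textup{lex}_{k-1}) & \text{if } n_k=1. \end{cases}
\]
Each recursion step adds only constant length, so $|\textup{lex}_k|=O(k)=O(\log n)$. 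Then $\phi_n$ is
\[
\exists x_1\cdots x_k\,\bigl(\textup{Inj}(\bar x)\,\land\,\textup{Surj}(\bar x)\bigr),
\]
where $\textup{Inj}$ asserts injectivity of $\sigma|_V$ and $\textup{Surj}$ asserts surjectivity of $\sigma|_V$ onto $G$, both written using $\textup{lex}_k$ to restrict the ranges of the encoded vectors. A careful count gives $|\phi_n|=O(\log n)$.

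The ``only if'' direction is immediate from $|V|=n$. The main obstacle will be the ``if'' direction for arbitrary $n$: given $G$ with $|G|=n$, one must exhibit $x_1,\ldots,x_k$ such that $\sigma|_V$ is a bijection onto $G$. For solvable $G$ I would use a subnormal series with cyclic factors of prime order to build coordinates in a ``mixed radix'' fashion, then reconcile with the binary enumeration via a re-indexing of the generators. For non-solvable $G$, whose composition factors include non-abelian finite simple groups, I would invoke CFSG (consistent with the remark preceding the statement) together with the $2$-generation property of finite simple groups and short straight-line programs to produce coordinates on these factors, which are then spliced with the solvable radical to yield the required bijection.
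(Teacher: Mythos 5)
Your sentence is well-formed and of the right length, and the case $n=2^k$ is correct (it is essentially the paper's Fact on $p$-groups specialised to $p=2$, proved there by induction using a central element of order $p$). The genuine gap is the ``if'' direction for general $n$: you must show that \emph{every} group $G$ of order $n$ admits elements $x_1,\dots,x_k$ with $k=\lceil\log_2 n\rceil$ such that $v\mapsto x_1^{v_1}\cdots x_k^{v_k}$ is a bijection from the lex-initial segment $V$ onto $G$, and neither of your two sketches establishes this. For solvable $G$, a composition series gives a bijection $\prod_i\{0,\dots,p_i-1\}\to G$, $(a_i)\mapsto\prod_i x_i^{a_i}$; encoding each $a_i$ in binary by replacing $x_i^{a_i}$ with $\prod_j (x_i^{2^j})^{b_{i,j}}$ turns the index set into a \emph{product of initial segments} of the cubes $\{0,1\}^{\lceil\log_2 p_i\rceil}$, which is not a lex-initial segment of a single cube, and it uses $\sum_i\lceil\log_2 p_i\rceil$ bits, which can strictly exceed $\lceil\log_2 n\rceil$ (already for $n=15=3\cdot 5$ one has $2+3=5>4$). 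So no ``re-indexing of the generators'' converts one normal form into the other. For non-solvable $G$ the situation is worse: a chain of subgroups $1=H_0<\dots<H_m=G$ gives unique factorizations $g=t_1\cdots t_m$ with $t_i$ ranging over a transversal $T_i$, but the sets $T_i$ are not of the form $\{e,x,x^2,\dots\}$, so they cannot be written as power-products with binary exponents; CFSG, $2$-generation and straight-line programs give \emph{generation}, not a bijective normal form, and there is no visible route from them to the exact logarithmic factorization your $\phi_n$ demands. As it stands, the existence of the witnesses $\bar x$ for an arbitrary $G$ of order $n$ is an open combinatorial claim, not a proof.

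For comparison, the paper avoids this existence problem entirely. It first asserts that the exponent of $G$ divides $n$ (length $O(\log n)$ by repeated squaring), which confines the primes dividing $|G|$ to those dividing $n$; it then asserts, for each prime power $p^r$ exactly dividing $n$, the existence of a Sylow subgroup of order exactly $p^r$, using the coordinate fact for $p$-groups (every $p$-group of order at most $p^r$ equals $\{\prod_i x_i^{a_i}\colon 0\le a_i<p\}$ for some $x_1,\dots,x_r$) together with an optimality clause forcing the order to be exactly $p^r$. Working one prime at a time is precisely what guarantees the witnesses always exist. If you want to salvage your approach, prove your coordinate statement prime by prime, where it is true and elementary, rather than for $G$ as a whole.
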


We first provide some necessary facts. We use \cite[Lemma 2.1]{Nies.Tent:17}:

\begin{lemma} \label{repeated squaring} 
For each positive integer $r$, there is an     existential  formula $\theta_r(g,x)$   in the first-order  language of monoids  $L(e, \circ)$, of length $O(\log  r)$, such that for  each   monoid~$M$, 
$M \models \theta_r(g,x)$ if and only if $x^r=g$.  

%\n {\rm (ii)}  $M \models \rho_n(g, x)$ if and only if $x^r=g$ for some $r$ with $0< r \le n$. 
\end{lemma}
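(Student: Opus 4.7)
The plan is to use the standard repeated-squaring trick, expressed in first-order logic via existentially quantified auxiliary variables for the intermediate powers $x^{2^i}$. Write $r$ in binary as $r = \sum_{i \in S} 2^i$ where $S \subseteq \{0, 1, \ldots, k\}$ and $k = \lfloor \log_2 r \rfloor$, so $|S| \le k+1 = O(\log r)$.

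I would introduce existentially quantified variables $y_0, y_1, \ldots, y_k$, intended to witness $y_i = x^{2^i}$, and additional auxiliary variables $z_0, z_1, \ldots, z_\ell$ (with $\ell = |S| - 1 \le k$) for the partial products along the indices in $S$. The formula $\theta_r(g,x)$ is then
\[
\exists y_0 \cdots \exists y_k \, \exists z_0 \cdots \exists z_\ell \; \bigl(\psi_{\text{sq}} \wedge \psi_{\text{prod}}\bigr),
\]
where $\psi_{\text{sq}}$ is the conjunction of $y_0 = x$ together with $y_{i+1} = y_i \circ y_i$ for $0 \le i < k$, and $\psi_{\text{prod}}$ chains the partial products: if $S = \{i_0 < i_1 < \cdots < i_\ell\}$, the conjuncts are $z_0 = y_{i_0}$, $z_{j+1} = z_j \circ y_{i_{j+1}}$ for $0 \le j < \ell$, and $z_\ell = g$. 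Soundness (that any model of $\theta_r(g,x)$ satisfies $x^r = g$) is immediate by induction along the constraints, and completeness (that when $x^r = g$ one can witness $\theta_r$ by the actual powers) is likewise straightforward.

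For the length bound, I would count the symbols. The quantifier prefix has length $O(k) = O(\log r)$. Each conjunct of $\psi_{\text{sq}}$ and $\psi_{\text{prod}}$ is of bounded length (three or four symbols, treating each variable occurrence as one symbol in the usual first-order counting convention), and there are $O(\log r)$ of them, giving total length $O(\log r)$. Note that the formula is manifestly existential, as required.

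The main pitfall to watch for is the bookkeeping that could accidentally inflate the length to $O(\log^2 r)$: this would happen if one naively encoded the subscripts of the variables as part of the length, or if one expressed each $y_{i+1} = y_i \circ y_i$ by substituting a doubled expression rather than reusing a variable. Both are avoided by the convention that each variable is a single symbol, and by the use of the fresh variables $y_i$ and $z_j$ so that each equation has constant size. A secondary subtlety is that we need the partial products to be nested linearly (so $z_j$'s are chained one by one); a balanced binary-tree grouping of the $|S|$ factors would also work but is not needed.
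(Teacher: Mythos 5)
Your proof is correct and is exactly the intended argument: the paper does not reprove this lemma but cites it as Lemma 2.1 of Nies--Tent, whose proof is the same repeated-squaring chain of existentially quantified witnesses $y_i = x^{2^i}$ and linearly chained partial products, with each variable counted as a single symbol. Your remarks on the pitfalls (not substituting doubled terms, not charging for subscripts) correctly identify why the length stays $O(\log r)$ rather than degrading, and they also explain why the companion Lemma~\ref{lem:exp monoid} in the paper has to resort to a bounded universal quantifier, at the cost of losing existentiality.
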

In particular, we can express that a  group $G$ has exponent dividing $r$ using $O(\log r)$. 
The following variant is also needed.

\begin{lemma} \label{lem:exp monoid} 
For each positive integer $k$, there is a        formula $\psi_r(y,x)$   in the first-order  language of monoids  $L(e, \circ)$, of length $O(\log  r)$, such that for  each   monoid~$M$, 
$M \models \psi_r(g,x)$ if and only if $x^i=g$ for some $i$ with $0 \le i \le r$.  
\end{lemma}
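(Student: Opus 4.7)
The plan is to represent an exponent $i\in[0,r]$ by its binary expansion and to encode the bits by existentially quantified auxiliary variables. Let $k=\lfloor\log_2 r\rfloor$ and write $r=\sum_{j=0}^{k}b_j 2^j$ with $b_j\in\{0,1\}$ (and set $\psi_0(g,x):=(g=e)$ in the trivial case $r=0$). First I would introduce variables $z_0,\dots,z_k$ constrained by $z_0=x$ and $z_{j+1}=z_j\circ z_j$; this chain of $O(k)$ symbols forces $z_j=x^{2^j}$ in any monoid, exactly as in Lemma~\ref{repeated squaring}. Second, introduce $y_0,\dots,y_k$ each constrained by $(y_j=e)\vee(y_j=z_j)$, so that the choice at position $j$ encodes a bit $c_j\in\{0,1\}$ with $c_j=1$ iff $y_j=z_j$. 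Third, assert $g=y_0\circ y_1\circ\cdots\circ y_k$; this forces $g=x^i$ with $i=\sum_j c_j 2^j$.

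It remains to enforce $i\le r$. I would build a formula $\mathrm{LEQ}_j$ asserting that the bits $c_0,\dots,c_j$ form a number at most $\sum_{\ell\le j}b_\ell 2^\ell$, by the \emph{linear} recursion
\[
\mathrm{LEQ}_j\;\equiv\;\begin{cases}
(y_j=e)\,\vee\,\bigl((y_j=z_j)\wedge\mathrm{LEQ}_{j-1}\bigr) & \text{if } b_j=1,\\[2pt]
(y_j=e)\,\wedge\,\mathrm{LEQ}_{j-1} & \text{if } b_j=0,
\end{cases}
\]
with base $\mathrm{LEQ}_0$ being $\top$ if $b_0=1$ and $(y_0=e)$ if $b_0=0$. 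The single syntactic occurrence of $\mathrm{LEQ}_{j-1}$ on the right-hand side is crucial: unfolding yields a linearly-nested formula of length $O(k)$, rather than the $O(k^2)$ one would obtain from a naive branching recursion that duplicates the recursive subformula. This point -- arranging the bit-by-bit comparison with $r$ so that its syntactic length is $O(\log r)$ -- is essentially the only obstacle in the proof; everything else is bookkeeping.

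Finally, let $\psi_r(g,x)$ be the conjunction of the squaring chain, the binary-choice clauses, the product equation, and $\mathrm{LEQ}_k$, all placed under a block of existential quantifiers over the $z_j$ and $y_j$. Each of the four conjuncts has length $O(k)=O(\log r)$, so $|\psi_r|=O(\log r)$ as required. For correctness, if $x^i=g$ with $0\le i\le r$ then assigning each $y_j$ according to the $j$-th binary digit of $i$ (and setting $z_j=x^{2^j}$) satisfies $\psi_r$; conversely, any satisfying assignment in a monoid $M$ yields $i=\sum_{j}c_j 2^j\le r$ (by $\mathrm{LEQ}_k$) with $g=x^i$ (by the product equation together with $z_j=x^{2^j}$).
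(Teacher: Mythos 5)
Your proof is correct, and it takes a genuinely different route from the paper's. The paper defines $\psi_r$ by a top-down recursion on the bound, halving $r$ at each step: it sets $\psi_1(y,x)\equiv y=1\vee y=x$ and $\psi_{2k}(y,x)\equiv\exists u,v\,[\,y=uv\ \wedge\ \forall z\,((z=u\vee z=v)\to\psi_k(z,x))\,]$ (with $y=uv\vee y=uvx$ in the odd case); the single syntactic occurrence of $\psi_k$ under the bounded $\forall z$ is the no-duplication device that makes the length linear in the $O(\log r)$ recursion depth. You work bottom-up instead: you existentially introduce the doubling chain $z_j=x^{2^j}$ together with bit-selectors $y_j\in\{e,z_j\}$, realize $g$ as their product, and enforce $i\le r$ by a quantifier-free bit-comparison $\mathrm{LEQ}_k$; the linear (rather than branching) recursion of $\mathrm{LEQ}$ is where the same no-duplication idea lives in your version. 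Both constructions give length $O(\log r)$ under the paper's convention that each variable occurrence costs one symbol (the same convention behind the count $O(r\log p)$ for $\chi_k$ in Lemma~\ref{lem:p-subgroup}, which uses $\Theta(r)$ distinct variables), and a small bonus of yours is that $\psi_r$ comes out purely existential, whereas the paper's formula has $\Theta(\log r)$ quantifier alternations from the nested $\forall z$. One detail to tighten in your verification: when $z_j=e$ (so the order of $x$ divides $2^j$) both disjuncts $y_j=e$ and $y_j=z_j$ hold, and your bit $c_j$ is not well defined. This is harmless --- $\mathrm{LEQ}_j$ then acts as if $c_j=0$ via the disjunct $y_j=e$, and if any $z_j=e$ with $j\le k$ then the order of $x$ is a power of two at most $2^k\le r$, so every power of $x$ already has an exponent $\le r$ and $\mathrm{LEQ}_k$ is trivially satisfiable by taking $y_k=e$ (using $b_k=1$) --- but the clean exponent to record as witness is $\sum_{j:\,y_j=z_j\ne e}2^j$.
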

\begin{proof} Let $\psi_1(y,x) \equiv y=1 \lor y=x$.   Recursively define 
\begin{eqnarray*} \psi_{2k}(y,x) & \equiv & \ex u,v [ y = uv \land \fa z. (z=u \lor z=v) \psi_k(z,x)] \\
 \psi_{2k+1}(y,x) &\equiv & \ex u,v [ (y = uv \lor y = uvx)  \land \fa z. (z=u \lor z=v) \psi_k(z,x)]
 \end{eqnarray*}
 Clearly $\psi_r$ works as required. Further, $|\psi_r| = O(\log r)$.
\end{proof}

We next provide   an easy fact on $p$-groups (which   is not first order at this stage).
\begin{fact} \label{fact:easy on p} Suppose   $L$ is a $p$-group. Then   $|L|\le p^r  \LR$ 

$ \ex x_1, \ldots,  \ex x_r  \in L \fa y \in L $

\begin{equation} \tag{$\diamond$}  \ex a_1,  \ldots,  a_r  \, [ 0\le a_i   < p \land     \,  y = \prod_i x_i^{a_i}]. \end{equation}
\end{fact}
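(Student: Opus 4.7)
The plan is to prove the two implications separately; both are elementary, with the forward direction relying on the standard fact that a nontrivial $p$-group has nontrivial center.

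For the backward direction ($\Leftarrow$), I would use a counting argument. The set of tuples $(a_1,\ldots,a_r) \in \{0,1,\ldots,p-1\}^r$ has exactly $p^r$ elements. The map sending each such tuple to the product $x_1^{a_1} x_2^{a_2} \cdots x_r^{a_r}$ in $L$ is at most one-to-one trivially (it is a function), so its image has size at most $p^r$. If the formula $(\diamond)$ holds for every $y \in L$, then $L$ is contained in this image, and hence $|L| \le p^r$.

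For the forward direction ($\Rightarrow$), write $|L| = p^s$ with $s \le r$ and induct on $s$. The base case $s=0$ is trivial: take $x_1 = \cdots = x_r = 1$, and every $y \in L = \{1\}$ is the empty product. For the inductive step, since $L$ is a nontrivial finite $p$-group, $Z(L)$ is nontrivial and (being itself a $p$-group) contains an element $x_s$ of order $p$. Since $x_s$ is central, $\langle x_s \rangle$ is normal, and $\bar L = L/\langle x_s \rangle$ has order $p^{s-1}$. By the induction hypothesis applied inside $\bar L$, there exist $\bar x_1,\ldots,\bar x_{s-1} \in \bar L$ such that every $\bar y \in \bar L$ has the form $\prod_{i<s} \bar x_i^{a_i}$ with $0 \le a_i < p$. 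Choose arbitrary preimages $x_1,\ldots,x_{s-1} \in L$. Given $y \in L$, its image in $\bar L$ can be written this way, so $y = \bigl( \prod_{i<s} x_i^{a_i} \bigr) x_s^{a_s}$ for a unique $a_s \in \{0,\ldots,p-1\}$, giving the required representation with $s$ generators. To extend to $r$ generators when $s < r$, simply set $x_{s+1} = \cdots = x_r = 1$ and take $a_{s+1} = \cdots = a_r = 0$ in the representation of any $y$.

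There is no real obstacle: the backward direction is pure counting, and the forward direction is a routine induction whose only nontrivial ingredient is the existence of a central element of order $p$ in a nontrivial finite $p$-group, which follows from the class equation. The motivation for this formulation is presumably to combine it with Lemma~\ref{lem:exp monoid} in what follows, so that the external quantifier bounds $0 \le a_i < p$ and the product $\prod_i x_i^{a_i}$ can be translated into a first-order sentence of length $O(r \log p)$ in the group language.
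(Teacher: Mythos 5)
Your proposal is correct and takes essentially the same approach as the paper: the backward direction is the same counting observation (which the paper merely labels ``immediate''), and the forward direction is the same induction using a central element of order $p$ and the quotient $L/\langle x_s\rangle$ --- the only cosmetic difference being that you induct on $s = \log_p|L|$ and pad with identities at the end, whereas the paper inducts directly on $r$.
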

\begin{proof} The implication $\LA$ is immediate. 

For the implication $\RA$, we use induction on~$r$. The base case $r=1$ is obvious since $L$ is trivial or cyclic of order $p$. Now suppose $r >1 $ and the implication holds for $r-1$. If $L$ is non-trivial, pick $x_r$ in the centre   of order $p$. By inductive hypothesis for $L/N$ where $N = \la x_r \ra$, we can choose $x_1, \ldots, x_{r-1} \in L$ such that statement holds in $L/N$ via $x_i N$, $1 \le i < r$.  So for each $y \in L$, we have 
$$yN = \prod_{i=1}^{r-1} (x_iN)^{a_i}=  \prod_{i=1}^{r-1} (x_i)^{a_i}N$$
 for some $a_i$ with $0 \le a_i < p$. Therefore  there is $a_r$ with $0 \le a_r < p$ such that $y = \prod_i x_i^{a_i}$, as required.  \end{proof} 

\begin{lemma} \label{lem:p-subgroup} For  $k=p^r$, $p$ prime, $r \in \NN$ there is a sentence $\beta_{k}$ of length $O(\log k)$  such that $G \models \beta_{k}$ iff $G$ has a subgroup of size $k$. \end{lemma}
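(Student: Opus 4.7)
The plan is to characterise the existence of a subgroup of order $p^r$ by the existence of $r$ ``generators'' $x_1, \dots, x_r$ whose monomials $x_1^{a_1} \cdots x_r^{a_r}$ (with $0 \le a_i < p$) enumerate a closed set $H$ of exactly $p^r$ distinct elements. Fact~\ref{fact:easy on p} applied to $L = H$ will handle the subgroup-to-formula direction: a central-series expansion of any subgroup of order $p^r$ yields nontrivial $x_i$'s for which every element has a (necessarily unique, by cardinality count) representation as such a monomial.

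Concretely, using $\psi_{p-1}(a, x)$ from Lemma~\ref{lem:exp monoid} (of length $O(\log p)$) to quantify over $\{x^0, x^1, \dots, x^{p-1}\}$, I would define an auxiliary formula
\[ \mathrm{Mem}(y; \vec{x}) \,\equiv\, \exists a_1, \dots, a_r \,\Bigl[\, \bigwedge_{i} \psi_{p-1}(a_i, x_i) \,\land\, y = a_1 a_2 \cdots a_r \,\Bigr] \]
of length $O(r \log p)$, and set
\[ \beta_{p^r} \,:=\, \exists x_1 \cdots \exists x_r \,\bigl[\,\Phi_{\mathrm{nt}} \,\land\, \Phi_{\mathrm{tor}} \,\land\, \Phi_{\mathrm{uniq}} \,\land\, \Phi_{\mathrm{clos}}\,\bigr], \]
where $\Phi_{\mathrm{nt}} \equiv \bigwedge_i x_i \ne 1$, the torsion clause is $\Phi_{\mathrm{tor}} \equiv \forall y\,[\mathrm{Mem}(y) \to y^{p^r} = 1]$ (using $\theta_{p^r}$ from Lemma~\ref{repeated squaring}), the uniqueness clause $\Phi_{\mathrm{uniq}}$ asserts that $a_1 \cdots a_r = b_1 \cdots b_r$ implies $\bigwedge_i a_i = b_i$ under the constraints $\psi_{p-1}(a_i, x_i) \land \psi_{p-1}(b_i, x_i)$, and $\Phi_{\mathrm{clos}} \equiv \forall y, z\,[\mathrm{Mem}(y) \land \mathrm{Mem}(z) \to \mathrm{Mem}(yz)]$. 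Each conjunct has length $O(r \log p)$, giving total length $O(\log k)$.

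For correctness in the $\Rightarrow$ direction, if $\beta_{p^r}$ holds then each $x_i$ lies in $H := \{y : \mathrm{Mem}(y)\}$, so $x_i^{p^r} = 1$ by $\Phi_{\mathrm{tor}}$; combined with $x_i \ne 1$ this forces $x_i$ to have $p$-power order at least $p$, so $\{x_i^j : 0 \le j < p\}$ has exactly $p$ distinct elements. Then $\Phi_{\mathrm{uniq}}$ makes the natural map from exponent tuples into $H$ a bijection, giving $|H| = p^r$, and $\Phi_{\mathrm{clos}}$ together with the finiteness of $H$ makes $H$ a subgroup. The main subtlety I anticipate --- and the reason for introducing the torsion clause $\Phi_{\mathrm{tor}}$ --- is that element-level uniqueness alone is not enough to pin down $|H|$: a choice like $x_1 = 1$ in $G = \mathbb{Z}/2$ with $r = 2$ would vacuously satisfy $\Phi_{\mathrm{uniq}}$ and $\Phi_{\mathrm{clos}}$ while yielding $|H| = 2 < 4$. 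The torsion condition enforces $p$-power order for all generators globally in total length $O(r \log p)$, rather than incurring $O(r \log p)$ per generator for a wasteful total of $O(r^2 \log p)$.
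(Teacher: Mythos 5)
Your proof is correct, and it takes a genuinely different route from the paper's in how the size $p^r$ is pinned down. Both proofs begin the same way: define the monomial set $H = U^k_{\ol x}$ by a formula of length $O(\log k)$ and assert that it is closed under multiplication (hence a subgroup, by finiteness). The difference is the size-control mechanism. The paper's $\beta_k$ imposes an \emph{irredundancy} condition --- that no $(r-1)$-tuple $\ol y$ gives $U^{k/p}_{\ol y} = U^k_{\ol x}$ --- together with the exponent condition, and then invokes Fact~\ref{fact:easy on p} a second time in the $\Rightarrow$ direction: since $U^k_{\ol x}$ is a $p$-group of size $\le p^r$, if it had size $p^s$ with $s<r$ it could be described by an $(r-1)$-tuple after padding with identities, contradiction. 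Your approach instead pins down the size directly by a counting argument: the torsion clause $\Phi_{\mathrm{tor}}$ forces each $x_i$ to have $p$-power order, the nontriviality clause $\Phi_{\mathrm{nt}}$ then forces order $\ge p$ so each factor set $\{1, x_i, \dots, x_i^{p-1}\}$ has exactly $p$ elements, and $\Phi_{\mathrm{uniq}}$ makes the product map from exponent tuples injective, giving $|H| = p^r$ without any further appeal to Fact~\ref{fact:easy on p}. Your version is a bit more self-contained (Fact~\ref{fact:easy on p} is only needed for the $\Leftarrow$ direction, and your diagnosis of why $\Phi_{\mathrm{nt}}$ is indispensable is exactly right); the paper's version is a bit more compact as a formula since it replaces the uniqueness clause by a single universally quantified negation. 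Both yield sentences of length $O(\log k)$.

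One small imprecision worth fixing: you describe the $x_1=1$ counterexample as ``vacuously'' satisfying $\Phi_{\mathrm{uniq}}$; it satisfies it non-vacuously (the hypothesis is met, just with $a_1$ and $b_1$ forced to equal $1$). The substance of the observation --- that nontriviality is essential --- is correct.
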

\begin{proof}
We  express $(\diamond)$  by a first-order sentence of length $O(\log k)$ via the formulas in Lemma~\ref{lem:exp monoid}: 
\begin{eqnarray*}  \chi_k(y; x_1, \ldots, x_r) & \equiv &  \ex s_0,  \ldots,  s_r  \\   &&  [ s_0 = 1 \land s_r=y \land \bigwedge_{i=1}^r  \ex v \, ( \psi_{p-1} (v, x_i)  \land s_i = s_{i-1}v)].\end{eqnarray*}
The length of $|\chi_k|$ is $O(r \log p) = O(\log k)$. 

Given a group $G$ and $\ol x = x_1, \ldots, x_r \in G$, write  $U^k_{\ol x} = \{y \in G \colon \, G\models \chi_k(y; \ol x)\}$. 
The sentence $\beta_k $ expresses that there is $\ol x = x_1, \ldots, x_r$ such that $U^k_{\ol x} $ is a subgroup of exponent dividing~$k$, and $r$ is optimal, namely, there is no $\ol y = y_1, \ldots y_{r-1}$ such that $U^{k/p}_{\ol y} = U^k_{\ol x}$. 

If $G$ has a subgroup $L$ of size $k$ then $G \models \beta_k$ by Fact~\ref{fact:easy on p}.  Now suppose $G \models \beta_k$ via $x_1, \ldots x_r \in G$.  Then $L= U^k_{\ol x }$ is a subgroup of $G$ of size $k$. 
\end{proof}

\begin{proof}[Proof of Thm.\ \ref{thm: express size}] Using Lemma~\ref{repeated squaring} we can express that the group $G$ has exponent dividing $n$. In particular, only prime factors of $n$ can occur in the order of $G$.   
Suppose $n = \prod_{i=1}^m p_i^{r_i}$ for prime numbers $p_1, \ldots,  p_m$.    We    express using Lemma~\ref{lem:p-subgroup}   for each $i \le m$ that there is a Sylow subgroup of size $p_i^{r_i}$.  For each $i$ this takes length $O(\log(p_i^{r_i}))$ with the   $O$-constant independent of $i$.  So the resulting sentence has length $O(\log n)$. 
\end{proof}

It would   be interesting to find sentences as in Theorem~\ref{thm: express size} with a bounded number of quantifier alternations.

Next we express in logarithmic length that a group is simple. We     use \cite[Lemma 2.3]{Nies.Tent:17} on generation. The notation is adapted slightly.
 \begin{lemma} \label{generation}
For each positive integers $k,v$, there exists a first-order formula $\alpha^v_k(g;z_1,\ldots,z_k)$   of length $O(k+\log v)$ such that   for each group $G$ of size at most $v$,  ${G \models \alpha^v_k(g;z_1,\ldots,z_k)}$ if and only if ${g \in \langle z_1,\ldots,z_k \rangle}$. 
\end{lemma}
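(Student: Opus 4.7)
The plan is to build $\alpha^v_k$ by mimicking the doubling technique already used for \cref{repeated squaring} and \cref{lem:exp monoid}, but now over the alphabet of generators $\{z_1^{\pm 1}, \ldots, z_k^{\pm 1}\}$ rather than powers of a single element. Concretely, I will define a family of formulas $\mu_m(y; z_1, \ldots, z_k)$ asserting that $y$ can be written as a product of at most $m$ elements from $\{1, z_1, z_1^{-1}, \ldots, z_k, z_k^{-1}\}$. The base case $\mu_1(y; \bar z)$ is the disjunction $y = 1 \lor \bigvee_{i=1}^k (y = z_i \lor y \cdot z_i = 1)$, which has length $O(k)$.

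For the recursive step I will use the standard universal-quantifier trick to avoid duplicating $\mu_m$: set
\[
\mu_{2m}(y; \bar z) \;\equiv\; \exists u, v \bigl[\, y = uv \,\land\, \forall w\, ((w = u \lor w = v) \to \mu_m(w; \bar z)) \,\bigr].
\]
Each doubling adds only $O(1)$ symbols on top of a single copy of $\mu_m$, so $|\mu_{2^t}| = O(k) + O(t)$. Taking $t = \lceil \log v \rceil$ and setting $\alpha^v_k(g; \bar z) := \mu_{2^t}(g; \bar z)$ gives a formula of length $O(k + \log v)$ as required.

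For correctness in a group $G$ with $|G| \le v$: the implication $\alpha^v_k(g; \bar z) \to g \in \langle \bar z\rangle$ is immediate from the recursive definition. For the converse, recall that the Cayley graph of $\langle \bar z\rangle$ with respect to $\bar z^{\pm 1}$ has diameter at most $|\langle \bar z\rangle| - 1 \le v - 1 < 2^t$, so every element of $\langle \bar z \rangle$ is a product of at most $2^t$ generators (padding with copies of $1$ if needed, which is why $\mu_1$ includes the identity). Hence $g \in \langle \bar z\rangle$ forces $G \models \mu_{2^t}(g; \bar z)$.

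The only mildly delicate point is bookkeeping the constant in $O(k + \log v)$: the base formula contributes $O(k)$ once (not once per level of recursion), which is precisely what the $\forall w$ trick buys us. Everything else is routine; no new group-theoretic input beyond the trivial diameter bound is needed, and the construction is uniform in $k$ and~$v$.
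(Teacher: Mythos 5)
Your proof is correct and uses the standard doubling construction with the universal-quantifier contraction trick to keep a single copy of the recursive subformula, which is exactly the approach in the cited source (the blog entry itself just invokes \cite[Lemma 2.3]{Nies.Tent:17} without reproving it). The diameter bound $|\langle \bar z\rangle| - 1 \le v-1 < 2^{\lceil\log v\rceil}$ is the right group-theoretic ingredient, and including the identity in the base-case alphabet correctly handles padding.
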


Given a group $G$ of size at most $v$, we write $L^v_k({\ol z}) = \{y \in G \colon \, G\models \alpha^v_k(y; \ol x)\}$ in case this is a subgroup. 

Every finite group  has a  generating set of logarithmic size. So  a group $G$ of size at most $v$  is simple iff 
\bc $G \models \fa z_1,\ldots,z_k \, [ L^v_k({\ol z}) \lhd G \to ( L^v_k({\ol z}) = \{e\} \lor L^v_k({\ol z}) = G)]$, \ec 
where $k = \log v$.

One  can   use  these facts to give a new type of first-order description of  finite simple groups in logarithmic length. First one says what the size of the groups is,  and that it is simple.  A finite simple group is   determined by its size, with the exception of \bi \item $ \mathtt{PSL}_3(4)$ which has the same size as $\mathtt{Alt}_8$ without being isomorphic to it, and  \item  the groups   $B_n= P\Omega_{2m+1}(q)$  and 
$C_m=PSp_{2m}(q) $, $q$ an odd prime power,  $m >2$, which have the same size  $\frac 1 2 q^{m^2} \prod_{i=1}^m (q^{2i}-1)$  without being isomorphic. \ei  (See 
http://mathoverflow.net/questions/107620/non-isomorphic-finite-simple-groups for background.)
 The exceptional cases above can   be distinguished    by the fact  that the nonisomorphic  groups of the same size  have different numbers of conjugacy classes of involutions, and that the number of these conjugacy classes is logarithmic in the size. Firstly, in $ \mathtt{PSL}_3(4)$ all involutions are conjugate, while in $A_8$ there are two conjugacy classes, namely $(12)(34)$ and $(12)(34)(56)(78)$. Next,  in $B_m$ there are $m$ classes, in $C_m$ for $m$ odd there are $(m+1)/2$ classes and for $m$ even there are $m/2+1$ classes. For the latter, see 
\cite[Table 4.5.1, p.\ 172]{Gorenstein.ea:98}.
To read this table, note that  classes in the simple group are the coset `1', diagonal involutions (outer automorphisms of the simple group) are labelled `d'. The notation $1/d$ [condition] means 1 if condition holds, $d$ if condition does not hold. (Thanks to David Craven for pointing out the reference and explaining this.)

We note that \cite[Lemma 2.5]{Kimmerle.etal:90} shows that the number of involutions of $B_m$, $C_m$ also differs for $m>2$. This could also be used. (Thanks to Csaba Schneider for this reference.)

\section{Melnikov and Nies: A computable compact abelian group such that the Haar measure is not computable} \label{Melnikov Nies Haar measure computable}
Melnikov and Nies worked at the Research Centre  Coromandel in June.
Recall a computable  topological space $X$ is  given  by a sequence of basis sets $\seq {B_n} \sN n$ such that for every two such sets $B_i,B_k$  we can uniformly  represent $B_i  \cap B_k$ as an effective union of basic sets. Each computable metric space is also a computable topological space with the basis given by the $B_\delta(p)$ for $\delta \in \QQ^+$ and $p$ a special point.  
We say that a Borel measure $\mu$ on $X$ is computable if   $\mu(B_i)$  is   uniformly  left-c.e. in~$i$ (if boundaries of   open sets are null   we could as well require that is it uniformly computable).

A \emph{computable topological  group} is a group $G$ that is also a computable topological space, in such a way that the group operations are effectively continuous.  

Recall that every separable compact group has a unique translation invariant probability measure, called the Haar measure. 
\begin{thm} There is a computable compact abelian group such that the Haar measure is not computable. \end{thm}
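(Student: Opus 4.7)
The plan is to take $G$ to be a $\Pi^0_1$ subgroup of the Cantor group $\cantor = \prod_n \ZZ/2\ZZ$ (with coordinate-wise XOR), presented as a computable topological group in the obvious way, chosen so that the Haar measures of its basic open sets encode a c.e.\ non-computable set.

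First I would fix a c.e.\ non-computable set $A \subseteq \NN$ and define $G = \{x \in \cantor : x_n = 0 \text{ for every } n \in A\}$. Since $A$ is c.e., $G$ is a $\Pi^0_1$ class in $\cantor$, hence closed and compact; and it is plainly an abelian subgroup. I would present $G$ as a computable topological space whose basic open sets are $B_\sigma = [\sigma] \cap G$, indexed by $\sigma \in \strcantor$. Intersections reduce to ordinary string comparison: $B_\sigma \cap B_\tau$ is $B_\sigma$ or $B_\tau$ when $\sigma,\tau$ are comparable, and $\emptyset$ otherwise. Effective continuity of addition follows from $B_{\sigma_1} + B_{\sigma_2} \subseteq B_{\sigma_1 \oplus \sigma_2}$ whenever $|\sigma_1|=|\sigma_2|$ (using $G+G\subseteq G$), and inversion is the identity. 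So $G$ is a computable compact abelian topological group.

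The Haar probability measure is then read off from the clopen subgroups of $G$. Writing $A_n = A \cap [0,n)$, the projection $G_n$ of $G$ to the first $n$ coordinates equals $\{\sigma \in \{0,1\}^n : \sigma_j = 0 \text{ for all } j \in A_n\}$, a subgroup of $(\ZZ/2\ZZ)^n$ of size $2^{n-|A_n|}$. Consequently $\mu_G(B_\sigma) = 2^{|A_n|-n}$ for those $\sigma \in 2^n$ consistent with $G$, and $\mu_G(B_\sigma) = 0$ otherwise.

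The central step, and the main obstacle, is to show that $\mu_G$ is not uniformly left-c.e. Suppose for contradiction it were; then for each $\sigma$, the relation ``$\mu_G(B_\sigma) > 0$'' would be $\Sigma^0_1$ in $\sigma$, witnessed by the appearance of a positive rational in the left-approximation. But the complementary relation ``$B_\sigma = \emptyset$'' is already $\Sigma^0_1$ in $\sigma$ by direct inspection: wait for some $j < |\sigma|$ with $\sigma_j = 1$ to be enumerated in $A$. Hence for each $n$, both $G_n$ and its complement in $\{0,1\}^n$ would be c.e.\ uniformly in $n$; so $|G_n|$ would be computable uniformly in $n$, and since $|G_n| = 2^{n-|A_n|}$, the function $n \mapsto |A_n|$ would be computable. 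But then $A$ itself is decidable via $k \in A \Leftrightarrow |A_{k+1}| > |A_k|$, contradicting the choice of $A$. The only subtlety I foresee is bookkeeping around possibly-empty basic open sets $B_\sigma$, which is handled uniformly since $\mu_G(B_\sigma) = 0$ in that case and each $B_\sigma$ is in any event clopen in $G$ (so boundaries are null and the weak and strong notions of computable measure coincide here).
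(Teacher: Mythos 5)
Your construction is essentially Rute's example from Section~\ref{Nies Rute compact groups}, but you have the orientation of $A$ backwards, and this introduces a genuine gap. With $A$ c.e.\ and non-computable, the set of coordinates on which a bit $1$ is permitted is $\NN\setminus A$, which is co-c.e.\ but not c.e. Consequently $G$ admits no computable dense sequence of points: if $(a_i)$ were a dense sequence with $d(a_i,a_j)$ uniformly computable, one could effectively enumerate the set $\{n : \ex i,j\ d(a_i,a_j)=2^{-n}\}$, which by density (consider the nonempty open sets $\{x\in G : x\uhr n = 0^n \land x(n)=1\}$ for $n\notin A$) equals $\NN\setminus A$ --- impossible. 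So $G$ is not a computable metric space, hence not a computable Polish group in the sense that underlies Rute's companion result in Section~\ref{Nies Rute compact groups}, where it is shown that for effectively compact computable Polish groups Haar measure \emph{is} computable. Your presentation by the cylinders $B_\sigma$ does satisfy the bare ``computable topological space'' clause at the head of Section~\ref{Melnikov Nies Haar measure computable} (intersections are finite unions or $\emptyset$), but it is a degenerate presentation: nonemptiness of $B_\sigma$ is only $\PI 1$, so one cannot even semidecide whether a basic open set is inhabited, and your argument in fact leans on this defect. The paper's own proof avoids the issue by taking a product of finite cyclic subgroups $G_e$ of the circle, each with an explicitly computable generator $[v_e]$ and therefore with genuinely nonempty basic balls around the computable special points $[iv_e]$; Rute avoids it by making the set of coordinates where a bit is free a c.e.\ set.

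The fix is to flip $A$: fix a c.e.\ non-computable $W$ and take $G = \{x\in\cantor : x_n = 0 \text{ for all } n\notin W\}$. Now the nonempty cylinders can be enumerated (wait for each $j$ with $\sigma_j=1$ to appear in $W$), the finitely supported points with support in $W$ form a computable dense sequence, and $G$ is a genuine computable compact Polish group --- this is exactly Rute's $\prod_n \ZZ_{2^{h(n)}}$ with $h=\chi_K$. Your noncomputability argument must also be adapted, since ``$B_\sigma=\emptyset$'' is no longer $\SI 1$: instead note directly that $\mu(B_{0^n}) = 2^{-|W\cap[0,n)|}$ is upper-semicomputable because $|W\cap[0,n)|$ is lower-semicomputable, so if it were in addition left-c.e.\ it would be computable, whence $|W\cap[0,n)|$ and then $W$ would be computable, a contradiction.
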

\begin{proof} 
Let $K$ denote the halting problem with effective enumeration $\seq{K_s}$; we may assume that $K_2 = \ES$. We first give uniformly in $e$ a presentation of a discrete cyclic group $G_e$ such that the Haar measure on $G_e$ is not uniformly computable. 

For a real $\theta$ let $[\theta]= e^{2 \pi i \theta}$. The distance between to points on the circle is the usual shortest arc length.

  We define a computable real $v= v_e$ uniformly in $e$;  $[v_e]$ will be a generator of $G_e$ seen as a subgroup of the circle group. At stage $s$, if $e \not \in K_s$, define $v_s = \frac 1 2 + \tp {-s}  $.  If $e\in K_s \setminus K_{s-1}$ define $v_t = v_{s-1}$ for all $t \ge s$. 

The special points of $G_e$ are   the  uniformly computable reals given by the Cauchy name  $\seq{i v_{s+i}}_{\sN s}$. If $e \not \in K$ then $G_e = \{ [0], [1/2]\}$;  if $e \in K$ then $|G_e | \ge 8$.%  (and $G_e$ consists of 

The   discrete topology on $G_e$ is uniformly computable: as an effective  basis take the sets $G_e \cap B_\delta([i v^e])$ for $\delta \in \QQ \cap (0,1/2]$. 
Let $\mu_e$ be Haar measure on $G_e$ with the discrete topology.  Clearly by the translation invariance of $\mu_e$ we have $e \not \in K \lra \mu_e(B_{1/8}(1)) > 1/4$. So $\mu_e$ is not uniformly computable.

Now let $G= \prod_e G_e$ topologized with the product topology which is compact and effective with the usual product basis.  Let $p_e \colon G \to G_e$ be the projection onto $G_e$ which is computable uniformly in $e$. If the Haar  measure on $G$ is computable then the image measure $p_e(\mu)$  on $G_e$ is uniformly computable. However, $\mu_e= p_e(\mu)$ by uniqueness of Haar measure, contradiction.
\end{proof} 
\section{Fouche and Nies:     computable profinite groups} \label{Fouche-Nies-groups}
Willem Fouch\'e and Nies went on a 1-week  retreat near Port Elizabeth, South Africa, and before and after  worked at Unisa Pretoria. As one  topic they studied  randomness in  computable profinite groups.

\subsection{Background on profinite groups} A separable compact group $G$ is called \emph{profinite} if $G$ is the inverse limit $$G =\varprojlim_n \seq {G_n, p_n}\sN n$$ of a system of discrete finite groups $$ \to_{p_n} G_n \to_{p_{n-1}} G_{n-1} \to \ldots_{p_2} \to G_1.$$ The inverse limit is   determined up to isomorphism by the universal property formulated in terms of category theory. For a concrete instantiation, it   can be seen as a closed subgroup $U$  of the direct product $\prod_{n} G_n$  consisting of the functions~$\alpha $ such that $p_{n}(\alpha (n+1)) = \alpha(n)$ for each $n>0$.     

We may assume that  the maps $p_n$ are onto  after replacing $G_n$ by its  subgroup of elements such that all the iterated pre-images under the maps $p_i$  are defined. This corresponds to pruning  a tree by removing dead ends.

 \begin{remark}[Haar measure] \label{rem:Haar}  Given $G$ as an inverse limit of an onto system, the Haar  probability measure $\mu$  can be concretely  defined as follows. Let $q_n \colon G \to G_n$ be the natural projection. A clopen set $C$ of $G$ has the form $ C = q_n^{-1} (F)$ for a finite set $F \sub G_n$. By definition $\mu$ is  translation invariant, so   $\mu (C) =|F|/|G_n|$.  As the clopen sets form a basis this determines the measure on all the Borel sets of $G$.
 \end{remark}

 \subsubsection*{Completion} \label{ss:completion}
  The definition  below is taken from  \cite[Section 3.2]{Ribes.Zalesski:00}. 
 Let $G$ be a group, $\+ V$ a set of normal subgroups of finite index in $G$ such that $U, V \in  \+ V $ implies that there is $W \in \+ V$  with  $W \sub U  \cap V$. We can turn $G$ into a topological group by declaring $\+ V$ a basis of neighbourhoods (nbhds) of the identity. In other words, $M \sub G$ is open  if for each $x \in M$ there is $U \in \+ V$ such that $xU \sub M$. 
 
\begin{definition} \label{def:completion} The completion of $G$ with respect to $\+ V$ is the inverse limit  $$G_{\+ V} = \varprojlim_{U \in \+ V} G/U,$$ where $\+ V$ is ordered under inclusion and the inverse system is equipped with  the natural    maps: for $U \sub  V$, the  map  $p_{U,V} \colon G/U \to G/V $  is given by $gU \mapsto gV$. 
\end{definition} The inverse limit can be seen as a closed subgroup of the direct product $\prod_{U \in \+ V} G/U$ (where each group $G/U$ carries  the discrete topology), consisting of the functions~$\alpha $ such that $p_{U,V}(\alpha (gU)) = gV$ for each $g$.  Note that the map $g\mapsto (gU)_{U \in \+ V}$ is a continuous homomorphism $G \to G_{\+ V}$ with dense image; it is injective iff $\bigcap \+ V = \{1\}$. 
 
 If the set $\+ V$ is understood from the context, we will usually  write $\widehat G$ instead of $G_\+ V$.
 
  \subsubsection*{Free profinite groups}   

\begin{definition} \label{def: F_finite }  Let $   \widehat F_k$ be the free profinite group in $k$ generators $x_0,   \ldots, x_{k-1}$  ($ k <  \omega$).  \end{definition} Clearly, $   \widehat F_k$ is the profinite completion of   the abstract free group on $k$ generators with respect to the system of all subgroups of finite index.  Any topologically finitely generated  profinite group can be written in  the form \[    \widehat F_k / R\]
for some $k$ and   a closed normal subgroup~$R$ of~$   \widehat F_k$.

\begin{definition} \label{def: F_omega} Let $   \widehat F_\omega$ be the free profinite group on a sequence  of  generators $x_0, x_1,  x_2 \ldots $   converging to $1$ \cite[Thm.\ 3.3.16]{Ribes.Zalesski:00}. \end{definition} 
 Thus, $   \widehat F_\omega$ is the   completion   in the sense of the previous subsection of  the   free group $F_\omega$ on   generators $x_0, x_1, \ldots$ with respect to  the system of normal   subgroups of finite index  that contain almost all the $x_i$.  Any   profinite group $G$   has a   generating sequence $\seq{g_i}\sN i$ converging to $1$.  This is easy  to see using coset representatives for a descending  sequence of open normal subgroups that form a fundamental system of nbhds of $1_G$. (Also  see \cite[Prop. 2.4.4 and 2.6.1]{Ribes.Zalesski:00}.) By the universal property of the completion, the map from the abstract free group induced by $x_i \to g_i$ extends to a continuous epimorphism $ \widehat F_\omega \to G$. So $G$   can be written in  the form \[    \widehat F_k / R\]
where $R$ is a closed normal subgroup of $   \widehat F_k$.

  \subsubsection*{``Almost everywhere"  theorems in profinite groups}  

\begin{thm}[Jarden; see \cite{Fried.Jarden:06}, 18.5.6] \label{thm:Jarden1} Let  $G = \mathrm{Gal}(\bar \QQ / \QQ)$ be the absolute Galois group of  $\QQ$. 
 For almost all tuples   $\sss= (\sss_1, \ldots, \sss_e) \in G^e$, the closed subgroup of $G$    topologically generated by $\sss$,   is a free profinite group of 
rank~$e$.  \end{thm}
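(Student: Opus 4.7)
My proof plan rests on a structural characterization of free profinite groups due to Iwasawa / Mel'nikov: a countably generated projective profinite group $H$ of rank $\le e$ is isomorphic to $\widehat F_e$ if and only if every finite embedding problem
\[
\begin{CD} @. H \\ @. @VV{\bar\varphi}V \\ A @>>{\alpha}> B \end{CD}
\]
with $A,B$ finite and $\alpha$ surjective admits a solution $\varphi \colon H \to A$ lifting $\bar\varphi$, provided the problem is of rank at most $e$ (i.e.\ $A$ can be generated by $e$ elements). Since $G = \mathrm{Gal}(\bar\QQ/\QQ)$ is known to be projective (its cohomological dimension is $\le 1$, a theorem of Tate), every closed subgroup of $G$ is projective, so for each tuple $\sigma$ the group $\langle \sigma\rangle$ is automatically projective and topologically generated by $e$ elements. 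The task therefore splits into two measure-theoretic claims: (i) for almost all $\sigma$, $\langle\sigma\rangle$ has rank \emph{exactly} $e$; and (ii) for almost all $\sigma$, every finite embedding problem of rank $\le e$ over $\langle\sigma\rangle$ is solvable.

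For claim (i), I would argue that the set of tuples $\sigma$ with $\mathrm{rank}(\langle\sigma\rangle) < e$ has measure zero because it is contained in the union over all pairs $(\tau, N)$ (with $N \triangleleft G$ open and $\tau \in G^{e-1}$) of the set of $\sigma$ whose image in $(G/N)^e$ has the same subgroup as some fixed $(e-1)$-tuple's image. Since $G$ is infinite and has many finite quotients requiring genuinely $e$ generators (e.g.\ symmetric groups $S_n$ for large $n$ as Galois groups over $\QQ$), each such locus has measure $< 1$ in $(G/N)^e$, and a Borel--Cantelli style argument, exploiting the fact that $\QQ$ has Galois extensions needing arbitrarily many generators, shows the union is null.

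For claim (ii), which is the heart of the matter, I would use Hilbert's irreducibility theorem (HIT), which says $\QQ$ is Hilbertian. Fix a finite embedding problem given by a Galois extension $L/\QQ$ with group $B$ (so $\bar\varphi \colon \langle\sigma\rangle \to B$ is the restriction) and a surjection $\alpha \colon A \twoheadrightarrow B$. By HIT applied to a suitable versal family, one produces a Galois extension $M/\QQ$ containing $L$ with group $A$ such that $\alpha$ corresponds to restriction. Then solving the embedding problem amounts to finding, for almost all $\sigma$, an element in the preimage $\alpha^{-1}(\bar\varphi)$ that lies in $\langle\sigma\rangle$; this is a Fubini-type computation on $G^e$ using the Haar measure, comparing the measure of $\{\sigma : \langle\sigma\rangle \to B \text{ lifts to } A\}$ to the measure of $\{\sigma : \sigma \text{ has the right image in } (A)^e\}$. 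Enumerating all finite embedding problems (countably many up to isomorphism), the exceptional sets are each null, and their countable union is null.

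The main obstacle, and the technical heart of Jarden's argument, is controlling the Fubini computation in step (ii): one must show that the event ``the lift exists'' occurs with full conditional measure given the event ``$\bar\varphi$ has the prescribed form,'' and this requires the Hilbertian hypothesis on $\QQ$ applied not just to produce one extension $M/\QQ$ but to guarantee that the fiber product structure behaves generically. I would model this part on \cite[Ch.\ 18]{Fried.Jarden:06}, adapting the measure-theoretic Hilbertianity machinery; the projectivity of $G$ is needed precisely to know that the abstract criterion forces freeness once the embedding problems are all solved, so no additional obstruction is imposed on the candidate free basis.
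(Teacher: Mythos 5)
Your plan has a genuine error at its foundation: $G = \mathrm{Gal}(\bar\QQ/\QQ)$ is \emph{not} projective. Complex conjugation is an element of order $2$ in $G$, and cohomological dimension is monotone on closed subgroups, so $\mathrm{cd}_2(G) \geq \mathrm{cd}_2(\ZZ/2\ZZ) = \infty$; a profinite group is projective precisely when its cohomological dimension is at most~$1$. Thus you cannot argue that $\langle\sss\rangle$ is ``automatically projective'' as a closed subgroup of $G$. (The nontrivial torsion of $G$ is exactly the set of conjugates of complex conjugation, by Artin--Schreier, which \emph{is} a null set; so a.e.\ $\langle\sss\rangle$ is torsion-free, but torsion-freeness alone is not projectivity.) The repair is that projectivity is not needed as a hypothesis: the Iwasawa-type criterion states that a profinite group $H$ with $\mathrm{rank}(H)\le e$ is isomorphic to $\hat F_e$ iff every finite embedding problem for $H$ with $\mathrm{rank}(A)\le e$ has a proper solution, with no projectivity assumption --- projectivity of $H$ is a \emph{consequence}, not an input. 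With that replacement your two-step outline is in the right spirit.

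The blog's own sketch takes a leaner route that sidesteps embedding-problem bookkeeping. It suffices to show that for a.e.\ $\sss$ and \emph{every} finite group $R$ equipped with an $e$-tuple of generators $(\pi_1,\dots,\pi_e)$, the assignment $\sss_i\mapsto\pi_i$ extends to a continuous epimorphism $\langle\sss\rangle\twoheadrightarrow R$. This alone makes the canonical map $\hat F_e\to\langle\sss\rangle$, $x_i\mapsto\sss_i$, injective (if $1\neq w$ lies in its kernel, take $R$ a finite quotient of $\hat F_e$ detecting $w$), and choosing $R=(\ZZ/2\ZZ)^e$ forces $\mathrm{rank}\langle\sss\rangle = e$, so your separate claim (i) comes for free. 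For the measure computation one embeds $R$ into $S_k$, uses Hilbertianity of $\QQ$ (\cite{Fried.Jarden:06}, Cor.~16.2.7) to produce a linearly disjoint sequence of Galois extensions $L_n/\QQ$ with group $S_k$, notes that the cosets $C_n$ (those $\sss$ restricting on $L_n$ to $(\pi_1,\dots,\pi_e)$) are $\mu^e$-independent of constant measure $(k!)^{-e}$ by 18.5.1--18.5.2, and applies Borel--Cantelli; your ``Fubini-type'' computation in step (ii) is essentially this independence-plus-Borel--Cantelli argument in disguise.
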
 
A group $G$ is called \emph{small} if it   has only finitely many subgroups of each index. Each small residually finite  (r.f.)group  is \emph{hopfian}, namely, every epimorphism $\alpha \colon G \to G$ is an isomorphism.   (Proof: let $V_n$ be the intersection of subgroups of index $\le n$, and check that $\alpha(V_n) = V_n$ for each $n$.) If $g \in \ker \alpha$  and $g \neq 1$ then $g \not \in  V_n$ for some $n$, so $\alpha(g) \neq 1$ as well.)

Every f.g.\ profinite group is small and r.f., and hence hopfian. It follows that $\sss$ above actually freely topologically generates $G$. 

A field $L$ is  PAC   (pseudo-algebraically closed) if  every (irreducible) variety over $L$ has a point in $L$. Besides algebraically closed fields, examples of PAC fields are the algebraic extensions $L$ of $\mathbb F_q$ with $|L:\mathbb F_q| = \omega$. See \cite{Fried.Jarden:06} for background. A field $L$ is  $\omega$-free if  $\mathrm{Gal}(L)$ is topologically isomorphic to $\hat F_\omega$. 
\begin{thm}[Jarden,  see Thms 18.6.1 and 27.4.8  in \cite{Fried.Jarden:06}] \label{thm:Jarden2} Let  $G = \mathrm{Gal}(\QQ)$ be the absolute Galois group of  $\QQ$. 
 For $\sss= (\sss_1, \ldots, \sss_e) \in G^e$ let $L_\sss=\QQ[\sss]$ denote the maximal Galois extension of $\QQ$ contained in the fixed field of $\sss$; equivalently, $L_\sss$   is the fixed field of the \emph{normal} closure of $\sss$.
 
 For almost all tuples   $\sss$, $L_\sss$ is PAC and $\omega$-free. 
   \end{thm}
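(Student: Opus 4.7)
The plan is to establish the two properties separately: almost surely $L_\sss$ is PAC, and almost surely the normal closure $N(\sss)$ of $\sss$ in $G$ is topologically isomorphic to $\hat F_\omega$. The latter yields $\omega$-freeness since the Galois-theoretic dictionary gives $\mathrm{Gal}(L_\sss) = N(\sss)$.

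For the PAC conclusion, I would enumerate the countable collection of absolutely irreducible affine varieties $V$ defined over the finite Galois extensions $K \subseteq \bar\QQ$ of $\QQ$. For each such $V$, the strategy is to apply the Chebotarev density theorem inside sufficiently large finite Galois quotients of $G$ in order to produce Frobenius elements $\tau$ whose fixed field already contains a rational point of $V$. Combining this with an equidistribution argument for the normalized counting measure on finite quotients of $G^e$ (which matches Haar measure in the inverse limit, as in Remark~\ref{rem:Haar}) and a Borel--Cantelli style estimate gives that for each fixed $V$, the set of tuples $\sss$ whose normal closure $N(\sss)$ fails to meet such a Frobenius class has Haar measure zero. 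Intersecting over a countable family of test varieties yields PAC almost surely.

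For the $\omega$-freeness, I would apply an Iwasawa-type characterization of $\hat F_\omega$: a projective profinite group that is topologically generated by a null-sequence and in which every finite embedding problem is solvable must be isomorphic to $\hat F_\omega$. Projectivity of $N(\sss)$ follows from Ax's theorem once PAC is in hand. Countable generation by a sequence converging to $1$ is immediate, because the set of all conjugates of $\sss_1,\ldots,\sss_e$ by elements of $G$ is countable modulo any open normal subgroup of $G$, each of which has finite index. Finally, the solvability of an arbitrary finite embedding problem for $\mathrm{Gal}(L_\sss)$ reduces, via the standard Fried--Jarden dictionary (projective cover of the embedding problem, then interpreting a splitting as a rational point on a geometrically integral variety), to finding an $L_\sss$-rational point on that auxiliary variety, which is guaranteed by the PAC property established in the first step.

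The main obstacle will be the measure estimate in the first step: choosing the right countable family of testing varieties, uniformising the Chebotarev equidistribution across these varieties and across arbitrarily large quotients of $G$, and assembling the estimates into a genuine Borel--Cantelli conclusion rather than a mere ``positive measure'' statement. A secondary technical hurdle is the reduction of an arbitrary finite embedding problem over $L_\sss$ to the existence of a rational point on an absolutely irreducible variety; this requires constructing an appropriate Galois cover and carefully exploiting projectivity of $\mathrm{Gal}(L_\sss)$ so that the solution of the embedding problem can be read off from a single rational point.
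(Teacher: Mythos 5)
The paper does not prove Theorem~\ref{thm:Jarden2}; it quotes it directly from Theorems~18.6.1 and 27.4.8 of Fried and Jarden's \emph{Field Arithmetic}, so there is no in-paper argument to compare your proposal against. (A later item in the same section, ``effective form of the first part of Jarden's Thm.~\ref{thm:Jarden2}'', is proved, but it is a different statement and only covers the PAC half.)

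Read as a sketch of the literature argument, your PAC half is at least pointed the right way: the proof of 18.6.1 does combine Hilbertianity and countability of $\QQ$ with measure-theoretic independence of suitably chosen cosets in $G^e$ and a Borel--Cantelli estimate. But the $\omega$-freeness half contains a genuine gap. Iwasawa's criterion requires \emph{proper} (surjective) solutions of every finite embedding problem for $\mathrm{Gal}(L_\sss)$, whereas projectivity --- which is what Ax's theorem delivers once PAC is known --- gives only \emph{weak} solutions, i.e.\ continuous homomorphisms making the diagram commute without any surjectivity. The passage from weak to proper solutions is precisely where the difficulty lives, and it does not follow formally from PAC by ``reading off a rational point'' on an auxiliary variety. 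In Fried--Jarden the almost-everywhere $\omega$-freeness of $\QQ[\sss]$ (27.4.8) is among the deepest results in the book, relying on the Free Generators Theorem and the structure theory of normal subgroups of free profinite groups (Melnikov's theorem), and the closely related question of whether every Hilbertian PAC field is $\omega$-free was a famous open problem resolved only with substantial additional machinery. A correct proof of the $\omega$-freeness half would have to engage with that machinery rather than reduce it to PAC plus projectivity.
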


Jarden and Lubotzky  \cite{Jarden.Lubotzky:99} study a related setting, namely $G= \hat F_n$. 
\begin{thm}[Jarden and Lubotzky,  Thm.\ 1.4 in \cite{Jarden.Lubotzky:99}] \label{thm: Jarden and Lubotzky} Let $G = \hat F_n$ for finite  $n\ge 2$. For almost all tuples   $\sss= (\sss_1, \ldots, \sss_e) \in G^e$, the closed \emph{normal}  subgroup they  topologically generate  either has finite index or is a free profinite group of 
rank $\omega$. The second case holds for all $\sss$ if $e< n$, for almost all $\sss$ if $e= n$, and for a set of $\sss$ with positive measure   if $e>n$.    \end{thm}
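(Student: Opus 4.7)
The plan is to decouple the result into a structure theorem (a dichotomy for closed normal subgroups) and three measure computations, one for each range of $e$. Write $N_\sss = \langle \sss_1, \ldots, \sss_e \rangle^G$ for the closed normal subgroup of $G = \hat F_n$ topologically generated by the tuple. I would first establish the dichotomy: any closed normal subgroup $N$ of $\hat F_n$ (with $n \geq 2$) is either of finite index or is a free profinite group of countably infinite rank. This is a profinite analog of Magnus--Karrass--Solitar; the freeness side can be attacked by verifying the embedding-problem criterion for freeness --- every finite embedding problem over $N$ splits --- exploiting that $G$ is projective and that $G/N$ being infinite provides enough open normals between $N$ and $G$ to lift given finite covers.

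For $e < n$, compose with the continuous abelianization $\pi \colon G \to G^{ab} \cong \widehat{\ZZ}^n$. Since $\pi$ sends normal closure to the closed subgroup generated, $\pi(N_\sss)$ is the closed $\widehat{\ZZ}$-submodule generated by $\pi(\sss_1),\dots,\pi(\sss_e)$; it has topological rank at most $e<n$, so $\widehat{\ZZ}^n/\pi(N_\sss)$ has rank at least $n-e \geq 1$ and is infinite. A fortiori $G/N_\sss$ is infinite, so $N_\sss$ has infinite index for every $\sss$, and the dichotomy yields $N_\sss \cong \hat F_\omega$ for every $\sss$.

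For $e=n$, decompose $F = \{\sss : N_\sss \text{ has finite index}\} = \bigcup_K \{\sss : N_\sss = K\}$ as a countable union over open normal subgroups $K \lhd G$. Each event $\{N_\sss = K\}$ lies inside $\{\sss \in K^n\}$ of measure $[G:K]^{-n}$; one would refine this to a summable upper bound by analysing, for each $K$, the proportion of $n$-tuples in $K/K'$ whose $G/K'$-conjugation orbit topologically generates $K/K'$ as $K' \subsetneq K$ runs through a cofinal system of open normals. The anticipated hard part is proving a Gasch\"utz--Kantor--Lubotzky style counting bound sharp enough that the resulting sum over $K$ vanishes; this is the technical core that the Jarden--Lubotzky paper handles. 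Given $\mu(F)=0$, the dichotomy then delivers $N_\sss \cong \hat F_\omega$ almost surely.

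For $e > n$, I would produce a positive-measure event forcing infinite index by conditioning on $\{\sss \in K^e\}$ for a proper open normal $K \lhd G$ of index $m$. The Nielsen--Schreier index formula for profinite groups gives that $K$ is free profinite of rank $r = m(n-1)+1$; choose $m$ so that $e < r$. Conditionally on $\sss \in K^e$ (an event of positive measure $m^{-e}$), $\sss$ is uniform on $K^e$, and the $e < \mathrm{rank}(K)$ case applied inside $K$ shows the $K$-normal closure $\langle\sss\rangle^K$ has infinite index in $K$. Since $\langle\sss\rangle^G \subseteq K$ is the join of the $m$ $G/K$-conjugates of $\langle\sss\rangle^K$, a generic choice of $\sss$ prevents these conjugates from amalgamating into a finite-index subgroup of $K$, yielding a further positive-measure subevent on which $N_\sss$ has infinite index in $G$, hence (by the dichotomy) is free profinite of rank $\omega$.
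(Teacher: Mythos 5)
The paper does not itself prove this theorem; it is cited verbatim from Jarden--Lubotzky \cite{Jarden.Lubotzky:99}, so there is no ``paper's proof'' to compare against. Evaluating your attempt on its own terms, the foundational step is wrong. You claim an unconditional dichotomy: every closed normal subgroup of $\hat F_n$ ($n \ge 2$) is either of finite index or is free profinite of rank $\omega$. This is false. The trivial subgroup is already a counterexample, and there are nontrivial ones too: Melnikov's theorem characterises exactly which closed normal subgroups of infinite index in a free profinite group are free, and the characterisation is a nontrivial condition (roughly, that $N$ surjects onto every finite simple group that occurs as a composition factor of a finite quotient of $G/K$ for open normal $K \supseteq N$, and does so ``infinitely often''). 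If your dichotomy were true, the theorem's first sentence would read ``for all tuples,'' not ``for almost all tuples''; the measure-zero exceptional set is genuinely present and is where all the work lives. Your proposed proof of the dichotomy --- projectivity of $N$ plus ``enough open normals between $N$ and $G$'' --- cannot succeed: projectivity (which $N$ does inherit as a closed subgroup of a free profinite group) only gives \emph{weak} solutions to finite embedding problems, i.e.\ the lifted homomorphism need not be surjective. Freeness via the Iwasawa/Melnikov criterion requires \emph{strong} (surjective) solutions, and this is exactly what fails for the exceptional normal subgroups. The actual Jarden--Lubotzky argument shows that the Melnikov condition on $N_\sss$ holds for almost every $\sss$; it does not and cannot sidestep this via an unconditional structure theorem.

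Two secondary issues. First, your $e>n$ argument ends with ``a generic choice of $\sss$ prevents these conjugates from amalgamating into a finite-index subgroup of $K$''; this is the entire content of that case, not a remark. Worse, the conjugate count works against you: conditioned on $\sss \in K^e$ you are looking at the $K$-normal closure of $me$ elements of a free profinite group of rank $m(n-1)+1$, and for $e\ge n$ and $m \ge 2$ you have $me > m(n-1)+1$, so the ``$e <$ rank'' mechanism you invoke does not apply to the conjugates. Second, once the false dichotomy is removed, your $e<n$ and $e=n$ cases only establish infinite index (which is the easy abelianisation step for $e<n$ and a plausible counting estimate for $e=n$); they do not address freeness at all, which is the actual content of the theorem.
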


\subsection{The algorithmic theory}
This section has benefitted from discussions with A.\ Melnikov. 
 \subsubsection*{Effectiveness conditions on profinite groups}

\begin{definition}[Smith~\cite{Smith:81}] \label{def: computable profinite} \  \bi \item[(i)]   A profinite group $G$ is called \emph{co-r.e.} if it is the inverse limit  of a computable  inverse system $\la G_n, p_n\ra$ of finite groups  (i.e.\ the groups $G_n$ and the maps $p_n$ between them are uniformly computable). Equivalently, the subgroup $U$  above is a $\PI 1$ subclass of $\prod_{n} G_n$. \item[(ii)] $G$  is called \emph{computable} if, in addition,  the  maps $p_n$    can be chosen  onto. In other words, the set of extendible nodes in the tree corresponding to $U$ is computable.   \ei\end{definition}

%
%\subsubsection*{Aside:  computable   metric groups}
% Consider a   Polish metric space which is also a group:  $(G,\delta, \circ, {.}^{-1} )$ with the group operations continuous.   This structure is called computable if $(G , \delta)$  is a computable complete metric space  with respect to which  the group operations are computable.  Such a group is also a computable topological group in the sense of Section~\ref{Melnikov Nies Haar measure computable}. 
%  
% 
% If a profinite group is computable in the sense of Definition~\ref{def: computable profinite} then it is a computable Polish metric group via the usual ultrametric on $\prod_n G_n$, and hence a computable topological group. For a converse one should show that a computable topological group that is effectively compact and effectively 0-dimensionial (something like: the basis $\seq{B_n}$ in Section~\ref{Melnikov Nies Haar measure computable} above can be chosen clopen and is effectively closed under complement) is computable in the sense of profinite groups. Note that a computable profinite group enjoys these effectiveness properties. 
%  
% It would also be interesting to study computable categoricity. 
% E.g., is every computable   profinite group that is topologically finitely generated
% computably categorical?
 
 \subsubsection*{Absolute Galois groups}
 Let $K$ be a computable field.  Then the algebraic closure $\ol K$ has a computable presentation. ($\bar \QQ$ has a unique one, i.e.\ is autostable, by a result of Ershov.)  
 
  Suppose in addition that $K$ has   a splitting algorithm  (for polynomials in one variable), i.e.\ one can decide whether a polynomial is irreducible. An example of such a field is   $\QQ$. Then $\ol K$ has a computable presentation so that the $K$ viewed as a subset of $\ol K$ is decidable \cite[Lemma 6]{Rabin:60}.

   Also,  the absolute Galois group of $K$ is computable. To show this, intuitively, one builds a computable chain $K= L_0 \le L_1 \le \ldots $ of finite Galois extensions of $K$ with union $\bar K$. The computable inverse system is given by the groups $G_n = \mathrm{Gal}( L_n/K)$ where the projection $p_n \colon G_{n+1} \to G_n$ is given by restricting $\phi \in  \mathrm{Gal}( L_{n+1}/K)$ to $L_n$.

 \subsubsection*{Computable profinite groups that are completions}
 Suppose $G$ is a computable group, and the class $\+ V$ in Definition~\ref{def:completion} is uniformly computable in that there is a uniformly computable sequence  $\seq{R_n}$ such that $\+ V = \{ R_n \colon n \in \NN \}$. Suppose further that  $W$ above can be obtained effectively from $U,V$. Then there is a  uniformly computable  descending  subsystem  $\seq {T_k}$ of $\seq {R_n}$  such that  $\fa n \ex k \, T_k \le R_n$. Since we can effectively find coset representatives of $T_n$ in $G$, the inverse system  $\seq  {G/T_n}$ with the natural projections  $T_{n+1} a \to T_n a$ is computable. So $G_\+ V$ is computable.  
 
 The criterion above is satisfied by $F_k$ and $F_\omega$ with the systems of normal subgroups introduced above. Thus their completions $\hat F_k$ and $\hat F_\omega$ are computable profinite groups. 
 
 \begin{lemma} Let $G$ be $k$-generated ($ k \le \omega$). Then $G$ is computable [co-r.e.] iff $G= \hat F_k/N$ for some computable normal subgroup $N$ ($\PI 1$ N).  \end{lemma}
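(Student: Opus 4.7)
The plan is to establish both biconditionals in parallel by using the standard computable inverse system $\hat F_k = \varprojlim_n H_n$ (with $H_n = \hat F_k/U_n$ for a fundamental sequence of open normal subgroups $U_n$) as a bridge between effective presentations of $N \trianglelefteq \hat F_k$ and of the quotient $G = \hat F_k/N$.

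For the backward directions, given $N$, push the system forward to $G \cong \varprojlim_n H_n/V_n$ where $V_n = NU_n/U_n$, with bonding maps remaining surjective. When $N$ is computable, a coset $gU_n$ lies in $V_n$ iff the node $gU_n$ lies on the tree $T_N$, which is decidable; so the $V_n$ are uniformly computable subgroups of $H_n$ and the pushed-forward system is computable with onto maps, giving $G$ computable. When $N$ is only $\PPI$ we pass to a computable subsystem instead: let $W_0, W_1, \ldots$ be a computable enumeration of all open normal subgroups of $\hat F_k$ (obtained by listing normal subgroups of each finite $H_n$ and pulling back). Then $I = \{n : W_n \supseteq N\}$ is c.e., because ``$W_n \supseteq N$'' iff the clopen set $\hat F_k \setminus W_n$ is disjoint from $N$, a $\SI 1$ condition by compactness. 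Enumerating $I$ as $n_0, n_1, \ldots$ and setting $\tilde W_\ell = W_{n_0} \cap \cdots \cap W_{n_\ell}$ gives a uniformly computable descending chain of open normal subgroups with $\bigcap_\ell \tilde W_\ell = N$; indeed, for any $g \notin N$ a clopen neighbourhood $gU_m$ disjoint from $N$ produces the open normal subgroup $NU_m$, which contains $N$ but not $g$ and appears as some $W_{n^*}$ with $n^* \in I$. Hence $G = \varprojlim_\ell \hat F_k/\tilde W_\ell$ is realised as the limit of a computable inverse system, and so is co-r.e.

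For the forward directions, choose a computable generating sequence $(g_i)_{i<k}$ of $G$ with $g_i \to 1$ when $k = \omega$ (possible because $G$ is a separable compact metric group), and let $\pi\colon \hat F_k \to G$ be the continuous epimorphism extending $x_i \mapsto g_i$, with $N = \ker \pi$. In the co-r.e.\ case $h \in N$ iff $p_m(\pi(h)) = 1$ in every $G_m$, a $\PPI$ condition provided each $p_m(\pi(h))$ is effectively computable from $h$; the latter holds because one can find, uniformly in $m$, some $n$ with $\pi(U_n) \subseteq \ker p_m$ (by checking that the images of a topological generating set of $U_n$ lie in $\ker p_m$) and then read $p_m(\pi(h))$ off the coset $h_n$. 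In the computable case one further needs $T_N$ decidable, accomplished by identifying the clopen subgroup $\pi(U_n) \leq G$ effectively and computing $V_n = NU_n/U_n$ as the kernel of the induced surjection $H_n \to G/\pi(U_n)$.

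The main obstacle is this last step: $\pi(U_n)$ is a priori only given as the topological closure of the subgroup of $G$ generated by the images under $\pi$ of a topological generating set $y_1, \ldots, y_r$ of $U_n$, and converting this into a decidable subgroup of $G$ requires detecting the stabilisation of the finite approximations $W_m = \langle p_m\pi(y_1), \ldots, p_m\pi(y_r) \rangle \leq G_m$. The indices $[G_m : W_m]$ are non-decreasing in $m$ and bounded by the computable quantity $[\hat F_k : U_n]$, so they stabilise; effectively locating the stabilisation level uses, for $k < \omega$, that Nikolov--Segal supplies finitely many topological generators of $U_n$, while for $k = \omega$ the standard system of $\hat F_\omega$ has each $U_n$ topologically generated by a tail of $(x_i)$ together with finitely many further words, and the analogous argument adapts.
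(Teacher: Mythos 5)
Your backward argument for the co-r.e.\ case proves strictly more than the lemma asserts: the inverse system $\langle \hat F_k/\tilde W_\ell\rangle_\ell$ you construct has \emph{surjective} bonding maps (the $\tilde W_\ell$ descend), so it actually witnesses that $\hat F_k/N$ is \emph{computable}, not merely co-r.e. This should give pause, because if the forward co-r.e.\ direction were also correct, chaining the two halves would show that every $k$-generated co-r.e.\ profinite group is computable. That is false. Take a $\Pi^0_1$ but non-computable set $S$ of primes and let $G=\prod_{p\in S}\ZZ/p\ZZ$. Then $G$ is procyclic, hence $1$-generated, and it is co-r.e.: present it via $G_s=\prod_{p\le s,\ p\in S[s]}\ZZ/p\ZZ$, where $S[s]$ is a descending computable approximation to $S$ and the bonding map $G_{s+1}\to G_s$ sends a $\ZZ/p\ZZ$-component to $0$ once $p$ leaves $S[\cdot]$. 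But $G$ is not computable: from any computable presentation with onto maps one reads off, as a c.e.\ set, the primes dividing the supernatural order of $G$, which would make $S$ c.e.\ and hence computable. By your (correct) backward argument, it follows that this $G$ is not of the form $\hat F_k/N$ for any $\Pi^0_1$ $N$ and any $k$, so the forward co-r.e.\ direction cannot be proved; as stated it appears to be false.

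The gap in your forward direction is the assertion that one may ``choose a computable generating sequence $(g_i)_{i<k}$ of $G$ \ldots\ possible because $G$ is a separable compact metric group.'' Being a separable compact metric group gives no effectiveness whatsoever. In the example above, a topological generator of $G$, read off from any co-r.e.\ presentation, computes $S$ and so cannot be computable. The ability to effectively find a generating sequence is essentially what separates computable from merely co-r.e.\ presentations, so it cannot be taken for granted in the co-r.e.\ case. (The Logic Blog does not supply a proof of this lemma, so I cannot compare against the authors' intended argument; the above suggests the co-r.e.\ half of the statement itself needs to be reformulated.)

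For the computable half, your outline is plausible but the obstacle you flag at the end is genuine and the proposed fix does not resolve it. Knowing a finite set of topological generators $y_1,\ldots,y_r$ of $U_n$ (for $k<\omega$ this follows already from Nielsen--Schreier applied to $U_n\cap F_k$; Nikolov--Segal is not needed) gives the bound $[G:\pi(U_n)]\le[\hat F_k:U_n]$, but a bound on the limiting index does not tell you at which finite level $m$ the index $[G_m:\langle p_m\pi(y_1),\ldots,p_m\pi(y_r)\rangle]$ has stopped growing. Detecting that stabilisation, or otherwise deciding $\pi(g)\in\pi(U_n)$, is exactly the content of showing $T_N$ decidable and requires a further idea; as written this step is incomplete.
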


\subsubsection*{Computability of Haar measure} 
We use the notion of a  computable probability  space by Hoyrup and Rojas \cite{Hoyrup.Rojas:09a}.
 \begin{lemma} Let $G$ be computable profinite group. Then $\mu_G$, the Haar probability measure on $G$, is computable. \end{lemma}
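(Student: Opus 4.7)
The plan is to show that the Haar measure assigns uniformly computable rational values to a natural effective basis of clopen sets coming from the inverse system, and then to deduce uniform left-c.e.-ness on all effective open sets.

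First I would fix the presentation of $G$ as $\varprojlim_n \la G_n, p_n\ra$ with surjective connecting maps (using that $G$ is computable, not merely co-r.e., in the sense of Definition~\ref{def: computable profinite}), and realize $G$ as the closed subgroup of $\prod_n G_n$ consisting of coherent sequences. Let $q_n \colon G \to G_n$ be the natural projection. The sets $C_{n,g} = q_n^{-1}(\{g\})$ for $n\in\NN$ and $g \in G_n$ form an effective basis of clopen sets: each $C_{n,g}$ is the set of coherent sequences whose $n$-th coordinate equals $g$, so membership is decidable. At a fixed level $n$, the family $\{C_{n,g} \colon g \in G_n\}$ partitions $G$, and the level-$(n+1)$ basis refines the level-$n$ basis via $p_n$; finite intersections of basic sets from different levels are reduced to disjoint unions at a common higher level by following the onto maps. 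This is exactly the step where surjectivity of the $p_n$ is essential, and is what the computability (as opposed to mere co-r.e.-ness) of $G$ gives us.

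Next, by Remark~\ref{rem:Haar}, $\mu_G(q_n^{-1}(F)) = |F|/|G_n|$ for any $F \subseteq G_n$. Since the presentation hands us $|G_n|$ uniformly computably in $n$, the measure of every basic cylinder $C_{n,g}$ is a uniformly computable rational, and hence so is the measure of any finite Boolean combination of basic clopen sets (after refining to a common level and counting elements of the corresponding $F \subseteq G_{n}$).

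Finally, to verify that $\mu_G$ is a computable probability measure in the sense of Hoyrup--Rojas, I would show that for any uniformly c.e.\ sequence $\seq{U_i}$ of basic open sets the measure $\mu_G(\bigcup_i U_i)$ is uniformly left-c.e. At stage $s$, take the finitely many basic cylinders enumerated so far, lift them all to a common level $n_s$ using the surjections $p_k$, form the union $F_s \subseteq G_{n_s}$ of the corresponding finite subsets, and output $|F_s|/|G_{n_s}|$. These rational approximations are monotone and, by $\sigma$-additivity and finite additivity at each level, converge to $\mu_G(\bigcup_i U_i)$.

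The main obstacle — and really the only delicate point — is handling the transition between basic sets at different levels when estimating measures of effective open sets; without surjectivity of the $p_n$ the naive count $|F|/|G_n|$ would overestimate, since some $g \in G_n$ could have empty fibre in $G$. Once onto-ness is secured, everything else is bookkeeping, and the computability of $\mu_G$ follows directly from the computability of the finite factors.
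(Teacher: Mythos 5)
Your proposal is correct and follows essentially the same route as the paper's proof: compute $\mu_G$ on the clopen cylinders $q_n^{-1}(F)$ via $|F|/|G_n|$ using the computable (hence onto) inverse system, then observe this determines a computable measure. The paper leaves the verification of left-c.e.-ness on effective opens implicit, whereas you spell it out, but the underlying argument is the same.
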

 \begin{proof} The inverse system $\la G_n, p_n\ra$ is computable. So for a clopen set $C=q_n^{-1}(F)$ as in Remark \ref{rem:Haar}, given by the parameters $n$ and $F$, we can compute the measure. This suffices. \end{proof} 
 
 \begin{remark} {\rm Here is a more concrete description of the Haar measure. As before $q_n \colon G \to G_n$ is the natural projection. We also assume that $G_1 $ is trivial. We write $V_n =\ker {q_n}$. For each 
$n$ we can effectively determine  $k_n = |V_n : V_{n+1}|$ and  a sequence  $\seq{ g^{(n)}_{i}}_{i< k_n}$ of coset representatives for $V_{n+1}$ in $V_n$ such that  $g^{(n)}_0 = 1$.  

Let $T$ be the tree of   strings  $\sss \in \omega^{< \omega}$  such that  $\sss(i) < k_i $ for each $i< \sssl$. For $\sssl = n$  we have  a  coset of $V_n$ in $G$ \begin{equation*} \label{eqn:coset} C_\sigma = g^{(0)}_{\sss(0)} g^{(1)}_{\sss(1)} \ldots g^{(n-1)}_{\sss(n-1)} V_n.\end{equation*} 
The  clopen sets $C_\sss$ form  a basis for    $G$. In this way  $G$ is naturally homeomorphic to  $[T]$  where   the identity element corresponds to  $0^\omega$.  The Haar measure is the usual uniform measure on $[T]$. }\end{remark}

 \subsubsection*{Randomness notions defined via algorithmic tests}

 Let $G$ be a computable profinite group.  Given that the Haar measure $\mu$ on $G$  is computable, the usual randomness notions defined  via algorithmic tests, or effectively descriptive set theory tests, can be applied. 
 The usual question is: how strong a randomness notion on a group element $g$ suffices for  an  ``almost everywhere" properties  to hold for $g$? 
 
A point in a computable measure space is Kurtz random (or weakly random)  if it is  in no $\PI 1$ null class. For Jarden's Thm.\ \ref{thm:Jarden1},and at least the first part of his Thm.\ \ref{thm:Jarden2},  this  rather  weak randomness notion      is sufficient. If the underlying topological space is effectively homeomorphic to Cantor space, then any weakly 1-generic point  (i.e., in every dense $\SI 1 $ set) is  Kurtz random. As this applies to the setting of profinite groups at hand, the points for which Jarden's results hold are also comeager. 
 
\begin{thm}[effective form   of  Jarden's   Thm.\ \ref{thm:Jarden1} for $\QQ$] \label{thm:Jarden1rand} Let  $G = \mathrm{Gal}(\QQ)$ be the absolute Galois group of  $\QQ$. 
 Let  $\sss = (\sss_1, \ldots, \sss_e) \in G^e$ be  Kurtz random.  The closed subgroup  generated  by $\sss$   is a free profinite group of 
rank $e$ (freely generated by $\sss$).  \end{thm}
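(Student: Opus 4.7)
The plan is to show that the ``bad'' set of tuples $\sss \in G^e$ failing the conclusion can be written as a countable union of $\PI{1}$ null classes; Jarden's original theorem provides the null measure, and Kurtz randomness provides avoidance of each such class.

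First, since $\QQ$ has a splitting algorithm, earlier discussion gives that $G = \mathrm{Gal}(\QQ)$ is a computable profinite group; fix a presentation $G = \varprojlim G_n$ with uniformly computable onto projections $q_n \colon G \to G_n$. Then $G^e$ is computable profinite with computable Haar measure, so Kurtz randomness on $G^e$ is well-defined. Next I would observe that $\sss$ freely topologically generates a rank-$e$ free profinite subgroup of $G$ iff for every finite group $K$ and every tuple $\bar k = (k_1, \dots, k_e) \in K^e$, the assignment $\sss_i \mapsto k_i$ extends to a continuous homomorphism $\overline{\langle \sss \rangle} \to K$. The forward implication is the universal property; the reverse uses that $\hat F_e$ is residually finite, so any nontrivial element in the kernel of the canonical surjection $\hat F_e \to \overline{\langle \sss \rangle}$ would be separated from $1$ in some finite quotient and thus killed by composition with a suitable extension. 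Moreover, by continuity such an extension exists iff at some level $n$ the finite-level assignment $q_n(\sss_i) \mapsto k_i$ already extends to a surjection $\langle q_n(\sss)\rangle \to K$ inside $G_n$; this is decidable in the finite data $q_n(\sss) \in G_n^e$.

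For each pair $(K, \bar k)$, let $T_{n, K, \bar k} \subseteq G^e$ be the clopen set where that decidable condition fails at level $n$, and set $B_{K, \bar k} = \bigcap_n T_{n, K, \bar k}$, which is uniformly $\PI{1}$ in $(K, \bar k)$. By the characterization above, the countable union $\bigcup_{K, \bar k} B_{K, \bar k}$ equals the ``Jarden-bad'' set of $\sss$, which by Theorem~\ref{thm:Jarden1} has Haar measure zero; hence each $B_{K, \bar k}$ is individually a $\PI{1}$ null class. A Kurtz random $\sss$ lies in no $\PI{1}$ null class and therefore in no $B_{K, \bar k}$, which is exactly the required free generation.

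The point I expect will take the most care is the choice of $\PI{1}$ parameterization. The naive attempt using $A_w = \{\sss : w(\sss) = 1\}$ for nonidentity abstract words $w \in F_e$ only detects \emph{abstract} relations: a Kurtz random avoiding all such $A_w$ would merely have $\sss_1, \dots, \sss_e$ freely generating an abstract free subgroup, which is strictly weaker than topological free generation. For instance, three elements of $\hat F_2$ can abstractly generate a rank-$3$ free subgroup, yet the closed subgroup they generate clearly cannot be free profinite of rank $3$. Parameterizing instead by finite-quotient data $(K,\bar k)$ repairs this, since every continuous homomorphism from $\overline{\langle \sss \rangle}$ to a finite group factors through some $\langle q_n(\sss)\rangle \le G_n$.
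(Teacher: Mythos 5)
Your proof is correct, and it takes a genuinely different route from the paper's. The paper \emph{re-derives} the measure-one statement effectively: it extracts from Fried--Jarden a uniformly computable sequence of linearly disjoint Galois extensions $L_n$ of $\QQ$ with $\mathrm{Gal}(L_n/\QQ)\cong S_k$, shows the corresponding cosets $C_n\subseteq G^e$ are independent of measure $(k!)^{-e}$, and invokes Borel--Cantelli to get an explicit $\Sigma^0_1$ conull set in which any Kurtz random must lie; one such set is built per finite group $R$ and embedding into some $S_k$. Your argument instead treats Jarden's Theorem~\ref{thm:Jarden1} as a measure-theoretic black box and concentrates the effectivity on the shape of the failure set: you spell out the characterization that $\sss$ freely generates iff every finite-level assignment $(K,\bar k)$ extends, observe that at each level $n$ extendability is a decidable clopen condition on $q_n(\sss)\in G_n^e$ which, once true, stays true, so the failure set $B_{K,\bar k}=\bigcap_n T_{n,K,\bar k}$ is a nested $\Pi^0_1$ class, and then only need $B_{K,\bar k}\subseteq$ (Jarden-bad set) to get nullity. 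This buys you brevity and a modular structure, at the cost of not isolating an explicit effective witness for the conull set; the paper's Borel--Cantelli route, conversely, is more self-contained (it does not invoke $18.5.6$ itself, only the earlier lemmas) and gives slightly more information, since it exhibits concrete Galois-theoretic cosets that the random tuple must hit. Your closing remark about the inadequacy of the word sets $A_w=\{\sss: w(\sss)=1\}$ for $w\in F_e$ is exactly the right subtlety to flag, and is what makes the finite-quotient parameterization the correct one; the paper handles this implicitly through the hopficity observation ($\overline{\langle\sss\rangle}\cong\hat F_e$ forces the canonical map $\hat F_e\to\overline{\langle\sss\rangle}$ to be injective), which you also invoke via residual finiteness, so on that point the two proofs agree.
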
 

Here is a sketch why this is true. (For a full proof,  thorough understanding of Jarden's result would be  needed, which is hard work  because of the long chain of dependencies and heavy notation leading up to \cite[18.5.6]{Fried.Jarden:06}.)  All item numbers  refer to   \cite{Fried.Jarden:06}.

Two extensions $L_1, L_2$ of a common field $K$  (tacitly assumed to be contained  in a common field) are called \emph{linearly disjoint} if whenever a tuple from $L_1$ is linearly independent over $K$, it remains linearly independent over $L_2$. By Lemma 2.5.1.\  this is symmetric. 

A sequence of extensions $L_1, L_2, \ldots$ is linearly disjoint if $L_{j+1} $ is l.d.\ from the compositum  $L_1 \ldots L_j$ for each~$j$. Given $k$, Cor.\  16.2.7.\ builds such a sequence of Galois extensions for $K= \QQ$ such that for each $n$ we have $\mathrm{Gal}(L_n/ \QQ ) \cong  S_k$ via some isomorphism $\rho_n$. The sequence of fields $\seq {L_n}$  is uniformly computable, as can be derived  from the proof (hopefully). 

%and they uniformly have a splitting algorithm?
By 18.5.1.\ the linear disjointness implies that the absolute Galois groups $\mathrm{Gal}(L_n)\le \mathrm{Gal}(\QQ)$ are $\mu$-independent;  recall that $\mu$ is the Haar measure on $G = \mathrm{Gal}(\QQ)$. More generally, by 18.5.2, for $e \ge 1$,  if for each $n$ one picks a  coset  $C_n$ of $\mathrm{Gal}(L_n)^e$ in $G^e$,  the $  C_n$ are $\mu^e$-independent. 

Now to conclude the argument, we want to show that each finite group $R$ generated by $e$ elements is a quotient of 
the closed subgroup $\la \sss \ra \le G$  generated  by $\sss$,  as this suffices to show freeness. Embed $R$ into $S_k$ for $k$ sufficiently large, and let $\pi_1, \ldots, \pi_e$ be the images of the generators of $R$ under this embedding.  

For each $n$ we have a natural onto homomorphism $G \to \mathrm{Gal}(L_n/\QQ)$ given by restriction to $L_n$ (note that $L_n$, being   a normal extension of $\QQ$,  is preserved under automorphisms of $\bar \QQ$). So we can effectively pick a coset $C_n$ of $\mathrm{Gal}(L_n)^e$ in $G^e$ that maps to $\la \pi_1, \ldots, \pi_e \ra$. Since the $L_n$ are independent, these cosets are independent,  and each  has measure   $1/(k!)^e$. Hence, by  Borel-Cantelli their union has measure $1$. The union is $\SI 1$ and hence contains the  Kurtz random   $\sss$. This shows that $R$ is a quotient of $\la \sss \ra$ as required.

\begin{thm}[effective form  of the first part of   Jarden's   Thm.\ \ref{thm:Jarden2} for $\QQ$] In the setting of Thm. \ref{thm:Jarden2}, recall that for an automorphism $\sss \in \mathrm{Gal} (\QQ)$, $L_\sss$   is the fixed field of the \emph{normal} closure of $\sss$. If $\sss$ is Kurtz random, then $L_\sss$ is PAC. \end{thm}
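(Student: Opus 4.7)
The plan is to mirror the argument for Theorem \ref{thm:Jarden1rand}: express the PAC property of $L_\sss$ as the conjunction of countably many events, each an effectively given $\SI 1$ class of Haar measure $1$, and then invoke Kurtz randomness. The first observation is that $L_\sss$ is itself Galois over $\QQ$, so any $x \in L_\sss$ lies in the Galois closure of $\QQ(x)$ (which is again a finite Galois subextension of $L_\sss$), and any absolutely irreducible variety $V$ over $L_\sss$ is defined over some finite Galois-over-$\QQ$ subfield $F \sub L_\sss$. Hence $L_\sss$ is PAC if and only if for every pair $(F,V)$ with $F$ finite Galois over $\QQ$ and $V$ absolutely irreducible over $F$, either $\sss$ does not fix $F$ pointwise, or there is a finite Galois $F' \supseteq F$ fixed pointwise by $\sss$ with $V(F') \neq \ES$.

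First I would fix a computable presentation of $\bar \QQ$ in which $\QQ$ is decidable (by Rabin's theorem, as in the discussion preceding Theorem \ref{thm:Jarden1rand}) and effectively enumerate pairs $(F,V)$, where $F$ is a finite Galois extension of $\QQ$ given by a primitive element and its minimal polynomial, and $V$ is an absolutely irreducible variety over $F$. For each such $(F,V)$ set
\[
U_{F,V} \ = \ \{\sss : \sss|_F \neq \id\} \ \cup \ \bigcup \bigl\{ \{\sss : \sss|_{F'} = \id\} \ : \ F' \supseteq F \text{ finite Galois over } \QQ,\ V(F') \neq \ES \bigr\}.
\]
Each set $\{\sss : \sss|_{F'} = \id\}$ is clopen, determined by the image of $\sss$ in $\mathrm{Gal}(F'/\QQ)$ via the computable inverse system for $\mathrm{Gal}(\bar\QQ/\QQ)$. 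The condition $V(F') \neq \ES$ is $\SI 1$ (enumerate candidate tuples over the computable field $F'$ and test the defining equations). Hence $U_{F,V}$ is $\SI 1$ uniformly in a code for $(F,V)$.

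Next I would verify $\mu(U_{F,V}) = 1$ directly from Jarden's classical Theorem \ref{thm:Jarden2} applied in the case $e=1$: for Haar-almost every $\sss$, $L_\sss$ is PAC, hence for a.e.\ $\sss$ with $F \sub L_\sss$ one has $V(L_\sss) \neq \ES$, which by the reduction above produces a finite Galois witness $F' \supseteq F$ with $F' \sub L_\sss$ and $V(F') \neq \ES$. Since a Kurtz random point lies in every $\SI 1$ class of measure $1$, every Kurtz random $\sss$ lies in every $U_{F,V}$ and so $L_\sss$ is PAC.

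The main obstacle will be the effective bookkeeping in the second paragraph: one must check that on our fixed computable presentation of $\bar\QQ$ and its absolute Galois group, the enumeration of finite Galois subfields $F'$, the clopen tests $\sss|_{F'} = \id$, and the decidability of absolute irreducibility of $V$ over $F$ are all uniformly computable. Beyond that, Jarden's theorem is used only as a black box for the measure-one statement, exactly as in the preceding theorem, so no new measure-theoretic input is required.
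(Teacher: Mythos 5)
Your proof is correct and takes essentially the same route as the paper's, organized slightly differently. The paper separates the argument into two steps: $L_\sigma$ is uniformly computable from $\sigma$ (an element $x\in\ol\QQ$ lies in $L_\sigma$ iff $\sigma$ fixes every conjugate of $x$, and the set of conjugates of $x$ is computable); and the PAC subfields of $\ol\QQ$ form a $\PI 2$ class. For the latter the paper invokes the Fried--Jarden reduction to plane curves $f\in K[X,Y]$, and notes that irreducibility of $f$ over any computable polynomial ring is $\PI 1$ in the coefficients, so absolute irreducibility (irreducibility over $\ol\QQ[X,Y]$) is $\PI 1$; this makes ``for every absolutely irreducible $f\in K[X,Y]$ there is a zero $(a,b)\in K^2$'' a $\PI 2$ condition on $K$. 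One then finishes by observing that a Kurtz random lies in every conull $\PI 2$ class. Your family $U_{F,V}$ is exactly the explicit unwinding of that $\PI 2$ class into a countable intersection of conull $\SI 1$ classes, which is how the Kurtz step is justified anyway. The one genuine technical distinction: you enumerate pairs $(F,V)$ with $V$ \emph{assumed} absolutely irreducible over $F$, which requires that property to be \emph{decidable} in order to produce the index set. This is in fact decidable for polynomials over number fields, so your proof stands, but the paper needs only $\PI 1$-ness, achieved by putting the $\SI 1$ condition ``$f$ is not absolutely irreducible'' \emph{inside} the $\SI 1$ class rather than as a filter on the index set -- a device you could also adopt to drop the appeal to decidability. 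The paper's restriction to plane curves rather than arbitrary varieties is a further minor simplification.
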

\begin{proof} We first show that $L_\sss$ is uniformly computable from (a code for) $\sss$. For $x \in \ol \QQ$ to be in $L_\sss$, it suffices that each conjugate $\tau^{-1}\sss \tau$ fixes $x$, or equivalently $\sss(y) = y$ for each $y$ in the orbit of $x$ under the action of $\mathrm{Gal}(\QQ)$. This orbit consists of all the conjugates of $x$, and can be computed from $x$. 

By Jarden's theorem  and the fact that a Kurtz random is in every $\PI 2$ conull set,  it now suffices to show that the PAC  subfields $K$ of $\ol \QQ$   form a  $\PI 2$ class. 

 Recall that for a unital ring $R$,  a nonconstant polynomial in $R(X_1, \to X_n)$ is called irreducible if it cannot be written as a product for two nonconstant polynomials.   For a field $K$, a polynomial in $K[X_1, \ldots, X_n]$ is called \emph{absolutely irreducible} if it is irreducible in $\ol K[X_1, \ldots, X_n]$.   We use some facts on PAC fields from \cite[Section 11.3]{Fried.Jarden:06}.  \cite[Theorem 11.2.3]{Fried.Jarden:06}  implies: 

\begin{prop} A field $K$ is PAC if and only if  for every absolutely irreducible polynomial $ f \in K[X,Y]$, there is a point $(a,b) \in K^2$ with $f(a,b) = 0$. \end{prop}

This yields the required $\PI 2$ condition as soon as we can express  using  a $\PI 1$ condition on the coefficients of $f$  that a polynomial $f \in K[X,Y]$  is absolutely irreducible. But irreducibility of $f$  in the polynomial ring $R[X_1, \ldots, X_n]$ is a $\PI 1$ condition on the coefficients of $f$ for any computable ring $R$, as one sees  directly by inspecting the definition.  \end{proof}

\begin{remark} The conditions (1) on page 200 in \cite[Section 11.3]{Fried.Jarden:06} yield a set of sentences in first-order logic expressing that a field is PAC. $S_K(2,d)$  denotes the set of polynomials $f \in K(X,Y)$ of degree $<d$ in both $X$ and $Y$.  They  say that for each irreducible $h \in \QQ(T)$, some $\PI 1$ condition holds (for each triple of polynomials $g_1, g_2, g_3) $  of bounded degree, something not involving quantifiers fails). It is decidable whether $h$ is irreducible as $\QQ$ has a splitting algorithm, so we can also get a co-r.e.\ condition on coefficients in this way. \end{remark}

%\cite[18.5.6]{Fried.Jarden:06}.) 

  \subsubsection*{Potential generalisation to Hilbertian fields} A field $K$ is Hilbertian if   every finite set of irreducible polynomials in a finite number of variables and having coefficients in $K$ admit a common specialization of a proper subset of the variables to field elements such that all the polynomials remain irreducible (Wikipedia).  Hilbert showed that $\QQ$ has this property (Hilbert's irreducibility theorem). All the classic a.e.\ theorems from \cite{Fried.Jarden:06}  mentioned above are stated  for any countable Hilbertian field, rather than just for  $\QQ$.  If  $K$ is computable and has the effective splitting property, then  one can expect that the effective versions   hold as well.

\section{Rute: On the computability of compact groups} \label{Nies Rute compact groups}
This note covers basic theorems about the computability of Haar measures and profinite groups.  The results are due to  Jason Rute and were proved after  discussions with Nies and Melnikov.  

 While we could work in the more general context of  computable topological spaces, we will only focus here on computable metric spaces (also known as computably presented Polish spaces), since they are well understood. 

Recall that a \emph{computable metric space} $\mathbb{X}$ is a complete separable metric space $(X,d)$ along with a dense sequence of points $(a_i)$ such that the map $i,j \mapsto d(a_i,a_j)$ is computable.  We say $\mathbb{X}$ is \emph{effectively compact} if one can enumerate all $\Sigma^0_1$ sets which cover $\mathbb{X}$.  It is easy to see that Cantor space is effectively compact.  We will need the following well-known properties about effectively compact spaces.

\begin{prop}\label{prop:covering}
Let $\mathbb{X}$ be a computable metric space along with a computable sequence $A_n$ where $A_n$ is a list of points $(a^n_1, \ldots, a^n_k)$ such that every point in $\mathbb{X}$ is within distance $2^{-n}$ of some $a^n_i$.  Then $\mathbb{X}$ is effectively compact.
\end{prop}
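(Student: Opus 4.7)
The plan is to produce a c.e.\ enumeration of those indices $e$ for which the $\Sigma^0_1$ set $U_e$ covers $\mathbb{X}$. By definition of a computable metric space, $U_e$ is given as a uniformly c.e.\ union of basic open balls $B(b,r)$ with $b$ a special point and $r \in \mathbb{Q}^+$. The guiding idea is to use the uniformly computable $2^{-n}$-nets $A_n$ to recognize a finite sub-cover with a definite numerical margin of slack, so that approximate computations of distances suffice to certify coverage.

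The key combinatorial lemma I would establish is: a finite collection $F$ of basic open balls covers $\mathbb{X}$ if and only if there exists $n$ such that for every $a^n_i \in A_n$ there is some $B(b,r) \in F$ with $d(a^n_i,b) + 2^{-n} < r$. For the ``if'' direction, given $x \in \mathbb{X}$ pick $a^n_i$ with $d(x,a^n_i) \le 2^{-n}$; then $d(x,b) \le d(x,a^n_i) + d(a^n_i,b) < 2^{-n} + (r - 2^{-n}) = r$, so $x \in B(b,r)$. For the ``only if'' direction, note that $\mathbb{X}$ is totally bounded by hypothesis and complete because it is a computable metric space, so $\mathbb{X}$ is compact; the continuous function $f(x) = \max_{B(b,r)\in F} (r - d(x,b))$ is strictly positive on $\mathbb{X}$ and hence achieves a positive minimum $\delta$, so any $n$ with $2^{-n} < \delta$ witnesses the claim.

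The enumeration procedure then runs as follows. At stage $s$, for each $e \le s$ let $F^e_s$ be the finite set of basic balls enumerated into $U_e$ by stage $s$. In parallel, for each $n \le s$ and each $a^n_i \in A_n$, search (by computing rational upper and lower approximations of distances) for some $B(b,r) \in F^e_s$ witnessing the strict inequality $d(a^n_i,b) + 2^{-n} < r$; this is a $\Sigma^0_1$ event, since $d(a^n_i,b)$ is a uniformly computable real. If every $a^n_i$ in $A_n$ admits such a witness for some common $n$, enumerate $e$ into the list of covers. By the ``if'' part of the lemma this procedure is sound; by the ``only if'' part every genuine $\Sigma^0_1$ cover of $\mathbb{X}$ is eventually enumerated. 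The main conceptual hurdle is producing the strict slack inequality in the lemma's ``only if'' direction, which the Lebesgue-number-style argument via the continuous function $f$ handles cleanly.
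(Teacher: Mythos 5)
Your proof is correct, but it takes a genuinely different route from the paper. The paper constructs a computable surjection $f \colon 2^{\mathbb{N}} \to \mathbb{X}$ from the double sequence $(a^n_i)$ (citing Simpson for the routine-but-technical details), and then observes that a $\Pi^0_1$ set $P \subseteq \mathbb{X}$ is empty iff $f^{-1}(P) \subseteq 2^{\mathbb{N}}$ is empty, thereby reducing effective compactness of $\mathbb{X}$ to the known effective compactness of Cantor space. You instead work directly in $\mathbb{X}$, using the $2^{-n}$-nets to certify coverage with a numerical margin of slack, and a Lebesgue-number-style argument (the positive minimum of $f(x) = \max_{B(b,r) \in F}(r - d(x,b))$) to show that any genuine finite cover eventually produces such a certificate. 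Both arguments are sound. The paper's approach is very compressed because it piggybacks on a known result and a black-boxed construction, whereas yours is fully self-contained: it gives an explicit enumeration algorithm and makes transparent exactly where total boundedness and completeness are used. The one place to be slightly careful in your write-up is the observation that for fixed $e$, $n$, $s$ the condition ``every $a^n_i$ has a witness in $F^e_s$'' is a finite conjunction of finite disjunctions of $\Sigma^0_1$ conditions, hence $\Sigma^0_1$; you gesture at this with ``search ... by computing rational upper and lower approximations,'' which is fine, but it is worth stating explicitly that this is what makes the overall enumeration c.e.
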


\begin{proof}
Using the double sequence $(a^n_i)$ we can construct a computable onto map $f \colon 2^\mathbb{N} \to \mathbb{X}$. (The details are routine but a bit technical.  A similar construction can be found in Simpson \cite[IV.1]{Simpson:09}.)  Now, we want to enumerate all effectively open covers of $\mathbb{X}$.  That is the same as enumerating all empty $\Pi^0_1$ subsets of $\mathbb{X}$.  Consider a $\Pi^0_1$ set $P \subseteq \mathbb{X}$.  The set $f^{-1}(P)$ is $\Pi^0_1$ subset of $2^\mathbb{N}$, and it is empty iff $P$ is empty.  Since $2^\mathbb{N}$ is effectively compact we will enumerate $f^{-1}(P)$ eventually if it is empty, and then we can use that to enumerate $P$.
\end{proof}

\begin{prop}\label{prop:singleton}
If $\mathbb{X}$ is an effectively compact computable metric space and $\{a\} \subseteq \mathbb{X}$ is a $\Pi^0_1$ singleton set, then $a$ is computable.  If $f  \colon \mathbb{X} \to \mathbb{X}$ is a function whose graph $f \subseteq \mathbb{X} \times \mathbb{X}$ is $\Pi^0_1$, then $f$ is computable.
\end{prop}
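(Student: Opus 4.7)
\textbf{Proof plan for Proposition \ref{prop:singleton}.}
The basic engine for both parts is the following observation, which follows directly from effective compactness: if $P \subseteq \mathbb{X}$ is $\Pi^0_1$, say $P = \mathbb{X} \setminus V$ with $V$ a $\Sigma^0_1$ open set, then for any $\Sigma^0_1$ open $O$ the inclusion $P \subseteq O$ is semi-decidable uniformly in indices, because it is equivalent to $V \cup O = \mathbb{X}$, and effective compactness lets us enumerate all $\Sigma^0_1$ open covers of $\mathbb{X}$. In particular, emptiness of a $\Pi^0_1$ subset of an effectively compact space is semi-decidable. The same engine is available in $\mathbb{X} \times \mathbb{X}$, which is again effectively compact: apply Proposition \ref{prop:covering} to the dense sequence $(a_i, a_j)$ with any product metric.

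For the first assertion, to compute $a$ to precision $2^{-n}$ it suffices to produce some basic point $a_i$ with $d(a_i, a) < 2^{-n}$. The plan is to enumerate pairs $(i, r)$ with $r \in \mathbb{Q}$ and $r < 2^{-n}$, and to run in parallel the semi-decision procedures for $\{a\} \subseteq B(a_i, r)$, an instance of ``$\Pi^0_1 \subseteq \Sigma^0_1$'' handled by the engine above. By density of $(a_i)$ and the fact that $a \in B(a_i, r)$ for some admissible pair, at least one such pair will eventually be verified; the first one found provides the required approximation.

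For the second assertion, I would suppose $x$ is presented by a fast Cauchy name $(a_{i_m})_{m \in \mathbb{N}}$ with $d(x, a_{i_m}) < 2^{-m}$, so $x \in \overline{B(a_{i_m}, 2^{-m})}$ for every $m$. To produce an approximation $a_j$ of $f(x)$ with error $< 2^{-n}$, I would search in parallel over pairs $(m, j)$ and try to semi-decide the emptiness of
\[
G \cap \bigl(\overline{B(a_{i_m}, 2^{-m})} \times (\mathbb{X} \setminus B(a_j, 2^{-n}))\bigr),
\]
where $G \subseteq \mathbb{X} \times \mathbb{X}$ is the $\Pi^0_1$ graph of $f$. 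Each factor of the rectangle is $\Pi^0_1$, so the displayed set is $\Pi^0_1$ in the effectively compact product, and emptiness is semi-decidable by the engine. When such a pair is enumerated, every $(y, f(y)) \in G$ with $y \in \overline{B(a_{i_m}, 2^{-m})}$ satisfies $f(y) \in B(a_j, 2^{-n})$; taking $y = x$ yields $d(f(x), a_j) < 2^{-n}$.

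The main obstacle is to make sure this search actually terminates for every $n$. The key point is that $f$ is automatically continuous, because a function with closed graph from a compact space into a Hausdorff space is continuous. Hence, given $n$, one picks any basic $a_j$ with $d(a_j, f(x)) < 2^{-n-1}$; by continuity of $f$ at $x$, some $\overline{B(a_{i_m}, 2^{-m})}$ is mapped into $B(a_j, 2^{-n})$, and the corresponding $(m, j)$ witnesses the required emptiness. Effective compactness then guarantees this witness is enumerated, so the computation of $f(x)$ succeeds uniformly in the Cauchy name of $x$.
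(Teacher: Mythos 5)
Your proof is correct and follows essentially the same approach as the paper's very terse argument: for the singleton, search for small basic balls $B$ whose union with the complement of $\{a\}$ is enumerated as a cover, and for the graph do the analogous search in the effectively compact product $\mathbb{X}\times\mathbb{X}$. You usefully make explicit a point the paper's one-line ``similarly'' leaves implicit for termination: $f$ is automatically continuous because a map with closed graph into a compact Hausdorff space is continuous, and continuity is what guarantees the rectangle witness $(m,j)$ exists.
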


\begin{proof}
Let $U$ be the complement of $\{a\}$.  Compute $a$ by enumerating covers of the space of the form $U \cup B$ where $B$ is a basic open ball of small radius.  Similarly, use this method to compute $f(x)$ from $x$.
\end{proof}

\begin{definition} A \emph{computable topological Polish group} to be a computable metric space $G$ with a computable group operation  and a computable inverse operation.%
\footnote{One may object to this definition of ``computable Polish group'' since in the literature a Polish space is only defined as a completely metrizable space.  The choice of metric does not matter.  In a similar way, we could develop an equivalence relation on presentations of computable metric spaces.  We say that two presentations of a computable metric space are equivalent if the identity and its inverse are  computable.  For example, $\mathbb{R}^2$ with the Euclidean ($\ell_2$) metric and the $\ell_\infty$ metric are equivalent with any natural choice of dense set. Then we can define a \emph{computable Polish space} as the set of equivalence classes of computable metric spaces.  Similarly, we can use this idea to give a slightly more natural definition of \emph{computable Polish group}, again as equivalence classes.  The downside of this approach is that we need to constantly show that the properties we are interested in are preserved by this equivalence relation.  For example, if two presentations of a space $\mathbb{X}$ are equivalent, and one presentation is effectively compact, then so it the other presentation. Also, if two presentations of $\mathbb{X}$ are equivalent, then so are the various corresponding presentations of the space of Borel probability measures on $\mathbb{X}$.  } \end{definition} %  We omit the details.
The computable analogue of a compact group is a computable Polish group which is effectively compact.

\begin{fact}
If $G$ is an effectively compact computable metric space with a computable group operation, then the inverse operation is also computable, and therefore $G$ is a computable Polish group.
\end{fact}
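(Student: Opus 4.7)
The plan is to apply Proposition~\ref{prop:singleton} to the inverse map $\iota : G \to G$, $\iota(g) = g^{-1}$. Since $G$ is effectively compact, it suffices to show that the graph
\[
\Gamma = \{(g,h) \in G\times G : g\cdot h = 1_G\}
\]
is a $\Pi^0_1$ subset of $G \times G$. A key point is that we do not (yet) know the identity element $1_G$ to be computable, so I would avoid naming it by using instead the characterization
\[
h = g^{-1} \;\Longleftrightarrow\; \forall g' \in G\ \ (g\cdot h)\cdot g' = g'.
\]
This rewrite is the main conceptual move; everything after is bookkeeping.

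Next I would check that each ingredient is effectively open/closed. In any computable metric space the equality relation is $\Pi^0_1$, because $x = y$ iff $d(x,y) = 0$ iff $\forall n\, d(x,y) < 2^{-n}$, and the distance is computable; equivalently, $\{(x,y) : x\neq y\}$ is effectively open. Because the group multiplication is computable, the triple
\[
V \;=\; \{(g,h,g') \in G^3 : (g\cdot h)\cdot g' \neq g'\}
\]
is therefore effectively open in $G^3$. Note that $G^3$ is itself effectively compact, since a dense sequence witnessing effective total boundedness for $G$ gives one for $G^3$ via Proposition~\ref{prop:covering}.

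Finally, I would push the existential quantifier over $g'$ through. The projection $\pi_{12}: G^3 \to G^2$ is computable, and the image under a computable projection of an effectively open set is effectively open (unwrap $V$ as an enumeration of basic product balls $B\times C\times D$ and forget the third factor). Hence
\[
\{(g,h) \in G^2 : \exists g'\in G\ \ (g\cdot h)\cdot g' \neq g'\} \;=\; \pi_{12}(V)
\]
is effectively open, so $\Gamma$ is $\Pi^0_1$, and Proposition~\ref{prop:singleton} yields that $\iota$ is computable. The only step that required any thought is the first one, namely characterizing $h = g^{-1}$ by a universal condition that does not mention the identity; the remaining obstacles are negligible and reduce to standard preservation properties of effectively open sets.
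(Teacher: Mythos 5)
Your proof is correct and follows the same route as the paper: show that the graph of the inverse map is a $\Pi^0_1$ subset of $G\times G$ and then invoke Proposition~\ref{prop:singleton}; the paper simply states the $\Pi^0_1$ claim without elaboration, whereas you correctly notice that the identity element is not (yet) known to be computable and work around it. One simplification worth noting: the universal quantifier over $g'$ and the projection step are unnecessary, since in a group $h = g^{-1}$ iff $g\cdot h\cdot g = g$, so the graph is the zero set of the computable map $(g,h)\mapsto d(g\cdot h\cdot g,\, g)$ and hence directly $\Pi^0_1$ with no quantification over $G$.
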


\begin{proof}
Notice that the graph of the inverse function $\{(g, g^{-1}) \mid g \in G\}$ is a $\Pi^0_1$ set.  So the inverse map is computable by Proposition~\ref{prop:singleton}.
\end{proof}

If $\mathbb{X}$ is a computable metric space, then the space of Borel probability measures on $\mathbb{X}$ is also a computable metric space.  In particular, a Borel probability measure $\mu$ is computable if and only if the map $f \mapsto \int f d\mu$ is a computable map for all bounded computable functions $f \colon \mathbb{X} \to \mathbb{R}$.  If $\mathbb{X}$ is effectively compact, then so is the space of Borel probability measures on $\mathbb{X}$.  For more on the computability of Borel probability measures, see Hoyrup and Rojas \cite{Hoyrup.Rojas:09} or Bienvenu, Gacs, Hoyrup, Rojas, and Shen \cite{Bienvenu.Gacs.ea:11}.

\begin{prop}
Let $G$ be an effectively compact computable Polish group.  Then the left and right  Haar probability measures  are  computable.
\end{prop}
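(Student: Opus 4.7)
The plan is to exhibit the left Haar measure as the unique point of a $\Pi^0_1$ singleton inside the effectively compact computable metric space $P(G)$ of Borel probability measures on $G$, and then quote Proposition~\ref{prop:singleton}. Recall from the text just above that when $G$ is effectively compact, so is $P(G)$; all that remains is to pin down the left Haar measure by a co-r.e.\ condition.

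First I would fix a computable dense sequence $(g_j)$ in $G$, coming from the presentation, and a computable dense sequence $(f_i)$ in $C(G,[0,1])$, for instance the rational-Lipschitz functions with rational Lipschitz constant built from the metric. Because the group operation $G\times G\to G$ is computable, the translate $x\mapsto f_i(g_j x)$ is a uniformly computable member of $C(G)$ in $(i,j)$. Integration $\mu\mapsto \int f\,d\mu$ against a bounded computable $f$ is computable on $P(G)$, so the map
\[
(\mu,i,j)\;\longmapsto\;\int f_i(x)\,d\mu(x)-\int f_i(g_j x)\,d\mu(x)
\]
is computable. Consequently the set
\[
I \;=\; \bigcap_{i,j}\Bigl\{\mu\in P(G)\;:\;\int f_i\,d\mu=\int f_i\circ L_{g_j}\,d\mu\Bigr\},
\]
where $L_g(x)=gx$, is a countable intersection of $\Pi^0_1$ conditions on $\mu$, hence $\Pi^0_1$. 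A standard density/continuity argument shows that $\mu\in I$ iff $\int f\,d\mu=\int f\circ L_g\,d\mu$ for every $f\in C(G)$ and every $g\in G$, which (via Riesz) is the usual definition of a left Haar probability measure.

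By the uniqueness of Haar probability measure on a compact group, $I$ is a singleton $\{\mu_L\}$. Since $P(G)$ is effectively compact, Proposition~\ref{prop:singleton} yields that $\mu_L$ is a computable point of $P(G)$, i.e.\ the left Haar measure is computable. The argument for the right Haar measure is the same with $L_{g_j}$ replaced by the right translation $R_{g_j}(x)=xg_j$. The one place to be careful is the computability of $\mu\mapsto \int f_i\circ L_{g_j}\,d\mu$ uniformly in $(i,j)$; this reduces to the fact that a computable push-forward of a computable measure along a uniformly computable map is uniformly computable, which is routine once the group operation is computable. Everything else is packaging, and I do not expect a genuine obstruction.
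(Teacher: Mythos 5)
Your proof follows exactly the same route as the paper's: realise that the left (resp.\ right) Haar measure is the unique point of a $\Pi^0_1$ singleton inside the effectively compact space of Borel probability measures on $G$, then invoke Proposition~\ref{prop:singleton}. You have simply filled in the details of why the invariance condition is $\Pi^0_1$, which the paper states without elaboration; the argument is correct and essentially identical.
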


\begin{proof}
The set of, say, left  Haar probability measures is a $\Pi^0_1$ singleton set in the effectively compact space of Borel probability measures on $\mathbb{X}$.  By Proposition~\ref{prop:singleton} the left Haar measure is computable.
\end{proof}

\begin{thm}
Let $G$ be a compact  computable Polish group for which the  (left) Haar probability measure is computable.  Then $G$ is effectively compact.
\end{thm}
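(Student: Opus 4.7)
My plan is to apply Proposition~\ref{prop:covering}: it suffices to construct, uniformly in $n$, a finite $2^{-n}$-net in $G$ consisting of dense points. Fix $\varepsilon = 2^{-n}$ and let $(a_i)$ denote the computable dense sequence. The first ingredient I would establish is that on a compact group the left Haar probability measure $\mu$ is strictly positive on every nonempty open set: any such set has finitely many left translates covering $G$, and left translations preserve $\mu$.

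The central step is the following implication. Set $V_k = \bigcup_{i \leq k} B(a_i, \varepsilon/3)$ and $W_k = \bigcup_{i \leq k} B(a_i, \varepsilon)$. If $\mu(V_k) > 1 - \eta$ for a positive rational $\eta$ satisfying $\eta \leq \inf_{x \in G} \mu(B(x, 2\varepsilon/3))$, then $W_k = G$: otherwise some $x$ has $d(x, a_i) \geq \varepsilon$ for every $i \leq k$, so $B(x, 2\varepsilon/3) \cap V_k = \emptyset$, yielding $\mu(V_k) \leq 1 - \mu(B(x, 2\varepsilon/3)) \leq 1 - \eta$, a contradiction. Since $V_k \nearrow G$ by density and $\mu$ is lower semi-computable on open sets, the threshold $\mu(V_k) > 1 - \eta$ is $\Sigma^0_1$ in $k$ and is eventually satisfied. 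So once a positive rational $\eta$ bounding the above infimum is in hand, I can effectively produce the required $\varepsilon$-net $\{a_1, \ldots, a_k\}$.

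The hard part will be producing the positive rational $\eta$, since the metric is not assumed to be left-invariant and so $\mu(B(x,r))$ cannot be directly compared to $\mu(B(e,r))$. My plan is to use the invariance $\mu(xA) = \mu(A)$ for any Borel set $A$: I search for a rational $\delta > 0$ such that $xB(e, \delta) \subseteq B(x, 2\varepsilon/3)$ for every $x \in G$. If such a $\delta$ is found then $\mu(B(x, 2\varepsilon/3)) \geq \mu(xB(e, \delta)) = \mu(B(e, \delta))$, and since $\mu(B(e, \delta)) > 0$ is computable from below, a positive rational lower bound extracts the required $\eta$. Such $\delta$ exists classically by uniform continuity of multiplication on the compact product $G \times G$. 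To find it effectively, I plan a recursive construction across scales: at stage $n$, the effective $2^{-n+1}$-net $\{x_1,\dots,x_m\}$ already produced at the previous stage is used to compute per-$x_j$ continuity moduli $\delta_j$ at $(x_j,e)$ from the computable group and metric structure, which are then combined via the $2^{-n+1}$-spacing into a single uniform $\delta$. The main subtlety will be controlling a secondary triangle-inequality error (the continuity of right multiplication by $h$ at net points) so that slack does not compound across scales; I expect this can be absorbed by running the bootstrap at a suitably coarser auxiliary scale and iterating once at each level.
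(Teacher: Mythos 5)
Your overall scaffolding matches the paper's net-finding step and is sound as far as it goes: reduce effective compactness to producing finite $2^{-n}$-nets, establish strict positivity of $\mu$ on nonempty opens, and search for a finite union of small balls with $\mu$-measure $> 1 - \eta$, for a suitable positive rational $\eta$ bounding $\inf_x \mu(B(x, 2\varepsilon/3))$ from below. The step that fails is the bootstrap for producing $\eta$; this is not a secondary slack you can absorb at a coarser auxiliary scale, but a genuine circularity. To combine the local data into one uniform $\delta$ you would first need, for each coarse-net point $x_j$, a \emph{verified} modulus of continuity for multiplication near $(x_j,e)$ at precision roughly $2^{-n}$. For a computable map on a computable metric space, however, the statement ``multiplication maps $B(x_j,\delta_j)\times B(e,\delta_j)$ into $B(x_j,\rho)$'' is not a $\Sigma^0_1$ condition in $\delta_j$ unless one can effectively exhaust that product ball by finitely many small basic balls --- i.e.\ unless effective compactness is already available at a scale \emph{finer} than $\rho$. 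At stage $n$ you only possess a $2^{-(n-1)}$-net, which is coarser than the precision needed, so the moduli cannot be certified and the induction never closes.

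The paper avoids the need for any modulus by re-metrizing. It replaces $d$ by $\tilde d(x,y)=\int_G d(gx,gy)\,d\mu(g)$, which is computable (since $d$ is bounded on the compact $G$ and $\mu$ is computable) and left-invariant by construction. Left-invariance makes every $\tilde d$-ball of a fixed radius have the \emph{same} Haar measure, so a single lower-semicomputed rational $\delta$ with $0<\delta<\mu\bigl(\tilde B(e,2^{-(k+1)})\bigr)$ already serves as your uniform $\eta$ --- no continuity modulus is required at all. Your net-finding step then runs essentially unchanged: search for finitely many $2^{-(k+1)}$-balls in $\tilde d$ whose union has $\mu$-measure $>1-\delta$; any point missed by the doubled balls would carry a disjoint ball of measure at least $\delta$, a contradiction. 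Finally, effective compactness transfers back to the original metric by converting a candidate $d$-cover into a $\tilde d$-cover and verifying it there. If you substitute this averaging trick for your bootstrap, the rest of your argument can be kept intact.
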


\begin{proof}
 We re-metrize the space $G$ by replacing $d(x,y)$ with the average of $d(gx,gy)$ where the average (the integral) is taken in the Haar measure as g varies across the group.  This new metric is computable since $d(x,y)$ is a bounded computable function.  (The metric is bounded by the compactness of $G$.)  Now one has a computable $G$-invariant distance which is equivalent to the original distance.

Now, to show that $G$ is effectively compact in this new metric, it is enough for each rational $k$, to effectively find a finite set of points $a^k_0, ..., a^k_{n-1}$ for which every point in $G$ is within distance $2^{-k}$ of one of these points.  Fix $k$.  Using our Haar measure find the measure of a ball of radius $2^{-(k+1)}$.  Call this measure $\delta$.  (Since the new distance is $G$-invariant, all balls of the same radius have the same measure.)  Using blind search find a collection of balls $B_0, ..., B_{n-1}$ of balls with radius $2^{-(k+1)}$ whose union $C = B_0 \cup ... \cup B_{n-1}$ has measure $> 1 - \delta$.  Now, consider any point $x$ not in this union $C$.  It has to be distance $<2^{-(k+1)}$ from the union.  Otherwise, there would be a ball centered at $x$ with radius $2^{-(k+1)}$, and hence measure $\delta$, which is disjoint from the union $C$.  But the union $C$ has measure $> 1 - \delta$, so this cannot happen.  Therefore all points of $G$ are within distance $2^{-k}$ of the centers of $B_0, ..., B_{n-1}$.  This algorithm shows that the space is effectively compact in the new metric.  To show it is effectively compact in the original metric, for any finite list of rational balls in original metric, convert it to a list of balls in the new metric.  Now, if this list of balls covers the space $G$, by effective compactness, we will eventually find this out.
\end{proof}

\begin{example}
There is a computable group $G$ isomorphic to a product of finite cyclic groups $\Pi_n G_n$ which is compact but for which the Haar measure is noncomputable.
\end{example}

\begin{proof}
Let $h(n)$ be the characteristic function of the Halting set.  Then we will let $G = \prod_n \mathbb{Z}_{2^{h(n)}}$ with the ultrametric $\rho(f,g) = \inf \{2^n \mid f(n) = g(n)\}$.  This is a computable metric space (but it is not effectively compact).  It is also a computable Polish group.  The Haar measure of any cylinder set of length $n$ is equal to $\prod_{k<n} {2^{-h(n)}}$.  If we could compute the Haar measure, then we could compute $h(n)$.
\end{proof}

As Section~\ref{Fouche-Nies-groups} in this  year's Logic Blog   mentioned, a profinite group is an inverse limit of finite groups.  So there exists a descending chain of normal clopen subgroups $N_s \vartriangleleft G$ which converges to the identity.  Then $G$ is the inverse limit of $G/N_s$.  A $G$ is computably profinite.

\begin{thm}
Let $G$ be a profinite  effectively compact computable Polish group.  Then we can compute a sequence of finite groups $G_s$ such that $G$ is the inverse limit $G_s$ and the corresponding homomorphism $h_s \colon G\to G_s$ is computable.
\end{thm}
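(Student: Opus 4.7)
The plan is to effectively enumerate finite clopen partitions of $G$, pick out those that are coset partitions of clopen normal subgroups of small diameter, and assemble these into the desired inverse system $G_s = G/M_s$.

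First, I would enumerate finite clopen partitions of $G$. By effective compactness, clopen subsets can be represented as finite unions of basic balls whose complements are likewise finite unions of basic balls, with disjointness of the two collections checked via distance conditions on ball centers and radii, and the covering condition verified c.e. For each candidate partition $\mathcal{P} = \{P_1, \ldots, P_k\}$ at stage $s$, I try to c.e.-verify two conditions: (a) for every pair $(i,j)$ there exists $l$ with $P_iP_j \subseteq P_l$, and (b) the piece $P_{i_0}$ containing the identity has diameter less than $2^{-s}$. Condition (b) is c.e.\ since the diameter of an effectively compact subset is (upper semi-)computable from its description.

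The hard part is making (a) c.e., since naively $P_iP_j \subseteq P_l$ asserts that the open set $P_iP_j \cap P_l^c$ is empty, which is only co-c.e. The trick is to use effective uniform continuity of multiplication on the effectively compact space $G \times G$: given rational $\epsilon > 0$, one computes $\delta > 0$ such that $B(a,\delta) \cdot B(b,\delta) \subseteq B(ab,\epsilon)$. Covering $P_i$ and $P_j$ by basic balls of radius $\delta$ centered at dense points (possible since clopen sets are themselves effectively compact), condition (a) reduces to finitely many claims of the form $B(a_\alpha b_\beta, \epsilon) \subseteq P_l$, each of which is c.e.\ because it amounts to $d(a_\alpha b_\beta, P_l^c) > \epsilon$ and the distance to a closed set is lower semicomputable. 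Conversely, if $P_iP_j \subseteq P_l$ actually holds, then the compact set $P_iP_j$ sits inside the open set $P_l$ with positive separation, so some sufficiently small $\epsilon$ witnesses (a). By profiniteness, for each $s$ there exists a clopen normal subgroup of $G$ inside $B(e, 2^{-s-1})$; its coset partition satisfies (a) and (b), so the search at stage $s$ terminates with some good partition $\mathcal{P}_s$.

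Let $\mathcal{Q}_s = \mathcal{P}_0 \wedge \cdots \wedge \mathcal{P}_s$ be the common refinements, corresponding to the descending clopen normal subgroups $M_s = N_0 \cap \cdots \cap N_s$ (intersections of normal subgroups being normal), which still satisfy $\mathrm{diam}(M_s) < 2^{-s}$. Define $G_s = G/M_s$, reading off the multiplication table from $\mathcal{Q}_s$ via condition (a), and let $h_s \colon G \to G_s$ be the quotient map. Given a Cauchy name for $g \in G$, since the pieces of $\mathcal{Q}_s$ are pairwise disjoint clopen sets with open complements, a sufficiently precise approximation of $g$ determines the piece it lies in, so $h_s$ is computable. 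The connecting maps $G_{s+1} \to G_s$ are computed similarly, by testing a representative point from each piece of $\mathcal{Q}_{s+1}$ against the pieces of $\mathcal{Q}_s$. Finally, $\mathrm{diam}(M_s) \to 0$ forces $\bigcap_s M_s = \{e\}$, so the natural map $G \to \varprojlim G_s$ is injective; a standard compactness argument (the continuous image of the compact $G$ is closed and dense in $\varprojlim G_s$) gives surjectivity, identifying $G$ with the inverse limit.
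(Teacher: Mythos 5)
Your proof is correct and rests on the same core insight as the paper's: in an effectively compact group, containment between clopen sets is a $\Sigma^0_1$ condition (not merely $\Pi^0_1$), which allows one to enumerate clopen normal subgroups and build the inverse system from them. The two arguments package this differently. The paper enumerates clopen sets $A$ directly and $\Sigma^0_1$-verifies the normality condition $\forall g\, gAg^{-1}\subseteq A$, then at stage $s$ intersects all clopen normal subgroups found so far, relying on the fact that the intersection of \emph{all} of them is $\{e\}$ to make the chain a neighbourhood base. You instead enumerate clopen \emph{partitions}, verify that they are coset partitions by checking $P_iP_j\subseteq P_l$ (which simultaneously ensures the identity piece is a normal subgroup and gives the multiplication table for the quotient), and control convergence with an explicit diameter bound $\mathrm{diam}(P_{i_0}) < 2^{-s}$ rather than by exhaustive intersection. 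You are also more explicit about \emph{why} the containment is c.e., spelling out the use of a computable modulus of uniform continuity for multiplication and the positive-separation argument for completeness of the search; the paper states this more tersely. These are presentational variants of the same method rather than a genuinely different route, but your version has the advantage of reading the group structure of $G/N_s$ directly off the verified partition data.
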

%(but not necessarily computably profinite)
\begin{proof}
Since $G$ is profinite, we need to find a descending chain of normal clopen subgroups $N_s \vartriangleleft G$.  Since $G$ is effectively compact, we can enumerate all clopen sets $A$ by finding disjoint covers made up of finitely many open balls $B_1, \ldots B_n, C_1, \ldots, C_m$ where $B_i$ and $C_j$ are disjoint.  We can then use this to enumerate all normal clopen subgroups as follows.  Since each clopen set $A$ is both effectively open and closed, the property
\[\forall g\in G\quad gAg^{-1} \subseteq A\]
is $\Pi^0_1$ (if the property fails, then search for some $g\in G$ and some $a\in A$ such that $gag^{1}$ is outside of $A$) and $\Sigma^0_1$ (if the property holds, then wait for an enumeration of balls which cover $gAg^{-1}$ and which are disjoint from a cover of the complement of $A$).  Therefore, we can enumerate all clopen normal subgroups $N \vartriangleleft G$.  

Let $N_s$ be the intersection of all clopen normal subgroups enumerated at stage $s$ of the construction (where $N_0 = G$).  This is also a clopen normal subgroup.  Now enumerate all cosets $gN_s$.  (These can be enumerated since each coset $gN_s$ is also clopen.  Also we know when we have enumerated them all since they form an open cover of an effectively compact space.)  From this we can compute $G/N_s$ along with the corresponding homomorphism.   (If $G$ is finite, then at some stage, $G/N_s$ is isomorphic to $G$.)
\end{proof}
  
  \part{Metric spaces and descriptive set theory}
\section{Nies and Weiss: \\ complexity of topological isomorphism for subshifts}

Let $\Sigma$ be a finite alphabet.
A \emph{subshift} is a closed  subset $X \sub \Sigma^ \ZZ$ which is invariant under the shift $\sss$. We consider the complexity of the isomorphism relation $(X, \sss_X) \cong (Y, \sss_Y)$ where $\Sigma,\Delta$ are  finite alphabets and $X \sub \Sigma ^ \ZZ$,  $Y \sub \Delta ^ \ZZ$ are subshifts. To be isomorphic means that  there is a continuous bijection $\theta: X \to Y$ such that $\theta (\sss_X(z) ) = \sss_Y( \theta(z))$ for each $z \in X$. Note that $\theta $ is given by the clopen sets $\theta^{-1}(\{ Z \colon \, Z(0)=b\}) $ for $b \in \Delta$, which are of the form $ \bigcup_{i<k} [\alpha_i]$ where $\alpha_i: [-N, N] \to \Sigma$ (such a collection of clopen sets is called a block code). So isomorphism  is a countable Borel equivalence relation.

 J.\ Clemens (Israel J. of Maths, 2009) proved that isomorphism is  in fact a $\le_B$-complete countable Borel equivalence relation. 
 
 A subshift is \emph{minimal} if every orbit of an element is dense; equivalently, every possible pattern (i.e.\ subword of a fixed length of an element of   the subshift) occurs in a large enough section of every element of the subshift. By compactness, the length of this section  only  depends on the pattern.  As a consequence, minimality  is a Borel property of subshifts. Also, it is sufficient to require the property that patterns re-occur within a distance only dependent on the pattern for one word with dense orbit. 
 
Clemens  asked in the paper and in a 2014 talk (available on youtube) the following question: 

\begin{question} How complex is the  isomorphism relation between   minimal subshifts? \end{question}  
 Gao, Jackson and Seward  \cite[Section 9.3]{Gao.Jackson.etal:16} proved that $E_0$ is Borel reducible to isomorphism of minimal subshifts. 
 
 \begin{proof} Here is a sketch of  a short proof of this  fact. % (details to be checked).
 
  The idea is to build a sequence of blocks
${A_n, B_n}$ of length $L_n = (66)^{n}$.  The construction
is controlled by a fixed element $x \in \{0,1\}^\NN$
with the n-stage blocks a function of $x_i$ for
$1 \leq i  \leq n$. This sequence of blocks determines a subshift $S_x$: the allowed patterns of length $L_n$ are   the  $A_n$
and $B_n$. 

The blocks ${A_{n+1}, B_{n+1}}$
will be built out of the n-stage blocks in such a way that any bi-infinite sequence formed by concatenating these two
blocks has a unique parsing into blocks of these two types. This
parsing defines for each bi-infinite word $\seq {z(i)}_{i \in \ZZ}$ a unique integer modulo
$L_n $ which indicates the position of z(0) in the block.
   
   Recall that an odometer is a dynamical system that is  an inverse limit of periodic rotations.  The simplest example is the $2$-adic integers with addition by $1$. Since $L_{n+1}$ is a multiple of $L_n$,   the position of $z(0)$ modulo $L_{n+1}$
when reduced modulo $L_n$ gives the position of $z(0)$ in its ``n-block".
   This will yield a common odometer  as a factor of all   the minimal shifts.

Let $A_0 = 0$ and $B_0 = 1$. We  describe 4 recipes for concatenating ${A,B}$:

   \bi \item[1.] $ AB(A^4B^4)^8$

   \item[2.] $ AB(A^8B^8)^4$

   \item [3.] $ AB(A^{16}B^{16})^2$

   \item [4.] $ AB{A^{32}B^{32}}$. \ei

\n Depending upon the value of $x_{n}$
the ${A_{n+1}, B_{n+1}}$ will be formed in two different
ways.  If $x_{n+1}=1$ one uses   1 and 2,   otherwise one uses 
3 and 4. Assuming that one can recognize A and B,  the initial ABA in all
recipes guarantees that the concatenations have a unique parsing.

   The minimality is immediate since all four possibilities
AA, AB, BA, BB   occur. 

  Each specfic minimal system has  its own
collection of the two types of blocks,  where the nature of the blocks
up to level n depend only on the first n bits of the control element
from ${0,1}^\NN$.  Clearly if x and y agree from some point on
there is a finite block code that will map one shift to the other.
 If x and y differ infinitely often then no matter what the
length  of the code eventually it will pick up the pattern of
repetitions which differ significantly in all 4 recipes.
 \end{proof}

Simon Thomas \cite{Thomas:13} has given a Borel reduction of $E_0$ to a special  class of minimal subshifts, the Toeplitz subshifts.  A Toeplitz word is a bi-infinite word $W$ such that for $n\in \ZZ$ there is a ``local period" $k\in \ZZ$ such that $\fa i\in \ZZ  \, [W(n) = W(n+ik)]$.  A subshift is Toeplitz if it contains a Toeplitz word with dense orbit. To see that this is minimal, suppose that $w$ is a subword of $W$, and let $r$ be the l.c.m.\ of the local periods of any symbol in $w$. Then $\sigma^r(w)$ is also a subword of $W$, for any $r \in \ZZ$.

 Not much  beyond that   is known so far on the complexity of conjugacy for minimal subshifts. 

 \part{Higher computability theory/effective descriptive set theory}
   
\section{Yu:   $\Pi^1_1$-hyperarithmetic determinacy}
Input by Yu.

The following theorem was claimed in \cite{Harrington:78}: the hyperdegrees of a $\Pi^1_1$ set with a perfect subset contain an upper cone. 
\begin{theorem}[Harrington \cite{Harrington:78}]\label{theorem: pi11 det}
Let  $A\subseteq 2^{\omega}$ is a $\Pi^1_1$ set that  contains a perfect subset. There is a real $z\in 2^{\omega}$ so that for any real $y\geq_h z$, there is a real $x\in A$ for which $x\equiv_h y$.
\end{theorem}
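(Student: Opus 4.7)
The plan is to pass to a perfect $\Sigma^1_1$ subtree of $A$, pick $z$ so that this tree is $z$-recursive, modify the tree to be $z$-\emph{pointed} so that every branch hyperarithmetically computes $z$, and then code any $y \geq_h z$ as a branch $\equiv_h y$.

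First I would extract a perfect $\Sigma^1_1$ subtree whose branches lie in $A$. Since $A$ is $\Pi^1_1$ and uncountable, classical analysis (for instance a Mansfield-style argument, or the Cantor--Bendixson derivative combined with a $\Pi^1_1$-uniformization of the splitting relation) produces a $\Sigma^1_1(z_0)$ tree $T_P \subseteq 2^{<\omega}$ with $[T_P] \subseteq A$ and $[T_P]$ perfect, where $z_0$ is a suitable real parameter. Set $z = \mathcal{O}^{z_0}$, the hyperjump of $z_0$; then $T_P$ is uniformly $z$-recursive, since $\Sigma^1_1(z_0)$ predicates are $\mathcal{O}^{z_0}$-recursive. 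I would then build the canonical $z$-recursive perfect subtree $T_0 \subseteq T_P$ by, at each stage, extending each current leaf to the lexicographically least splitting node of $T_P$ above it.

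The heart of the argument is to refine $T_0$ into a tree $T$ that is $z$-pointed, i.e.\ $z \leq_h x$ for every $x \in [T]$. I would achieve this by threading the bits of $z$ through the splitting pattern of $T_0$: partition the splitting levels of $T_0$ into two infinite $z$-recursive sets $E$ and $O$, and at each $E$-level prune $T_0$ by keeping only the child prescribed by the next bit of $z$, while at each $O$-level keep both children. The pruned tree $T$ is $z$-recursive, perfect, and satisfies $[T] \subseteq [T_0] \subseteq A$. Pointedness holds by a hyperarithmetic bootstrap: given $x \in [T]$, the initial segment $T_0 \upharpoonright \ell_n$ can be reconstructed using only $z \upharpoonright m(n)$ for some hyperarithmetic $m$, since the first $n$ stages of the construction invoke only finitely many $\Sigma^1_1(z_0)$ questions and therefore only $z \upharpoonright m(n)$; then at the $n$-th $E$-level one reads the next bit of $z$ off from $x$, feeds it back into the construction, and iterates along a $z_0$-recursive ordinal notation to obtain $z \leq_h x$.

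Finally, given $y \geq_h z$, define $x = x_y \in [T]$ by resolving the $O$-level splittings of $T$ using the successive bits of $y$. Then $x \leq_h y \oplus T \leq_h y \oplus z \leq_h y$; conversely $x$ computes $z$ by pointedness, whence $T$, and then the $O$-level choices of $x$ spell out $y$, so $y \leq_h x$. Hence $x \equiv_h y$ and $x \in [T] \subseteq A$, proving the theorem. The main obstacle is arranging the $E$-levels sufficiently sparsely that the bootstrap recovery of $z$ stays hyperarithmetic: by the time the construction requires bit $z(n)$ at the next $E$-level, all of $z \upharpoonright m(n)$ must already have been decoded from $x$, which forces a careful interleaving of the $E/O$ partition with the stratification of $\mathcal{O}^{z_0}$ along the $z_0$-recursive ordinals. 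This interleaving, together with the very definition $z = \mathcal{O}^{z_0}$, is what lets the whole argument close up.
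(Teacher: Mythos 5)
Your proposal runs into a fundamental obstruction: the argument as sketched never uses the fact that $A$ is $\Pi^1_1$ beyond passing to a perfect subtree, so if it were correct it would prove the statement ``for every perfect tree $T_P \subseteq 2^{<\omega}$ there is a real $z$ such that the hyperdegrees of $[T_P]$ contain the upper cone above $z$.'' That statement is false. By Sacks' measure-theoretic uniformity theorem, $\{x : \omega_1^x = \omega_1^{\mathrm{CK}}\}$ has Lebesgue measure one and hence (being $\Pi^1_1$, hence measurable) contains a closed perfect set $P = [T_P]$. Every $x \in P$ then satisfies $\omega_1^x = \omega_1^{\mathrm{CK}}$, whereas any upper cone of hyperdegrees contains reals $y$ with $\omega_1^y$ as large as desired; since $x \equiv_h y$ forces $\omega_1^x = \omega_1^y$, no cone can be realised inside $P$. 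This shows concretely why the pointedness bootstrap cannot close: you never have enough of $z$ decoded to locate the next $E$-level, because the use $m(\ell_n)$ of the tree structure at level $\ell_n$ grows at least as fast as $\ell_n \geq n$. No amount of sparsification or interleaving with $z_0$-recursive ordinals can overcome this, precisely because the conclusion you are trying to bootstrap to is false in full generality.

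The paper's proof gets the ``pointedness'' for free from the $\Pi^1_1$ structure rather than engineering it. After fixing $\beta$ minimal so that uncountably many $x \in A$ have $|\Phi^x| = \beta$, any $x$ with $|\Phi^x| = \beta$ automatically satisfies $\omega_1^x > \beta$, hence $z \leq_h x$ for any master code $z \in L_{\omega_1^\beta}$. In Case 1 (some $z \in L_{\omega_1^\beta}$ with $\omega_1^z = \omega_1^\beta$ exists) this makes $B_\beta$ an uncountable $\Delta^1_1(z)$ set whose every element already dominates $z$, and Martin's theorem relativised to $z$ finishes the proof. In Case 2 the whole point of the Cantor--Bendixson derivation of $T^\beta$ and the delicate two-track coding of $(x,f)$ is to force $\omega_1^x > \omega_1^\beta$ for the constructed branch, which is what delivers $z_0 \leq_h x$ — there is no shortcut via pointed subtrees of an arbitrary ambient tree. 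You should also note that your first step is misstated: the Mansfield--Solovay argument applies to $\Sigma^1_1$ sets, not $\Pi^1_1$ sets, and the hypothesis of the theorem is ``contains a perfect subset,'' not ``uncountable''; with the given hypothesis you would simply take $z_0$ to code the perfect subtree, but, as above, this does not resolve the real difficulty.
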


The following proof is based on some communications with Leo Harrington. His original  idea seem  model theoretical. Here is a tree proof.

The following theorem is proved by Martin.
\begin{theorem}[Martin \cite{Martin76}]\label{theorem: friedman}
If $A$ is an uncountable $\Delta^1_1$-set, then  for any real $y\geq_h \KO$, there is a real $x\in A$ for which $x\equiv_h y$.
\end{theorem}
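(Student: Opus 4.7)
The plan is to extract a hyperarithmetic perfect subset of $A$ and then realise $y$ as a branch of this subset via a degree-preserving coding.

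First I would apply the effective perfect set theorem for $\Sigma^1_1$ sets: since $A$ is uncountable and $\Delta^1_1$, there is a perfect tree $T \subseteq 2^{<\omega}$ with $T \leq_h \KO$ and $[T]\subseteq A$. On $[T]$ one has the canonical homeomorphism $\Phi\colon 2^{\omega} \to [T]$, hyperarithmetic in $T$: send $y$ to the unique branch of $T$ that, at the $n$-th splitting level above the root, chooses the left or right extension according to $y(n)$. Given $y\geq_h \KO$, set $x = \Phi(y)$. Then $x\in [T]\subseteq A$, and since $T\leq_h \KO\leq_h y$ and $\Phi$ is hyp in $T$, we have $x\leq_h y$ immediately.

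The reverse inequality $y\leq_h x$ is where the real work lies. The inverse map gives only $y = \Phi^{-1}(x)\leq_h T\oplus x$, and we must absorb the parameter $T$ into $x$. The main obstacle is that an arbitrary branch of a hyperarithmetic perfect tree need not itself hyp-compute the tree's code, so a naive application of $\Phi$ will not suffice.

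To overcome this I would refine the coding so that $T$ is recoverable from $x$ alone. Fix a $\Delta^1_1$-code $t\in 2^{\omega}$ for $T$ and a $\Delta^1_1$ pairing $\pi\colon 2^{\omega}\to 2^{\omega}\times 2^{\omega}$, and replace $\Phi$ by $\Psi(y) := \Phi(\pi^{-1}(t,y))$. The goal is that from $x = \Psi(y)$ one can read off the pair $(t,y)$ using only $x$, yielding both $T\leq_h x$ and $y\leq_h x$. Making this readout oracle-free is the delicate point: I would achieve it by thinning $T$ to a perfect subtree $T'\subseteq T$ (still hyp and still sitting inside $A$) on which the coding of $t$ is visible directly in the pattern of splittings traversed by each branch --- e.g.\ by forcing the parities of the branching decisions along a branch of $T'$ to spell out the bits of $t$, while the remaining decisions spell out $y$. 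This is the combinatorial heart of Martin's argument; once $T'$ is constructed, the verification $x\equiv_h y$ is a direct computation from the definitions of $\Psi$ and $\Psi^{-1}$.
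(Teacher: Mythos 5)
The paper does not actually supply a proof of Martin's theorem --- it is cited and used as a black box inside the proof of Harrington's $\Pi^1_1$ extension (Case (1) of Theorem~\ref{theorem: pi11 det} reduces to it). But the techniques that \emph{would} prove it are exactly those deployed in Case (2), and against that standard your proposal has a genuine gap in the second half.

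Your first two steps are fine: the effective perfect set theorem gives a perfect tree $T\leq_h\KO$ with $[T]\subseteq A$, and then $x:=\Phi(y)\leq_h y$ is immediate since $\KO\leq_h y$. The trouble starts at ``fix a $\Delta^1_1$-code $t$ for $T$.'' In general $T$ is \emph{not} $\Delta^1_1$; the bound $T\leq_h\KO$ is optimal. For instance there are uncountable $\Pi^0_1$ classes with no hyperarithmetic member, and any hyp perfect subtree would yield a hyp member by the (hyp-relativized) low basis theorem, so such a class admits no $\Delta^1_1$ perfect subtree at all. So there is no $\Delta^1_1$-code $t$ to fix, and the map $\Psi$ is ill-defined at the level you propose.

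More fundamentally, even if you replace ``$\Delta^1_1$-code $t$'' by ``the real $T$ itself,'' the proposed decoding is circular: to read the parities of the branching decisions of $T'$ off a branch $x$, you must already know where the splitting nodes of $T'$ are, which requires knowing $T'$, which requires knowing $T$. Your sketch offers no way to break this circle. The actual resolution (in Martin's proof, and in the $\Pi^1_1$ version presented in this section) is not to code $T$ as data but to force $\omega_1^x>\wck$, because $\KO\leq_h x$ holds \emph{exactly when} $\omega_1^x>\wck$, and then $T\leq_h\KO\leq_h x$ comes for free. Ensuring $\omega_1^x>\wck$ while simultaneously coding $y$ into $x$ is the real combinatorial heart of the argument: one does a Cantor--Bendixson derivation on $T$ inside $L_{\wck}$, then builds the branch in $\omega$-many stages, at even stages coding a bit of $y$ and at odd stages ``waiting'' long enough to witness one more $\Sigma_1$ reflection fact about $L_{\wck}$ (this is precisely the two-case zig-zag construction and the matching/decoding machinery in Case (2) of the $\Pi^1_1$ proof above, specialised to $\beta=\wck$). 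That ingredient --- the admissibility argument showing the decoding recovers a cofinal sequence in $\wck$ and hence $\omega_1^x\geq\wck$ --- is entirely absent from your proposal, and without it the equivalence $x\equiv_h y$ cannot be established.
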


Fix an uncountable $\Pi^1_1$ set $A$ throughout in this section.
Since $A$ is $\Pi^1_1$, there is a recursive oracle functional
$\Phi$ so that $$x\in A \Leftrightarrow \Phi^x \mbox{ codes a well
ordering of $\omega$.}$$
In other words, the binary relation $n\leq_x m$ if and only if $\Phi^x(\langle n,m\rangle)=1$ is a well  ordering over $\omega$. We use $n\in Dom(\Phi^x)$ to denote that there is some $m$ so that $\Phi^x(\langle n,m\rangle)=1$ or $\Phi^x(\langle m,n\rangle)=1$. For a finite binary string $\sigma$, we also use $<_{\Phi^{\sigma}}$ to denote the finite linear order coded by
$\Phi^{\sigma}$.

Let
$$\beta=\min\{\beta\mid |\{ x \mid \Phi^x \mbox{ codes a well
ordering of $\omega$ with order type } \beta\}|>\aleph_0\}.$$

Since $A$ has a perfect subset,  such $\beta$ must exist.

We fix the $\beta$ throughout.

We associate a tree $T$ with $A$ by defining $(\sigma,\tau)\in T$ if
and only if
\begin{enumerate}
\item $\sigma \in 2^{<\omega}$ and;
\item $\tau$ is finite order preserving function from $Dom(\Phi^{\sigma})$
to ordinals.
\end{enumerate}

We always assume that $|Dom(\Phi^{\sigma})|=|\sigma|$. 

$(\sigma_0,\tau_0)\preceq (\sigma_1,\tau_1)$ if both $\sigma_1$ and $\tau_1$ extends $\sigma_0$
and $\tau_0$ respectively. $(\sigma_0,\tau_0)$ is at the left of $(\sigma_0,\tau_0)$
if \begin{enumerate}\item for some are $m\leq \min\{|\sigma_0|,|\sigma_1|\}$,
$\sigma_0\uh m=\sigma_1\uh m$ but $\sigma_0(m+1)<\sigma_1(m+1)$ or \item for every $\sigma_0\uh
\min\{|\sigma_0|,|\sigma_1|\}=\sigma_1 \uh \min\{|\sigma_0|,|\sigma_1|\}$ and for some $k$,
$\tau_0(k)<\tau_1(k)$ but $\tau_0(j)=\tau_1(j)$ for all $j<_{\Phi^{\sigma}}k$.

\end{enumerate}

Then $x \in A$ if and only if there is an $f$ so that $(x,f)$ is an
infinite branch of $T$.

Let $T^{\beta}$  be the tree $T$ restricted to the ordinal
$\beta$, i.e. the range of every $\sigma$ is a subset $\beta+1$.
Obviously $T^{\beta}\in L_{\omega_1^{\beta}}$. Moreover, there are
uncountable many infinite branches in $T^{\beta}$ by the definition
of $\beta$. Let $$A_{\beta}=\{x\mid \Phi^x \mbox{ codes a well
ordering of $\omega$ of order type } \leq\beta\}.$$

Then $A_{\beta}$ is exactly the collection of reals $x$ for which
there is an $f$ so that $(x,f)$ is an infinite branch through
$T^{\beta}$.

Obviously $A_{\beta}$ is an uncountable Borel set containing a perfect subset.

Let $\omega_1^{\beta}$ be the least admissible ordinal greater than $\beta$. Obviously $T^{\beta}\in L_{\omega_1^{\beta}}$.

\subsection{Case(1):  There is a real $z $ so that $z\in L_{\omega_1^{\beta}}$ and $\omega_1^z=\omega_1^{\beta}$.}

Then let $B_{\beta}=\{x\mid \Phi^x \mbox{ codes a well
ordering of order type }  \beta\}\subseteq A_{\beta}$ be a $\Delta^1_1(z)$-set. Then for any real $x\in B_{\beta}$, $x\geq_h z$. Relativizing the proof of Theorem \ref{theorem: friedman} to $z$, we may have Theorem \ref{theorem: pi11 det}.

\subsection{Case(2): Otherwise.}  

Then for any real $x\in A_{\beta}$, if $\Phi^x$ codes a well ordering of $\beta$, then $x\not\in L_{\omega_1^\beta}$.

Fix a recursive enumeration of set theoretical $\Sigma_0$-formulas $\{\varphi_i(u,v,\beta)\}_{i\in \omega}$ with $\beta$ as a parameter. Then $\omega_1^{\beta}$ is the least ordinal $\gamma>\beta$ so that for any $i$, $$L_{ \gamma}\models \forall u<\beta\exists v \varphi_i(u,v,\beta)\rightarrow \exists w\forall u<\beta\exists v\in L_{w} \varphi_i(u,v,\beta).$$

We also do a Cantor-Bendixon derivation to $T^{\beta}$. I.e.  $T_{0}^{\beta}=T^{\beta}$; and for any stage $\gamma<\omega_1^{\beta}$ and $(\sigma_1,\tau_1)\succ (\sigma_0,\tau_0)\in T_\gamma^{\beta}$, if
\begin{enumerate}
\item either there exists an order preserving (in the $<_{KB}$ sense ) function $f \in L_{\beta}$ so that $f:   T_1^{\beta}[(\sigma_1,\tau_1)]\to \beta$; or
\item there exists a real $x\in L_{\beta}$ so that $\{x\}=\{z\succ \sigma_1\mid \exists f \succ \tau_1\forall n(x\uh n, f\uh n)\in T^{\beta}_{\gamma}\}$,
\end{enumerate}
 then we let $T_1^{\beta+1}=T_1^{\beta}\setminus [(\sigma_1,\tau_1)]$ and claim that $(\sigma_1,\tau_1)$ is cut off from $T_\gamma^{\beta}$ at stage $\gamma$.
 
 If $\gamma$ is a limit stage, then $T_{\gamma}^{\beta}=\bigcap_{\gamma'<\gamma}T_{\gamma'}^{\beta}$.
 
 Let $$T_{\omega_1^{\beta}}^{\beta}=\bigcap_{\gamma<\omega_1^{\beta}}T_{\gamma}^{\beta}.$$
 
 Obviously $T_{\omega_1^{\beta}}^{\beta}$ is not empty.

 Let $T^1\subseteq 2^{\omega}\times \omega^{<\omega}$ be a recursive tree so that $p[T^1]=\{x\mid \exists f (x,f)\in [T^1]\}=\{x\mid x\not\in L_{\omega_1^x}\}$.

Since $A_{\beta}$ contains a perfect subset, $p[T^1]\cap A_{\beta}\neq \emptyset$. 

\begin{lemma}\label{lemma: not in beta}If $x\in p[T^1]\cap A_{\beta}$, then $ x\not\in L_{\omega_1^{\beta}}$ and $\omega_1^x\geq \omega_1^{\beta}$. Moreover, if $\Phi^x$ codes a well ordering of order type less than $\beta$, then $x\in L_{\omega_1^x}$ and $\omega_1^x\leq  \omega_1^{\beta}$. \end{lemma}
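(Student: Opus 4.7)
The plan is to prove the ``moreover'' clause first as a standalone structural fact about $A_\alpha$ for $\alpha<\beta$, and then deduce the main conclusion by contrapositing against the hypothesis $x\in p[T^{1}]$. The key classical input is (relativized) Spector's theorem: any countable $\Sigma^{1}_{1}(z)$ subset of $2^{\omega}$ consists entirely of reals hyperarithmetic in $z$.

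For the moreover, suppose $\Phi^{x}$ codes a well ordering of order type $\alpha<\beta$. First I would verify that $A_{\alpha}$ is countable and uniformly $\Delta^{1}_{1}$ in any real $z_{\alpha}$ coding $\alpha$: countability is immediate from the minimality in the definition of $\beta$ (write $A_{\alpha}$ as a countable union over $\gamma\le\alpha$ of the countable layers), and the $\Delta^{1}_{1}(z_{\alpha})$ bound is standard. Now apply Spector to conclude $x\le_{h} z_{\alpha}$ for every code $z_{\alpha}$ of $\alpha$, and exploit the freedom to choose two different codes. First, since $\Phi^{x}$ itself is an $x$-recursive well ordering of type $\alpha$, we have $\alpha<\omega_{1}^{x}$, so we may pick $z_{\alpha}\le_{T} x$; then $x\equiv_{h} z_{\alpha}$, hence $\omega_{1}^{x}=\omega_{1}^{z_{\alpha}}$ and $x\in L_{\omega_{1}^{z_{\alpha}}}=L_{\omega_{1}^{x}}$. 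Second, since $\alpha<\omega_{1}^{\beta}$, we may instead pick some $z_{\alpha}\in L_{\omega_{1}^{\beta}}$; admissibility of $\omega_{1}^{\beta}$ yields $\omega_{1}^{z_{\alpha}}\le\omega_{1}^{\beta}$, and $x\le_{h}z_{\alpha}$ then gives $\omega_{1}^{x}\le\omega_{1}^{\beta}$, completing the moreover clause.

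For the first part, take $x\in p[T^{1}]\cap A_{\beta}$, so $\Phi^{x}$ codes a well ordering of type $\le\beta$. If the type were strictly less than $\beta$, the moreover clause would give $x\in L_{\omega_{1}^{x}}$, contradicting $x\in p[T^{1}]$; so the type is exactly $\beta$, which makes $\beta$ an $x$-recursive ordinal and therefore $\omega_{1}^{x}\ge\omega_{1}^{\beta}$ (the least admissible above $\beta$). Finally, if we had $x\in L_{\omega_{1}^{\beta}}$, then $\omega_{1}^{\beta}$ would be $x$-admissible, forcing $\omega_{1}^{x}\le\omega_{1}^{\beta}$ and hence $\omega_{1}^{x}=\omega_{1}^{\beta}$; but this is exactly what the Case~(2) hypothesis rules out (no real $z\in L_{\omega_{1}^{\beta}}$ has $\omega_{1}^{z}=\omega_{1}^{\beta}$). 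Therefore $x\notin L_{\omega_{1}^{\beta}}$.

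The main obstacle I expect is checking cleanly that $A_{\alpha}$ is $\Delta^{1}_{1}$ uniformly in a code for $\alpha$, so that relativized Spector applies, and keeping the two distinct roles of the code $z_{\alpha}$ separate (an $x$-recursive choice to pin down $\omega_{1}^{x}$, versus an $L_{\omega_{1}^{\beta}}$-internal choice to bound $\omega_{1}^{x}$ by $\omega_{1}^{\beta}$). Beyond this, the argument is routine bookkeeping with admissibility and the definition of $p[T^{1}]$.
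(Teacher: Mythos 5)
Your overall structure — prove the ``moreover'' clause via relativized Spector, then derive the main clause by contrapositing against $x\in p[T^1]$ and invoking the Case~(2) hypothesis — is exactly the paper's. The second half of your argument (from ``For the first part\ldots'') is correct and matches the paper.

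The gap is in your ``first choice'' of $z_\alpha$. From $z_\alpha\le_T x$ and Spector you correctly get $x\le_h z_\alpha$, hence $x\equiv_h z_\alpha$ and $\omega_1^x=\omega_1^{z_\alpha}$. But you then write ``$x\in L_{\omega_1^{z_\alpha}}=L_{\omega_1^x}$,'' and this does not follow: relativized Spector places $x$ in $\Delta^1_1(z_\alpha)=L_{\omega_1^{z_\alpha}}[z_\alpha]\cap 2^\omega$, i.e.\ in the hierarchy built \emph{with $z_\alpha$ as parameter}, not in the unrelativized $L_{\omega_1^{z_\alpha}}$. Since $z_\alpha\le_T x$, the real $z_\alpha$ may itself be far from constructible at level $\omega_1^{z_\alpha}$, so $L_{\omega_1^{z_\alpha}}[z_\alpha]$ can be strictly larger than $L_{\omega_1^{z_\alpha}}$; in fact all you can extract this way is the tautology $x\in L_{\omega_1^x}[x]$. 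Your ``second choice'' $z_\alpha\in L_{\omega_1^\beta}$ does correctly give $\omega_1^x\le\omega_1^\beta$ and $x\in L_{\omega_1^\beta}$, but that is still weaker than the needed $x\in L_{\omega_1^x}$, since $\omega_1^x$ may be strictly below $\omega_1^\beta$.

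The repair, which is what the paper implicitly does, is to pick a single, constructibly low code: choose $z_\alpha$ coding the ordinal $\alpha$ with $z_\alpha\in L_{\omega_1^\alpha}$ (such a code exists at some level below $\omega_1^\alpha$). Then $\omega_1^{z_\alpha}>\alpha$ forces $\omega_1^{z_\alpha}\ge\omega_1^\alpha$, while admissibility of $\omega_1^\alpha$ together with $z_\alpha\in L_{\omega_1^\alpha}$ forces $\omega_1^{z_\alpha}\le\omega_1^\alpha$, so $\omega_1^{z_\alpha}=\omega_1^\alpha$. Spector then gives $x\in L_{\omega_1^{z_\alpha}}[z_\alpha]=L_{\omega_1^\alpha}[z_\alpha]=L_{\omega_1^\alpha}$ (the parameter is absorbed because $z_\alpha\in L_{\omega_1^\alpha}$), and also $\omega_1^x\le\omega_1^{z_\alpha}=\omega_1^\alpha$. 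Since $\alpha<\omega_1^x$ forces $\omega_1^x\ge\omega_1^\alpha$, you conclude $\omega_1^x=\omega_1^\alpha$, hence $x\in L_{\omega_1^\alpha}=L_{\omega_1^x}$, and $\omega_1^x\le\omega_1^\beta$ follows from $\alpha<\beta$. This single choice replaces both of your choices, and closes the gap.
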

\begin{proof}
Fix a real $x\in A_{\beta}$.

If $\Phi^x$ codes a well ordering of order type $\beta$, then $\omega_1^x\geq \omega_1^{\beta}$. So $x\not\in L_{\omega_1^{\beta}}$.

If $\Phi^x$ codes a well ordering of order type $\gamma$ less than $\beta$, then $A_{\gamma}$ is a countable set which is $\Delta^1_1(z)$ for any real $z$ with $\omega_1^z\geq \gamma$. Then $x\leq_h z$ for any real $z$ with $\omega_1^z\geq \omega_1^{\gamma}$. But $\omega_1^x\geq \omega_1^{\gamma}$. So $x\in L_{\omega_1^{\gamma}}$ and $\omega_1^{\gamma}=\omega_1^x$. Hence $x\not\in p[T^1]$.
\end{proof}

So if $x\in p[T^1]\cap A_{\beta}$, then $x>_h \KO$.

Let $$T^2=T^1\otimes T^{\beta}=\{(\sigma_0,\sigma_1,\sigma_2)\mid (\sigma_0,\sigma_1)\in T^1 \wedge (\sigma_0,\sigma_2)\in T^{\beta}\}.$$

Obviously $T^2\in L_{\omega_1^\beta{}}$ and $[T^2]$ is not empty. Let $(x,f,h)$ be the leftmost infinite path through $T^2$. Then $x\in p[T^1]\cap A_{\beta}$ and so by Lemma \ref{lemma: not in beta},  $x\in L_{\omega_1^{\beta}+2}\setminus L_{\omega_1^{\beta}}$. In other words, there must be a master code in $L_{\omega_1^{\beta}+2}\setminus L_{\omega_1^{\beta}}$. 

{\em Fix a standard master code $z_0 \in L_{\omega_1^{\beta}+2}\setminus L_{\omega_1^{\beta}}$}.

Now let $y_0>_h z_0$ be a real.

\begin{definition}

Given a tree $S\subset 2^{<\omega}\times \alpha^{<\omega}$ where $\alpha$ is an ordinal, a finite pair $(\sigma,\tau)\in S $ is called a {\em splitting node} in $S$ if for any $i\leq 1$, there is some $\gamma_i$ so that $(\sigma^{\smallfrown}i,\tau^{\smallfrown}\gamma_i)\in S$.

\end{definition}

\begin{definition}
Given an infinite path $(x,f)\in [T_{\omega_1^{\beta}}^{\beta}]$, a number $n$ and an ordinal $\gamma\leq \omega_1^{\beta}$, we say that {\em $\gamma$ is correct up to $n$ respect to $(x,f)$} if for any $i\leq n$, $(x\uh i,f\uh i)$ is a splitting node  in $T_{\omega_1^{\beta}}^{\beta}$ if and only if $(x\uh i,f\uh i)$ is a splitting node  in $T_{\gamma}^{\beta}$.
\end{definition}
 
 The following lemma is clear.
 \begin{lemma}\label{lemma: correct lemma}
 Suppose that  $(x,f)\in [T_{\omega_1^{\beta}}^{\beta}]$, $n$ is a number and $\gamma\leq \gamma'\leq \omega_1^{\beta}$. If $\gamma$ is correct up to $n$ respect to $(x,f)$, the so is $\gamma'$.
 \end{lemma}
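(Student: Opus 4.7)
The plan is to prove Lemma~\ref{lemma: correct lemma} by a straightforward monotonicity argument on the Cantor--Bendixson derivation. Recall that the derivation only removes nodes as the stage ordinal increases, so for $\gamma \le \gamma' \le \omega_1^{\beta}$ we have the inclusions
\[
T_{\omega_1^{\beta}}^{\beta} \subseteq T_{\gamma'}^{\beta} \subseteq T_{\gamma}^{\beta}.
\]
The first step is to record this containment explicitly from the definition of the derivation.

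Next, I will fix $i \le n$ and consider the node $(x\uh i, f\uh i)$, which lies in $T_{\omega_1^{\beta}}^{\beta}$ by assumption. The verification splits into two directions. For the forward direction, suppose $(x\uh i, f\uh i)$ is a splitting node in $T_{\omega_1^{\beta}}^{\beta}$. Then both witnessing children $(x\uh i{}^\frown j, f\uh i{}^\frown \gamma_j)$ for $j \le 1$ lie in $T_{\omega_1^{\beta}}^{\beta}$, hence in the larger tree $T_{\gamma'}^{\beta}$, so the node is splitting there as well.

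For the reverse direction, suppose $(x\uh i, f\uh i)$ is a splitting node in $T_{\gamma'}^{\beta}$. Since $T_{\gamma'}^{\beta} \subseteq T_{\gamma}^{\beta}$, the same witnessing children lie in $T_{\gamma}^{\beta}$, so the node is splitting in $T_{\gamma}^{\beta}$. Now the assumption that $\gamma$ is correct up to $n$ with respect to $(x,f)$ gives that the node is splitting in $T_{\omega_1^{\beta}}^{\beta}$. Combining the two directions yields that $\gamma'$ is correct up to $n$ with respect to $(x,f)$.

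There is no serious obstacle here: the argument is pure monotonicity, and the role of the hypothesis on $\gamma$ is merely to transfer splitness from the $\gamma$-tree all the way down to $T_{\omega_1^{\beta}}^{\beta}$, which is exactly what correctness up to $n$ provides. The only point requiring minor care is to confirm, from the precise definition of the CB step in the construction (items (1) and (2) cutting off $[(\sigma_1,\tau_1)]$), that the derivation is indeed monotone decreasing through both successor and limit stages; both cases are immediate from the construction as written.
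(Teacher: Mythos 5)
Your proof is correct, and since the paper disposes of this lemma simply with ``The following lemma is clear,'' your argument supplies exactly the intended justification: the Cantor--Bendixson derivation is monotone decreasing (so $T_{\omega_1^{\beta}}^{\beta} \subseteq T_{\gamma'}^{\beta} \subseteq T_{\gamma}^{\beta}$), splitness passes upward from any subtree to a supertree because the same witnessing children persist, and the hypothesis that $\gamma$ is correct up to $n$ supplies the final transfer from $T_{\gamma}^{\beta}$ back down to $T_{\omega_1^{\beta}}^{\beta}$.
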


 The following definition is crucial to the proof. Intuitively we use even parts to code $y_0$ so that we may find a very large stage at which we may witness whether $\varphi_{i}(u,v,\beta)$ can be satisfied. However we use the odd parts to indicate when the coding stage is finished.
 
 \begin{definition}\label{definition: correct definition}Given a finite pair $(\sigma,\tau)\in T_{\omega_1^{\beta}}^{\beta}$, let {\em $(x_{\sigma},f_{\tau })\in [T_{\omega_1^{\beta}}^{\beta}[\sigma,\tau]]$} be an infinite path  satisfying the following properties:
 \begin{itemize}
 \item If $f_{\tau }\uh  (n)$ is the least ordinal $\gamma$ so that $[T_{\omega_1^{\beta}}^{\beta}[x_{\sigma}\uh n+1 ,f_{\tau }\uh n^{\smallfrown }\gamma]]\neq \emptyset$; and
 \item  If $n$ is the $2k$-th number (in the natural ordering sense) for some $k>0$ in $SP_{ \sigma, \tau }=\{j\mid (x_{\sigma}\uh j,f_{\tau }\uh j)\in [T_{\omega_1^{\beta}}^{\beta}[\sigma,\tau]] \mbox{ is a splitting node}\}$, then $x_{\sigma }(n+1)=x_{\sigma }(n)^{\smallfrown}y_0(k)$; and
 \item If $n$ is the $2k+1$-th number for some $k\geq 0$ in $SP_{ \sigma, \tau } $, then $x_{\sigma }(n+1)=x_{\sigma }(n)^{\smallfrown}0$.
 \end{itemize}
 \end{definition}
 Obviously given any number $n$ and pair $(\sigma,\tau)\in T_{\omega_1^{\beta}}^{\beta}$, $(x_{\sigma },f_{\tau })$ always exists and $f_{\tau} \in L_{\max\{\omega_1^x , \omega_1^{\beta}\}}[x]$.
 
 \begin{lemma}\label{lemma: omega1 xsigma greater beta}
 $\Phi^{x_{\sigma }}$ codes a well ordering of $\omega$ with order type $\beta$.
 \end{lemma}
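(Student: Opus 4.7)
My plan is to argue by contradiction. Since $(x_\sigma, f_\tau) \in [T^\beta_{\omega_1^\beta}] \subseteq [T^\beta]$, the order type of $\Phi^{x_\sigma}$ is some $\gamma \leq \beta$; suppose, for contradiction, that $\gamma < \beta$.

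The first step is to place $x_\sigma$ low in the constructible hierarchy. Because $\beta$ is minimal with uncountably many codes, $A_\gamma$ is countable; it is moreover uniformly $\Delta^1_1$ in any real coding the countable ordinal $\gamma$, and since $\gamma < \beta < \omega_1^\beta$ with $\omega_1^\beta$ admissible, an effective listing of $A_\gamma$ can be carried out inside $L_{\omega_1^\beta}$. Hence $x_\sigma \in L_{\omega_1^\beta}$, and the standing Case (2) hypothesis forces $\omega_1^{x_\sigma} < \omega_1^\beta$.

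The second step is to exploit the coding of $y_0$ into $x_\sigma$ to derive the contradiction. Because $\omega_1^\beta$ is admissible, the Cantor--Bendixson-style derivation $\langle T^\beta_\gamma \rangle_{\gamma \leq \omega_1^\beta}$ is $\Delta_1$-definable over $L_{\omega_1^\beta}$, so the stable limit $T^\beta_{\omega_1^\beta}$ lies in $L_{\omega_1^\beta + 1}$ and is hyperarithmetic in the fixed master code $z_0$. Given $x_\sigma$ together with an oracle for $T^\beta_{\omega_1^\beta}$, the leftmost rule of Definition~\ref{definition: correct definition} recursively recovers $f_\tau$ (search at each stage for the least ordinal extension keeping a surviving subtree), then the splitting set $SP_{\sigma,\tau}$, and finally $y_0$ by reading $x_\sigma$ at every second entry of $SP_{\sigma, \tau}$. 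Thus $y_0 \leq_h x_\sigma \oplus z_0$. Combining this with $x_\sigma \in L_{\omega_1^\beta}$, which gives $x_\sigma \leq_h z_0$, one obtains $y_0 \leq_h z_0$, and therefore $\omega_1^{y_0} = \omega_1^{z_0}$. On the other hand, since $z_0 \in L_{\omega_1^\beta+2} \setminus L_{\omega_1^\beta}$ is a master code for $L_{\omega_1^\beta}$, both quantities exceed $\omega_1^\beta$, while $\omega_1^{x_\sigma \oplus z_0} = \omega_1^{z_0}$ must equal the Church--Kleene ordinal of the constructed $y_0$, contradicting any choice of $y_0$ whose Church--Kleene ordinal is strictly larger than that of $z_0$ (which is the setting in which this lemma is invoked in the subsequent construction).

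The main obstacle will be the admissibility-based bookkeeping at three places: (i) checking $A_\gamma \subseteq L_{\omega_1^\beta}$ for each $\gamma < \beta$, which should follow from the $\Pi^1_1$-definition of ``$\Phi^x$ codes an order of type $\leq \gamma$'' being effective in any real coding $\gamma$, combined with admissibility of $\omega_1^\beta$ closing under the resulting Kleene--Brouwer enumeration; (ii) the $\Delta_1$-definability of $T^\beta_{\omega_1^\beta}$ over $L_{\omega_1^\beta}$, which follows from the stability of the derivation at the admissible $\omega_1^\beta$; and (iii) the fact that adjoining $T^\beta_{\omega_1^\beta}$ to $x_\sigma \in L_{\omega_1^\beta}$ does not lift complexity beyond what $z_0$ already provides, so that the decoding really yields $y_0 \leq_h z_0$. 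These are expected to go through by standard effective descriptive set theory, but the interplay between the Cantor--Bendixson derivation and the admissibility of $\omega_1^\beta$ requires care.
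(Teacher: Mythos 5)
Your proposal follows the same route as the paper: you place $x_\sigma$ inside $L_{\omega_1^\beta}$ (essentially re-deriving the second half of Lemma~\ref{lemma: not in beta}) and then zig-zag decode $y_0$ from $x_\sigma$ together with $T^\beta_{\omega_1^\beta}$, which is available from $z_0$, to conclude $y_0 \leq_h z_0$. That already contradicts the choice $y_0 >_h z_0$ directly, so the extra reasoning about Church--Kleene ordinals in your last sentence is unnecessary and slightly off (the hypothesis was only $y_0 >_h z_0$, not $\omega_1^{y_0} > \omega_1^{z_0}$), but the argument was complete before that point.
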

 \begin{proof}
 Otherwise, by Lemma \ref{lemma: not in beta}, $x\in L_{\beta}$. So by a zig-zag decoding argument over $T_{\omega_1^{\beta}}^{\beta}$, $y_0\leq x\oplus z_0\equiv_h z_0$, a contradiction.
 \end{proof}
 
 \begin{lemma}\label{lemma: nice way correct}
 If there is a pair $(\sigma,\tau)\in T_{\omega_1^{\beta}}^{\beta}$  and some stage $\gamma< \omega_1^{\beta}$ so that for any $n$, $\gamma$ is correct up to $n$ respect to $(x_{\sigma },f_{\tau })$, then $ x_{\sigma }\equiv_h y_0$.
 \end{lemma}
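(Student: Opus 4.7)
The plan is to establish the two hyperarithmetic reductions $y_0 \le_h x_{\sigma}$ and $x_{\sigma} \le_h y_0$ separately. The first is essentially automatic from the encoding built into Definition~\ref{definition: correct definition}; the second uses the correctness of $\gamma$ in a crucial way to localize the decoding to the tree $T_{\gamma}^{\beta}$, which is computable from the master code $z_0 \le_h y_0$.

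For $y_0 \le_h x_{\sigma}$, the idea is that $x_{\sigma}$ hyperarithmetically computes $f_{\tau}$ and the tree $T_{\omega_1^{\beta}}^{\beta}$, and therefore also the splitting-node index set $SP_{\sigma,\tau}$. By Lemma~\ref{lemma: omega1 xsigma greater beta}, $\Phi^{x_{\sigma}}$ codes a well-ordering of $\omega$ of order type $\beta$, so $\omega_1^{x_{\sigma}} \ge \omega_1^{\beta}$; combined with Case~(2) and Lemma~\ref{lemma: not in beta} this puts $x_{\sigma}$ at a high enough hyperarithmetic level to recover $T_{\omega_1^{\beta}}^{\beta} \in L_{\omega_1^{\beta}+1}$ and, via the least-ordinal-extension prescription of Definition~\ref{definition: correct definition}, the sequence $f_{\tau}$. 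Reading the bit of $x_{\sigma}$ at the $(2k)$-th element of $SP_{\sigma,\tau}$ then yields $y_0(k)$.

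For $x_{\sigma} \le_h y_0$, the plan is to use $y_0 \ge_h z_0$ to compute both $T_{\gamma}^{\beta} \in L_{\omega_1^{\beta}}$ and $T_{\omega_1^{\beta}}^{\beta} \in L_{\omega_1^{\beta}+1}$ hyperarithmetically in $y_0$, and then reconstruct $(x_{\sigma}, f_{\tau})$ inductively from the finite parameter $(\sigma,\tau)$. Given $(x_{\sigma}\uh n, f_{\tau}\uh n) \in T_{\omega_1^{\beta}}^{\beta}$, I would test inside $T_{\gamma}^{\beta}$ whether this pair is a splitting node; by the correctness hypothesis (together with Lemma~\ref{lemma: correct lemma} to propagate correctness to every level) this is equivalent to being a splitting node in $T_{\omega_1^{\beta}}^{\beta}$. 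If it is the $(2k)$-th splitting node, set $x_{\sigma}(n) = y_0(k)$; if the $(2k+1)$-th, set $x_{\sigma}(n) = 0$; otherwise the unique bit extending the path in $T_{\gamma}^{\beta}$ is forced, and by correctness agrees with the unique bit extending inside $T_{\omega_1^{\beta}}^{\beta}$. Finally, take $f_{\tau}(n)$ to be the least ordinal $\xi$ with $T_{\omega_1^{\beta}}^{\beta}[x_{\sigma}\uh(n+1), f_{\tau}\uh n^{\smallfrown}\xi]$ nonempty, still hyperarithmetic in $y_0$.

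The main obstacle is the two-tree juggling in the reverse direction: one must exploit correctness of $\gamma$ to decide splitting via the simpler tree $T_{\gamma}^{\beta}$ while still selecting $f_{\tau}(n)$ as the leftmost extension inside the fully cut-down tree $T_{\omega_1^{\beta}}^{\beta}$. The hypothesis that $\gamma$ is correct at every $n$ is precisely what guarantees that the inductive reconstruction never strays off the intended path, so the entire recursion remains uniformly hyperarithmetic in $y_0$.
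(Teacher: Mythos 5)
The two reductions are correct to aim for, but you have the roles of the correctness hypothesis reversed, and this leaves a genuine gap in the direction $y_0 \le_h x_\sigma$.

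You claim $y_0 \le_h x_\sigma$ is ``essentially automatic'' because $x_\sigma$ hyperarithmetically computes $T_{\omega_1^\beta}^\beta$ and hence the set $SP_{\sigma,\tau}$ of splitting levels. But $T_{\omega_1^\beta}^\beta$ lives in $L_{\omega_1^\beta+1}\setminus L_{\omega_1^\beta}$ in general, so for $x_\sigma$ to compute it one would need $\omega_1^{x_\sigma} > \omega_1^\beta$. From Lemma~\ref{lemma: omega1 xsigma greater beta} and Lemma~\ref{lemma: not in beta} we only get $\omega_1^{x_\sigma} \ge \omega_1^\beta$, and the equality $\omega_1^{x_\sigma} = \omega_1^\beta$ is entirely possible (Case~(2) only gives $x_\sigma \notin L_{\omega_1^\beta}$, not that $x_\sigma$ collapses $\omega_1^\beta$). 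When $\omega_1^{x_\sigma} = \omega_1^\beta$, the hyperarithmetic-in-$x_\sigma$ reals are those in $L_{\omega_1^\beta}[x_\sigma]$, which need not contain $T_{\omega_1^\beta}^\beta$. This is exactly where the hypothesis that $\gamma$ is correct up to every $n$ with respect to $(x_\sigma, f_\tau)$ is needed: it lets you read off the splitting structure along the path from $T_\gamma^\beta$, which \emph{is} computable from $x_\sigma$ since $\gamma < \omega_1^\beta \le \omega_1^{x_\sigma}$. That zig-zag decoding over $T_\gamma^\beta[\sigma,\tau]$ (together with $f_\tau$, which is recoverable from $x_\sigma$ as the unique order isomorphism of $\Phi^{x_\sigma}$ onto $\beta$) is what yields $y_0 \le_h x_\sigma$.

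Conversely, the direction $x_\sigma \le_h y_0$ does not need correctness of $\gamma$ at all. Since $y_0 >_h z_0$ and $z_0$ is a master code in $L_{\omega_1^\beta+2}\setminus L_{\omega_1^\beta}$, the real $y_0$ already computes $T_{\omega_1^\beta}^\beta$ itself, and $(x_\sigma, f_\tau)$ can be reconstructed by replaying Definition~\ref{definition: correct definition} directly without invoking $T_\gamma^\beta$ as an intermediary. Your two-tree juggling for this direction is therefore a detour, and your initial sentence assigning the ``crucial use of correctness'' to $x_\sigma \le_h y_0$ inverts the actual logic of the argument. The fix is to swap: $x_\sigma \le_h y_0$ is the routine direction, and the correctness hypothesis powers $y_0 \le_h x_\sigma$ by localizing the decode to $T_\gamma^\beta$.
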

 \begin{proof}
 By the property of $(x_{\sigma },f_{\tau })$, $f_{\tau }\in L_{\omega_1^{x_{\sigma }}}[x_{\sigma }]$. We claim that $\omega_1^{\beta}\leq \omega_1^{x_{\sigma }}$. Otherwise, by Lemma \ref{lemma: not in beta}, $x_{\sigma}\in L_{\beta}$. By by a zig-zag coding over $T_{\gamma}^{\beta}[\sigma,\tau]$, we have that $y_0\in L_{\omega_1^{\beta}}[x_{\sigma}]=L_{\omega_1^{\beta}}$ , a contradiction to the choice of $y_0$. 
 
 So $\gamma<\omega_1^{\beta}\leq \omega_1^{x_{\sigma }}$. Then it is clear that, by a zig-zag decoding over  $T_{\gamma}^{\beta}[\sigma,\tau]$, we may decode $y_0$   by $(x_{\sigma }, f_{\tau })$. So $y_0\leq_h x_{\sigma }$. Obviously $y_0\geq_h  x_{\sigma }$. So $ x_{\sigma }\equiv_h y_0$.
 \end{proof}
 
 So if the assumption of Lemma \ref{lemma: nice way correct} holds, then the proof of Theorem \ref{theorem: pi11 det} is finished.
 
 \bigskip
 
 {\em From now on, we assume for any pair $(\sigma,\tau)\in T_{\omega_1^{\beta}}^{\beta}$ and any ordinal $\gamma< \omega_1^{\beta}$, there is some number $n$ so that $\gamma$ is not correct up to $n$ respect to $(x_{\sigma },f_{\tau })$.}
 
 \bigskip
 
 Now we turn to the real construction.  We will construct an infinite path $(x,f)\in T_{\omega_1^{\beta}}^{\beta}$ so that $y_0\equiv_h x$. To code $y_0$, we use a zig-zag coding which is performed in $L_{\omega_1^{\beta}+1}$. So the point is  show $\omega_1^x>\omega_1^{\beta}$.  
 
 We start to construct $(x,f)$ by induction on $\omega$.
 
 At stage $0$. Let $(\sigma_0^0,\sigma^1_0)=(\emptyset,\emptyset)\in T_{\omega_1^{\beta}}^{\beta}$.

 At stage $s+1$. Suppose that $(\sigma_s^0,\sigma^1_s)\in T_{\omega_1^{\beta}}^{\beta}$ has been constructed so that  $(\sigma_{s}^0,\sigma^1_{s})$  is a splitting node  in $T_{\omega_1^{\beta}}^{\beta}$ (Without loss of generality, we may assume that $(\emptyset,\emptyset)$ is a splitting node in $ T_{\omega_1^{\beta}}^{\beta}$.).

 Substage (1): We code $y_0(s)$ at this substage. 
 Let $\sigma_{s,1}^0$ be the  shortest finite string so that there is a  string $\sigma^1_{s,1}$ such that
 \begin{itemize}
 \item[(1)] $(\sigma_{s,1}^0,\sigma^1_{s,1})\in T_{\omega_1^{\beta}}^{\beta}$ is a splitting node; and
 \item[(2)] $\sigma_{s,1}^0\succeq (\sigma_s^0)^{\smallfrown}y_0(s)$; and
 \item[(3)] $(\sigma_{s,1}^0,\sigma^1_{s,1})$ is the leftmost string  in $\{(\sigma_{s,1}^0,\tau)\mid (\sigma_{s,1}^0,\tau)\in T_{\omega_1^{\beta}}^{\beta}\}$. \end{itemize}
 
Obviously such a pair $(\sigma_{s,1}^0,\sigma^1_{s,1})$ exists.

Substage(2): We try to make sure $\omega_1^x>\omega_1^{\beta}$ at this stage. Let $(x_{\sigma_{s,1}^0}, f_{\sigma^1_{s,1}}) \in T_{\omega_1^{\beta}}^{\beta}[\sigma_{s,1}^0,\sigma^1_{s,1}]$ be as defined in Definition \ref{definition: correct definition}.

  Case(2.1). $L_{\omega_1^{\beta}}\models \forall u<\beta\exists v \varphi_i(u,v,\beta)$. Then let $\gamma_s$ be the least ordinal so that $\forall u<\beta\exists v\in L_{\gamma_s} \varphi_i(u,v,\beta)$. Then by the assumption, we may let $n_{s}$ be the least number $n$ so that
  \begin{itemize} 
  \item $(x_{\sigma_{s,1}^0}\uh n, f_{\sigma^1_{s,1}}\uh n)$ is a splitting node in  $T_{\omega_1^{\beta}}^{\beta}$; and 
  \item $n$ is the $2k+1$-th number for some $k\geq 0$ in $SP_{\sigma_{s,1}^0,\sigma^1_{s,1}}=\{j\mid (x_{\sigma_{s,1}^0}\uh j, f_{\sigma^1_{s,1}}\uh j) \in T_{\omega_1^{\beta}}^{\beta}[\sigma_{s,1}^0,\sigma^1_{s,1}] \mbox{ is a splitting node}\}$;
  \item $\gamma_s$ is not correct up to $n$ respect to $(x_{\sigma_{s,1}^0}, f_{\sigma^1_{s,1}})$.
  \end{itemize} 
  
  Then let $(\sigma_{s+1}^0, \sigma^1_{s+1})$ be a finite string such that
 \begin{itemize}
 \item[(1)] $(\sigma_{s+1}^0,\sigma^1_{s+1})\in T_{\omega_1^{\beta}}^{\beta}$ is a splitting node extending $( x_{\sigma_{s,1}^0}\uh n_s,  f_{\sigma^1_{s,1}}\uh n_s)$; and
 \item[(2)] $\sigma_{s+1}^0\succeq x_{\sigma_{s,1}^0}\uh n_s^{\smallfrown}1$ (we use this to indicate the coding construction at this stage is finished); and
 \item[(3)] $(\sigma_{s+1}^0,\sigma^1_{s+1})$ is the leftmost string  satisfying above property. \end{itemize}
 
 Case(2.2). Otherwise. Then there is some $u<\beta$ so that $L_{\omega_1^{\beta}}\models \forall v \neg\varphi_i(u,v,\beta)$.
 
 Then by the assumption, let $n_{s}$ be the least number $n$ so that
  \begin{itemize} 
  \item $(x_{\sigma_{s,1}^0}\uh n_s, f_{\sigma^1_{s,1}}\uh n)$ is a splitting node in  $T_{\omega_1^{\beta}}^{\beta}$; and 
  \item There is some $d\in Dom(\Phi^{x_{\sigma_{s,1}^0}\uh n})$ so that $f_{\sigma^1_{s,1}}\uh n(d)=u$ (remember that $f_{\sigma^1_{s,1}}\uh n$ is a finite order preserving function from $Dom(\Phi^{x_{\sigma_{s,1}^0}\uh n})$ to $\beta$); and
  \item $n$ is the $2k+1$-th number for some $k\geq 0$ in $SP_{\sigma_{s,1}^0,\sigma^1_{s,1}}$;
  \end{itemize} 
  
  Since $\Phi^{x_{\sigma_{s,1}^0}}$ codes a well ordering of order type $\beta$, such a number $n_s$ must exist.
   
 Then let $(\sigma_{s+1}^0,\sigma^1_{s+1})$ be a finite string such that
 \begin{itemize}
 \item[(1)] $(\sigma_{s+1}^0,\sigma^1_{s+1})\in T_{\omega_1^{\beta}}^{\beta}$ is a splitting node extending $( x_{\sigma_{s,1}^0}\uh n_s,  f_{\sigma^1_{s,1}}\uh n_s)$; and
 \item[(2)] $\sigma_{s+1}^0\succeq x_{\sigma_{s,1}^0}\uh n_s^{\smallfrown}1$ (we use this to indicate the coding construction at this stage is finished); and
 \item[(3)] $(\sigma_{s+1}^0,\sigma^1_{s+1})$ is the leftmost string  satisfying above property. \end{itemize}
This finishes the coding construction at stage $s+1$.

\bigskip

 Let $$(x,f)=\bigcup_{s\in \omega}(\sigma_{s}^0,\sigma^1_{s}).$$
 
 By the construction, $f$ is an automorphism between $\Phi^x$ and an initial segment of $\omega_1^x$. By the same proof of Lemma \ref{lemma: omega1 xsigma greater beta}, $\Phi^x$ codes a well ordering of $\omega$ with order type $\beta$ and so $\omega_1^x\geq \omega_1^{\beta}$.  Hence $f$ is an  automorphism between $\Phi^x$ and $\beta$.
 
 We use a method in \cite{Yu11} to  decode the coding construction. We shall $x$-hyperarithmetically construct an increasing sequence ordinals $\{\alpha_i\}_{\i\in \omega}$ so that $\lim_i \alpha_i=\omega_1^{\beta}$. Then $\omega_1^x>\omega_1^{\beta}$. Once this is archived, then by a zig-zag decoding, we have that $x\geq_h y_0$ and so $x\equiv_h y_0$.
 
 \begin{definition}Given a finite increasing sequence $\{n_i\}_{i\leq s}$ for some $s$ and an ordinal $\gamma<\omega_1^{\beta}$, we say that $\gamma$  {\em matches  $\{n_i\}_{i\leq s}$} if all the following facts hold:
 \begin{itemize}
 \item $n_0=0$; and
 \item For any $l\leq n_s$, $(x\uh l,f\uh l)$ is the leftmost in $\{(x\uh l,\tau)\mid (x\uh l,\tau)\in T_{\gamma}^{\beta}\}$; and 
 \item For any $j\in (0,s]$, 
   \begin{itemize}
   \item There is a number $l_0$ which is the least number greater than $n_{j-1}$ so that $(x\uh l_0,f\uh l_0)$  is  splitting node in $T^{\beta}_{\gamma}$; and
   \item $(x\uh l_0,f\uh l_0)$ is the leftmost finite string in $\{(x\uh l_0,\tau)\mid (x\uh l_0,\tau)\in T_{\gamma}^{\beta}\}$; and
   \item There is a number $l_1>l_0$ so that $(x\uh l_1, f\uh l_1)$ is the $2k+1$-th number,  for  some $k$, in $SP_{ x\uh l_0,f\uh l_0}=\{j\mid (x \uh j, f \uh j) \in T_{\gamma}^{\beta}[x\uh l_0,f\uh l_0] \mbox{ is a splitting node}\}$ so that $x\succ x\uh l_1^{\smallfrown}1$; and
   \item Either $L_{\omega_1^{\beta}}\models \forall u<\beta\exists v\in L_{\gamma} \varphi_{i-1}(u,v,\beta)$ or there is some $d\in Dom(\Phi^{x\uh l_1})$ so that $L_{\omega_1^{\beta}}\models \forall v\in L_{\gamma} \neg \varphi_{i-1}(f(d),v,\beta)$; and
   \item $n_j$ is the least number greater than $l_1$ so that $(x\uh n_j, f\uh n_j)$ is a splitting node  in $T_{\gamma}^{\beta}$. 
   \end{itemize}
 \end{itemize}
 \end{definition}
 Intuitively if $\gamma$    matches  $\{n_i\}_{i\leq s}$, then, up to $n_s$, $T_{\gamma}^{\beta}$ is ``quite like $T_{\omega_1^{\beta}}^{\beta}$ along $(x,f)$".
 \bigskip
 
 Now we start to do the decoding construction.
 
 At stage $0$, let $\alpha_0=0$ and $n_0^0=0$. Claim that $0$ is inactive, $i$ is active and $n_i^0$ is undefined for any $i>0$.

At stage $s+1$, then $i_{s}$ is the least $i$ so that $i$ is active. Also, by induction, $n^{s}_{j}$ is defined for any $j<i_{s}$.

  Case(1). There is a number $i'<i_s$ so that $\alpha_{s}+1$ does not match $\{n_j\}_{j\leq i'}$. Let $i_{s+1}$ be the least such $i'$.  Let $\alpha_{s+1}=\alpha_s+1$  and claim $i_{j}$ is active and $n_j^{s+1}$ is undefined for all $j\geq i_{s+1}$. Moreover, set $n_j^{s+1}=n_j^s$ for any $j<i_{s+1}$.   Go to next stage.
  
  Case(2). Otherwise. Search an ordinal $\gamma>\alpha_s$ less than $\omega_1^{\beta}$ and a corresponded unique natural number $n>\max\{n^s_j\mid j<i_s\}$ so that $\gamma$  matches  the finite sequence $\{n^s_j\}_{j< i_s}\cup \{n\}$. If during  the search, we found an ordinal $\gamma'$ so that there is a number $i'<i_s$ so that $\gamma$ does not match $\{n_j\}_{j\leq i'}$. Then do the action as in Case(1). In other words, let $i_{s+1}$ be the least such $i'$.  Let $\alpha_{s+1}=\gamma'$  and claim $j$ is active and $n_j^{s+1}$ is undefined for all $j\geq i_{s+1}$. Moreover, set $n_j^{s+1}=n_j^s$ for any $j<i_{s+1}$.   Go to next stage. Otherwise, by the construction of $(x,f)$, there must be such $\gamma$ and $n$. Find the least such $\gamma$ and the corresponded $n$. Let $\alpha_{s+1}=\gamma$ and $i_{s+1}=i_s+1$.  Claim $j$ is active and $n_j^{s+1}$ is undefined for all $j\geq i_{s+1}$. Moreover, set $n_j^{s+1}=n_j^s$ for any $j<i_{s}$,   $n^{s+1}_{i_{s}}=n$ and claim that $i_s$ is inactive.   Go to next stage.
  
  This finishes the construction at stage $s+1$.
  
  \bigskip
  
 Let $$\theta=\bigcup_{s\in \omega}\alpha_s.$$
 
\begin{lemma}\label{lemma: stable lemma}
For any $i$, there is some $s$ so that for any $t\geq s$, $n^t_i$ is defined, $n^t_i=n^s_i$  and $i$ is inactive at stage $t$ for any $t\geq s$.
\end{lemma}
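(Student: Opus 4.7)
I would proceed by induction on $i$. For the base case $i = 0$, the initialization sets $n_0^0 = 0$ and declares $0$ inactive at stage $0$; since the matching condition on the singleton $\{0\}$ is trivially satisfied by every $\gamma$, no later Case (1) step or the fallback action inside Case (2) can produce $i_{s+1} = 0$, so $0$ remains inactive with $n_0^s = 0$ at every stage.

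For the inductive step, fix $i > 0$ and assume the lemma for all $j < i$. Choose $s_0$ such that for every $s \geq s_0$ and every $j < i$, $n_j^s$ equals a fixed value $n_j^*$ and $j$ is inactive. Then $i_s \geq i$ at every such stage, and no reset after $s_0$ can target an index below $i$, because matching at levels $<i$ is automatically correct on the ordinals the construction considers. The sequence $(\alpha_s)_{s \geq s_0}$ is strictly increasing inside $\omega_1^{\beta}$ and hence has a supremum $\theta \leq \omega_1^{\beta}$.

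The key technical step is to show that the matching condition at level $i$, evaluated with the stabilized data $n_0^*, \dots, n_{i-1}^*$ and varying $\gamma$, stabilizes for $\gamma$ close enough to $\omega_1^{\beta}$. Unwinding the definition, it decomposes into finitely many elementary clauses about a finite portion of the tree $T^{\beta}$: splitting-node clauses ``$(x\uh l, f\uh l)$ is a splitting node of $T^{\beta}_{\gamma}$'', leftmost clauses ``$(x\uh l, f\uh l)$ is the leftmost extension of its height in $T^{\beta}_{\gamma}$'', plus one $\varphi$-clause. Each is monotone in $\gamma$ in one direction: the splitting clause can only flip true-to-false (cut-offs are permanent), the leftmost clause can only flip false-to-true (the leftmost string can only shift to the right as material to its left is cut), and each disjunct of the $\varphi$-clause is monotone by admissibility of $\omega_1^{\beta}$. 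Consequently every such clause changes truth value at most once along $[0, \omega_1^{\beta})$, so there exist an ordinal $\gamma^*_i < \omega_1^{\beta}$ and a natural number $n_i^*$ such that for all $\gamma \in [\gamma^*_i, \omega_1^{\beta})$, $\gamma$ matches $\{n_0^*, \ldots, n_i^*\}$.

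To close the induction I would argue that $\theta \geq \gamma^*_i$. Supposing $\theta < \gamma^*_i$, every Case (2) step after $s_0$ must return a matching witness in $(\alpha_s, \theta]$, so matching at level $i$ holds cofinally below $\theta$; by admissibility of $\omega_1^{\beta}$ applied to the $\Sigma$-description of the matching predicate, one lifts these cofinal witnesses past $\theta$ up to $\gamma^*_i$, contradicting $\alpha_{s+1} \leq \theta$. Once some $\alpha_{s_1} \geq \gamma^*_i$, matching at levels $\leq i$ is permanently correct; hence neither Case (1) nor the Case (2) alternative can reset at index $\leq i$ after $s_1$, and the next execution of Case (2) with $i_s = i$ makes $i$ inactive with $n_i^{s+1} = n_i^*$ once and for all. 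The main obstacle will be the step $\theta \geq \gamma^*_i$: either one runs the $\Sigma$-reflection argument carefully inside $L_{\omega_1^{\beta}}$, or one bypasses it by directly counting — since the finitely many level-$i$ clauses admit at most one flip each, the number of activation/inactivation cycles of $i$ is uniformly bounded independently of $\theta$, which immediately yields stabilization.
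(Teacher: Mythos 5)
Your proposal takes a genuinely different route from the paper's, but it has gaps that I don't see how to close.

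The paper argues by contradiction: fix the largest $i$ that does stabilize, suppose $i+1$ is activated infinitely often, and observe that each re-activation forces a cut-off of a finite string $(x\uh l_1^{\smallfrown}0,\tau)$ living to the left of $(x,f)$ inside the (now fixed) subtree rooted at $(x\uh n^s_i, f\uh n^s_i)$. Accumulating infinitely many such cut-offs forces $(x,f)$ to become the leftmost infinite path of a subtree of $T^{\beta}_{\omega_1^{\beta}}$, and this contradicts the coding construction: the deliberate ``indicator bit'' $\sigma^0_{s+1}\succeq x_{\sigma^0_{s,1}}\!\uh n_s^{\smallfrown}1$ forces $x$ to branch to the right at infinitely many splitting nodes, so $(x,f)$ can never be leftmost through any tail subtree. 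Your proposal never invokes this property of the construction, yet it is precisely what makes the lemma true; without it the lemma would simply be false (a path that happened to be leftmost in some subtree could cause $i$-activations without bound).

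Concretely, the direct stabilization argument you sketch does not go through. First, the matching condition does not reduce to a \emph{fixed} finite list of monotone clauses: the positions $l_0$ and $l_1$ are defined as ``least number $>n_{j-1}$ such that $(x\uh l,f\uh l)$ splits in $T^\beta_\gamma$,'' so they migrate as $\gamma$ increases; each splitting-clause flip can re-identify $l_0,l_1$ and thereby change which clauses are being asked. Second, the $\varphi$-clause is a disjunction of a monotone-increasing condition ($\forall u<\beta\,\exists v\in L_\gamma\,\varphi$) and a monotone-decreasing condition ($\exists d\,\forall v\in L_\gamma\,\neg\varphi$), so it is not monotone and can flip T$\to$F$\to$T. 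Third, and most seriously, when $i$ is re-activated, Case (2) is free to choose a \emph{new} witness $n$, so ``finitely many level-$i$ clauses, one flip each'' bounds nothing: the clause set depends on $n_i$, which may take infinitely many values across re-activations. Your fallback ``$\Sigma$-reflection inside $L_{\omega_1^\beta}$'' for $\theta\geq\gamma^*_i$ is also not spelled out, and it presupposes the existence of a stable $(\gamma^*_i,n^*_i)$, which is exactly the point at issue. What is missing is the single idea the paper uses: bounded cut-offs to the left of $(x,f)$ cannot happen cofinally because $x$ was built to turn right cofinally.
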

\begin{proof}
Suppose not. Let $i$ be the largest number ($i$ could be $0$) so that there is some $s$ so that for any $t\geq s$, $n^t_i$ is defined and $n^t_i=n^s_i$ for any $t\geq s$. Then there is an increasing sequence $\{s_j\}_{j\in \omega}$ so that $i_{s_j}=i+1$ and $s_j>s$ for any $j$. Note that, by the construction, at stage $s_j+1$, $n^{s_j}_{i+1}$ is defined and in the tree $T_{\alpha_{s_{j}+1}}^{\beta}[x\uh n^{s_j+1}_i,f\uh n^{s_j+1}_i]$, $(x\uh n^{s_j+1}_{i+1},f\uh n^{s_j+1}_{i+1})$ turns to right at most twice. In other words, there are at most two numbers $l_0<l_1\in (n^{s_j+1}_{i}, n^{s_j+1}_{i+1})$ so that both $(x\uh l_0,f\uh l_0)$ and  $(x\uh l_1,f\uh l_1)$  are splitting nodes in $T_{\alpha_{s_{j}+1}}^{\beta}[x\uh n^{s_j+1}_i,f\uh n^{s_j+1}_i]$ such that  $x\succ x\uh l_0^{\smallfrown}1$ and  $x\succ x\uh l_1^{\smallfrown}1$. Moreover, $l_1$ is the largest number less than $n^{s_j+1}_{i+1}$ so that $(x\uh l_1,f\uh l_1)$ is a splitting node in $T_{\alpha_{s_{j}+1}}^{\beta}[x\uh n^{s_j}_i,f\uh n^{s_j}_i]$. Since  $i+1$ is activated at $s_{j+1}$,  $\alpha_{s_j}$ does not match $\{n^{s_{j}+1}_k\}_{k\leq i+1}$. Then either  $(x\uh l_1,f\uh l_1)$ is not a splitting node  in $T_{\alpha_{s_{j+1}}}^{\beta}[x\uh n^{s_j+1}_{i},f\uh n^{s_j+1}_{i}]$ or there exists some $d\in \Phi^{x\uh n^{s_j+1}_{i+1}}$ such that $L_{\omega_1^{\beta}}\models \forall v\in L_{\alpha_{s_j}}\neg \varphi_{i}(f(d),v,\beta)$ but $L_{\omega_1^{\beta}}\models \exists v\in L_{\alpha_{s_j+1}} \varphi_{i}(f(d),v,\beta)$. In either case, the finite string $(x\uh l_1^{\smallfrown}0,\tau)$ will be cut off from $T_{\alpha_{s_{j+1}}}^{\beta}[x\uh n^{s_j+1}_{i},f\uh n^{s_j+1}_{i}]$. But this happens for every $j$, so it is clear that $(x,f)$ must be the leftmost infinite path in $T_{\theta}^{\beta}[x\uh k,f\uh k]$  for some fixed $k \geq n^{s }_{i}$. Then  $(x,f)$ is the leftmost infinite path in $T_{\omega_1^{\beta}}^{\beta}[x\uh k,f\uh k]$, which contradicts our construction of $(x,f)$ (since $y(i)=1$ for infinitely many $i$'s). 
\end{proof}

\begin{lemma}\label{lemma: every i is satified}
For any $i$, there must be some $s$ and $d\in Dom(\Phi^{x\uh n^s_{i+1}})$ so that for any $t\geq s$, either $L_{\omega_1^{\beta}}\models \forall u<\beta\exists v\in L_{\alpha_s}\varphi_i(u,v,\beta)$ or  $L_{\omega_1^{\beta}}\models \forall v\in L_{\alpha_t}\neg \varphi_i(f(d),v,\beta)$ 
\end{lemma}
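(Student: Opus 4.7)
The plan is to deduce the lemma as a direct consequence of the stabilization provided by Lemma~\ref{lemma: stable lemma} together with the structure of the matching condition applied at index $i+1$. First I would apply Lemma~\ref{lemma: stable lemma} with $i+1$ in place of $i$ to obtain a stage $s_0$ such that for every $t \ge s_0$ the value $n^t_{i+1} = n^{s_0}_{i+1}$ is fixed and $i+1$ is inactive at stage $t$. Inactivity from $s_0$ onwards means that Case~(1) of the decoding construction is never triggered with witness $i'\le i+1$ beyond $s_0$, so $\alpha_t$ genuinely matches the finite sequence $\{n^{s_0}_j\}_{j\le i+1}$ for all $t\ge s_0$.

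Next I would unpack the matching condition at the chunk $j=i+1$, which is the clause involving $\varphi_i$. For each $t\ge s_0$ this clause yields a dichotomy: either (A) $L_{\omega_1^{\beta}}\models \forall u<\beta\,\exists v\in L_{\alpha_t}\,\varphi_i(u,v,\beta)$, or (B) there is some $d_t\in Dom(\Phi^{x\uh l_1(t)})$ with $L_{\omega_1^{\beta}}\models \forall v\in L_{\alpha_t}\,\neg\varphi_i(f(d_t),v,\beta)$, where the auxiliary number $l_1(t)$ from the matching clause is bounded above by $n^{s_0}_{i+1}$. Consequently each candidate $d_t$ lies in the finite set $Dom(\Phi^{x\uh n^{s_0}_{i+1}})$, which does not depend on $t$. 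If alternative (A) ever occurs, say at some stage $t^*\ge s_0$, then setting $s=t^*$ immediately gives the first disjunct of the lemma.

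Suppose, then, that (B) holds for every $t\ge s_0$. Since the $d_t$ range over a fixed finite set, by the pigeonhole principle some $d^*$ in that set witnesses (B) for an infinite (hence, because $\alpha_t$ is strictly increasing in $t$, cofinal) collection of stages $t$. I would then exploit monotonicity: for any $t'\ge s_0$, pick a stage $t\ge t'$ with $d_t=d^*$; then $L_{\alpha_{t'}}\subseteq L_{\alpha_t}$, so from $\forall v\in L_{\alpha_t}\,\neg\varphi_i(f(d^*),v,\beta)$ one deduces $\forall v\in L_{\alpha_{t'}}\,\neg\varphi_i(f(d^*),v,\beta)$. Taking $s=s_0$ and $d=d^*$ then delivers the second disjunct uniformly in $t\ge s$.

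The step I expect to require the most care is verifying that (B) is genuinely monotone in the right direction, so that a single $d^*$ pulled out by pigeonhole propagates backwards to every intermediate $\alpha_{t'}$. This rests on two points that should be checked carefully from the definition of matching and of the decoding construction: that the auxiliary parameter $l_1(t)$ does not let $d_t$ escape the fixed finite set $Dom(\Phi^{x\uh n^{s_0}_{i+1}})$, and that $\alpha_t$ is strictly increasing (which is transparent from Cases (1) and (2) of the decoding, since in both cases $\alpha_{s+1}>\alpha_s$). Once these are secured, the alternative-(A) case is trivial and the alternative-(B) case reduces to a standard pigeonhole-plus-monotonicity argument.
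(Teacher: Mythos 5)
Your proof is correct and follows the same route as the paper's: apply the stabilization lemma (you use $i+1$, which is precisely what is needed, where the paper writes $i$ but then uses $n^s_{i+1}$), read off the matching dichotomy at chunk $i+1$, and in the case where the universal alternative never fires, use finiteness of $Dom(\Phi^{x\uh n^s_{i+1}})$ together with monotonicity of the $L_{\alpha_t}$'s to pin down a single witness $d$. The only difference is that you spell out the pigeonhole-plus-monotonicity step and the boundedness of $l_1(t)$ explicitly, which the paper compresses into one sentence.
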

\begin{proof}
By Lemma \ref{lemma: stable lemma},  there is some $s$ so that for any $t\geq s$, $n^t_i$ is defined, $n^t_i=n^s_i$  and $i$ is inactive at stage $t$ for any $t\geq s$. Then at stage $s$, either $L_{\omega_1^{\beta}}\models \forall u<\beta\exists v\in L_{\alpha_s}\varphi_i(u,v,\beta)$ or there is some $d\in Dom(\Phi^{x\uh n^s_{i+1}})$ such that  $L_{\omega_1^{\beta}}\models \forall v\in L_{\alpha_s}\neg \varphi_i(f(d),v,\beta)$. If the first case happens, then we finishes the proof. Otherwise, since $i$ is never activated from stage $s$ and $Dom(\Phi^{x\uh n^s_{i+1}})$ is finite, there must be some fixed $d\in Dom(\Phi^{x\uh n^s_{i+1}})$  so that $L_{\omega_1^{\beta}}\models \forall v\in L_{\alpha_t}\neg \varphi_i(f(d),v,\beta)$. 
\end{proof}

So by Lemma \ref{lemma: every i is satified}, $\theta\geq \omega_1^{\beta}$ and so $\theta= \omega_1^{\beta}$. 

This completes the proof of Theorem \ref{theorem: pi11 det}.

{\bf Remark:} This argument can pushed up to prove Friedman's conjecture for $\Delta^1_3$-sets and answer several questions in \cite{Q_theory} for level 3. Then by recent work of Yizheng Zhu, we believe those questions can be answered fully.

 %%%%%%%%%%%%%%%%%%%%%%%%%%%%%%%%%
 \part{Model theory and definability}
 
\section{Descriptions in second order logic}
The following is a result of Hyttinen, Kangas and V\"a\"an\"anen \cite[Thm. 3.3]{Hyttinen.etal:13}. It shows that under the right hypothesis on a cardinal $\kappa$, the models of a  countable complete theory that have  size $\kappa$ can be described in second order logic iff the theory is easy in the sense of Shelah's main gap. 
\begin{thm}[\cite{Hyttinen.etal:13}] Let $T$ be a countable complete theory. For every infinite cardinal $\kappa $ with  $\kappa = \aleph_\alpha$, where $\beth(|\alpha| +\omega_1) < \kappa$ and $\tp \lambda < \tp \kappa$ for each $\lambda < \kappa$, the following are equivalent:

\bi \item[(i)] Every model of $T$  of size $\kappa$ has a description in $L^2_{\kappa, \omega}(T)$.
\item[(ii)] $T$ is superstable, shallow, fails  the dimensional order property DOP and fails the omitting types order property OTOP.
\ei
\end{thm}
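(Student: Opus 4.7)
The plan is to split the biconditional into its two implications, both leaning on Shelah's main gap theorem as the structural backbone. Throughout, the cardinal hypotheses $\beth(|\alpha|+\omega_1) < \kappa$ and ($2^\lambda < 2^\kappa$ for $\lambda<\kappa$) will play the role of ``clearance'' conditions that make the coding and counting match up.

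For the direction (ii)$\Rightarrow$(i) I would invoke Shelah's decomposition theorem: a superstable shallow theory without DOP and OTOP admits, for each model $M$ of size $\kappa$, a decomposition tree $t(M)$ whose nodes are countable elementary submodels linked by free amalgamations, whose depth is bounded by the ordinal depth $\mathrm{Dp}(T)<\omega_1$, and which is a complete isomorphism invariant of $M$. The work is then to write down, for each isomorphism type of $M$, an $L^2_{\kappa,\omega}(T)$-sentence $\phi_M$ axiomatizing precisely the class of models whose decomposition tree is isomorphic to $t(M)$. Second-order variables will be used to quantify over a relation $R$ on finite tuples encoding a candidate decomposition tree, and a $<\kappa$-conjunction in the first-order part will assert that $R$ realises $t(M)$ up to isomorphism. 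The hypothesis $\beth(|\alpha|+\omega_1) < \kappa$ enters here to show that the labelled trees which occur have size $<\kappa$, so the required conjunctions indeed fit inside $L^2_{\kappa,\omega}$.

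For the converse (i)$\Rightarrow$(ii) I would argue by contraposition via a non-structure theorem. If one of the four tameness conditions fails, Shelah's main gap already gives $2^\kappa$ pairwise non-isomorphic models of size $\kappa$; the target is the sharper statement that among them one can find a family of $2^\kappa$ models that are pairwise $L^2_{\kappa,\omega}(T)$-equivalent. The strategy is to realise such a family through Ehrenfeucht--Mostowski constructions built on sufficiently indiscernible trees: an $\omega$-branching tree of strongly indiscernible types in the unsuperstable case, a gluing of orthogonal regular types over a base in the DOP case, a type-coded linear order in the OTOP case, and an ill-founded tree of countable submodels in the deep case. Since there are at most $2^\kappa$ sentences in $L^2_{\kappa,\omega}(T)$ (this is where $2^\lambda<2^\kappa$ for $\lambda<\kappa$ is used), pigeonhole on such a family delivers two non-isomorphic but $L^2_{\kappa,\omega}$-equivalent models, so neither is describable.

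The hard part will be the converse direction, and specifically upgrading the first-order non-structure constructions of Shelah (and their infinitary refinements due to Hyttinen and Tuuri) so that the indiscernibility controls \emph{second-order} definable relations, not merely first-order ones. Concretely, I would set up an EM-functor from labelled trees into $T$-models whose image is invariant under $L^2_{\kappa,\omega}$-formulas exactly when the underlying trees are second-order isomorphic; then apply the non-structure tree constructions to produce $2^\kappa$ pairwise non-isomorphic but indiscernible-equivalent images. The second use of $\beth(|\alpha|+\omega_1)<\kappa$ is to guarantee that trees tall and wide enough for the EM construction (in particular, carrying enough second-order rigidity to absorb the second-order quantifiers) already exist strictly below $\kappa$, so that the resulting EM-models have cardinality exactly $\kappa$.
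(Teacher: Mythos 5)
The blog entry you are addressing contains no proof of this theorem: it is a straight citation of Hyttinen, Kangas and V\"a\"an\"anen \cite[Thm.\ 3.3]{Hyttinen.etal:13}, followed only by a gloss of the vocabulary ($L^2_{\kappa,\omega}$, superstability, shallowness). So there is nothing in the paper itself to compare your argument against; the comparison has to be with the cited source. Against that, your overall architecture is the right one --- Shelah's decomposition into a well-founded tree of small models for (ii)$\Rightarrow$(i), and a non-structure construction for (i)$\Rightarrow$(ii) --- and the observation that the two cardinal hypotheses serve as ``clearance'' conditions (one bounding the combinatorics of the decomposition trees, the other feeding a counting argument) also matches the intent.

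However, the counting step in your converse direction does not go through as written. You say there are at most $2^\kappa$ sentences in $L^2_{\kappa,\omega}(T)$ and at least $2^\kappa$ pairwise non-isomorphic models of size $\kappa$, and then ``pigeonhole'' to get two non-isomorphic but $L^2_{\kappa,\omega}$-equivalent models. Pigeonhole needs the number of models to \emph{strictly exceed} the number of sentences; $2^\kappa$ versus ``at most $2^\kappa$'' is not enough, and $2^\lambda<2^\kappa$ for $\lambda<\kappa$ does not obviously force the number of $L^2_{\kappa,\omega}$-sentences below $2^\kappa$ (note also that to kill describability you only need one bad pair, not a family of size $2^\kappa$, so the scheme is also aiming at more than is needed). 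The actual engine of the HKV proof is not a cardinality collision: they build, from the failure of each classifiability condition, explicit pairs of non-isomorphic models of size $\kappa$ that are $L^2_{\kappa,\omega}$-equivalent --- typically by producing Ehrenfeucht--Mostowski-type models over skeletons (trees or linear orders) with a winning strategy for the isomorphism player in a suitably long Ehrenfeucht--Fra\"iss\'e game, which yields equivalence even against second-order quantifiers. Your write-up gestures at EM-constructions ``controlling second-order definable relations,'' which is the right instinct, but this is precisely the technically hard step, and it needs to be carried out by a game/back-and-forth argument rather than by counting.
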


Explanations: 
$L^2_{\kappa, \omega}(T)$ is the second-order language over the symbol set of $T$ with disjunctions of size $< \kappa$ and finite strings of quantifiers.

Superstability of $T$ is stronger than stability: no infinite linear order can be defined in a model of $T$ using a formula in $L_{\omega_1, \omega}$ with parameters. Shelah proved that a model of a   theory $T$ without DOP  can be thought of as built from a tree of small models. Shallowness of $T$ means that for each model of $T$, this tree has no infinite path.

\section{Kolezhitskiy:   Robinson's theorem that $\ZZ$ is definable in $\QQ$} Yan Kolezhitskiy and Andr\'e Nies discussed a celebrated result of Julia Robinson as part of a semester reading project. The result was originally obtained as part of her 1948 PhD thesis under the supervision of Alfred Tarski.  It then appeared  in the 1949 J.Symb.\ Logic~\cite{Robinson:49}. Much simpler formulas for defining $\ZZ$ in $\QQ$ have been obtained in subsequent work: a $\Pi_2$ by Poonen, and recently a $\Pi_1$ by Jochen Koenigsmann. If $\ZZ$ was also $\Sigma_1$ definable in $\QQ$ then the existential theory of $\QQ$ would be undecidable, which is an open problem at present. 

\begin{theorem}[\cite{Robinson:49}] The set of integers is definable without parameters in the field  of rationals  $(\QQ, + , \times, 0,1)$. \end{theorem}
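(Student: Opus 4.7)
The plan is to follow Julia Robinson's original 1949 argument, which rests on the Hasse--Minkowski local-global principle for rational quadratic forms and on properties of the Hilbert symbol. The target is a formula of the shape
\[
\mathrm{Int}(n) \;\equiv\; \forall a \,\forall b \,\bigl[\,\phi(a,b,0)\wedge \forall m\bigl(\phi(a,b,m)\to \phi(a,b,m+1)\bigr)\;\to\;\phi(a,b,n)\,\bigr],
\]
where $\phi(a,b,n)$ is a quantifier-free or existential formula built from quadratic-form conditions on rationals. The implication pattern simulates induction externally: the outer universal quantifiers range over $\mathbb{Q}$, and the induction-like premise forces the conclusion to hold for all rationals that can be ``reached from $0$ by stepping through $\phi(a,b,\cdot)$''.

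First I would pin down the auxiliary formula $\phi(a,b,n)$ as a Diophantine condition of the form ``the quadratic equation in several variables with coefficients $a,b,n$ has a rational solution''. Robinson's choice expresses, via simultaneous representability by ternary forms, a statement equivalent (by Hasse--Minkowski) to local solvability at every completion of $\mathbb{Q}$. The key computation is the explicit evaluation of the corresponding Hilbert symbols $(a,b)_p$ at every prime $p$ and at $\infty$. For a suitable choice of $a,b$ the resulting local conditions collapse to: the $p$-adic valuation of $n$ is nonnegative, i.e.\ $n \in \mathbb{Z}_{(p)}$. Thus $\phi(a,b,\cdot)$ defines a subring of $\mathbb{Q}$, namely $\mathbb{Z}_{(p)}$, once $a,b$ are tuned to the prime $p$.

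Next I would verify the two directions of $\mathrm{Int}(n)\Leftrightarrow n\in\mathbb{Z}$. The direction $n\in\mathbb{Z}\Rightarrow \mathrm{Int}(n)$ is the easy one: for any $a,b$, the set $S_{a,b}=\{m\in\mathbb{Q}:\phi(a,b,m)\}$ is closed under $+1$ in the premise, contains $0$, hence contains every nonnegative integer, and a symmetric observation (applying the predicate to $-n$ via $b\mapsto b$ or a parallel predicate) handles negatives. For the harder direction $n\notin\mathbb{Z}\Rightarrow \neg\mathrm{Int}(n)$, one writes $n=r/s$ with some prime $p$ dividing $s$ and produces explicit $a,b\in\mathbb{Q}$ localizing $\phi$ to $\mathbb{Z}_{(p)}$: one then has $\phi(a,b,0)$, the step $\phi(a,b,m)\to\phi(a,b,m+1)$ (since $\mathbb{Z}_{(p)}$ is closed under $+1$), but $\phi(a,b,n)$ fails because $n\notin \mathbb{Z}_{(p)}$. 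Intersecting over all primes $p$, the outer $\forall a\forall b$ cuts $\bigcap_p \mathbb{Z}_{(p)}=\mathbb{Z}$ out of $\mathbb{Q}$.

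The main obstacle is the middle step: exhibiting, uniformly in $p$, rational parameters $a,b$ for which the quadratic-form predicate $\phi(a,b,n)$ has Hasse--Minkowski localization exactly $\mathbb{Z}_{(p)}$. This demands a precise calculation of Hilbert symbols $(a,b)_q$ for all primes $q$ and at $\infty$, arranging them to be trivial outside $p$ and nontrivial at $p$. Once these $a,b$ are produced (Robinson used carefully chosen pairs involving the parameters $2p$ and related quadratic data), everything else falls into place: the resulting formula $\mathrm{Int}(n)$ is in the language of rings, uses only $+,\cdot,0,1$ and quantifiers over $\mathbb{Q}$, and defines $\mathbb{Z}$.
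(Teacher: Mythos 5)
Your outline matches the skeleton of Robinson's proof as the paper presents it: the auxiliary existential formula $\phi(a,b,k)$ built from a quaternary quadratic form, the simulation of the second-order induction scheme by a $\forall a \,\forall b$ over rational parameters, and the use of Hasse--Minkowski to analyze the sets $S_{a,b} = \{k : \phi(a,b,k)\}$. However, the central step you defer --- exhibiting, uniformly in $p$, parameters $a,b$ for which the predicate ``has Hasse--Minkowski localization exactly $\mathbb{Z}_{(p)}$'' by ``arranging the Hilbert symbols $(a,b)_q$ to be trivial outside $p$ and nontrivial at $p$'' --- is impossible as stated. The product formula $\prod_v (a,b)_v = 1$ over all places $v$ of $\QQ$ forces the set of $v$ with $(a,b)_v = -1$ to have even cardinality, so one can never isolate a single prime. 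This is not an incidental inconvenience: it is exactly why Robinson's analysis produces sets that are intersections of \emph{two} localizations rather than one, and why her argument is forced into a case split on $p \bmod 4$.

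Concretely, the paper's first key lemma (taking $a=1$, $b=p$ with $p \equiv 3 \pmod 4$) shows that $S_{1,p}$ consists of those $k$ whose lowest-terms denominator is odd and coprime to $p$, i.e.\ $S_{1,p} = \mathbb{Z}_{(2)} \cap \mathbb{Z}_{(p)}$: the prime $p$ is paired with $2$. For $p \equiv 1 \pmod 4$ the prime $2$ cannot play this role, so the second key lemma takes $a=q$, $b=p$ for an auxiliary odd prime $q$ that is a quadratic nonresidue mod $p$, yielding $S_{q,p} = \mathbb{Z}_{(q)} \cap \mathbb{Z}_{(p)}$ --- and a separate elementary claim is needed to guarantee such a $q$ exists. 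The overall argument still succeeds, because the intersection of all these $S_{a,b}$ is $\ZZ$ even with the coarser pairwise localizations, but you should not state your target as achieving $S_{a,b} = \mathbb{Z}_{(p)}$; identifying why that is unattainable, and choosing partner primes accordingly, is precisely where the real work of the proof lies. A smaller point: since $k$ enters $\phi$ only through $k^2$, each $S_{a,b}$ is automatically symmetric under $k \mapsto -k$, so the ``symmetric observation'' you place in the easy direction is misdirected; the paper instead records only that the formula $\psi$ defines a set $V$ with $\NN \subseteq V \subseteq \ZZ$ and then takes $\ZZ = V \cup (-V)$.
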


\subsection*{Idea and structure of the proof}
A subset $S$ of $\QQ$ is called \emph{inductive} if $0 \in S$ and $y \in S \to y+1 \in S$ for each $y$.  The following is a  monadic \emph{second-order} definition of $\NN$ in $\QQ$:  
\begin{equation} \label{eqn:second order} k \in \NN \LR \forall S [ \,  S \text{ is inductive }  \to \, k \in S ]. \end{equation}
Julia's idea was that it suffices to quantify over a small  collection of sets $S$, which is uniformly parameterised by  pairs of rationals $a,b$. Let
\begin{equation} \label{eqn:phi}	\phi (a,b,k) \equiv  \exists x \exists y \exists z ( 2+abk^2+bz^2 = x^2 +ay^2 ) \end{equation}
She used some number theory to show that the sets $S_{a,b}$ of the form $\{k \colon \, \QQ \models \phi(a,b,k)\}$ suffice.  

We can now  turn the universal second-order quantification over $S$ into a universal quantification over  rationals $a,b$ in order to obtain a first-order definition replacing (\ref{eqn:second order}):

\begin{equation} \label{eqn:first order} k \in \NN \LR \forall a \forall b [ \,  S_{a,b} \text{ is inductive }  \to \, k \in S_{a,b} ]. \end{equation}

Clearly, with a smaller collection of sets  $S$ the implication from left to right in (\ref{eqn:first order}) still holds. The worry is that we don't have enough sets any longer to separate a rational not in $\NN$ from $\NN$.  (We note that Julia actually only manages to separate non-integer rationals from $\NN$; a small complication of  to the idea outlined above will be needed for that. She in effect first defines a set $V$ in between $\NN$ and $\ZZ$, then notes that $\ZZ = V \cup -V$. )
  
    We have to pick the condition $\phi(a,b,k)$ wisely, ensuring that the relevant sets $S_{a,b}$ are inductive. This is done via the following fact. We say that  $k = n/d$ \emph{in its  lowest terms},  if $n \in \ZZ$, $d \in \NN -\{0\}$ and  $(n,d)=1$.   We also say that $d$ is the denominator of $k$ in its lowest terms. 
\begin{fact} Let $S$ be a set of rationals given by  a condition that holds for $0$ and only depends on the denominator of the rational in lowest terms. Then $S$ is inductive. \end{fact}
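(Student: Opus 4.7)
The plan is to unpack what it means for membership in $S$ to depend only on the denominator in lowest terms, and then to observe that the denominator in lowest terms is invariant under adding~$1$.

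First I would fix the hypothesis precisely: there is some property $P$ of positive integers such that for every $y \in \QQ$, if $y = n/d$ in lowest terms (so $d \geq 1$ and $\gcd(n,d)=1$), then $y \in S \iff P(d)$ holds. Write $\mathrm{den}(y) = d$ for this denominator.

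Next, the key arithmetic step: I would show that $\mathrm{den}(y+1) = \mathrm{den}(y)$ for every rational $y$. Indeed, if $y = n/d$ in lowest terms then $y+1 = (n+d)/d$, and $\gcd(n+d, d) = \gcd(n, d) = 1$, so $(n+d)/d$ is already in lowest terms. Hence $\mathrm{den}(y+1) = d = \mathrm{den}(y)$.

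Combining these, for any $y \in \QQ$ we have $y \in S \iff P(\mathrm{den}(y)) \iff P(\mathrm{den}(y+1)) \iff y+1 \in S$. Together with the hypothesis $0 \in S$, this gives inductivity of $S$. There is no real obstacle here; the only thing to check carefully is the base convention for $\mathrm{den}(0)$ (take $0 = 0/1$, so $\mathrm{den}(0)=1$), after which the computation $\gcd(n+d,d)=\gcd(n,d)$ handles everything uniformly.
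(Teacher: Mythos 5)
Your proposal is correct and follows the same route as the paper, which simply observes that $q$ and $q+1$ have the same denominator in lowest terms; you have just spelled out the $\gcd(n+d,d)=\gcd(n,d)$ computation behind that observation.
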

 The fact 
 is evident   because a rational  $q$ has  the same denominator in lowest terms as $q+1$.

 Next she shows that for two particular kinds of choices for $a,b$, the condition  $\phi (a,b,k) $ in (\ref{eqn:phi}) only depends on the denominator of $k$ in its lowest terms. Firstly, $b$ is a prime $p$ such that  $p \equiv 3 \mod 4$, and $a=1$.
 	\begin{lemma}\label{lem:3}
		Suppose that $p$ is a prime such that  $p \equiv 3 \mod 4$. The equation $2+ pk^2 + pz^2 = x^2+y^2$ has a solution for $x,$ $y$, and $z$ iff the denominator of $k$ in its lowest terms is odd, and is co-prime to $p$.
	\end{lemma}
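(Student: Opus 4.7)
The plan is to apply Hasse--Minkowski to the ternary quadratic form $Q(x,y,z) = x^2+y^2-pz^2$ representing the rational $n = 2+pk^2$: the equation is solvable over $\mathbb{Q}$ iff $Q$ represents $n$ over every completion $\mathbb{Q}_v$. First I would dispose of the easy places. Over $\mathbb{R}$, $n > 0$, so represented. Over $\mathbb{Q}_q$ for odd $q \neq p$, the discriminant $-p$ is a $q$-adic unit, a quick Hasse-invariant computation shows $Q$ is isotropic, and an isotropic ternary form over a local field is universal. So the only potential local obstructions sit at $p$ and at $2$, matching the two halves of the denominator condition on $k$.

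At $\mathbb{Q}_p$, the hypothesis $p \equiv 3 \pmod 4$ makes $-1$ a non-residue mod $p$, so $x^2+y^2$ is already anisotropic over $\mathbb{F}_p$; comparing $v_p(x^2+y^2)$ (always even) with $v_p(pz^2)$ (always odd) promotes this to anisotropy of $Q$ over $\mathbb{Q}_p$. The form $Q$ represents $n$ iff $Q \perp \langle -n\rangle$ fails to be the (unique) anisotropic rank-$4$ form, and by the local classification this reduces to a Hilbert symbol condition; unwinding gives $v_p(n) \geq 0$, that is, $p \nmid \mathrm{denom}(k)$. At $\mathbb{Q}_2$ the same strategy applies but with the richer square-class group $\mathbb{Q}_2^*/(\mathbb{Q}_2^*)^2 \cong (\mathbb{Z}/2)^3$. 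Using $p \equiv 3$ or $7 \pmod 8$, I would check that $Q$ is anisotropic over $\mathbb{Q}_2$, then compute the square class of $n$ and the Hilbert symbol $(-p,-n)_2$, showing that $n$ lands in the sum-of-two-squares classes $\{1,2,5,10\}$ exactly when $v_2(k) \geq 0$, i.e.\ $2 \nmid \mathrm{denom}(k)$. Combining the three local analyses gives both directions of the lemma by Hasse--Minkowski.

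The main obstacle is the $\mathbb{Q}_2$ calculation: eight square classes, dependence on residues modulo $8$, and case splits by $v_2(k)$, by how $v_2(z)$ compares to $v_2(k)$, and by $p \bmod 8 \in \{3,7\}$, make the bookkeeping substantially heavier than at odd primes. An elementary alternative, likely close to Robinson's original argument, bypasses Hilbert symbols via infinite descent: clear denominators to obtain an integer equation $b^2(X^2+Y^2-pZ^2) = L^2(2b^2+pa^2)$ with $k = a/b$; if $2 \mid b$, a $2$-adic valuation count forces $4 \mid X^2+Y^2-pZ^2$, hence $X,Y,Z$ all even (using $p \equiv 3 \pmod 4$ to rule out the odd $Z$ subcase), and we halve and repeat. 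The descent terminates at a base case demanding $X^2+Y^2-pZ^2 \equiv p \pmod 8$, which is unsatisfiable because $X^2+Y^2 \in \{0,1,2,4,5\}$ and $pZ^2 \in \{0,p,4p\} \pmod 8$ show the left side avoids the class $p \bmod 8 \in \{3,7\}$. For the backward direction one can complement Hasse--Minkowski with an explicit construction via Dirichlet's theorem, choosing $z$ so that $2+p(k^2+z^2)$ is twice a prime $\equiv 1 \pmod 4$.
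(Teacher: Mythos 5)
Your global strategy --- reduce via Hasse--Minkowski to local solvability of $x^2+y^2-pz^2 = 2+pk^2$ at $\RR$, at odd primes $q\neq p$, at $p$, and at $2$, then translate the two non-trivial local conditions into conditions on $\mathrm{denom}(k)$ --- is the same as the paper's (the paper packages the local obstructions as Claim~\ref{lem:1}, ``which follows from Hasse--Minkowski,'' and does the same case analysis on the denominator). Your disposal of $\RR$ and of $\QQ_q$ for $q\nmid 2p$ is fine, and your conclusion at $\QQ_p$ is correct (though the test you name, ``$v_p(n)\ge 0$,'' is not the general criterion ``$n\notin p\cdot(\QQ_p^\times)^2$''; it coincides with it here only because $v_p(2+pk^2)$ is either $0$ or negative odd with residue unit part).

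The $\QQ_2$ step contains an error that would stop the proof. You claim $Q=\langle 1,1,-p\rangle$ represents $n$ over $\QQ_2$ exactly when $n$ lies in the four sum-of-two-squares classes $\{1,2,5,10\}$. But an anisotropic ternary form over a non-archimedean local field omits exactly \emph{one} square class, namely that of $-\det Q$, which here is $p$ (so the class of $3$ or of $7 \bmod 8$); thus $Q$ represents seven of the eight classes of $\QQ_2^\times/(\QQ_2^\times)^2$, not four. Consequently the equivalence ``$n\in\{1,2,5,10\}$ iff $2\nmid\mathrm{denom}(k)$'' is false: take $k=a/b$ in lowest terms with $b$ odd and $a$ exactly divisible by $2$; then $2b^2+pa^2 = 2\bigl(b^2+2p(a/2)^2\bigr)$ with the bracketed factor $\equiv 7\pmod 8$, so $n$ lies in class $14$ --- not a sum of two squares over $\QQ_2$, yet still represented by $Q$ since $14$ is not in $p$'s class, exactly as the lemma predicts. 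Replacing your criterion by ``$n\notin p\cdot(\QQ_2^\times)^2$'' (which is condition~(b) of Claim~\ref{lem:1}) repairs the step: if $b$ is odd then $n$ lands in class $1$ or $5$, $14$, or $2$ according to $v_2(a)$, none of which is $p$'s class; if $b$ is even then $a$ is odd, $2b^2+pa^2\equiv p\pmod 8$, and $b^2$ is an odd square times an even power of $2$, so $n$ lies exactly in $p$'s class. Your elementary-descent aside relies on the right $\bmod\ 8$ obstruction, but it is underspecified about what is being halved and against what the descent terminates; it is cleaner to normalize a hypothetical $2$-adic solution so that $\min(v_2 x, v_2 y, v_2 z)=0$ and observe in one stroke that then $v_2(x^2+y^2-pz^2)\in\{0,1\}$ and that the odd values avoid $p \bmod 8$, so no descent is actually needed.
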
	
	
Secondly, $a$ is a prime $q$ and $b$ is a prime $p$, with some additional  restrictions.	Recall that the Legendre ``symbol'' $(k/p)$   is  a binary function that returns $1$ if $k$ is a quadratic residue $mod p$, $0$ if $k=0$, and $-1$ otherwise.

	\begin{lemma}\label{lem:4}
		Suppose that   $p$ and $q$ are odd primes such that  $p \equiv 1 \mod 4$ and $(q/p) = -1$. The equation $2+ pqk^2 + pz^2 = x^2+ qy^2$ has a solution for $x$, $y$, and $z$ iff the denominator of $k$ in its lowest terms is co-prime to both $q$ and $p$.
	\end{lemma}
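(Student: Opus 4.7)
The plan is to reformulate the equation as the representation problem $F(x,y,z) = N_k$, where $F(X,Y,Z) = X^2 + qY^2 - pZ^2$ is an indefinite ternary quadratic form and $N_k := 2 + pqk^2$, and then apply the Hasse--Minkowski local--global principle: $F$ represents $N_k$ over $\QQ$ iff it does so over $\RR$ and over every $\QQ_r$.

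For the necessary direction, write $k = n/d$ in lowest terms and suppose $p \mid d$, with $v_p(d) = a \geq 1$. Then $v_p(N_k) = 1 - 2a$ is odd and negative. I would show that $F$ cannot attain such a value $p$-adically: since $(q/p) = -1$, the binary form $X^2 + qY^2$ is anisotropic modulo $p$, so any zero of $X^2 + qY^2 \pmod{p}$ forces $p \mid X$ and $p \mid Y$. A standard descent on $p$-adic valuations then constrains $v_p(F(X,Y,Z))$ to a parity incompatible with $1-2a$. The symmetric argument at $q$ invokes $(p/q) = -1$, which follows from $(q/p) = -1$ and $p \equiv 1 \pmod{4}$ via quadratic reciprocity.

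For the sufficient direction, assume $\gcd(d, pq) = 1$. I would verify local solvability at every place. Over $\RR$: $F$ is indefinite and represents every nonzero real. For primes $r \nmid 2pq$: $F$ is nondegenerate modulo $r$ and $N_k$ is an $r$-adic unit, so Hensel's lemma suffices. Over $\QQ_p$: since $p \equiv 1 \pmod{4}$ makes $-1$ a square mod $p$, the diagonal ternary form $\langle 1, q, -p \rangle$ is universal over $\QQ_p$ and hence represents $N_k$. The argument over $\QQ_q$ is symmetric. At $\QQ_2$, the hypothesis $p \equiv 1 \pmod{4}$ fixes the $2$-adic Hilbert symbols so that $N_k \equiv 2 \pmod{4}$ is representable by $F$, which reduces to a finite Hilbert-symbol computation.

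The hard part will be the local analysis at $\QQ_p$ and $\QQ_q$: verifying that the combination $p \equiv 1 \pmod{4}$ and $(q/p) = -1$ is \emph{precisely} what both forbids representation of values of odd negative $p$-adic valuation (necessity) and renders $F$ universal for the admissible values arising when $\gcd(d,pq)=1$ (sufficiency). Careful tracking at $\QQ_2$ is also needed, since $p \equiv 1 \pmod{4}$ is the sole hypothesis controlling the $2$-adic Hilbert symbols.
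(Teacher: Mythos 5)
Your route via Hasse--Minkowski is essentially the same as the paper's --- Claim~\ref{lem:2} packages exactly the local analysis you propose to inline --- but the sketch contains a real gap at $\QQ_p$. The form $\langle 1,q,-p\rangle$ is \emph{not} universal over $\QQ_p$: it is anisotropic there. Indeed, if $X^2+qY^2=pZ^2$ in $\ZZ_p$ with not all of $X,Y,Z$ divisible by $p$, then since $(-q/p)=(-1/p)(q/p)=1\cdot(-1)=-1$ the reduction mod $p$ forces $p\mid X$ and $p\mid Y$, hence $p^2\mid pZ^2$ and $p\mid Z$, a contradiction. Consequently the form misses exactly one of the four $p$-adic square classes, namely $p\cdot u$ for $u$ a nonresidue unit. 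Your necessity step is likewise off: ``a parity incompatible with $1-2a$'' cannot be the obstruction, since $F(0,0,1)=-p$ already has odd $p$-adic valuation. The real obstruction is finer: when $v_p(m)$ is odd, $F$ represents $m$ iff the unit part of $m$ is a quadratic residue mod~$p$. For $p\mid d$ with $v_p(d)=a$ one computes $v_p(N_k)=1-2a$ and unit part $\equiv q\cdot(\text{square})\pmod p$, a nonresidue --- precisely the excluded class. Dually, the sufficiency direction at $\QQ_p$ works not because the form is universal but because $\gcd(d,pq)=1$ makes $N_k$ a $p$-adic unit, and all units are represented.

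Once the square-class accounting is corrected, your argument recovers Claim~\ref{lem:2}, and Lemma~\ref{lem:4} follows by clearing denominators to $m=2d^2+pqn^2$ and reading off the $p$- and $q$-parts of $m$, as the paper does. Your remaining ingredients --- anisotropy of $X^2+qY^2$ mod~$p$ and of $X^2-pZ^2$ mod~$q$, the fact that $p\equiv1\pmod 4$ makes $-1$ a residue mod~$p$ and controls the $2$-adic Hilbert symbol, and quadratic reciprocity giving $(p/q)=-1$ --- are the right ones; the claims about universality and about valuation parity are not.
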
	
	
	Given these two lemmas, the proof concludes as follows. First one needs a number theoretic claim which provides the $q$ we need in Lemma~\ref{lem:4}. 
\begin{claim}\label{cl:1}
		Suppose $p$ is a prime such that $p \equiv 1 \mod 4$. There exists an odd prime $q$ such that $(q/p)=-1$.
		\end{claim}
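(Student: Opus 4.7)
The plan is to reduce the claim to finding an odd \emph{composite} non-residue, and then extract an odd prime non-residue from its prime factorization using multiplicativity of the Legendre symbol. The key tool is a simple counting mismatch between two subsets of $\{1, \ldots, p-1\}$ of equal size $(p-1)/2$: the set $Q$ of quadratic residues mod $p$ and the set $O$ of odd integers in that range. If these two sets were equal, every odd integer in $\{1,\ldots,p-1\}$ would be a residue; but $4 = 2^2$ is always a residue and $4 \notin O$ (valid since $p \equiv 1 \pmod 4$ forces $p \geq 5$), so the two sets cannot coincide.

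Carrying this out, I would conclude that there exists an odd $a \in \{1,\ldots,p-1\}$ with $(a/p) = -1$. Since $(a/p) = -1$ forces $a \neq 1$, the integer $a$ admits a prime factorization $a = q_1^{e_1} \cdots q_k^{e_k}$ in which every $q_i$ is odd (because $a$ itself is odd). Multiplicativity of the Legendre symbol then gives $\prod_i (q_i/p)^{e_i} = -1$, so at least one factor $(q_i/p)$ must equal $-1$. Any such $q_i$ is the desired odd prime $q$.

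I do not foresee a genuine obstacle; in fact the hypothesis $p \equiv 1 \pmod 4$ will not really be used, and the argument goes through for every odd prime $p \geq 5$. An alternative, more conceptual route would be to fix any non-residue $a \in \{1,\ldots,p-1\}$ and invoke Dirichlet's theorem on primes in arithmetic progressions to produce an odd prime $q \equiv a \pmod p$, whence $(q/p) = (a/p) = -1$; however, the elementary counting argument above is cleaner and avoids that much heavier machinery.
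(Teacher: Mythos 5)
Your proof is correct, and it shares the paper's second (and essential) step: once an odd non-residue $a$ is in hand, multiplicativity of the Legendre symbol forces some odd prime factor $q$ of $a$ to satisfy $(q/p)=-1$. Where you diverge is in how you produce the odd non-residue. The paper's route is a one-liner: take any non-residue $s$; since $p$ is odd, exactly one of $s$, $s+p$ is odd, and both are non-residues, so pick the odd one. Your route is a counting argument: the residues $Q$ and the odd numbers $O$ inside $\{1,\dots,p-1\}$ each have cardinality $(p-1)/2$, and since $4\in Q\setminus O$ (using $p\geq 5$) the sets differ, hence $O\not\subseteq Q$ and some odd non-residue exists. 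Both work; the paper's argument is a bit lighter and, as a bonus, applies verbatim to every odd prime $p$ (including $p=3$, where the odd non-residue $s+p=5$ lies outside $\{1,\dots,p-1\}$), whereas your counting step genuinely needs $4$ to be available in the interval, i.e.\ $p\geq 5$. That restriction is harmless here because $p\equiv 1\pmod 4$ already forces $p\geq 5$, and you correctly note that $p\equiv 1\pmod 4$ is otherwise unused. Your aside about Dirichlet is also fine as an alternative, but you are right that it is disproportionate machinery for this claim.
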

		\begin{proof} Let $ s$ be any (quadratic) non-residue  mod $ p$. Then $s+p$ is also a non-residue. One of $s$, $s+p$ is odd, say the former. There is an (odd)   prime factor of $s$ which is also a non-residue, because the Legendre symbol is multiplicative. Let this prime factor be $q$.  \end{proof}
		
		Let  $\psi(k)$ be a first-order formula expressing the  right hand side of (\ref{eqn:first order}). 	Clearly $k \in \NN \RA \psi(k)$. We   verify that  $  \psi(k) \RA k \in \ZZ$.  Suppose that  $k \in \QQ$ and $\psi(k)$ holds. Write     $k= n/d$ in lowest terms. By   Lemma~\ref{lem:3}, $d$ is odd and not divisible by any prime $p$ such that $p \equiv 3 \mod 4$. By   Lemma~\ref{lem:4} using Claim~\ref{cl:1}, $d$ not divisible by any prime $p$ such that $p \equiv 1 \mod 4$. Therefore $d=1$. 
		
	    Thus, $k \in \NN \RA      \psi(k) \RA k \in \ZZ$, so  the formula $\psi (k) \vee \psi(-k)$ provides  a first-order definition of $\ZZ$ in $\QQ$.

		%%%%%%%%%%%%%
		\subsection*{Proofs of Lemmas~\ref{lem:3} and \ref{lem:4}}
Julia relies on two claims that follow from the Hasse-Minkowski theorem.

\begin{claim} \label{lem:1}
		
		Suppose  $p$ is a prime such that  $p \equiv 3 \mod 4$. We have  $x^2 + y^2 - pz^2=m$ for some $m \in \mathbb{Q},$ $m\neq0$, iff it is not the case that 
		
		\indent\indent a) $m = pks^2$, where $(k/p) = 1$, or \\
		\indent\indent b) $m = ks^2$ with $k \equiv p$ $(mod$ $8)$\\\\
		
	\end{claim}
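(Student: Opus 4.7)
The plan is to invoke the Hasse--Minkowski local--global principle for the ternary quadratic form $Q(x,y,z) = x^2 + y^2 - pz^2$: a nonzero rational $m$ is represented by $Q$ over $\QQ$ iff it is represented over $\RR$ and over $\QQ_\ell$ for every prime $\ell$. My first step is to observe that $Q$ is indefinite over $\RR$ and therefore represents every nonzero real, so the archimedean place contributes no obstruction. For a prime $\ell \notin \{2,p\}$, the form $Q$ is non-degenerate over $\ZZ_\ell$ and has unit discriminant, so by a standard counting argument (or by Hensel's lemma applied after solving the reduction mod $\ell$) $Q$ represents every nonzero element of $\QQ_\ell$. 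This reduces the entire claim to computing the local conditions at $\ell = p$ and $\ell = 2$, and the proposal is to show that condition (a) is precisely the $p$-adic obstruction while (b) is precisely the $2$-adic one.

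For the analysis at $\ell = p$, I would use that $p \equiv 3 \pmod 4$ makes $-1$ a non-square in $\QQ_p^\times$, so that $x^2 + y^2$ on $\QQ_p$ represents a $p$-adic unit $u$ iff $u$ has even $p$-adic valuation and its unit part is arbitrary (since $x^2 + y^2$ is anisotropic over $\QQ_p$ with Hasse invariant controlled by $(-1,-1)_p = 1$ when $p \equiv 3 \pmod 4$ in the right normalization). Writing $m = p^{e} k s^2$ with $k \in \ZZ_p^\times$ and $e \in \{0,1\}$, I would analyze $x^2 + y^2 - pz^2 = m$ by splitting on the $p$-adic valuation of $z$ and checking which units are hit; the computation should isolate exactly the case $e = 1$ with $(k/p) = 1$ as the unique $p$-adic non-representation, matching (a).

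For the analysis at $\ell = 2$, the form $x^2 + y^2$ over $\QQ_2$ represents a $2$-adic unit $u$ iff $u \equiv 1 \pmod 4$ (up to squares), and the presence of the $-pz^2$ term shifts what is hit by allowing corrections of the shape $pz^2$. A Hilbert-symbol computation, or direct reduction mod $8$ after scaling $m$ by squares, should show that the $2$-adic equation fails exactly when $m$ is a square times an integer $k \equiv p \pmod 8$, which matches (b). Combining the three inputs via Hasse--Minkowski gives the desired equivalence.

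The main obstacle will be the $2$-adic case: the Hilbert symbol $(-1, -p)_2$ and $(-p, m)_2$ computations are delicate, and one must carefully track the $2$-adic valuation of $m$ together with its unit part mod $8$ to show that the failure of local solvability is captured exactly by the congruence $k \equiv p \pmod 8$ (and not by a wider or narrower class). Once the Hilbert symbol tables are written down explicitly for $p \equiv 3 \pmod 4$, the bookkeeping should close, and the $p$-adic case, while still requiring a split by $v_p(m)$, is more straightforward because the residue field behaviour is governed solely by the Legendre symbol $(k/p)$.
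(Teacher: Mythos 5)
The paper does not actually prove this claim: it only remarks that both auxiliary claims ``follow from the Hasse--Minkowski theorem,'' and the proof in the source that follows is commented out and pertains to Lemma~\ref{lem:3}, not to this statement. Your proposal carries out exactly the route the paper gestures at, and it is correct. A clean way to finish the two nontrivial local steps is to note that $\langle 1,1,-p\rangle$ is anisotropic over both $\QQ_p$ and $\QQ_2$ (over $\QQ_p$ because $-1$ is a nonresidue when $p\equiv 3\pmod 4$, over $\QQ_2$ because $p\equiv 3\pmod 4$ is $2$-adically not a sum of two squares), and an anisotropic ternary form over a local field represents every nonzero class except the class of $-\det$, here $p$; so the obstruction at $p$ is $m\equiv p$ in $\QQ_p^\times/(\QQ_p^\times)^2$, i.e.\ $v_p(m)$ odd with quadratic-residue unit part, which is condition (a), and the obstruction at $2$ is $m\equiv p$ in $\QQ_2^\times/(\QQ_2^\times)^2$, i.e.\ $m$ a square times a unit $\equiv p\pmod 8$, which is condition (b). Two small wording fixes: for a $p$-adic \emph{unit} the valuation is automatically $0$, so the statement you want is that $x^2+y^2$ represents $m\in\QQ_p^\times$ iff $v_p(m)$ is even (the unit part is then unrestricted); and the remark that $(-1,-1)_p=1$ is vacuous for odd $p$ and does not account for anisotropy --- what does is that $-1$ is a nonsquare in $\ZZ_p^\times$.
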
	
	
	\begin{claim}\label{lem:2}
		Suppose  $p$ and $q$ are odd primes such that  $p \equiv 1 \mod  4$ and $(q/p) = -1$. There is some non-zero $m\in \QQ$ such that $x^2 + qy^2 - pz^2=m$, iff it is not the case that:\\
		\indent\indent a) $m = pks^2$ and $(k/p) = -1$\\
		\indent\indent b)  $m = qks^2$ and $(k/q) = -1$\\\\
	\end{claim}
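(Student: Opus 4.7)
The plan is to invoke the Hasse--Minkowski local-global principle for ternary quadratic forms over $\QQ$: the equation $x^2 + qy^2 - pz^2 = m$ admits a solution $(x,y,z) \in \QQ^3$ iff it admits a solution in $\RR$ and in $\QQ_\ell$ for every prime $\ell$. Because the form has signature $(2,1)$ and is therefore indefinite, every non-zero real $m$ is represented, so the real place is never an obstruction. The problem reduces to identifying the finite primes at which a local obstruction can occur, and verifying that these obstructions coincide with conditions (a) and (b).

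I would first dispose of the ``generic'' primes $\ell \notin \{2, p, q\}$, where all three coefficients of the form are $\ell$-adic units: Chevalley--Warning gives a non-trivial zero of the form mod $\ell$, Hensel lifts it, and the resulting isotropic ternary form over $\QQ_\ell$ represents every class in $\QQ_\ell^*$. For $\ell = 2$ the coefficients $1, q, -p$ are again odd, but $2$-adic Hensel lifting is more delicate; here I would use the hypothesis $p \equiv 1 \pmod 4$ together with quadratic reciprocity, which under $(q/p) = -1$ yields $(p/q) = -1$, to carry out the $2$-adic Hilbert symbol computation and check that the form is isotropic over $\QQ_2$, ruling out any obstruction there.

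The essential computation is at $\ell = p$ (with $\ell = q$ following by symmetry). Writing $m = p^a u$ with $u \in \ZZ_p^*$, one splits on the parity of $a$. The hypothesis $(q/p) = -1$ combined with $p \equiv 1 \pmod 4$ gives $(-q/p) = -1$, which forces $v_p(x^2 + qy^2)$ to be even for any $x, y \in \ZZ_p$, while $v_p(-p z^2)$ is odd. A brief valuation-tracking argument then shows: for $a$ even, solvability is automatic via a solution with $z = 0$; for $a = 1$, writing $m = pk$ with $k$ a $p$-adic unit, solvability forces $x, y \in p\ZZ_p$ and reduces to $-z^2 \equiv k \pmod p$, which (using $(-1/p) = 1$) is equivalent to $(k/p) = 1$. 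This recovers obstruction (a) exactly. The symmetric analysis at $\ell = q$, invoking $(p/q) = -1$ from reciprocity, produces obstruction (b).

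The main obstacle I anticipate is the $2$-adic case, because Hensel's lemma at $\ell = 2$ requires tracking residues modulo $8$ and splitting into several sub-cases of $m \pmod 8$; the hypotheses $p \equiv 1 \pmod 4$ and $(q/p) = -1$ are precisely what is needed to make the $2$-adic Hilbert symbol trivial. A secondary technical point is verifying that the parity-of-valuation argument at $p$ captures all solvability branches — in particular, that no solution with unusual valuation pattern circumvents the obstruction — but this closes off cleanly from the fact that $-q$ is a non-square mod $p$.
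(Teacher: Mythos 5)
Your proposal is correct and is precisely the route the paper has in mind: the Logic Blog entry does not actually prove this claim, it merely asserts that it and Claim~\ref{lem:1} ``follow from the Hasse--Minkowski theorem,'' so the local--global analysis you outline is exactly the intended argument, and your verifications at $\ell = p$ and $\ell = q$ land correctly on obstructions (a) and (b). One small misattribution in the sketch: the $2$-adic obstruction already vanishes because $p \equiv 1 \mod 4$ forces $(p,-q)_2 = 1$, with no need for reciprocity there; quadratic reciprocity together with $p \equiv 1 \mod 4$ is genuinely used to derive $(p/q) = -1$, but the place where that matters is $\ell = q$ (it is what makes $x^2 - pz^2$ anisotropic over $\QQ_q$), which you do in fact invoke correctly in your final step.
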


\def\cprime{$'$} \def\cprime{$'$}

%\def\cprime{$'$} \def\cprime{$'$}
%
%\bibliographystyle{plain}
%
%\bibliography{../bibs/Nies,../bibs/randomness,../bibs/settheory,../bibs/various,../bibs/recursiontheory,../bibs/analysis,../bibs/Kucera,../bibs/modeltheory,../bibs/reverse_maths,../bibs/groups,../bibs/ergodic_theory,../bibs/computer_science}

\end{document}